\titleformat{\chapter}[display]
   {\normalfont\filcenter}
   {\MakeUppercase{\chaptertitlename} \Roman{chapter}}
   {\baselineskip}
   {\MakeUppercase}
\titlespacing{\chapter}{0pt}{-\baselineskip}{\baselineskip}
\titleformat{\section}[block]
   {\normalfont\filcenter}
   {\thesection\ }
   {\baselineskip}
   {}
\edef\lab{\thecontentslabel}\ex\ex\ex\if1\ex\eatuntildot\lab\addvspace{\baselineskip}\fi}
\def\eatuntildot#1.{}
\let\ex=\expandafter
  \renewcommand{\contentsname}%
    {Table of contents}%
 \theoremstyle{plain}    
 \newtheorem{theorem}{Theorem}[section]
 \theoremstyle{definition}
 \newtheorem{definition}[theorem]{Definition}
 \theoremstyle{plain}    
 \newtheorem{proposition}[theorem]{Proposition}
 \theoremstyle{remark}
 \newtheorem{remark}[theorem]{Remark}
 \newtheorem{example}[theorem]{Example}
\theoremstyle{plain}    
 \newtheorem{lemma}[theorem]{Lemma} 
 \theoremstyle{definition}
 \theoremstyle{plain}    
 \newtheorem{corollary}[theorem]{Corollary} 
\title{Age-structured Population Models with Applications \\
  \normalsize PhD thesis} 
\author{Min Gao}
\date{1/12/2015}
\begin{document}

\addtocontents{toc}{\vskip\baselineskip\hfill Page\par}

\frontmatter


\thispagestyle{empty}
\doublespacing
\begin{center}

Age-structured Population Models with Applications\\

By\\

Min Gao\\

Dissertation\\
Submitted to the Faculty of the\\
Graduate School of Vanderbilt University\\
in partial fulfillment of the requirements\\
for the degree of\\

DOCTOR OF PHILOSOPHY\\
in\\
Mathematics\\

August, 2015\\
Nashville, Tennessee\\
\end{center}
\hspace{1cm} 
\begin{center}
Approved: \\
Professor Glenn F. Webb \\
Professor Philip S. Crooke \\
Professor Doug Hardin \\
Professor Vito Quaranta \\

\end{center}

\newpage
\singlespacing
\thispagestyle{empty}
\vspace*{4.5in}
\centerline{Copyright {\small\copyright}\ 2015 by Min Gao}
\centerline{All Rights Reserved}

\newpage
\setcounter{page}{3}
\doublespacing
\addcontentsline{toc}{chapter}{Dedication}

\vspace*{3in}
\centerline{\em To my parents}
\centerline{\em  Jiaping Xu}
\centerline{\em and}
\centerline{\em Youshen Gao}

\newpage
\doublespacing
\chapter{Abstract}

A general model of age-structured population dynamics is developed and the fundamental properties of its solutions are analyzed. The model is a semilinear partial differential equation with a nonlinear nonlocal boundary condition. Existence, uniqueness and regularity of solutions to the model equations are proved. An intrinsic growth constant is obtained and linked to the existence and the stability of the trivial and/or the positive equilibrium. Lyapunov function is constructed to show the global stability of the trivial and/or the positive equilibrium.

Key words: Lyapunov function, Partial Differential Equation, Stability, Mathematical Model, Uniform       
                    Persistence, Reproductive Number (or Reproduction Number)



\singlespacing
\tableofcontents

\listoffigures
\listoftables

\onehalfspacing

\mainmatter


\addtocontents{toc}{\vskip\baselineskip\noindent Chapter\par}

\chapter{Introduction}

\section{Human age structure and human-microbe coevolution}

A human newborn is colonized by microbiota from the environment, especially through the exposure to the mother. The coevolution of humans and microbes occured over evolutionary time with adaptation to their environment. This shared evolutionary process involving human hosts and their symbiotic microbes, selected for mutualistic interactions that are beneficial for human health. Ecological and genetic changes that perburbed this symbiotic system resulted in disease or host demise through interactions with high-grade pathogens \cite{De}.
Studying the coevolution of human host and microorganisms in a dynamical system involving human age structure improves the current understanding of both human health and disease \cite{Wa1}.

The age structure of human females is exceptional among species, with its extremely extended pre-reproductive and post-reproductive phases. We explore how nature optimizes the age structure of human species through the mutualistic interactions between human host and microbiota. 
Recent studies in \cite{Br} show that a female baboon matures at $\approx 5$ years and has average life expectancy of $\approx 16$ years. Studies in \cite{Kr, Fo, Har,  At, Su} show that a whale matures at $\approx$ 9.5 years and could live to $\approx$ 90 years, Chimpanzees become reproductively active at $\approx 11-12$ years and have average lifespan $\approx 50$ years and Gorilla females mature at $\approx 10-11$ years with the maximum longevity $\approx 52$ years. In contrast, a female in an early human hunter-gatherer society became fertile at $\approx 15$ years and has an inherent lifespan on the order of $\approx 70$ years \cite{Gur}.  

It is commonly argued that selective pressure generally favors the good of the species in contrast to individual fitness. Child survival rate is an important indicator of human female reproductive success\cite{Jo, Re}, since human females have relatively little difference in fertility. Therefore, human age structure has to reflect the adaptation of maximizing the survival probability of newborns to ensure the reproductive success of the human species, while not overburdening their mothers. 
Clearly, young humans are not self-sufficient until much later than almost all other species, and parent or extended family care is necessary for their survival. 
The fact that the human female nonreproductive state (that is, the extended juvenility and senescence phases) accounts for a relatively high proportion of female's lifespan may have an evolutionary advantage that increases human species' reproductive success and the fitness of descendants.

The extended juvenility, senescence, and longevity of \textit{Homo sapiens} has been maintained over the past $130,000$ years \cite{Br, Gur}, and is inherited by modern humans. This is related to a dramatic fivefold increase in the ratio of older to younger adults (O-Y ratio) \cite{Gur,C} over evolutionary time. Studies in \cite{Brk, Gur} indicate that there exists a maximum age for human life span, which may be inherent in humans and resultant from evolutionary pressure. Such pressure involves optimizing limited resources in balancing juvenile and adult populations. Another consideration is the benefit of post-reproductive nurturing of juveniles. Such elements play an essential role in shaping human age structure and influencing total size of the population.  A consideration in this balance is the benefit of post-reproductive nurturing of juveniles-the so called \textit{grandmother effect} \cite{Fin, Fin1,  Haw, Kim, Fo, R, Brk, De, J, K}. 

From the coevolutionary point of view, we assumes that the mutualistic interactions between human hosts and their indigenous microbiota could be divided into two parts.  First, during reproductive life, there is selection for microbes that preserve host function, through regulation of energy homeostasis, promotion of fecundity, and interference with competing high-grade pathogens.  Second, after reproductive life, there is selection for organisms that contribute to host demise. While harmful for the individual (during their post-reproductive age), such interplay is salutary for the overall population, in terms of resource utilization, resistance to periodic diminutions in food supply, and epidemics due to high-grade pathogens \cite{B}. A question arises as to the fragility or robustness of this human structure, which is related to modern human lifespan. 
We study the age structure of early humans with models that illustrate the unique intrinsic balance and robustness of human fertility and mortality. We hypothesize that female fertility is regulated by human age structure and population density, that is, the actual population size has an influence on the quantity of newborns; and female mortality incorporates the programmed death of post-reproductive population through interactions with indigenous microorganisms \cite{B} and the effects of crowding. 

We investigate the interaction between those mortalities: one takes the form of all cause mortality, which could be total population size dependent, and affects individuals from all age classes. The other depends on the size of the post-reproductive population, but only affects the pre-reproductive subpopulation, with youngest ones most vulnerable. This models the competition between the post-reproductive population and the pre-reproductive population for the limited resources that all members of the population require. Thus, as the post-reproductive population grows, it disproportionately affects pre-reproductive individuals.

In early human populations, this effect could take the form of the competition between senescent and juvenile subpopulations. The numerical examples presented in \cite{B} and also in section 4 support the hypothesis that the senescent burden on juveniles is not negligible, and might affect the population much stronger than crowding effects. We thus argue that in early humans with especially small total population size, the fraction of senescent population is fairly small compared with modern society. This is consistent with recent anthropological findings: more than 1 million years ago, there were only 18,500 human ancestors, with relatively short average lifespan, living on earth \cite{H, Me}. Numerical simulations in \cite{B} and in section 4 illustrate that when subject to a linear or a nonlinear birth rate, the evolution of a population could exhibit relatively wild oscillations before it arrives at a steady state, if the senescent burden on juvenile individuals is significantly large. Such oscillations could reduce the overall population size drastically and might endanger the survival of the population, since the recovery from events corresponding to harsh times would be problematic. In particular, when a population becomes too small to be viable (which is known as Allee or fade-out effects \cite{B}), these oscillations would lead to the extinction of a population. 


\section{The age-structured population models}

A variety of linear and nonlinear logistic models have been developed to analyze the dynamical viability of age-structure in human populations (see \cite{M},  \cite{A}, \cite{P6},   \cite{W}, \cite{W86}, \cite{W87}, \cite{W02}, \cite{W8}, \cite{T},  \cite{M1},  \cite{Cu}, \cite{P},  \cite{F1}, \cite{F}, \cite{I}, \cite{G}, \cite{MC}, \cite{D}, \cite{D7}, \cite{Z}). In \cite{D}, the asymptotic behaviour of solutions of the following abstract differential equation (\ref{sec:it1}) has been analyzed

\begin{align}\label{sec:it1}
&u'(t)=\mathcal{A}u(t)-\mathcal{F}(u(t))u(t)+f,\;t\geq 0;
&u(0)=x\in X_{+}.
\end{align}

under the following hypotheses:
\begin{itemize}
\item[(H.1)] $\mathcal{A}$ is the infinitesimal generator of a strongly continuous semigroup of positive linear operators $T(t), t\geq 0,$ in the Banach lattice $X$ with positive cone $X_{+}$;
\item[(H.2)] $\mathcal{F}$ is a positive linear functional on $X$;
\item[(H.3)] $f\subset X_{+}$;
\item[(H.4)] $x\in X_{+}$ and $\lim_{t\rightarrow \infty}t^{-n}e^{-\lambda_{0} t}T(t)x=P_{0}x$ where $n$ is a positive integer, $\lambda_{0}\in\mathbb{R}$, and $P_{0}$ is a bounded linear operator in $X$;
\item[(H.5)] $\mathcal{F}P_{0}x>0$.
Set
\begin{align}\label{sec:it2}
&S(t)x=\frac{T(t)x}{1+\int_{0}^{t}\mathcal{F}(T(s)x)ds}.
\end{align}
\end{itemize}

The results from \cite{D} are stated as follows:

\begin{theorem}
Let (H.1)-(H.5) hold and let $f = 0$.
\begin{itemize}
\item[(i)] If $x\in D(\mathcal{A})$, then $u(t) = S(t)x$ is the unique solution of (\ref{sec:it1});
\item[(ii)] If $\lambda_{0}<0$, then $\lim_{t\rightarrow \infty}S(t)x=0$;
\item[(iii)] If $\lambda_{0}\geq 0$, then $\lim_{t\rightarrow \infty}S(t)x=\frac{\lambda_{0} P_{0} x}{\mathcal{F}(P_{0}x)}$.
\end{itemize}
\end{theorem}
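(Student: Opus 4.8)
The plan is to handle the three parts in turn, putting almost all the work into the asymptotic analysis of part (iii). For part (i) I would first verify directly that $u(t)=S(t)x$ is a solution. Writing $\phi(t)=1+\int_0^t\mathcal{F}(T(s)x)\,ds$, so that $\phi(0)=1$ and $\phi'(t)=\mathcal{F}(T(t)x)$, and using that $x\in D(\mathcal{A})$ guarantees $\frac{d}{dt}T(t)x=\mathcal{A}T(t)x$, the quotient rule combined with the linearity of $\mathcal{A}$ and of $\mathcal{F}$ gives
\[
\frac{d}{dt}\,\frac{T(t)x}{\phi(t)}
=\mathcal{A}\frac{T(t)x}{\phi(t)}-\frac{T(t)x}{\phi(t)}\cdot\frac{\mathcal{F}(T(t)x)}{\phi(t)}
=\mathcal{A}S(t)x-\mathcal{F}(S(t)x)\,S(t)x ,
\]
together with $S(0)x=x$. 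For uniqueness I would use a rescaling trick: given any solution $u$, set $\psi(t)=\exp\big(\int_0^t\mathcal{F}(u(s))\,ds\big)$ and check that $v(t)=\psi(t)u(t)$ satisfies the linear problem $v'=\mathcal{A}v$, $v(0)=x$; by uniqueness of classical solutions for the generator $\mathcal{A}$ (H.1) this forces $v(t)=T(t)x$. Substituting back, applying $\mathcal{F}$, and solving the resulting scalar identity $\psi'(t)=\mathcal{F}(T(t)x)$, $\psi(0)=1$, yields $\psi=\phi$, hence $u=S(t)x$.

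Part (ii) is then immediate: since $\phi(t)\ge 1$ one has $\|S(t)x\|\le\|T(t)x\|$, and (H.4) gives $T(t)x=t^{n}e^{\lambda_0 t}(P_0x+o(1))$, so $\lambda_0<0$ forces $t^{n}e^{\lambda_0 t}\to 0$ and hence $\|T(t)x\|\to 0$.

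Part (iii) is the crux. Writing $T(t)x=t^{n}e^{\lambda_0 t}(P_0x+\varepsilon(t))$ with $\varepsilon(t)\to 0$ by (H.4), and using that the positive linear functional $\mathcal{F}$ is bounded (automatic on a Banach lattice), the integrand defining $\phi$ becomes $\mathcal{F}(T(s)x)=s^{n}e^{\lambda_0 s}(c+\delta(s))$ with $c:=\mathcal{F}(P_0x)>0$ by (H.5) and $\delta(s)=\mathcal{F}(\varepsilon(s))\to 0$. The problem thus reduces to the scalar asymptotics of $\int_0^t s^{n}e^{\lambda_0 s}\,ds$. For $\lambda_0>0$, integration by parts (or a single application of L'Hôpital) gives $\int_0^t s^{n}e^{\lambda_0 s}\,ds\sim\lambda_0^{-1}t^{n}e^{\lambda_0 t}$, whence $\phi(t)\sim c\lambda_0^{-1}t^{n}e^{\lambda_0 t}$; dividing the numerator $T(t)x$ by this denominator cancels the factor $t^{n}e^{\lambda_0 t}$ and leaves $S(t)x\to\lambda_0 P_0x/c=\lambda_0 P_0x/\mathcal{F}(P_0x)$. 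For $\lambda_0=0$ the integral $\int_0^t s^{n}\,ds=t^{n+1}/(n+1)$ outgrows the numerator $t^{n}$, so $S(t)x\to 0$, which agrees with the stated formula since the factor $\lambda_0$ in the numerator vanishes; thus the single expression covers both subcases.

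The main obstacle is showing that the error term $\int_0^t s^{n}e^{\lambda_0 s}\delta(s)\,ds$ is negligible against the leading integral, which is what upgrades the heuristic ``$\sim$'' estimates to genuine norm convergence in $X$. I expect to do this with an $\varepsilon$–$T_0$ argument: fix $\eta>0$, choose $T_0$ so that $\|\varepsilon(s)\|<\eta$ (hence $|\delta(s)|<\|\mathcal{F}\|\,\eta$) for $s>T_0$, split the integral at $T_0$, bound the fixed contribution on $[0,T_0]$ against the leading integral which tends to $\infty$, and bound the tail by a constant multiple of $\eta$ times the leading integral; letting $t\to\infty$ and then $\eta\to 0$ shows the error is $o$ of the dominant term. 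Carrying the same $\varepsilon(t)\to 0$ control through the numerator then gives the limit in the norm topology, uniformly across the two cases $\lambda_0>0$ and $\lambda_0=0$.
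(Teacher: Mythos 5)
The paper does not actually prove this theorem: it is quoted verbatim from the reference \cite{D} as background, so there is no in-paper argument to compare yours against. Judged on its own, your proof is correct and is the standard argument for this classical normalization result. Part (i) is right: the quotient-rule computation works because $\mathcal{A}$ and $\mathcal{F}$ are linear (so the scalar $\phi(t)^{-1}$ passes through both), $\phi(t)\geq 1$ by positivity of $T(t)$ and $\mathcal{F}$ so $S(t)x$ is well defined, and the reverse substitution $v=\psi u$ with $\psi=\exp\bigl(\int_0^t\mathcal{F}(u(s))\,ds\bigr)$ reduces uniqueness to uniqueness for the linear Cauchy problem $v'=\mathcal{A}v$, after which the scalar ODE $\psi'=\mathcal{F}(T(t)x)$, $\psi(0)=1$, pins down $\psi=\phi$. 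Part (ii) follows as you say from $\phi\geq 1$ and (H.4). For part (iii), the two inputs you invoke are both legitimate: a positive linear functional on a Banach lattice is automatically bounded, so $\mathcal{F}(T(s)x)=s^{n}e^{\lambda_0 s}(\mathcal{F}(P_0x)+\delta(s))$ with $\delta(s)\to 0$; and the $\varepsilon$--$T_0$ splitting shows $\int_0^t s^n e^{\lambda_0 s}\delta(s)\,ds=o\bigl(\int_0^t s^n e^{\lambda_0 s}\,ds\bigr)$ because the leading integral diverges for $\lambda_0\geq 0$ (here (H.5) guarantees $\mathcal{F}(P_0x)>0$, which is what makes the denominator genuinely grow like $c\lambda_0^{-1}t^n e^{\lambda_0 t}$, respectively $c\,t^{n+1}/(n+1)$). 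The observation that the $\lambda_0=0$ case is absorbed into the stated formula because the numerator carries a factor $\lambda_0$ is also correct. No gaps.
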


\section{Formulation of a nonlinear model}\label{sec:mod}

We consider a female population with the total population size $T(t)$ at time $t$. Let $a_{1}$ be the maximum age of a female, and $a_{\min}$, $a_{\max}$ be the beginning and the end of the reproductive period, respectively. We divide the total population $T(t)$ into juvenile, reproductive, and senescent subpopulations, denoted by $J(t)$, $R(t)$ and $S(t)$. 
Let $X=L^{1}(0,a_{1})$ be the state space, endowed with the norm $\left\|\phi\right\|_{X}=\int_{0}^{a_{1}}|\phi(a)|da,\ \text{ for }\phi\in X$.
Let $p(a,t)$ denote the population density at age $a$ and time $t$. We stratify the number of females  between ages $b_{1}$ and $b_{2}$ at time $t$ as: $\int_{b_{1}}^{b_{2}}\hat{\omega}(a)p(a,t)da$, where $\hat{\omega}\in L^{\infty}_{+}(0,a_{1})$ is a specified weight function and $0\leq b_{1}< b_{2}\leq a_{1}$. 

The change of the population density $p(a,t)$ at age $a$ and time $t$ obeys the following balance law:
\begin{align}\label{sec:1}
&p_{t}(a,t)+p_{a}(a,t)=-[\mu_{0}(a, \eta_{0} (Q_{0}(t)))+\mu_{1}(a,\eta_{1}( Q_{1}(t)))+ \mu_{2}(a)]p(a,t),\\
\nonumber & 0\leq a\leq a_{1},\ t\geq 0,\\
\nonumber &p(0,t)=\int_{a_{\min}}^{a_{\max}}\beta(a; \eta_{2} (Q_{0}(t)))p(a,t)da,\ t\geq 0,\\
\nonumber &p(a,0)=p_{0}(a),\ 0\leq a\leq a_{1}.
\end{align}
where, $Q_{i}(t)=\int_{0}^{a_{1}}\omega_{i}(a)p(a,t)da$, $i=0,1$ and $p_{0}(a)$ is the initial age distribution of a population. Moreover, $\beta(a; \eta_{2} (Q_{0}(t)))$ represents the fertility rate of a female at age $a$ when the weighted number of females that affect the fertility is given by $Q_{0}(t)$ at time $t$. $\mu_{2}(a)$ represents the age-dependent mortality in the host population, which could be influenced by a particular class of microbes affecting age structure, but is not influenced by environmental limitations. Moreover, $\mu_{i}(a, \eta_{i} (Q_{i}(t))), \ i=0,1$, provide mechanisms which incorporate mortalities that affect the population disproportionately through age as the weighted number of females $Q_{i}(t)$ change over time $t$, and the effects of crowding and resource limitation take hold. In particular, $\mu_{1}(a, z)$ only affects the pre-reproductive population, that is, $\mu_{1}(a,z)=0, \  \text{ for } a>a_{\min}$ and $z \geq 0$.

We define the \textit{birth function} and the \textit{aging function} $\mathcal{F}:X\rightarrow \mathbb{R}$ and $\mathcal{G}:X\rightarrow X$ for $\forall\phi\in X$ by:
\begin{align}
&\mathcal{G}(\phi)(a)=-(\mu_{0}(a, \eta_{0}(Q_{0}\phi))+\mu_{1}(a, \eta_{1}(Q_{1}\phi))+\mu_{2}(a))\phi(a).\label{sec:2}\\
&\mathcal{F}(\phi)=\int^{a_{\max}}_{a_{\min}}\beta(a; \eta_{2}(Q_{0}\phi))\phi(a)da.\label{sec:3}
\end{align}
where $Q_{i}\phi=\int_{0}^{a_{1}}\omega_{i}(a)\phi(a)da$ for $\phi\in X$, $i=0,1$.
We make the following assumptions on $\beta, \mu_{i}, \eta_{i}, i=0,1,2, \omega_{i}, i=0,1$, and $p_{0}$ throughout this paper:
\begin{itemize}
\item[H.1.] $\beta\in C^{1}([a_{\min},a_{\max}]\times [0,\infty))$, $\beta(a,z)\geq 0$ for $(a,z)\in([a_{\min},a_{\max}]\times [0,\infty))$, $\mu_{i}\in C^{1}((0,a_{1})\times [0,\infty))$, $\mu_{i}(a,z)\geq 0$ for $(a,z)\in((0,a_{1})\times [0,\infty))$, $i=0,1$, $\mu_{1}(a, z)=0, \text{ for } a > a_{\min}$ and $z\geq 0$, $\mu_{1}(a, 0)=0$ for $a\in(0,a_{\min})$,
$\mu_{1,z}(a,z)> 0$ for $(a,z)\in((0,a_{1})\times [0,\infty))$, $\mu_{2} \in C^{1}(0,a_{1})$, $\mu_{2}(a)\geq 0$ for $a\in(0,a_{1})$, $\omega_{i}\in L^{\infty}_{+}(0,a_{1})$, $i=0,1$, and $p_{0}\in X_{+}$. We require that $\eta_{i}$ maps $[0,\infty)$ onto $[0,\infty)$, $i=0,1,2$. Further, $\eta_{i}\in C^{1}[0,\infty)$, $\eta_{i}(z)\geq 0$ and $\eta_{i}'(z)>0$, for $z\in[0,\infty)$, $i=0,1,2$.
\end{itemize}

We formulate the age-dependent population dynamics as follows (refer to \cite{W}, sec 1.4, pp.17):

Let $P(t)$ be the total population at time $t$. The average rate of change in the total population size in the time interval $(t,t+h)$ is 
\begin{align}\label{sec:char7}
\frac{P(t+h)-P(t)}{h}=h^{-1}\int_{0}^{h}p(a,t+h)da+\int_{h}^{a_{1}}\frac{p(a+h,t+h)-p(a,t)}{h}da
\end{align}

Let $T>0$, let $l\in L_{T}$, let $\mathcal{F}$ be a mapping from $X$ to $\mathbb{R}$, let $\mathcal{G}$ be a mapping from $X$ to $X$ and let $\phi\in X$. The \textit{balance law} of the population is given by 
\begin{align}\label{sec:char8}
\lim_{h\rightarrow 0^{+}}\int_{0}^{a_{1}}|\frac{p(a+h,t+h)-p(a,t)}{h}-\mathcal{G}(p(\cdot,t))(a)|da=0\;t\in[0,T].
\end{align}
The \textit{birth law} of the population is given by 
\begin{align}\label{sec:char9}
\lim_{h\rightarrow 0^{+}}\int_{0}^{h}|p(a,t+h)-\mathcal{F}(p(\cdot,t))(a)|da=0\;t\in[0,T].
\end{align}
The \textit{initial age distribution} of the population is given by 
\begin{align}
p(\cdot,0)=\phi
\end{align}
From (\ref{sec:char7}), (\ref{sec:char8}) and (\ref{sec:char9}) we see that the rate of change of the total population size follows
\begin{align}\label{sec:char88}
\frac{d}{dt}P(t)=\mathcal{F}((\cdot,t))+\int_{0}^{a_{1}}\mathcal{G}(p(\cdot,t))(a)da.
\end{align}
where $\mathcal{F}((\cdot,t))$ is the birth rate at time $t$ and $\int_{0}^{a_{1}}\mathcal{G}(p(\cdot,t))(a)da$ is the rate of change of total population at time $t$ due to aging process.

\section{Reformulation as integral equations} The method we use to solve this problem is the method of characteristics. We proceed as follows (\cite{W}, section 1.4, pp.21): Suppose that the solution $p(a,t)$ of problem (\ref{sec:1}) is known. The characteristic curves of the equations (\ref{sec:1}) are the lines $a-t=c$, where $c$ is a constant. Let $c\in\mathbb{R}$ and define the cohort function
\begin{align}
w_{c}(t):=p(t+c,t),\ t\geq t_{c}
\end{align}
From (\ref{sec:1}) we obtain,
\begin{align}
\frac{d}{dt}w_{c}(t)&=\lim_{h\rightarrow 0^{+}}\frac{p(t+h+c,t+h)-p(t+c,t)}{h}\\
&=\mathcal{G}p(t+c,t)\\
&=-(\mu_{0}(t+c, \eta_{0}(Q_{0}(t)))+\mu_{1}(t+c, \eta_{1}(Q_{1}(t)))+\mu_{2}(t+c))w_{c}(t),\ t\geq t_{c}.
\end{align}

This implies that 
\begin{align}\label{sec:char1}
w_{c}(t)=w_{c}(t_{c})e^{-\int_{t_{c}}^{t}(\mu_{0}(s+c, \eta_{0}(Q_{0}(s)))+\mu_{1}(s+c, \eta_{1}(Q_{1}(s)))+\mu_{2}(s+c))ds}\ t\geq t_{c}.
\end{align}
If we set $c=a-t$, where $a\geq t$, then
\begin{align}
w_{c}(t)=w_{c}(0)e^{-\int_{0}^{t}(\mu_{0}(s+c, \eta_{0}(Q_{0}(s)))+\mu_{1}(s+c, \eta_{1}(Q_{1}(s)))+\mu_{2}(s+c))ds}\ t\geq 0.
\end{align}
which yields
\begin{align}\label{sec:char2}
p(a,t)=p(a-t,0)e^{-\int_{0}^{t}(\mu_{0}(s+a-t, \eta_{0}(Q_{0}(s)))+\mu_{1}(s+a-t, \eta_{1}(Q_{1}(s)))+\mu_{2}(s+a-t))ds}\ a\geq t.
\end{align}
If we set $c=a-t$ where $a<t$, then
\begin{align}
w_{c}(t)=w_{c}(-c)e^{-\int_{-c}^{t}(\mu_{0}(s+c, \eta_{0}(Q_{0}(s)))+\mu_{1}(s+c, \eta_{1}(Q_{1}(s)))+\mu_{2}(s+c))ds}\ t\geq -c.
\end{align}
which yields
\begin{align}\label{sec:char3}
p(a,t)=p(0,t-a)e^{-\int_{t-a}^{t}(\mu_{0}(s+a-t, \eta_{0}(Q_{0}(s)))+\mu_{1}(s+a-t, \eta_{1}(Q_{1}(s)))+\mu_{2}(s+a-t))ds}\ a< t.
\end{align}
Combine forumlas (\ref{sec:char2}) and (\ref{sec:char3}) to obtain
\begin{align}\label{sec:char51}
p(a,t)=\begin{cases}
p(0,t-a)e^{-\int_{t-a}^{t} (\mu_{0}(s+a-t, \eta_{0}(Q_{0}(s)))+\mu_{1}(s+a-t, \eta_{1}(Q_{1}(s)))+\mu_{2}(s+a-t))ds} \ \ a \in (0,t)\cap[0,a_{1}];\\
p_{0}(a-t)e^{-\int_{0}^{t}(\mu_{0}(s+a-t, \eta_{0}(Q_{0}(s)))+\mu_{1}(s+a-t, \eta_{1}(Q_{1}(s)))+\mu_{2}(s+a-t)) ds} \ \ a\in(t,a_{1}].
\end{cases}
\end{align}
where $p(a,0)=p_{0}(a)$ for $a\in [0,a_{1}]$.

Define $B(t):=p(0,t)$ and substitute the formula for $p(a,t)$ (\ref{sec:char51}) into $Q_{i}(t)$, $i=0,1$, $t\geq 0$ and $B(t)$ to obtain,
\begin{align}\label{sec:char33}
Q_{0}(t)&=\int_{0}^{t}B(t-a)\exp[-\int_{t-a}^{t}\mathcal{G}(p(\cdot,s))(s+a-t)ds]da\\
\nonumber&+\int_{t}^{a_{1}}p_{0}(a-t)\exp[-\int_{0}^{t}\mathcal{G}(p(\cdot,s))(s+a-t)ds]da.
\end{align}
\begin{align}\label{sec:char4}
Q_{1}(t)&=\int_{a_{\max}}^{t}B(t-a)\exp[-\int_{t-a}^{t}\mathcal{G}(p(\cdot,s))(s+a-t)ds]da\\
\nonumber&+\int_{t}^{a_{1}}p_{0}(a-t)\exp[-\int_{0}^{t}\mathcal{G}(p(\cdot,s))(s+a-t)ds]da.
\end{align}
\begin{align}\label{sec:char5}
B(t)&=\int_{a_{\min}}^{t}\beta(a;Q_{0}(t))B(t-a)\exp[-\int_{t-a}^{t}\mathcal{G}(p(\cdot,s))(s+a-t)ds]da\\
\nonumber&+\int_{t}^{a_{\max}}\beta(a;Q_{0}(t))p_{0}(a-t)\exp[-\int_{0}^{t}\mathcal{G}(p(\cdot,s))(s+a-t)ds]da.
\end{align}
or equivalently,
\begin{align}\label{sec:char6}
Q_{0}(t)&=\int_{0}^{t}B(a)\exp[-\int_{a}^{t}\mathcal{G}(p(\cdot,s))(s-a)ds]da\\
\nonumber&+\int_{0}^{a_{1}}p_{0}(a)\exp[-\int_{0}^{t}\mathcal{G}(p(\cdot,s))(s+a)ds]da.
\end{align}
\begin{align}\label{sec:char77}
Q_{1}(t)&=\int_{a_{\max}}^{t}B(a)\exp[-\int_{a}^{t}\mathcal{G}(p(\cdot,s))(s-a)ds]da\\
\nonumber&+\int_{0}^{a_{1}}p_{0}(a)\exp[-\int_{0}^{t}\mathcal{G}(p(\cdot,s))(s+a)ds]da.
\end{align}
\begin{align}\label{sec:char8888}
B(t)&=\int_{a_{\min}}^{t}\beta(t-a;Q_{0}(t))B(a)\exp[-\int_{a}^{t}\mathcal{G}(p(\cdot,s))(s-a)ds]da\\
\nonumber&+\int_{0}^{a_{\max}}\beta(a-t;Q_{0}(t))p_{0}(a)\exp[-\int_{0}^{t}\mathcal{G}(p(\cdot,s))(s+a)ds]da.
\end{align}
The equations (\ref{sec:char6})-(\ref{sec:char8888}) constitute a coupled system of nonlinear Volterra integral equations in $B(t)$ and $Q_{i}(t)$, $i=0,1$.

The equivalent integral equation is given by (which is an adaptation of \cite{W}, sec 1.4, pp.21, (1.49))

\begin{align}\label{sec:inte11}
p(a,t)=\begin{cases}
\mathcal{F}(p(\cdot, t-a))+\int_{t-a}^{t}\mathcal{G}(p(\cdot, s))(s+a-t) ds & \text{ a.e. } \ \ a \in (0,t)\cap[0,a_{1}];\\
p_{0}(a-t)+\int_{0}^{t}\mathcal{G}(p(\cdot, s))(s+a-t) ds & \text{ a.e. } \ \ a\in(t,a_{1}].
\end{cases}
\end{align}

We state the following definition of solutions of the problem (\ref{sec:1}).
\begin{definition}[\cite{W}, sec 1.4, pp.17]
Let $T>0$ and let $p\in L_{T}$. We say that $p$ is a solution of problem (\ref{sec:1}) on $[0,T]$ provided that $p$ satisfies (\ref{sec:1}).
\end{definition}

\begin{definition}[\cite{W}, sec 1.4, pp.21]
Let $T>0$ and let $p\in L_{T}$. We say that $p$ is a solution of (\ref{sec:inte11}) on $[0,T]$ provided that $p(\cdot,t)$ satisfies (\ref{sec:inte11}) for $t\in [0,T]$.
\end{definition}

\chapter{Basic properties of the solutions}
In this chapter we establish some properties of solutions of the nonlinear problem (\ref{sec:1}) under the framework of \cite{W}.
\section{Preliminaries}
We derive from H.1 that the birth function $\mathcal{F}$ and aging function $\mathcal{G}$ are locally Lipschitz continuous in the following sense,
\begin{align}\label{sec:LC111}
&\mathcal{F}: X\rightarrow \mathbb{R},\text{ there is an increasing function }c_{1}: [0,\infty)\rightarrow[0,\infty)\text{ such that }|\mathcal{F}(\phi_{1})-\mathcal{F}(\phi_{2})|\\
\nonumber&\leq c_{1}(r)\left\|\phi_{1}-\phi_{2}\right\|_{X}\text{ for all }\phi_{1},\phi_{2}\in X\text{ such that }\left\|\phi_{1}\right\|_{X},\left\|\phi_{2}\right\|_{X}\leq r.\\
&\mathcal{G}: X\rightarrow X,\text{ there is an increasing function }c_{2}: [0,\infty)\rightarrow[0,\infty)\text{ such that }\left\|\mathcal{G}(\phi_{1})-\mathcal{G}(\phi_{2})\right\|\\
\nonumber&\leq c_{2}(r)\left\|\phi_{1}-\phi_{2}\right\|_{X}\text{ for all }\phi_{1},\phi_{2}\in X\text{ such that }\left\|\phi_{1}\right\|_{X},\left\|\phi_{2}\right\|_{X}\leq r.
\end{align}

We state an adapted version of three lemmas from \cite{W}, chapter 2, the first of which allows us to view an element in $L_{T}=C([0,T];X)$ as an element in $L^{1}((0,a_{1})\times(0,T);\mathbb{R})$. 

\begin{lemma}
Let $T>0$ and let $p\in L_{T}$. There is a unique element in $L^{1}((0,a_{1})\times(0,T);\mathbb{R})$ such that 
\begin{itemize}
\item[(i)] For each $t\in[0,T], p(a,t)=p(t)(a)$ for almost everywhere $a>0$.
\item[(ii)] 
\begin{align*}
&\int_{0}^{T}\left\|p(t)\right\|_{X}dt=\int_{0}^{T}[\int_{0}^{a_{1}}|p(a,t)|da]dt\\
\nonumber&=\int_{0}^{a_{1}}[\int_{0}^{T}|p(a,t)|dt]da\\
\nonumber&=\int_{0}^{a_{1}}\int_{0}^{T}|p(a,t)|dtda.
\end{align*}
\end{itemize}
\end{lemma}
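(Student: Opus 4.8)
The plan is to realize this lemma as the standard identification of the Bochner space $L_T = C([0,T];X)$, with $X = L^1(0,a_1)$, as a subspace of $L^1((0,a_1)\times(0,T);\mathbb{R})$, the crux being the construction of a jointly measurable representative $(a,t)\mapsto p(a,t)$ of the $X$-valued curve $t\mapsto p(t)$. Since $p$ is continuous on the compact interval $[0,T]$, the scalar function $t\mapsto \|p(t)\|_X$ is continuous, hence bounded and integrable, so that $\int_0^T \|p(t)\|_X\,dt < \infty$; this finiteness is what lets the construction land in $L^1$ of the product.

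First I would exploit uniform continuity of $p$. Given $\epsilon>0$, a sufficiently fine partition $0=t_0<\dots<t_m=T$ yields a step function $s(t)=\sum_j \chi_{[t_{j-1},t_j)}(t)\,p(t_{j-1})$ with $\sup_t \|s(t)-p(t)\|_X \le \epsilon$. Each such $s$ has an evident jointly measurable representative $\hat s(a,t)=\sum_j \chi_{[t_{j-1},t_j)}(t)\,[p(t_{j-1})](a)$, a finite sum of products of a measurable function of $t$ and a measurable function of $a$. Taking $\epsilon = 1/n$ produces step functions $s_n$ with representatives $\hat s_n$, and since for every $t$ the slice $\hat s_n(\cdot,t)$ equals $s_n(t)$ in $X$, Tonelli's theorem gives $\int_0^T\!\int_0^{a_1}|\hat s_n-\hat s_m|\,da\,dt = \int_0^T \|s_n(t)-s_m(t)\|_X\,dt \to 0$. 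Thus $(\hat s_n)$ is Cauchy in $L^1((0,a_1)\times(0,T))$ and converges there to a limit $\tilde p$; being an $L^1$-limit of jointly measurable functions, $\tilde p$ is the sought representative.

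To verify (i), I would pass to a subsequence along which $\hat s_{n_k}\to \tilde p$ pointwise a.e. on the product. By Fubini, for a.e. $t$ the slices converge a.e. in $a$, while $s_{n_k}(t)\to p(t)$ in $X$ for every $t$; comparing limits forces $\tilde p(\cdot,t)=p(t)$ a.e. in $a$ for all $t$ outside a null set $N\subset[0,T]$. Because $(0,a_1)\times N$ has product measure zero, I may redefine $\tilde p(\cdot,t)$ to equal a fixed representative of $p(t)$ for $t\in N$ without altering its $L^1$-class, and by completeness of Lebesgue measure the modified function remains jointly measurable; this promotes (i) to hold for every $t\in[0,T]$. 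Claim (ii) is then immediate: the first equality is precisely (i) integrated in $t$, since $\|p(t)\|_X=\int_0^{a_1}|p(a,t)|\,da$, and the interchange of the iterated integrals is Tonelli's theorem applied to the nonnegative measurable function $|\tilde p|$.

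Finally, uniqueness follows from (i): if $\tilde p_1,\tilde p_2\in L^1((0,a_1)\times(0,T))$ both satisfy it, then for each $t$ their slices agree a.e. in $a$, so $\int_0^{a_1}|\tilde p_1(a,t)-\tilde p_2(a,t)|\,da=0$ for every $t$, and integrating in $t$ (again via Tonelli) forces $\tilde p_1=\tilde p_2$ a.e. on the product. The main obstacle is the construction and joint measurability of $\tilde p$ together with the promotion of (i) from almost every $t$ to every $t$; once a jointly measurable representative is secured, (ii) and uniqueness are routine consequences of Tonelli's theorem.
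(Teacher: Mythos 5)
Your proof is correct. The paper itself states this lemma without proof, deferring to the cited reference (Webb's monograph, Chapter 2), and the standard argument there is essentially the one you give: approximate the continuous $X$-valued curve uniformly by step functions, pass to the $L^{1}$ limit on the product to obtain a jointly measurable representative, upgrade property (i) from almost every $t$ to every $t$ by modifying on a product-null set, and obtain (ii) and uniqueness from Tonelli's theorem.
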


This lemma establishes the existence of integrals in (\ref{sec:inte11}) when $p\in L_{T}$.

\begin{lemma}
Let H.1 hold, let $T>0$, let $\Gamma_{T}:=\left\{(c,s):0<s<T,-s<c<a_{1}\right\},\text{ and let }l\in L_{T}$. The following hold:
\begin{itemize}
\item[(i)] The function $t\rightarrow\mathcal{G}(p(\cdot,t))$ from $[0,T]$ to $L^{1}$ belongs to $L_{T}$.
\item[(ii)] There exists $h\in L^{1}((0,a_{1})\times(0,T);\mathbb{R})$ such that for each $t\in[0,T], h(a,t)=\mathcal{G}(p(\cdot,t))(a)$ for almost everywhere $a>0$.
\item[(iii)] There exists $k\in L^{1}(\Gamma_{T};\mathbb{R})$ such that $k(c,s)=h(s+c,s)$ for almost everywhere $(c,s)\in \Gamma_{T}$, and $\int_{0}^{T}[\int_{-s}^{a_{1}-s}k(c,s)dc]ds=\int_{-T}^{a_{1}-s}[\int_{\max\left\{0,-c\right\}}^{T}k(c,s)ds]dc$.
\end{itemize}
\end{lemma}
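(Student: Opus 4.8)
The plan is to establish the three parts in order, each resting on the preceding structure. For (i), I would use that $p\in L_T=C([0,T];X)$ makes $t\mapsto p(\cdot,t)$ continuous, so its image is compact and hence bounded in $X$, say $\|p(\cdot,t)\|_X\le r$ for all $t\in[0,T]$. The local Lipschitz estimate for $\mathcal{G}$ recorded in the preliminaries then gives
\[
\|\mathcal{G}(p(\cdot,t_1))-\mathcal{G}(p(\cdot,t_2))\|_X\le c_2(r)\,\|p(\cdot,t_1)-p(\cdot,t_2)\|_X,\qquad t_1,t_2\in[0,T],
\]
and continuity of $t\mapsto p(\cdot,t)$ transfers directly to continuity of $t\mapsto\mathcal{G}(p(\cdot,t))$. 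Hence this map belongs to $C([0,T];X)=L_T$, which is exactly (i).

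For (ii), I would invoke the representation lemma (the immediately preceding Lemma) on the element $g:=\mathcal{G}(p(\cdot,\cdot))$ of $L_T$ produced in (i). That lemma yields a unique $h\in L^1((0,a_1)\times(0,T);\mathbb{R})$ with $h(a,t)=g(t)(a)=\mathcal{G}(p(\cdot,t))(a)$ for almost every $a$ and each fixed $t$, which is precisely the assertion of (ii); no new estimate is required beyond applying the representation lemma to the function delivered by (i).

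The substantive part is (iii). I would introduce the shear $\Phi(c,s)=(s+c,s)$, a $C^1$ diffeomorphism with constant Jacobian $|\det D\Phi|=1$ that carries the parallelogram $P:=\{(c,s):0<s<T,\ -s<c<a_1-s\}$ onto the rectangle $(0,a_1)\times(0,T)$. Setting $k:=h\circ\Phi$, i.e.\ $k(c,s)=h(s+c,s)$, makes $k$ measurable as the composition of the measurable $h$ with the continuous map $\Phi$, and the change-of-variables formula (using Jacobian $1$) gives $\int_P|k|=\int_{(0,a_1)\times(0,T)}|h|<\infty$, so $k\in L^1$. Once integrability is known, Fubini's theorem evaluates the double integral as an iterated integral in either order: describing $P$ first with $s$ outer as $\{0<s<T,\ -s<c<a_1-s\}$ and then with $c$ outer as $\{-T<c<a_1,\ \max\{0,-c\}<s<\min\{T,a_1-c\}\}$ produces the asserted interchange identity.

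The main obstacle is the bookkeeping in (iii): one must confirm joint measurability of $k$, deduce its integrability from the measure-preserving change of variables rather than assume it, and read off the limits of the sheared domain correctly so that the two iterated integrals genuinely cover the same set. Since $h$ is defined only on $(0,a_1)\times(0,T)$, I would extend it by zero for $a\ge a_1$; then $k$ vanishes on $\Gamma_T\setminus P$, so the integral over $\Gamma_T$ reduces to the integral over $P$ and the upper $s$-limit in the second iterated integral may be taken as $T$. By contrast, (i) and (ii) are essentially formal consequences of the Lipschitz bound and the representation lemma.
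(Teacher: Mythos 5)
Your proposal is correct. The paper itself offers no proof of this lemma: it is stated as an adapted version of lemmas from \cite{W}, Chapter 2, with the argument delegated to that monograph, so there is no in-paper proof to compare against; your route is the standard one that the cited treatment requires. Specifically, (i) follows from the local Lipschitz estimate (\ref{sec:LC111}) together with boundedness of the compact continuous image $\left\{p(\cdot,t):t\in[0,T]\right\}$, (ii) is an application of the preceding representation lemma to the element of $L_{T}$ produced in (i), and (iii) is the unit-Jacobian shear plus Fubini. Two small points are worth recording. First, measurability of $k=h\circ\Phi$ does not follow merely from continuity of $\Phi$ (a Lebesgue-measurable function composed with a continuous map need not be measurable); it follows because $\Phi$ is an affine bijection whose inverse maps null sets to null sets and hence Lebesgue-measurable sets to Lebesgue-measurable sets. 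Second, your zero-extension of $h$ for $a\geq a_{1}$ is exactly the reading needed to make the stated identity literally true: the outer limit $a_{1}-s$ on the right-hand side of the displayed identity is evidently a typo for $a_{1}$, and the portion $\Gamma_{T}\setminus P$ of the domain, where $s+c\geq a_{1}$, contributes nothing precisely because of that extension.
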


The following lemma characterizes compact sets in $X$.

\begin{lemma}
A closed and bounded subset $M$ of $L^{1}$ is compact if and only if the following condition hold:
\begin{align}
&\lim_{h\rightarrow0}\int_{0}^{a_{1}}|\phi(a)-\phi(a+h)|da=0\text{ uniformly for }\phi\in M(\text{ where } \phi(a+h)\text{ is taken as }0 \text{ if } a+h<0).
\end{align}
\end{lemma}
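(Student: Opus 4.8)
This is the Frechet--Kolmogorov (Riesz--Kolmogorov) compactness criterion, specialized to the bounded interval $(0,a_1)$, where the usual decay-at-infinity hypothesis is vacuous. The plan is to exploit that $L^1=L^1(0,a_1)$ is complete, so that a closed set is compact precisely when it is totally bounded; since $M$ is assumed closed, the whole statement reduces to showing that the displayed equicontinuity-of-translation condition is equivalent to total boundedness of $M$. Throughout I extend every $\phi\in M$ by zero outside $[0,a_1]$ and abbreviate $\Delta(\phi;h):=\int_0^{a_1}|\phi(a+h)-\phi(a)|\,da$. A short computation shows that the boundary contributions created by the zero-extension are themselves dominated by $\Delta(\phi;h)+\Delta(\phi;-h)$, so I may freely pass between the integral over $(0,a_1)$ and the integral over $\R$, and I will use below that translation is an isometry of $L^1(\R)$.

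For the forward direction, assume $M$ is compact, hence totally bounded, and fix $\eps>0$. Choose a finite net $\phi_1,\dots,\phi_N\in M$ with $M\subseteq\bigcup_i\{\psi:\|\psi-\phi_i\|_X<\eps/3\}$. For each fixed $\phi_i$ the map $h\mapsto\phi_i(\cdot+h)$ is continuous in $L^1$ (the standard single-function translation-continuity, proved by approximating $\phi_i$ in $\|\cdot\|_X$ by a compactly supported continuous function and invoking its uniform continuity), so there is $\delta_i>0$ with $\Delta(\phi_i;h)<\eps/3$ for $|h|<\delta_i$. Put $\delta=\min_i\delta_i$. Given any $\phi\in M$, pick $i$ with $\|\phi-\phi_i\|_X<\eps/3$; then for $|h|<\delta$,
\[
\Delta(\phi;h)\le\|\phi(\cdot+h)-\phi_i(\cdot+h)\|_{L^1(\R)}+\Delta(\phi_i;h)+\|\phi_i-\phi\|_X<\eps,
\]
uniformly in $\phi$, which is the asserted condition.

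The substantive direction is the converse, for which I would use a Steklov/mollification argument. For $h>0$ set $\phi_h(a):=\tfrac1h\int_a^{a+h}\phi(t)\,dt=\tfrac1h\int_0^h\phi(a+s)\,ds$. The first claim is that $\phi_h\to\phi$ in $L^1$ \emph{uniformly over $M$}: integrating the pointwise bound $|\phi_h(a)-\phi(a)|\le h^{-1}\int_0^h|\phi(a+s)-\phi(a)|\,ds$ in $a$ and applying Fubini gives $\|\phi_h-\phi\|_X\le\sup_{0\le s\le h}\Delta(\phi;s)$, which tends to $0$ uniformly in $\phi\in M$ by hypothesis. The second claim is that, for each fixed $h$, the family $\{\phi_h:\phi\in M\}$ is precompact: it is uniformly bounded, $|\phi_h(a)|\le h^{-1}\sup_{\phi\in M}\|\phi\|_X$, and uniformly equicontinuous, since for $\delta>0$
\[
|\phi_h(a+\delta)-\phi_h(a)|=\frac1h\Bigl|\int_a^{a+h}\bigl(\phi(t+\delta)-\phi(t)\bigr)\,dt\Bigr|\le\frac1h\,\Delta(\phi;\delta),
\]
and $\sup_{\phi\in M}\Delta(\phi;\delta)\to0$ as $\delta\to0$ by hypothesis; Arzela--Ascoli then makes $\{\phi_h\}$ relatively compact in $C[0,a_1]$, hence in $L^1(0,a_1)$ because the interval is bounded. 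Combining the two claims: given $\eps>0$, fix $h$ so small that $\|\phi_h-\phi\|_X<\eps/2$ for all $\phi\in M$, take a finite $(\eps/2)$-net of the precompact set $\{\phi_h:\phi\in M\}$, and observe that this net is an $\eps$-net for $M$; thus $M$ is totally bounded, and being closed in the complete space $L^1$, it is compact.

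The main obstacle is the converse direction, and within it the delicate point is establishing equicontinuity of the averaged family $\{\phi_h\}$ \emph{uniformly in} $\phi\in M$ rather than for one $\phi$ at a time — this is exactly where the uniformity built into the hypothesis is consumed — together with the bookkeeping needed to control the boundary terms generated by extending functions by zero past $a=0$ and $a=a_1$, so that the interval-integral $\Delta(\phi;h)$ genuinely governs the full-line translates used in the estimates.
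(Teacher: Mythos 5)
Your proof is correct. The paper does not actually prove this lemma --- it is stated as an adaptation of a compactness lemma quoted from Webb's monograph \cite{W}, Chapter 2 --- and the argument given there is the same Fr\'echet--Kolmogorov criterion you reproduce (reduction to total boundedness, control of the zero-extension boundary terms by $\Delta(\phi;h)+\Delta(\phi;-h)$, Steklov averages converging uniformly over $M$, and Arzel\`a--Ascoli for the averaged family), so your proposal matches the intended proof in both substance and structure.
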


\section{Local existence and continuous dependence on initial values}

We collect some results from \cite{W} chapter 2 and adapt them for our nonlinear problem. 
We first state the following result to establish that a solution of the integral euqation (\ref{sec:inte11}) is also a solution of (\ref{sec:1}). 

\begin{proposition}\label{sec:be1}
Let H.1 hold and let $T>0$, let $\phi\in X$, and let $p\in L_{T}$. If $p$ is a solution of (\ref{sec:inte11}) on $[0,T]$, then $p$ is solution of the problem (\ref{sec:1}) on $[0,T]$.
\end{proposition}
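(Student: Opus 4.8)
The plan is to show that a solution $p$ of the integral equation $(\ref{sec:inte11})$ recovers each of the three defining components of problem $(\ref{sec:1})$: the balance law along characteristics (the PDE), the nonlocal boundary condition at $a=0$, and the initial condition at $t=0$. Since solutions here are understood in the integrated sense of the balance law $(\ref{sec:char8})$ and birth law $(\ref{sec:char9})$, the goal is really to verify these two limit identities together with $p(\cdot,0)=\phi$, starting from the representation $(\ref{sec:inte11})$. First I would fix $t\in[0,T]$ and examine the difference quotient $\frac{p(a+h,t+h)-p(a,t)}{h}$ appearing in $(\ref{sec:char8})$, working on the region $a\in(0,a_1)$ with $h>0$ small so that the point $(a+h,t+h)$ lies in the same characteristic regime as $(a,t)$.

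The key computational step is that along a characteristic both $p(a+h,t+h)$ and $p(a,t)$ are given by the \emph{same} branch of $(\ref{sec:inte11})$, so the only difference between them is the integral $\int_{t}^{t+h}\mathcal{G}(p(\cdot,s))(s+a-t)\,ds$ (for the subcritical branch, with the analogous expression for the $a>t$ branch). Dividing by $h$ and letting $h\to 0^+$, this integral average converges to $\mathcal{G}(p(\cdot,t))(a)$, which is exactly the integrand required by $(\ref{sec:char8})$. To make this rigorous in the $L^1$ sense demanded by $(\ref{sec:char8})$ rather than pointwise, I would invoke Lemma (the second of the three adapted lemmas) to realize $t\mapsto\mathcal{G}(p(\cdot,t))$ as a genuine element $h(a,t)$ of $L^1((0,a_1)\times(0,T))$ and to control the reparametrized kernel $k(c,s)=h(s+c,s)$ on $\Gamma_T$; the Lebesgue differentiation theorem for the Bochner-type average in $X$, combined with continuity of $t\mapsto\mathcal{G}(p(\cdot,t))$ in $L_T$ from part (i) of that lemma, then gives the limit uniformly enough to kill the $L^1$ integral in $(\ref{sec:char8})$.

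For the birth law $(\ref{sec:char9})$ I would use the first branch of $(\ref{sec:inte11})$ on the strip $a\in(0,h)$: there $p(a,t+h)=\mathcal{F}(p(\cdot,t+h-a))+\int_{t+h-a}^{t+h}\mathcal{G}(p(\cdot,s))(s+a-t-h)\,ds$. As $h\to 0^+$ the integral is over a shrinking interval of length $a<h$ and contributes nothing in the $\frac1h\int_0^h$ average, while $\mathcal{F}(p(\cdot,t+h-a))\to\mathcal{F}(p(\cdot,t))$ by the local Lipschitz estimate $(\ref{sec:LC111})$ together with the continuity of $s\mapsto p(\cdot,s)$ in $X$. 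This yields $(\ref{sec:char9})$. The initial condition is immediate from the second branch of $(\ref{sec:inte11})$ evaluated at $t=0$, which collapses to $p(a,0)=p_0(a-0)=\phi(a)$.

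The main obstacle I anticipate is the passage from the pointwise (along-characteristic) cancellation to the genuinely $L^1$-in-$a$ limits required by $(\ref{sec:char8})$ and $(\ref{sec:char9})$, and in particular handling the junction $a=t$ where the two branches of $(\ref{sec:inte11})$ meet: one must check that the contribution of a neighborhood of this diagonal to the $L^1$ integral vanishes as $h\to0^+$, and that the two representations match up so no spurious jump is introduced. This is precisely where the integrability of the reparametrized kernel $k$ on $\Gamma_T$ from part (iii) of the second lemma, and the absolute continuity of the integral it provides, do the essential work; everything else is bookkeeping with the Lipschitz bounds $(\ref{sec:LC111})$ and the continuity of $t\mapsto p(\cdot,t)$ and $t\mapsto\mathcal{G}(p(\cdot,t))$ in $L_T$.
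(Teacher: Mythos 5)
Your argument is correct and is essentially the proof the paper relies on: the paper gives no proof of its own, deferring to \cite{W}, Chapter 2, where exactly this along-characteristic cancellation plus $L^1$-continuity argument (difference quotient reduces to $h^{-1}\int_t^{t+h}\mathcal{G}(p(\cdot,s))(s+a-t)\,ds$, then pass to the limit using continuity of $s\mapsto\mathcal{G}(p(\cdot,s))$ in $L_T$ and the integrability of the reparametrized kernel) is carried out. One small simplification you could make: since the characteristic variable $a-t$ is preserved under $(a,t)\mapsto(a+h,t+h)$, the two points always lie on the same branch of (\ref{sec:inte11}), so the diagonal $a=t$ creates no branch-matching difficulty in the balance law beyond a measure-zero remark; the only genuine boundary bookkeeping is near $a=a_1$ and in the strip $a\in(0,h)$ for the birth law, which you handle correctly.
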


\begin{proposition}\label{sec:be2}
Let H.1 hold and let $r>0$. There exists $T>0$ such that if $\phi\in X$ and $\left\|\phi\right\|_{X}\leq r$, then there is a unique function $p\in L_{T}$ such that  $p$ is a solution of (\ref{sec:inte11}) on $[0,T]$. 
\end{proposition}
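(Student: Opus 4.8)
The plan is to realize the right-hand side of the integral equation (\ref{sec:inte11}) as a nonlinear operator on a closed ball of $L_T$ and to produce the solution as its unique fixed point via the contraction mapping principle. Concretely, I would fix $R=r+1$ and let $B$ be the closed ball $\{p\in L_T:\sup_{t\in[0,T]}\|p(\cdot,t)\|_X\le R\}$; as a closed subset of the Banach space $L_T=C([0,T];X)$ it is complete. Define $W\colon B\to L_T$ by taking $(Wp)(\cdot,t)$ to be the right-hand side of (\ref{sec:inte11}),
\begin{align*}
(Wp)(a,t)=\begin{cases}
\mathcal{F}(p(\cdot, t-a))+\int_{t-a}^{t}\mathcal{G}(p(\cdot, s))(s+a-t)\,ds, & a \in (0,t)\cap[0,a_{1}],\\
p_{0}(a-t)+\int_{0}^{t}\mathcal{G}(p(\cdot, s))(s+a-t)\,ds, & a\in(t,a_{1}].
\end{cases}
\end{align*}
The first two lemmas above guarantee that the integrals defining $Wp$ exist and that $Wp$ may be viewed as an element of $L^1((0,a_1)\times(0,T);\mathbb{R})$, so $W$ is well defined.

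First I would check that $W$ maps $B$ into $B$ for $T$ small. Since $\mathcal{F}$ and $\mathcal{G}$ are locally Lipschitz by (\ref{sec:LC111}) and vanish at $0$, they are bounded on the ball of radius $R$: $|\mathcal{F}(\psi)|\le c_1(R)R=:M_1$ and $\|\mathcal{G}(\psi)\|_X\le c_2(R)R=:M_2$ whenever $\|\psi\|_X\le R$. Integrating the defining formula over $a\in(0,a_1)$, the $\mathcal{F}$-term contributes at most $tM_1$, the $p_0$-term contributes $\int_t^{a_1}|p_0(a-t)|\,da\le\|\phi\|_X$, and the two $\mathcal{G}$-integrals, rearranged by the Fubini-type identity in part (iii) of the second lemma, are together bounded by $\int_0^T\|\mathcal{G}(p(\cdot,s))\|_X\,ds\le TM_2$. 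Hence $\|(Wp)(\cdot,t)\|_X\le r+T(M_1+M_2)\le R$ once $T(M_1+M_2)\le 1$. The more delicate point is that $Wp$ genuinely lies in $L_T$, i.e. that $t\mapsto(Wp)(\cdot,t)$ is continuous into $X$: here I would invoke continuity of translation in $L^1$ (the content of the compactness lemma, the third above) to control the translated datum $p_0(a-t)$ and the moving interface $a=t$, together with the continuity of $s\mapsto\mathcal{G}(p(\cdot,s))$ furnished by the second lemma.

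Next I would establish the contraction estimate. For $p,q\in B$, subtracting the two defining formulas and applying (\ref{sec:LC111}) with constant $c_1(R)$ on the $\mathcal{F}$-term and $c_2(R)$ on the $\mathcal{G}$-term, then integrating in $a$ and again rearranging the double integral via the second lemma, yields
\[
\|(Wp)(\cdot,t)-(Wq)(\cdot,t)\|_X\le\bigl(c_1(R)+c_2(R)\bigr)\,T\,\sup_{s\in[0,T]}\|p(\cdot,s)-q(\cdot,s)\|_X.
\]
Shrinking $T$ so that both $T(M_1+M_2)\le 1$ and $\theta:=(c_1(R)+c_2(R))T<1$ makes $W$ a $\theta$-contraction of the complete metric space $B$ into itself, and the contraction mapping principle produces a unique fixed point $p\in B$, which solves (\ref{sec:inte11}) on $[0,T]$; the initial condition $p(\cdot,0)=\phi=p_0$ is automatic from the second branch at $t=0$.

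Finally, to upgrade uniqueness from ``within $B$'' to ``within all of $L_T$'' as the statement requires, I would argue directly: if $p,\tilde p\in L_T$ both solve (\ref{sec:inte11}), each has finite $\rho:=\sup_t\|\cdot\|_X$, so the estimate above with $R$ replaced by $\rho$ gives $\|p(\cdot,t)-\tilde p(\cdot,t)\|_X\le(c_1(\rho)+c_2(\rho))\int_0^t\|p(\cdot,s)-\tilde p(\cdot,s)\|_X\,ds$, whence Gronwall's inequality forces $p=\tilde p$. I expect the real obstacle to be not the contraction bound, which is routine once the Lipschitz constants and the rearrangement of part (iii) are in hand, but rather the verification that $Wp$ is continuous in $t$ into $X$, owing to the moving boundary $a=t$ between the two branches and the translated argument $p_0(a-t)$; this is precisely where the $L^1$-translation-continuity lemma must be brought to bear.
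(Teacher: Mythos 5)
Your proposal is correct and takes essentially the same route as the paper's proof (which is the adaptation of Webb's Theorem 2.1 that the paper cites): a Banach fixed-point/contraction argument on a closed ball of $L_{T}$, with the self-mapping and contraction estimates obtained from the local Lipschitz bounds (\ref{sec:LC111}) together with the Fubini-type rearrangement of Lemma 2.1.2(iii), and with $L^{1}$-translation continuity handling the continuity of $t\mapsto (Wp)(\cdot,t)$; the Gronwall upgrade to uniqueness in all of $L_{T}$ likewise matches the standard treatment.
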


\begin{proposition}\label{sec:be3}
Let H.1 hold, let $\phi,\hat{\phi}\in X$, let $T>0$, and let $p,\hat{p}\in L_{T}$ such that $p$, $\hat{p}$ is the solution of (\ref{sec:inte11}) on $[0,T]$ for $\phi$, $\hat{\phi}$, respectively. Let $r>0$ such that $\left\|\hat{p}\right\|_{L_{T}},\left\|p\right\|_{L_{T}}\leq r$. Then,
\begin{align}
\left\|p(\cdot, t)-\hat{p}(\cdot,t)\right\|_{X}\leq e^{(c_{1}(r)+c_{2}(r))t}\left\|\phi-\hat{\phi}\right\|_{X}\text{ for }0\leq t \leq T.
\end{align}
\end{proposition}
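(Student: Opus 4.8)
The plan is to turn the integral representation (\ref{sec:inte11}) into a Gronwall inequality for the quantity $u(t):=\left\|p(\cdot,t)-\hat{p}(\cdot,t)\right\|_{X}$ and then integrate it. Since both $p$ and $\hat{p}$ solve (\ref{sec:inte11}) on $[0,T]$, for a.e.\ $a$ their difference is piecewise explicit: on $a\in(0,t)\cap[0,a_{1}]$ it equals $\mathcal{F}(p(\cdot,t-a))-\mathcal{F}(\hat{p}(\cdot,t-a))$ plus the integral over $s\in(t-a,t)$ of $\mathcal{G}(p(\cdot,s))(s+a-t)-\mathcal{G}(\hat{p}(\cdot,s))(s+a-t)$, while on $a\in(t,a_{1}]$ it equals $\phi(a-t)-\hat{\phi}(a-t)$ plus the same type of $\mathcal{G}$-integral taken now over $s\in(0,t)$. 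First I would take absolute values, apply the triangle inequality, and integrate in $a$ over $(0,a_{1})$, so that $u(t)$ is bounded by the sum of an initial-data term, an $\mathcal{F}$-term, and a $\mathcal{G}$-term.

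The initial-data term is $\int_{t}^{a_{1}}|\phi(a-t)-\hat{\phi}(a-t)|\,da$, which after the substitution $\sigma=a-t$ is at most $\left\|\phi-\hat{\phi}\right\|_{X}$ (when $t>a_{1}$ this region is empty and the term simply vanishes). For the $\mathcal{F}$-term I would use the Lipschitz estimate in (\ref{sec:LC111}): because $\left\|p\right\|_{L_{T}},\left\|\hat{p}\right\|_{L_{T}}\leq r$, we have $|\mathcal{F}(p(\cdot,t-a))-\mathcal{F}(\hat{p}(\cdot,t-a))|\leq c_{1}(r)\left\|p(\cdot,t-a)-\hat{p}(\cdot,t-a)\right\|_{X}$, and integrating in $a$ and substituting $\tau=t-a$ yields $c_{1}(r)\int_{0}^{t}u(\tau)\,d\tau$.

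The heart of the argument is the $\mathcal{G}$-term. Combining the two pieces, it is $\int_{0}^{a_{1}}\int_{\max\{0,t-a\}}^{t}|\mathcal{G}(p(\cdot,s))(s+a-t)-\mathcal{G}(\hat{p}(\cdot,s))(s+a-t)|\,ds\,da$. Here I would invoke the integrability-and-Fubini lemma on the triangular region $\Gamma_{T}$ to interchange the order of integration; fixing $s\in(0,t)$ forces $a\in(t-s,a_{1})$, and the change of variables $\alpha=s+a-t$ sends this to $\alpha\in(0,a_{1}-t+s)\subset(0,a_{1})$. Hence the $\mathcal{G}$-term is bounded by $\int_{0}^{t}\int_{0}^{a_{1}}|\mathcal{G}(p(\cdot,s))(\alpha)-\mathcal{G}(\hat{p}(\cdot,s))(\alpha)|\,d\alpha\,ds=\int_{0}^{t}\left\|\mathcal{G}(p(\cdot,s))-\mathcal{G}(\hat{p}(\cdot,s))\right\|_{X}ds$, and the second estimate in (\ref{sec:LC111}) bounds this by $c_{2}(r)\int_{0}^{t}u(s)\,ds$.

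Collecting the three contributions gives $u(t)\leq\left\|\phi-\hat{\phi}\right\|_{X}+(c_{1}(r)+c_{2}(r))\int_{0}^{t}u(s)\,ds$ for $0\leq t\leq T$. Since $p,\hat{p}\in L_{T}=C([0,T];X)$, the function $u$ is continuous and the integral is well defined, so Gronwall's inequality yields $u(t)\leq e^{(c_{1}(r)+c_{2}(r))t}\left\|\phi-\hat{\phi}\right\|_{X}$, which is exactly the claim. I expect the main obstacle to be the $\mathcal{G}$-term: justifying the Fubini interchange on $\Gamma_{T}$ and tracking the limits of integration through the change of variables carefully enough that the double integral really collapses to $\int_{0}^{t}\left\|\mathcal{G}(p(\cdot,s))-\mathcal{G}(\hat{p}(\cdot,s))\right\|_{X}\,ds$; the remaining steps are routine bookkeeping with the triangle inequality and the local Lipschitz bounds.
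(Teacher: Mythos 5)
Your argument is correct and is essentially the standard one: the paper states this proposition without proof, adapting it from \cite{W} (Chapter 2), and the proof there is exactly this Gronwall argument — subtract the two integral representations, bound the initial-data, $\mathcal{F}$-, and $\mathcal{G}$-terms using the local Lipschitz estimates (\ref{sec:LC111}), interchange the order of integration over the triangular region via the Fubini-type lemma, and apply Gronwall's inequality to the continuous function $t\mapsto\left\|p(\cdot,t)-\hat{p}(\cdot,t)\right\|_{X}$. Your handling of the change of variables in the $\mathcal{G}$-term and the collapse of the double integral to $\int_{0}^{t}\left\|\mathcal{G}(p(\cdot,s))-\mathcal{G}(\hat{p}(\cdot,s))\right\|_{X}\,ds$ is exactly right.
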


Collecting Proposition \ref{sec:be1}, \ref{sec:be2} and \ref{sec:be3} leads to,

\begin{theorem}
Let H.1 hold and let $\phi\in X$. There exists $T>0$ and $p\in L_{T}$ such that $p$ is a solution of the problem (\ref{sec:1}) on $[0,T]$. Furthermore, if $T>0$, then there is at most one solution of the problem (\ref{sec:1}) on $[0,T]$.
\end{theorem}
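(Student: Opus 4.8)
The plan is to obtain this theorem purely as a synthesis of the three preceding propositions, with the method-of-characteristics computation from the reformulation section supplying the one missing implication. For \textbf{existence}, given $\phi\in X$ I would set $r=\left\|\phi\right\|_{X}$ and invoke Proposition \ref{sec:be2}: it yields a time $T>0$ and a unique $p\in L_{T}$ solving the integral equation (\ref{sec:inte11}) on $[0,T]$. Proposition \ref{sec:be1} then promotes this $p$ to a genuine solution of the PDE problem (\ref{sec:1}) on $[0,T]$. This disposes of the existence assertion immediately, so the only real content lies in uniqueness.

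For \textbf{uniqueness} on an arbitrary interval $[0,T]$, suppose $p,\hat{p}\in L_{T}$ are both solutions of problem (\ref{sec:1}) on $[0,T]$, sharing the common initial datum $\phi$ prescribed in (\ref{sec:1}). The crucial step is the converse of Proposition \ref{sec:be1}: every solution of the PDE problem (\ref{sec:1}) must itself satisfy the integral equation (\ref{sec:inte11}). This is exactly what the characteristics argument in the reformulation section establishes — integrating along the lines $a-t=c$ and applying the balance law (\ref{sec:char8}) together with the birth law (\ref{sec:char9}) reproduces the representation (\ref{sec:inte11}). Hence both $p$ and $\hat{p}$ are solutions of (\ref{sec:inte11}) corresponding to the same initial value, i.e. $\phi=\hat{\phi}$.

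It then remains to apply the continuous-dependence estimate. Since $p,\hat{p}\in L_{T}=C([0,T];X)$, both are bounded on the compact interval $[0,T]$, so I can choose $r>0$ with $\left\|p\right\|_{L_{T}},\left\|\hat{p}\right\|_{L_{T}}\leq r$. Proposition \ref{sec:be3}, applied with $\hat{\phi}=\phi$, gives
\begin{align*}
\left\|p(\cdot,t)-\hat{p}(\cdot,t)\right\|_{X}\leq e^{(c_{1}(r)+c_{2}(r))t}\left\|\phi-\phi\right\|_{X}=0\quad\text{for }0\leq t\leq T,
\end{align*}
whence $p(\cdot,t)=\hat{p}(\cdot,t)$ for every $t\in[0,T]$, that is, $p=\hat{p}$ in $L_{T}$. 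I would emphasize that this works for \emph{any} $T>0$, not merely the small local-existence time delivered by Proposition \ref{sec:be2}, precisely because the Gronwall-type bound in Proposition \ref{sec:be3} is valid on the whole interval on which the two solutions are defined.

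The step I expect to be the genuine obstacle is not the algebra but justifying that a solution of (\ref{sec:1}), which a priori is only an element of $L_{T}$ and need not be differentiable in $a$, actually satisfies the integral form (\ref{sec:inte11}); one must check that the formal characteristic integration is rigorous for such $L^{1}$-regularity solutions, relying on the measurability and integrability statements of the preliminary lemmas (in particular the Fubini-type identity and the existence of the trace $k(c,s)=h(s+c,s)$ along characteristics). Once that reverse implication is secured, everything else is bookkeeping: existence is a one-line chaining of Propositions \ref{sec:be2} and \ref{sec:be1}, and uniqueness reduces to the vanishing of the right-hand side of the Proposition \ref{sec:be3} estimate when the initial data coincide.
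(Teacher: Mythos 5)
Your proposal is correct and follows essentially the same route as the paper, which presents this theorem precisely as the result of ``collecting'' Propositions \ref{sec:be1}, \ref{sec:be2} and \ref{sec:be3}: existence by chaining Proposition \ref{sec:be2} with Proposition \ref{sec:be1}, and uniqueness from the Gronwall-type estimate of Proposition \ref{sec:be3} with coinciding initial data. You are also right to flag the converse of Proposition \ref{sec:be1} (that every $L_{T}$-solution of (\ref{sec:1}) satisfies the integral equation (\ref{sec:inte11}), justified by the characteristics computation and the preliminary measurability lemmas) as the one step the paper leaves implicit at this point, since it is only stated as a full equivalence later, after Proposition \ref{sec:be4}.
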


\section{The semigroup property and continuability of the solutions}

The following proposition shows that solutions of (\ref{sec:inte11}) has the semigroup property.

\begin{proposition}\label{sec:be4}
Let H.1 hold, let $\phi\in X$, let $T>0$, and let $p\in L_{T}$ such that $p$ is a solution of (\ref{sec:inte11}) on $[0,T]$. Let $\hat{T}>0$ and let $p\in L_{\hat{T}}$ such that for $t\in[0,\hat{T}]$
\begin{align}
\hat{p}(a,t)=\begin{cases}
   \mathcal{F}(\hat{p}(\cdot, t-a))+\int_{0}^{a} \mathcal{G}(\hat{p}(\cdot,s+t-a))(s)ds        & \text{a.e.} a\in(0,t)\cap[0,a_{1}] \\
   p(a-t, T)+\int_{a-t}^{a} \mathcal{G}(\hat{p}(\cdot,s+t-a))(s)ds      & \text{a.e.}a\in(t,a_{1}]
  \end{cases}.
\end{align}
Define $p(\cdot,t)=\hat{p}(\cdot,t-T)$ for $T<t\leq T+\hat{T}$. Then, $p\in L_{T+\hat{T}}$ and $p$ is a solution of (\ref{sec:inte11}) on $[0,T+\hat{T}]$.
\end{proposition}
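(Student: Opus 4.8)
The plan is to verify directly that the concatenated function satisfies the integral equation (\ref{sec:inte11}) on all of $[0,T+\hat{T}]$. For $t\in[0,T]$ there is nothing to check, since $p$ is already a solution of (\ref{sec:inte11}) there by hypothesis; so the whole task reduces to confirming (\ref{sec:inte11}) for $t\in(T,T+\hat{T}]$ and establishing continuity of $t\mapsto p(\cdot,t)$ across $t=T$. Throughout I would set $\tau=t-T\in(0,\hat{T}]$ and use the defining relation $p(\cdot,t)=\hat{p}(\cdot,\tau)$, and more generally $p(\cdot,s)=\hat{p}(\cdot,s-T)$ for $s\in(T,t]$.

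First I would dispose of continuity, which simultaneously fixes the matching at the gluing time. Evaluating the defining formula for $\hat{p}$ at $\tau=0$, the first branch is vacuous and the second collapses to $\hat{p}(a,0)=p(a,T)+\int_{a}^{a}(\cdots)=p(a,T)$, so $\hat{p}(\cdot,0)=p(\cdot,T)$. Since $p\in L_{T}$ and $\hat{p}\in L_{\hat{T}}$ are each continuous on their intervals and agree at $t=T$, the glued map lies in $C([0,T+\hat{T}];X)=L_{T+\hat{T}}$.

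The substantive step is the a.e. verification of (\ref{sec:inte11}) for $t\in(T,T+\hat{T}]$, organized by where the characteristic through $(a,t)$ sits relative to the gluing time $T$. I would split into three regimes: (i) $a>t$, the individual is present at time $0$; (ii) $\tau<a<t$, born in $(0,T)$; and (iii) $0<a<\tau$, born after $T$. In regimes (i) and (ii) I use the second branch of the $\hat{p}$-formula, $\hat{p}(a,\tau)=p(a-\tau,T)+\int_{a-\tau}^{a}\mathcal{G}(\hat{p}(\cdot,s+\tau-a))(s)\,ds$, and substitute the value of $p(a-\tau,T)$ furnished by (\ref{sec:inte11}) at time $T$: its $p_{0}$-branch when $a>t$ (so $a-\tau\in(T,a_{1}]$) and its $\mathcal{F}$-branch when $\tau<a<t$ (so $a-\tau\in(0,T)$). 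Using the identities $T-(a-\tau)=t-a$ and $(a-\tau)-T=a-t$, the birth term $\mathcal{F}(p(\cdot,t-a))$ (regime (ii)) or the initial term $p_{0}(a-t)$ (regime (i)) lands exactly on the matching term of (\ref{sec:inte11}) at time $t$. In regime (iii) I use the first branch $\hat{p}(a,\tau)=\mathcal{F}(\hat{p}(\cdot,\tau-a))+\int_{0}^{a}\mathcal{G}(\hat{p}(\cdot,s+\tau-a))(s)\,ds$, where $\hat{p}(\cdot,\tau-a)=p(\cdot,t-a)$ makes the birth term match immediately.

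In each regime the identity then collapses to a single integral equality, of the form
\begin{align*}
\int_{a-\tau}^{a}\mathcal{G}(\hat{p}(\cdot,s+\tau-a))(s)\,ds=\int_{T}^{t}\mathcal{G}(p(\cdot,s))(s+a-t)\,ds
\end{align*}
(with lower limits $0$ and $t-a$ respectively in regime (iii)). I would prove it by the substitution $\sigma=s+\tau-a$ on the left and $\sigma=s-T$ on the right: both sides become $\int\mathcal{G}(\hat{p}(\cdot,\sigma))(\sigma+a-\tau)\,d\sigma$ over the same $\sigma$-interval, because $p(\cdot,\sigma+T)=\hat{p}(\cdot,\sigma)$ and $T+a-t=a-\tau$. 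I expect the main obstacle to be not any one step but the disciplined bookkeeping of the three branches and their endpoint transformations; the measurability and interchange-of-integration facts that legitimize these as genuine a.e. manipulations are precisely what the $L^{1}(\Gamma_{T})$-representation of $(c,s)\mapsto\mathcal{G}(p(\cdot,s))(s+c)$ in the preliminary lemmas provides, so I would invoke that representation rather than reprove Fubini-type statements.
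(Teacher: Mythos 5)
Your argument is correct: the three-regime case analysis, the identification $\hat{p}(\cdot,0)=p(\cdot,T)$ for continuity of the glued map, and the change of variables $\sigma=s+\tau-a$ versus $\sigma=s-T$ that matches the aging integrals are exactly what is needed, and the appeal to the $L^{1}(\Gamma_{T})$-representation lemma properly covers the measurability issues. The paper states this proposition without proof (it is adapted from \cite{W}, Chapter 2), and your direct verification along characteristics is essentially the standard argument given there.
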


Proposition \ref{sec:be4} has the following important consequence.

\begin{theorem}
Let H.1 hold, let $T>0$, let $\phi\in X$, and let $p\in L_{T}$. Then, $p$ is a solution of the problem (\ref{sec:1}) on $[0,T]$ if and only if $p$ is a solution of (\ref{sec:inte11}) on $[0, T]$.
\end{theorem}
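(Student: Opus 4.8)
The plan is to prove the two implications separately, leaning on the machinery already assembled. One direction is essentially free: if $p$ solves (\ref{sec:inte11}) on $[0,T]$, then Proposition~\ref{sec:be1} asserts exactly that $p$ solves problem~(\ref{sec:1}) on $[0,T]$. So all the real content sits in the converse, and that is where I would concentrate the work.

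For the converse, suppose $p\in L_T$ solves problem~(\ref{sec:1}) on $[0,T]$ with initial value $\phi$. The first thing I would record is that $p$ is bounded: since $p\in L_T=C([0,T];X)$, the number $r:=\sup_{t\in[0,T]}\|p(\cdot,t)\|_X$ is finite, and in particular each snapshot $p(\cdot,t)$ has norm at most $r$. This is the feature that makes the continuation below uniform, because the local existence time supplied by Proposition~\ref{sec:be2} depends only on the bound $r$ on the initial datum; hence one and the same step size $\tau>0$ is available at every stage.

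The key idea is to compare $p$ with the integral-equation solution and then \emph{identify} the two using uniqueness of solutions of problem~(\ref{sec:1}). By Proposition~\ref{sec:be2} there is a unique $q\in L_\tau$ solving (\ref{sec:inte11}) on $[0,\tau]$ with $q(\cdot,0)=\phi$; by Proposition~\ref{sec:be1}, $q$ is also a solution of problem~(\ref{sec:1}) on $[0,\tau]$. Since problem~(\ref{sec:1}) admits at most one solution (the uniqueness assertion of the preceding theorem), $p=q$ on $[0,\tau]$, so $p$ solves (\ref{sec:inte11}) on $[0,\tau]$. To pass from $[0,\tau]$ to all of $[0,T]$ I would iterate: assuming $p$ solves (\ref{sec:inte11}) on $[0,k\tau]$ with $k\tau<T$, apply the same local argument to the time-translated problem started from $p(\cdot,k\tau)$, whose norm is again at most $r$, to get an integral-equation solution that matches $p$ on $[k\tau,(k+1)\tau]$; Proposition~\ref{sec:be4} then glues these pieces into a genuine solution of (\ref{sec:inte11}) on $[0,(k+1)\tau]$. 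Because $\tau$ is fixed, $\lceil T/\tau\rceil$ steps exhaust $[0,T]$.

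The main obstacle I anticipate is not any single estimate but the bookkeeping of this continuation: one must check that the restarted equation really is the time-shifted form appearing in Proposition~\ref{sec:be4}, that its solution coincides with the appropriate restriction of $p$ (once more by uniqueness for problem~(\ref{sec:1})), and that the gluing is consistent at each node $k\tau$. An equivalent and perhaps cleaner packaging is to show that the set of $s\in[0,T]$ for which $p$ solves (\ref{sec:inte11}) on $[0,s]$ is nonempty, relatively open (by the uniform-$\tau$ restart just described), and closed (by continuity in $t$ of both sides of (\ref{sec:inte11}), which are $X$-valued continuous functions), hence all of $[0,T]$ by connectedness.
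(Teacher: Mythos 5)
Your backward implication is exactly the paper's: Proposition \ref{sec:be1} is quoted for precisely that purpose, so no objection there. The difficulty is the forward implication. You identify $p$ with the integral-equation solution $q$ by appealing to ``uniqueness of solutions of problem (\ref{sec:1})''. That uniqueness is indeed asserted in the theorem that collects Propositions \ref{sec:be1}--\ref{sec:be3}, but it cannot actually be derived from those three propositions: they give uniqueness for the \emph{integral equation} together with the implication ``integral solution $\Rightarrow$ solution of (\ref{sec:1})'', and nothing in that package excludes a solution of (\ref{sec:1}) that fails to satisfy (\ref{sec:inte11}). In the source \cite{W}, from which the thesis adapts these statements, uniqueness of solutions of (\ref{sec:1}) is obtained precisely as a corollary of the implication you are trying to prove. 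So your argument is circular: the only available proof of the fact you invoke already contains the forward direction.

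The intended forward direction is a direct argument, and it is sketched in the section ``Reformulation as integral equations'': using the $L^{1}$ lemmas of Chapter II to make sense of $(c,s)\mapsto \mathcal{G}(p(\cdot,s))(s+c)$ as an element of $L^{1}(\Gamma_{T};\mathbb{R})$, one shows from the balance law (\ref{sec:char8}) that for almost every characteristic $c$ the cohort function $w_{c}(t)=p(t+c,t)$ is absolutely continuous with derivative $\mathcal{G}(p(\cdot,t))(t+c)$, and integrating from $t_{c}$ to $t$ produces exactly (\ref{sec:inte11}); the birth law (\ref{sec:char9}) supplies the value $w_{c}(t_{c})=\mathcal{F}(p(\cdot,t-a))$ on the region $a<t$. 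No continuation or gluing is required, since this argument works on all of $[0,T]$ at once. Your bookkeeping with the uniform step $\tau$ and Proposition \ref{sec:be4} is internally consistent, but it only becomes necessary because your identification step is local, and that identification step is the part that cannot be justified as stated. To repair the proof you would need to replace the appeal to uniqueness of solutions of (\ref{sec:1}) by the characteristics computation just described.
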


We give the following definition of the maximal interval of existence of the solution of the problem (\ref{sec:1})-(\ref{sec:3}) since the continuability of the local solution of the problem (\ref{sec:1}) defined for all time depends on the existence of a priori bound.

\begin{definition}\label{sec:maxe1}
Let $\phi\in X$. \textit{The maximal interval of existence of the solution of the problem (\ref{sec:1})-(\ref{sec:3})}, denoted by $[0,T_{\phi})$, is the interval with the property that if $0<T<T_{\phi}$, then there exists $p\in L_{T}$ such that $p$ is a solution of the problem (\ref{sec:1}) on $[0,T]$. 
\end{definition}
The following definition says that by the uniqueness of solutions to (\ref{sec:1}) on $[0,T]$, if $0<T<\hat{T}$, $p\in L_{T}$, $\hat{p}\in L_{\hat{T}}$, such that $p$ is a solution of (\ref{sec:1}) on $[0,T]$ and $\hat{p}$ is a solution of (\ref{sec:1}) on $[0,\hat{T}]$, then $p$ and $\hat{p}$ have to agree on $[0,T]$.
\begin{definition}
Let $\phi\in X$ and let $p$ be a function from $[0,T_{\phi})$ to $X$. We define $p$ to be the solution of (\ref{sec:1}) on $[0,T_{\phi})$ provided that for all $T\in(0,T_{\phi})$, $p$ restricted to $[0,T]$ is the solution of (\ref{sec:1}) on $[0,T]$.
\end{definition}
Definition \ref{sec:maxe1} allows the possibility that $T_{\phi}=\infty$, which states as follows
\begin{theorem}
Let H.1 hold, let $\phi\in X$, and let $p$ be the solution of (\ref{sec:1}) on $[0,T_\phi)$. If $T_{\phi}<\infty$, then $\limsup_{t\rightarrow T_{\phi}^{-}}\left\|p(\cdot,t)\right\|_{X}=\infty$.
\end{theorem}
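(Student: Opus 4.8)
The plan is to argue by contradiction, the whole force of the argument resting on the fact that the local existence time in Proposition~\ref{sec:be2} depends only on a bound $r$ for the norm of the initial datum and not on the datum itself. Suppose, contrary to the claim, that $T_\phi<\infty$ yet $M:=\limsup_{t\to T_\phi^-}\|p(\cdot,t)\|_X<\infty$. I will manufacture a solution on an interval strictly larger than $[0,T_\phi)$, contradicting Definition~\ref{sec:maxe1}.

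First I would produce a uniform a priori bound on all of $[0,T_\phi)$. Since $M<\infty$, there is $t_0\in(0,T_\phi)$ with $\|p(\cdot,t)\|_X\le M+1$ for $t\in[t_0,T_\phi)$; and since $p$ restricted to $[0,t_0]$ lies in $L_{t_0}=C([0,t_0];X)$, it is bounded on that compact interval. Hence there is $R>0$ with $\|p(\cdot,t)\|_X\le R$ for every $t\in[0,T_\phi)$. Now apply Proposition~\ref{sec:be2} with $r=R$ to obtain a single time $\delta>0$ such that every $\psi\in X$ with $\|\psi\|_X\le R$ admits a unique solution of (\ref{sec:inte11}) on $[0,\delta]$.

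I would then continue the solution across $T_\phi$. Pick $t_1\in(T_\phi-\delta,T_\phi)$, so that $\|p(\cdot,t_1)\|_X\le R$. Reparametrizing the inner integrals by $s\mapsto s+t_1-a$, the equation appearing in Proposition~\ref{sec:be4} (with $t_1$ in the role of $T$) is seen to be exactly (\ref{sec:inte11}) with initial datum $p(\cdot,t_1)$; hence Proposition~\ref{sec:be2} furnishes $\hat p\in L_\delta$ solving it, and Proposition~\ref{sec:be4} glues $p$ and $\hat p$ into a solution of (\ref{sec:inte11}) on $[0,t_1+\delta]$. Since $t_1+\delta>T_\phi$, and since being a solution of (\ref{sec:inte11}) is equivalent to being a solution of (\ref{sec:1}), this exhibits a genuine solution of the problem on an interval properly containing $[0,T_\phi)$, contradicting the maximality in Definition~\ref{sec:maxe1}.

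The decisive step—and the one I expect to require the most care to state precisely—is the uniformity of $\delta$ coming out of Proposition~\ref{sec:be2}: because the minimal length of each continuation step is controlled by the norm bound $R$ alone, a single step of length $\delta$ launched from a time within $\delta$ of $T_\phi$ already overshoots $T_\phi$. Everything else (the boundedness on $[0,t_0]$, the identification of the Proposition~\ref{sec:be4} integral equation with (\ref{sec:inte11}), and the continuity of the glued function at $t_1$) is routine.
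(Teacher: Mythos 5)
Your proposal is correct and is essentially the argument the paper relies on (the theorem is stated without proof as an adaptation of the continuation theorem in \cite{W}): the uniform local existence time from Proposition \ref{sec:be2}, depending only on the norm bound $R$, lets you restart within $\delta$ of $T_{\phi}$ and glue via Proposition \ref{sec:be4} to overshoot $T_{\phi}$, contradicting maximality. The supporting details you flag — deducing the uniform bound on $[0,T_{\phi})$ from the finite $\limsup$ plus continuity on $[0,t_{0}]$, and the change of variables identifying the integral equation in Proposition \ref{sec:be4} with (\ref{sec:inte11}) — all check out.
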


\section{Positivity of solutions}

We derive from H.1 that the birth function $\mathcal{F}$ and aging function $\mathcal{G}$ given as (\ref{sec:2})-(\ref{sec:3}) satisfy
\begin{align}
&\mathcal{F}(X_{+})\subset \mathbb{R}_{+}.\label{sec:be5}\\
&\text{There is an increasing function } c_{3}:[0,\infty)\rightarrow[0,\infty)\text{ such that if }r>0 \text{ and }\phi\in X_{+}\text{ with } \left\|\phi\right\|_{X}\leq r,\\
\nonumber&\text{then }\mathcal{G}(\phi)+c_{3}(r)\phi\in X_{+}.
\end{align}

The following results follow from (\cite{W}, section 2.4, pp.49) :

\begin{proposition}
Let H.1 hold and let $\phi\in X_{+}$. There exists $T>0$ and a function $p\in L_{T,+}$ satisfying (\ref{sec:1}).
\end{proposition}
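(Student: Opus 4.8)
The plan is to promote the local solution furnished by Proposition~\ref{sec:be2} to one lying in the positive cone by realizing it as the fixed point of an operator that \emph{manifestly} preserves $L_{T,+}$. The obstruction is visible in the integral equation (\ref{sec:inte11}): the aging term $\int \mathcal{G}(p(\cdot,s))(\cdot)\,ds$ enters additively, and $\mathcal{G}$ does not preserve positivity, since it carries the negative mortality contribution. Hence the right-hand side of (\ref{sec:inte11}) may fail to be nonnegative even when $p\in X_+$. The device that removes the obstruction is the quasi-positivity property recorded after (\ref{sec:be5}): for $\phi\in X_+$ with $\|\phi\|_X\le r$ one has $\mathcal{G}(\phi)+c_3(r)\phi\in X_+$. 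Setting $g(\phi):=\mathcal{G}(\phi)+c_3(r)\phi$, the map $g$ preserves the cone on the ball of radius $r$, and it remains locally Lipschitz (with constant $c_2(r)+c_3(r)$) since $\mathcal{G}$ is and the identity is Lipschitz.

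First I would fix $r>0$ with $\|\phi\|_X<r$ and choose $T$ small so that Proposition~\ref{sec:be2} gives the unique solution $p\in L_T$ of (\ref{sec:inte11}) with $\|p\|_{L_T}\le r$; write $c_3=c_3(r)$. Along the characteristic $a-t=c$ the cohort $w_c(s)=p(s+c,s)$ satisfies $w_c'(s)=\mathcal{G}(p(\cdot,s))(s+c)$; adding $c_3 w_c$ to both sides and integrating with the factor $e^{c_3 s}$ converts (\ref{sec:inte11}) into the equivalent weighted renewal form
\[
p(a,t)=
\begin{cases}
e^{-c_{3}a}\,\mathcal{F}(p(\cdot,t-a))+\int_{t-a}^{t}e^{-c_{3}(t-s)}\,g(p(\cdot,s))(s+a-t)\,ds, & a\in(0,t)\cap[0,a_{1}],\\
e^{-c_{3}t}\,p_{0}(a-t)+\int_{0}^{t}e^{-c_{3}(t-s)}\,g(p(\cdot,s))(s+a-t)\,ds, & a\in(t,a_{1}].
\end{cases}
\]
Let $\Psi$ denote the operator sending $q\in L_T$ to the right-hand side above (with $q$ for $p$). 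The key point is that because $e^{-c_3 s}>0$, because $\mathcal{F}(X_+)\subset\mathbb{R}_+$, because $p_0=\phi\in X_+$, and because $g$ maps $X_+\cap\{\|\cdot\|_X\le r\}$ into $X_+$, every term of $\Psi q$ is nonnegative whenever $q\in\mathcal{M}:=\{q\in L_{T,+}:\|q\|_{L_T}\le r\}$; thus $\Psi(\mathcal{M})\subset L_{T,+}$.

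Next I would carry out the two routine estimates with $g$ in place of $\mathcal{G}$, shrinking $T$ if necessary. A norm estimate of the same type as in the proof of Proposition~\ref{sec:be2} (the weights $e^{-c_3(t-s)}\le 1$ causing no harm) shows $\|\Psi q\|_{L_T}\le r$, so $\Psi(\mathcal{M})\subset\mathcal{M}$; and the local Lipschitz bounds on $\mathcal{F}$ and $g$, exactly as in Proposition~\ref{sec:be3}, make $\Psi$ a contraction on $\mathcal{M}$ for $T$ small. Since $\mathcal{M}$ is a closed subset of the complete space $L_T$, it is itself complete, so the Banach fixed point theorem yields a unique fixed point $p_\ast\in\mathcal{M}\subset L_{T,+}$. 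As the displayed equation is equivalent to (\ref{sec:inte11}), $p_\ast$ solves (\ref{sec:inte11}); by the uniqueness in Proposition~\ref{sec:be3} it coincides with $p$, whence $p\in L_{T,+}$, and by the equivalence of (\ref{sec:inte11}) with (\ref{sec:1}) the function $p$ satisfies (\ref{sec:1}).

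The step I expect to demand the most care is the equivalence of the weighted renewal form with (\ref{sec:inte11}): the integrating-factor manipulation along characteristics must be justified at the level of the $L^1((0,a_1)\times(0,T))$ formulation rather than pointwise, using the representation of $t\mapsto\mathcal{G}(p(\cdot,t))$ as an element of $L^1$ and the change of variables on $\Gamma_T$ established in the preliminary lemmas. Everything else — the nonnegativity of each term of $\Psi$, the invariance of the ball, and the contraction estimate — follows directly from H.1 together with the Lipschitz and positivity properties already in hand.
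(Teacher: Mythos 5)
Your argument is correct and is essentially the standard one the paper intends: it invokes exactly the two properties set up just before the statement — $\mathcal{F}(X_{+})\subset\mathbb{R}_{+}$ and the quasi-positivity $\mathcal{G}(\phi)+c_{3}(r)\phi\in X_{+}$ — and runs the integrating-factor/contraction argument on the positive cone, which is the proof of the cited result in \cite{W}, section 2.4. The only point deserving the care you already flag is justifying the weighted renewal form at the level of the $L^{1}$ formulation via the preliminary lemmas, after which uniqueness from Proposition \ref{sec:be2} identifies the positive fixed point with the local solution.
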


\begin{theorem}\label{sec:T3.1} Let H.1 hold and let $\phi\in X_{+}$. The solution $p$ of problem (\ref{sec:1}) on $[0,T_{\phi})$ has the property that $p(\cdot,t)\in X_{+}$, for $0\leq t< T_{\phi}$. 
\end{theorem}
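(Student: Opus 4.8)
The plan is to promote the local positivity already established to the whole maximal interval $[0,T_{\phi})$ by a supremum (continuation) argument, using the semigroup property of Proposition \ref{sec:be4} together with uniqueness of solutions. Beyond the preceding results, the only structural facts I would invoke are that the positive cone $X_{+}$ is closed in $X=L^{1}(0,a_{1})$ and that $t\mapsto p(\cdot,t)$ is continuous into $X$; together these let positivity pass to limits in $t$ and be re-extended past any such limit, which is exactly what a clopen/connectedness argument on $[0,T_{\phi})$ needs.

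Concretely, I would set
$$\tau=\sup\{\,s\in[0,T_{\phi}): p(\cdot,r)\in X_{+}\text{ for all }r\in[0,s]\,\}.$$
The preceding proposition (local existence of a nonnegative solution) together with uniqueness shows that the unique local solution is nonnegative on some $[0,T_{0}]$, so $\tau\geq T_{0}>0$. Suppose for contradiction that $\tau<T_{\phi}$. Since $p(\cdot,r)\in X_{+}$ for every $r<\tau$, while $X_{+}$ is closed and $r\mapsto p(\cdot,r)$ is continuous, passing to the limit gives $p(\cdot,\tau)\in X_{+}$. I then restart the problem at time $\tau$ with initial datum $\psi:=p(\cdot,\tau)\in X_{+}$: the local positivity result furnishes $\delta>0$ and a nonnegative solution of the time-shifted integral equation (\ref{sec:inte11}) on $[0,\delta]$ with initial value $\psi$. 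By the semigroup property (Proposition \ref{sec:be4}), concatenating this with $p$ restricted to $[0,\tau]$ yields a solution on $[0,\tau+\delta]$, which by uniqueness coincides with $p$ there. Hence $p(\cdot,s)\in X_{+}$ for $s\in[\tau,\tau+\delta]$, and choosing $\delta$ small enough that $\tau+\delta<T_{\phi}$ contradicts the definition of $\tau$. Therefore $\tau=T_{\phi}$ and $p(\cdot,t)\in X_{+}$ for all $t\in[0,T_{\phi})$.

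The genuine content sits in the local positivity result this rests on, which is proved from $\mathcal{F}(X_{+})\subset\mathbb{R}_{+}$ (see (\ref{sec:be5})) and the quasi-positivity property that $\mathcal{G}(\phi)+c_{3}(r)\phi\in X_{+}$; for the theorem itself the only delicate point is the clean transfer of positivity across the restart, which the semigroup property is built to handle. As a self-contained alternative I would keep in reserve reading positivity directly off the characteristic representation (\ref{sec:char51}): for $a>t$ the branch is $p_{0}(a-t)\geq 0$ times a positive exponential, while for $a<t$ the value is $B(t-a)$ times a positive factor, where $B(t)=p(0,t)$ solves the Volterra equation (\ref{sec:char8888}) with nonnegative kernel $\beta\geq 0$. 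Because the reproductive window begins at $a_{\min}>0$, that equation expresses $B(t)$ through values $B(s)$ with $s\leq t-a_{\min}$, so an induction over the intervals $[k a_{\min},(k+1)a_{\min})$ gives $B\geq 0$ throughout. The subtle point in this second route is that the mortality exponentials are only meaningful once the weighted moments $Q_{i}(s)=\int_{0}^{a_{1}}\omega_{i}p(\cdot,s)$ are nonnegative, since $\eta_{i}$ is only defined on $[0,\infty)$; thus positivity of $p$, nonnegativity of $B$, and nonnegativity of $Q_{0},Q_{1}$ must be carried along in a single simultaneous induction rather than treated separately, which is why I prefer the continuation argument as the primary route.
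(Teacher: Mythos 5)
Your continuation argument is correct and is essentially the proof the paper is pointing to: the theorem is imported from Webb (\cite{W}, sec.\ 2.4), where global positivity is obtained exactly by combining the local positive solution (built from $\mathcal{F}(X_{+})\subset\mathbb{R}_{+}$ and the quasi-positivity $\mathcal{G}(\phi)+c_{3}(r)\phi\in X_{+}$) with uniqueness, the semigroup property of Proposition \ref{sec:be4}, and closedness of $X_{+}$ to push positivity up to $T_{\phi}$. Your remark that the Volterra/characteristic route requires carrying $p\geq 0$, $B\geq 0$, and $Q_{i}\geq 0$ in one simultaneous induction is a fair reason to prefer the continuation argument, and nothing in your primary route has a gap.
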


\begin{theorem}\label{sec:T3.3} Let H.1 hold and for each $\phi\in X_{+}$, let $p$ be the solution of the problem (\ref{sec:1}) on $[0,T_{\phi})$. Let there exists $\omega\in\mathbb{R}$ such that
\begin{align*}
\mathcal{F}(p(\cdot, t))+\int_{0}^{a_{1}}\mathcal{G}(p(\cdot, t))(a)da\leq\omega \int_{0}^{a_{1}}p(a,t)da\ t\in[0,T_{\phi}).
\end{align*}
Then, $T_{\phi}=\infty$ and 
\begin{align*}
\left\|p(\cdot,t)\right\|_{X}\leq e^{\omega t}\left\|\phi\right\|_{X}, \ \text{for}\ 0\leq t< T_{\phi}.
\end{align*}
\end{theorem}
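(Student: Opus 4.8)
The plan is to reduce the statement to a scalar differential inequality for the total population $P(t):=\int_{0}^{a_{1}}p(a,t)\,da$ and then combine Gr\"onwall's inequality with the blow-up alternative established earlier (the assertion that $T_{\phi}<\infty$ forces $\limsup_{t\to T_{\phi}^{-}}\|p(\cdot,t)\|_{X}=\infty$).

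First I would exploit positivity. Since $\phi\in X_{+}$, Theorem \ref{sec:T3.1} guarantees $p(\cdot,t)\in X_{+}$ for every $t\in[0,T_{\phi})$, so $|p(a,t)|=p(a,t)$ and hence $\|p(\cdot,t)\|_{X}=\int_{0}^{a_{1}}p(a,t)\,da=P(t)$; in particular $P(0)=\|\phi\|_{X}$. This identification is exactly what lets the standing hypothesis, which is phrased in terms of $\int_{0}^{a_{1}}p(a,t)\,da$, control the $X$-norm directly.

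Next I would establish the balance law (\ref{sec:char88}) for the solution $p$, i.e.\ that $t\mapsto P(t)$ is absolutely continuous on $[0,T_{\phi})$ with $\frac{d}{dt}P(t)=\mathcal{F}(p(\cdot,t))+\int_{0}^{a_{1}}\mathcal{G}(p(\cdot,t))(a)\,da$ for almost every $t$. This is obtained by integrating the integral equation (\ref{sec:inte11}) over $a\in(0,a_{1})$ and differentiating in $t$; the measurability and the interchange in the order of integration needed here are precisely what the three lemmas of the Preliminaries supply, viewing $p$ as an element of $L^{1}((0,a_{1})\times(0,T))$ and rearranging the integral of $\mathcal{G}(p(\cdot,s))(s+c)$ over the triangle $\Gamma_{T}$. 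Inserting the hypothesis then yields $\frac{d}{dt}P(t)\le\omega P(t)$ for a.e.\ $t$, equivalently the integral form $P(t)\le P(0)+\omega\int_{0}^{t}P(s)\,ds$. Multiplying by the integrating factor $e^{-\omega t}$ (valid for either sign of $\omega$) shows $e^{-\omega t}P(t)$ is non-increasing, whence $P(t)\le e^{\omega t}P(0)=e^{\omega t}\|\phi\|_{X}$, that is, $\|p(\cdot,t)\|_{X}\le e^{\omega t}\|\phi\|_{X}$ on the whole interval of existence. Finally I would argue $T_{\phi}=\infty$ by contradiction: if $T_{\phi}<\infty$, then $e^{\omega t}$ is bounded by $\max\{1,e^{\omega T_{\phi}}\}$ on $[0,T_{\phi})$, so the estimate just obtained forces $\limsup_{t\to T_{\phi}^{-}}\|p(\cdot,t)\|_{X}<\infty$, contradicting the blow-up alternative. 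Hence $T_{\phi}=\infty$ and the exponential bound holds for all $t\ge0$.

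The main obstacle is the middle step: rigorously justifying that $P(t)$ is absolutely continuous and satisfies the balance law (\ref{sec:char88}) for a mild (integral-equation) solution rather than a classical one, since (\ref{sec:char88}) was derived only formally. Once this regularity of $t\mapsto P(t)$ is secured via the Fubini-type rearrangement of the preliminary lemmas, the Gr\"onwall estimate and the contradiction with the blow-up alternative are routine.
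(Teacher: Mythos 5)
Your approach is correct and is essentially the argument the paper relies on: the paper gives no proof of this theorem, deferring to [W, sec.\ 2.4], and the standard proof there is exactly your combination of positivity (Theorem \ref{sec:T3.1}), the integrated balance law extracted from (\ref{sec:inte11}) via the Fubini-type preliminary lemma, the differential form of Gr\"onwall, and the blow-up alternative. One small caveat on your middle step: on the finite age interval $[0,a_{1}]$ the balance law is not the exact equality (\ref{sec:char88}) but carries an additional outflux term $-p(a_{1},t)$ at the maximum age (equivalently, the Fubini rearrangement of the double integral produces the partial integral $\int_{0}^{s+a_{1}-t}\mathcal{G}(p(\cdot,s))(a)\,da$ rather than $\int_{0}^{a_{1}}$, and the leftover pieces recombine into exactly $-p(a_{1},t)$); since that term is nonpositive by positivity of $p$, you still obtain $\frac{d}{dt}P(t)\leq\omega P(t)$ and the rest of your argument goes through unchanged.
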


\section{Regularity of solutions}

If we assume differentiability conditions on $\mathcal{F}, \mathcal{G}$ as in  (\ref{sec:2})-(\ref{sec:3}), we can obtain further regularity for solutions of the system (\ref{sec:1}).

\begin{definition}\label{def:Frechet}
Let $K$ be a mapping from a Banach space $X_{1}$ to a Banach space $X_{2}$. We require $K$ to be \textit{Frechet differentiable} at $\hat{x}\in D(K)$, in the following sense: $K(x)=K(\hat{x})+K'(\hat{x})(x-\hat{x})+o(x-\hat{x})$ for all $x\in D(K)$, where $K'(\hat{x})$ is a bounded linear operator from $X_{1}$ to $X_{2}$, $o $ is a function from $X_{1}$ to $X_{2}$, and $b$ is a continuous increasing function from $[0,\infty)$ to $[0,\infty)$ such that $b(0)=0$ and $\left\|o(x)\right\|\leq b(r)\left\|x\right\|$ for all $x\in X_{1}$ such that $\left\|x\right\|\leq r$. If $K$ is \textit{Frechet differentiable} at each $\hat{x}\in D(K)$, then $K$ is \textit{Lipschitz continuously Frechet differentiable} on $D(K)$, provided that $\left\|K'(x_{1})-K'(x_{2})\right\|\leq d(r)\left\|x_{1}-x_{2}\right\|$ for all $x_{1}, x_{2}\in D(K)$ such that $\left\|x_{1}\right\|, \left\|x_{2}\right\|\leq r$, where $d$ is a continuous increasing function from $[0,\infty)$ to $[0,\infty)$.
\end{definition}

By H.1, $\mathcal{F}, \ \mathcal{G}$ (\ref{sec:2})-(\ref{sec:3}) are continuously Frechet differentiable at $\hat{\phi}\in X$ in the sense of the definition (see \cite{W}, sec 2.6, pp.63), since
\begin{align}\label{sec:2'}
&(\mathcal{G}'(\hat{\phi})\phi)(a)=(\mathcal{C}_{1}(\hat{\phi})\phi)(a)+(\mathcal{C}_{2}(\hat{\phi})\phi)(a)\ \text{ a.e. }a\in[0,a_{1}],\ \text{ for }\phi\in X.
\end{align}
\begin{align}\label{sec:3'}
&\mathcal{F}'(\hat{\phi})(\phi)=\int^{a_{\max}}_{a_{\min}}\beta(a; \eta_{2}(Q_{0}\hat{\phi}))\phi(a)da\\
\nonumber&+\eta_{2}'(Q_{0}\hat{\phi})(Q_{0}\phi)\int^{a_{\max}}_{a_{\min}}\frac{\partial \beta(a, z)}{\partial z}|_{z = \eta_{2}(Q_{0}\hat{\phi})}\hat{\phi}(a)da,\ \text{ for }\phi\in X.
\end{align}
where,
\begin{align}\label{sec:4'}
(\mathcal{C}_{1}(\hat{\phi})\phi)(a)&=-\frac{\partial \mu_{0}(a, z)}{\partial z}|_{z = \eta_{0}(Q_{0}\hat{\phi})}\eta_{0}'(Q_{0}\hat{\phi})(Q_{0}\phi)\hat{\phi}(a)\\
\nonumber &-\frac{\partial \mu_{1}(a, z)}{\partial z}|_{z = \eta_{1}(Q_{1}\hat{\phi})}\eta_{1}'(Q_{1}\hat{\phi})(Q_{1}\phi)\hat{\phi}(a);\\
(\mathcal{C}_{2}(\hat{\phi})\phi)(a)&=-\mu_{0}(a, \eta_{0}(Q_{0}\hat{\phi}))\phi(a)-\mu_{1}(a, \eta_{1}(Q_{1}\hat{\phi}))\phi(a)-\mu_{2}(a)\phi(a).\label{sec:5'}
\end{align}

\begin{theorem} [\cite{W}, section 2.6, pp.63]Let H.1 hold. Let $\phi\in X$ such that $\phi$ is absolutely continuous on $[0,a_{1})$, $\phi'\in L^{1}$, $\phi(0)=\mathcal{F}(\phi)$ and let $p$ be the solution of the nonlinear problem (\ref{sec:1}) on $[0,T_{\phi})$. The following hold:

\begin{itemize}
\item[(i)] The mapping $t\mapsto p(\cdot, t)$ is continuously differentiable from $[0, T_{\phi})$ to $L^{1}$.
\item[(ii)] For $0\leq t \leq T< T_{\phi}$, $\left\|\frac{d}{dt}p(\cdot,t)\right\|_{X}\leq \left\|\phi'-\mathcal{G}(\phi)\right\|_{X}e^{t(\sup_{s\in [0,t]}|\mathcal{F}'(p(\cdot,s))|+\sup_{s\in [0,t]}|\mathcal{G}'(p(\cdot,s))|)}$.
\end{itemize}
\end{theorem}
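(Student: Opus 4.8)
The plan is to realize the $X$-valued time derivative of $t\mapsto p(\cdot,t)$ as the solution $z$ of a \emph{linear variational Volterra equation} obtained by differentiating the integral equation (\ref{sec:inte11}), and to extract both conclusions from a Gronwall estimate for that equation. Fix $T<T_\phi$, put $r:=\sup_{0\le s\le T}\|p(\cdot,s)\|_X$ (finite since $p\in L_T$), and write $w_h(t):=h\iv\big(p(\cdot,t+h)-p(\cdot,t)\big)$. The first step is to compute the one-sided derivative at $t=0$:
\begin{align*}
\lim_{h\to0^+}h\iv\big(p(\cdot,h)-\phi\big)=\mathcal{G}(\phi)-\phi'\qquad\text{in }X.
\end{align*}
On $a\in(h,a_1)$ the second branch of (\ref{sec:inte11}) with $p_0=\phi$ gives $h\iv\big(p(a,h)-\phi(a)\big)=h\iv\big(\phi(a-h)-\phi(a)\big)+h\iv\int_0^h\mathcal{G}(p(\cdot,s))(s+a-h)\,ds$, which converges in $L^1$ to $-\phi'(a)+\mathcal{G}(\phi)(a)$ by absolute continuity of $\phi$, $\phi'\in L^1$, and continuity of $s\mapsto\mathcal{G}(p(\cdot,s))$. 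On the shrinking strip $a\in(0,h)$ the first branch is used, and here the compatibility condition $\phi(0)=\mathcal{F}(\phi)$ is exactly what is needed: $|\mathcal{F}(p(\cdot,h-a))-\phi(a)|\le|\mathcal{F}(p(\cdot,h-a))-\mathcal{F}(\phi)|+|\phi(0)-\phi(a)|$ is $o(1)$ uniformly, while the leftover integral contributes at most $h\iv\int_0^h\|\mathcal{G}(p(\cdot,v))\|_{L^1(0,v)}\,dv\to0$ (Fubini, together with the vanishing of the restricted norm as $v\to0$).

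Next I would propagate this derivative in time. By the semigroup property (Proposition \ref{sec:be4}), $p(\cdot,t+h)$ is the value at time $t$ of the solution started from $p(\cdot,h)$, so $w_h(t)$ is $h\iv$ times the difference of two solutions of (\ref{sec:inte11}). Subtracting the two copies of (\ref{sec:inte11}) and shifting $s\mapsto s+h$ in one of the integrals shows that $w_h$ solves a Volterra equation whose coefficients are the difference quotients of $\mathcal{F}\circ p$ and $\mathcal{G}\circ p$; by the Fr\'echet differentiability (\ref{sec:2'})--(\ref{sec:3'}) in the sense of Definition \ref{def:Frechet}, these converge to $\mathcal{F}'(p(\cdot,\cdot))$ and $\mathcal{G}'(p(\cdot,\cdot))$ applied to $w_h$, with remainders that are $o(1)$. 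A contraction/Gronwall argument like that behind Proposition \ref{sec:be2} then gives $w_h(t)\to z(t)$ uniformly on $[0,T]$, where $z\in L_T$ is the unique solution of
\begin{align*}
z(a,t)=\begin{cases}\mathcal{F}'(p(\cdot,t-a))z(\cdot,t-a)+\int_{t-a}^{t}\mathcal{G}'(p(\cdot,s))\big(z(\cdot,s)\big)(s+a-t)\,ds,&a\in(0,t),\\ \big(\mathcal{G}(\phi)-\phi'\big)(a-t)+\int_{0}^{t}\mathcal{G}'(p(\cdot,s))\big(z(\cdot,s)\big)(s+a-t)\,ds,&a\in(t,a_1),\end{cases}
\end{align*}
with initial value $z(\cdot,0)=\mathcal{G}(\phi)-\phi'$ supplied by the first step; this $z$ is the forward derivative of $t\mapsto p(\cdot,t)$.

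Both conclusions then follow from this equation. For (ii), take $\|\cdot\|_X$ in the variational equation and note that every argument $t-a$ and $s$ that appears lies in $[0,t]$; the linear terms are then dominated by $\big(\sup_{s\in[0,t]}|\mathcal{F}'(p(\cdot,s))|+\sup_{s\in[0,t]}|\mathcal{G}'(p(\cdot,s))|\big)$ acting on the past values $\|z(\cdot,s)\|_X$, $s\le t$, so Gronwall's inequality applied to this Volterra structure yields $\|z(\cdot,t)\|_X\le\|\mathcal{G}(\phi)-\phi'\|_X\,e^{t(\sup_{s\in[0,t]}|\mathcal{F}'(p(\cdot,s))|+\sup_{s\in[0,t]}|\mathcal{G}'(p(\cdot,s))|)}$, which is the stated bound since $\|\mathcal{G}(\phi)-\phi'\|_X=\|\phi'-\mathcal{G}(\phi)\|_X$. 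For (i), $t\mapsto z(t)$ is continuous in $X$ because $p$, $\mathcal{F}'$ and $\mathcal{G}'$ are continuous along the solution (equivalently, by the uniform convergence $w_h\to z$); since $t\mapsto p(\cdot,t)$ is continuous with continuous \emph{right} derivative $z$, the standard one-sided-derivative lemma upgrades it to $p(\cdot,\cdot)\in C^1([0,T];X)$ with $\frac{d}{dt}p(\cdot,t)=z(t)$, and $T<T_\phi$ is arbitrary.

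The main obstacle is the propagation step: controlling the Fr\'echet remainders in the difference-quotient Volterra equation uniformly in $t$ so that $\{w_h\}$ is Cauchy, and in particular taming the two narrow transition strips $a\approx0$ and $a\approx t$ where the branch of (\ref{sec:inte11}) switches. The estimates there are of the same character as in the first step, and it is their combination with the Gronwall bootstrap, rather than any single inequality, that carries the proof.
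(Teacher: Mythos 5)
Your argument is correct and follows essentially the standard route: the paper itself supplies no proof of this theorem (it is quoted from Webb's monograph), and the proof there is exactly the one you outline --- the right derivative at $t=0$ equals $\mathcal{G}(\phi)-\phi'$ thanks to the compatibility condition $\phi(0)=\mathcal{F}(\phi)$, the difference quotients solve a linearized Volterra equation whose limit $z$ exists by a contraction/Gronwall argument with $o(1)$ Fr\'echet remainders, and both conclusions are read off from that equation, the norm bound by the Fubini--Gronwall estimate you describe and the $C^{1}$ statement by the one-sided-derivative lemma. One simplification worth noting: once you write $p(\cdot,t+h)=U(t)p(\cdot,h)$ via Proposition \ref{sec:be4}, both terms of $w_{h}(t)$ are solutions on the same interval with the same branch split at $a=t$, so the transition strip near $a\approx t$ that you flag as the main obstacle is not actually an issue --- only the strip $a\approx 0$ in the initial step genuinely needs the compatibility condition.
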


\chapter{The induced nonlinear semigroup}\label{chap:solu}

In this chapter we first state some results in the theory of the nonlinear semigroup theory from \cite{W}, pp.74 and we will establish that the solutions of the model (\ref{sec:1}) form a strongly continuous nonlinear semigroup in the state space $X$ which is set up within the framework of general age-structured nonlinear population model from \cite{W} chapter 3.


\section{The induced nonlinear semigroup}

We first introduce the definition of a strongly continuous semigroup,

\begin{definition}
Let $Y$ be a Banach space and let $C$ be a closed set in $Y$. A strongly continuous semigroup in $C$ is a family of mappings $U(t), \ t\geq 0$, satisfying the following:
\begin{itemize}
\item[(i)] $U(t)$ is a continuous mapping from $C$ into $C$ for each $t\geq 0$.
\item[(ii)] $U(0)=I$ (where $I$ is the identity mapping in $Y$ restricted to $C$).
\item[(iii)] $U(t_{1}+t_{2})x=U(t_{1})U(t_{2})x$\  for all $t_{i}\geq 0, \ i=1,2$, $x\in C$.
\item[(iv)] $U(t)x$ is continuous in $t$ as a function from $[0,\infty)$ to $C$ for each fixed $x\in C$.
\end{itemize}
\end{definition}
Fundamental properties of the infinitesimal generator of a strongly continuous semigroup determine the regularity, asymptotic behavior of of the trajectories of the semigroup. We define,

\begin{definition}\label{sec:definge}
Let $C$ be a closed subset of the Banach space $Y$ and let $U(t), \ t\geq 0$, be a strongly continuous semigroup in $C$. The \textit{infinitesimal generator} of $U(t), t\geq 0$, is the mapping $\mathcal{A}$ from a subset of $C$ to $Y$ such that
\begin{align}\label{sec:inge11}
 \lim_{t \rightarrow 0^{+} }\frac{U(t)x-x}{t}=\mathcal{A}x.
\end{align}
with domain $D(\mathcal{A})$ the set of all $x\in C$ for which the limit (\ref{sec:inge11}) exists.
\end{definition}

In the following theorem, we establish that the generalized solutions of the associated nonlinear problem (\ref{sec:1})
form a strongly continuous nonlinear semigroup in $X_{+}$.

\begin{theorem}\label{sec:semiform1}
Let H.1 hold and for each $\phi\in X_{+}$ let the maximal interval of existence $[0,T_{\phi})$ of the solution of the problem (\ref{sec:1}) be $[0,\infty)$. Let $U(t),t\geq 0$, be the family of mappings in $X_{+}$ defined as follows: for $t\geq 0$, $\phi\in X_{+}$, $U(t)\phi:=p(\cdot,t)$, where $p$ is the generalized solution of (\ref{sec:1}) on $[0,\infty)$. Then, $U(t),t\geq 0$, is a strongly continuous nonlinear semigroup in $X_{+}$.
\end{theorem}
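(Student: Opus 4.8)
The plan is to verify the four defining conditions of a strongly continuous semigroup directly from the definition, drawing on the existence, uniqueness, positivity, continuous-dependence, and concatenation results already established. Throughout, the standing hypothesis that $T_{\phi}=\infty$ for every $\phi\in X_{+}$ guarantees that $U(t)\phi:=p(\cdot,t)$ is defined for all $t\geq 0$, and Theorem \ref{sec:T3.1} ensures $p(\cdot,t)\in X_{+}$, so that $U(t)$ indeed maps $X_{+}$ into $X_{+}$; this well-definedness must be recorded before anything else.

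First I would dispose of the two easy conditions. Condition (ii), $U(0)=I$, is immediate from the initial datum $p(\cdot,0)=\phi$. Condition (iv), continuity of $t\mapsto U(t)\phi$, is also essentially free: by definition a solution $p$ on $[0,T]$ lies in $L_{T}=C([0,T];X)$, so $t\mapsto p(\cdot,t)$ is continuous from $[0,T]$ into $X$; since this holds for every $T>0$ and the solution on $[0,\infty)$ restricts consistently to each $[0,T]$, the map $t\mapsto U(t)\phi$ is continuous on all of $[0,\infty)$.

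Next I would establish condition (i), continuity of $U(t)$ on $X_{+}$ for each fixed $t$, using the continuous-dependence estimate of Proposition \ref{sec:be3}. Fix $\phi\in X_{+}$ and $t>0$. Since $s\mapsto\|p(\cdot,s)\|_{X}$ is continuous on the compact interval $[0,t]$, it is bounded, say by $M$. I would then pick $r>M$ and argue, via Proposition \ref{sec:be3} together with a short bootstrap on $\tau=\sup\{s\in[0,t]:\|\hat p(\cdot,s')\|_{X}\leq r\text{ for }s'\in[0,s]\}$, that for $\hat\phi\in X_{+}$ sufficiently close to $\phi$ the corresponding solution $\hat p$ also satisfies $\|\hat p\|_{L_{t}}\leq r$. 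Proposition \ref{sec:be3} then yields
\[
\|U(t)\phi-U(t)\hat\phi\|_{X}=\|p(\cdot,t)-\hat p(\cdot,t)\|_{X}\leq e^{(c_{1}(r)+c_{2}(r))t}\|\phi-\hat\phi\|_{X},
\]
which gives the (local Lipschitz) continuity of $U(t)$.

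Finally, condition (iii), the semigroup identity $U(t_{1}+t_{2})\phi=U(t_{1})U(t_{2})\phi$, is the heart of the matter and the step I expect to be the main obstacle. Here I would invoke Proposition \ref{sec:be4}: starting from the solution on $[0,t_{2}]$ and reinitializing at $p(\cdot,t_{2})=U(t_{2})\phi$, that proposition produces a solution on $[0,t_{1}+t_{2}]$ whose values past time $t_{2}$ are given by the concatenated function $\hat p(\cdot,\,\cdot-t_{2})$. By the equivalence of the problem (\ref{sec:1}) with the integral equation (\ref{sec:inte11}) and the uniqueness of solutions (Proposition \ref{sec:be2}), this concatenation must coincide with the solution $p$ itself, so that $p(\cdot,t_{1}+t_{2})=\hat p(\cdot,t_{1})$. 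Reading off the definitions, $\hat p(\cdot,t_{1})$ is exactly the value at time $t_{1}$ of the solution launched from $U(t_{2})\phi$, that is $U(t_{1})U(t_{2})\phi$, while $p(\cdot,t_{1}+t_{2})=U(t_{1}+t_{2})\phi$. The care required lies in matching the two integral representations term by term and in confirming, again by Theorem \ref{sec:T3.1}, that the reinitialized solution remains in $X_{+}$, so that all three semigroup values live in $X_{+}$. Once these identifications are checked, conditions (i)--(iv) together establish that $U(t)$, $t\geq 0$, is a strongly continuous nonlinear semigroup in $X_{+}$.
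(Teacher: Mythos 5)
Your proposal is correct and follows the standard argument that the paper itself relies on: the paper states this theorem without a written proof, importing it from the semigroup framework of \cite{W}, and the intended justification is exactly your combination of global existence, positivity (Theorem \ref{sec:T3.1}), the continuous-dependence estimate of Proposition \ref{sec:be3} (with the bootstrap keeping nearby solutions inside the ball of radius $r$), and the concatenation-plus-uniqueness argument of Proposition \ref{sec:be4} for the semigroup identity. No gaps.
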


We next provide a $L^{1}$ norm estimation of the generalized solutions of the nonlinear problem (\ref{sec:1}).

\begin{theorem}\label{sec:T3.3}  Let H.1 hold and for each $\phi\in X_{+}$ let the maximal interval of existence $[0,T_{\phi})$ of the solution of the problem (\ref{sec:1}). Let $U(t), \ t\geq 0$, be the family of mappings in $X_{+}$ defined as follows: for $t \geq 0$, $\phi\in X_{+}$, $U(t)\phi:=p(\cdot, t)$, where $p$ is the generalized solution of the system (\ref{sec:1}) on $[0, \infty)$ as in Theorem \ref{sec:T3.1}. Let there exists $\omega\in\mathbb{R}$ such that
\begin{align*}
\mathcal{F}(p(\cdot, t))+\int_{0}^{a_{1}}\mathcal{G}(p(\cdot, t))(a)da\leq\omega \int_{0}^{a_{1}}p(a,t)da\ t\in[0,T_{\phi}).
\end{align*}

Then, $T_{\phi}=\infty$ and $\left\{U(t),\ t \geq 0\right\}$ in $X_{+}$ forms a positive strongly continuous nonlinear semigroup in $X_{+}$ satisfying
\begin{align*}
\left\|U(t)\phi\right\|_{X}\leq e^{\omega t}\left\|\phi\right\|_{X}, \ \text{for}\ \forall\phi\in X_{+}.
\end{align*}
\end{theorem}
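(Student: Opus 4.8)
The plan is to derive global existence and the exponential $L^{1}$ bound from a single a priori estimate on the total population, and then to read off the semigroup structure from Theorem \ref{sec:semiform1}. First I would fix $\phi\in X_{+}$ and let $p$ be the solution of (\ref{sec:1}) on $[0,T_{\phi})$. By the positivity result (Theorem \ref{sec:T3.1}), $p(\cdot,t)\in X_{+}$ for every $t\in[0,T_{\phi})$, so the $X$-norm coincides with the total population,
\begin{align*}
\|p(\cdot,t)\|_{X}=\int_{0}^{a_{1}}|p(a,t)|\,da=\int_{0}^{a_{1}}p(a,t)\,da=:P(t).
\end{align*}
This identification is what lets me replace the norm by a quantity whose rate of change is governed by the balance law.

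Next I would invoke the balance law (\ref{sec:char88}), namely $\frac{d}{dt}P(t)=\mathcal{F}(p(\cdot,t))+\int_{0}^{a_{1}}\mathcal{G}(p(\cdot,t))(a)\,da$. Combining this with the standing hypothesis gives the differential inequality $\frac{d}{dt}P(t)\le\omega P(t)$ on $[0,T_{\phi})$. Multiplying by the integrating factor $e^{-\omega t}$ shows that $t\mapsto e^{-\omega t}P(t)$ is nonincreasing, whence $P(t)\le e^{\omega t}P(0)=e^{\omega t}\|\phi\|_{X}$ for $0\le t<T_{\phi}$. In the $L^{1}$ setting I would phrase this through the integral form of Gronwall's inequality, $P(t)-P(0)\le\omega\int_{0}^{t}P(s)\,ds$, so as to avoid any pointwise differentiability issue for the generalized solution.

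With the bound in hand, global existence follows from the blow-up/continuation criterion established earlier: if $T_{\phi}<\infty$ then $\limsup_{t\to T_{\phi}^{-}}\|p(\cdot,t)\|_{X}=\infty$; but $\|p(\cdot,t)\|_{X}\le e^{\omega t}\|\phi\|_{X}$ remains bounded on the finite interval $[0,T_{\phi})$, a contradiction. Hence $T_{\phi}=\infty$ for every $\phi\in X_{+}$. This is precisely the hypothesis of Theorem \ref{sec:semiform1}, which then yields that $U(t)\phi:=p(\cdot,t)$ defines a strongly continuous nonlinear semigroup on $X_{+}$; positivity, i.e.\ $U(t)X_{+}\subseteq X_{+}$, is again Theorem \ref{sec:T3.1}, and the estimate $\|U(t)\phi\|_{X}\le e^{\omega t}\|\phi\|_{X}$ is the bound just derived.

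The routine parts here are the Gronwall step and the appeals to the cited results; the point that requires care is the first one—justifying that the balance law (\ref{sec:char88}) applies to a solution which a priori lives only in $L_{T}$, and that positivity legitimately replaces $\|p(\cdot,t)\|_{X}$ by $P(t)$. I expect this identification, together with the correct integral form of Gronwall for a merely absolutely continuous $P$, to carry essentially all of the content; once $T_{\phi}=\infty$ is secured, the semigroup assertion is a direct appeal to Theorem \ref{sec:semiform1}.
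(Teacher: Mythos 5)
Your argument is correct and follows essentially the route the paper itself relies on (the paper states this theorem without an explicit proof, deferring to the corresponding results in \cite{W}): positivity from Theorem \ref{sec:T3.1} identifies $\left\|p(\cdot,t)\right\|_{X}$ with the total population, the balance law (\ref{sec:char88}) plus the hypothesis gives the Gronwall bound $P(t)\leq e^{\omega t}\left\|\phi\right\|_{X}$, the blow-up alternative for the maximal interval then forces $T_{\phi}=\infty$, and Theorem \ref{sec:semiform1} supplies the semigroup structure. Your remark that the Gronwall step should be run in integral form to accommodate generalized ($L_{T}$) solutions is exactly the right point of care.
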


If $\mathcal{F}$ and $\mathcal{G}$ are bounded linear operators, then the solutions of (\ref{sec:1}) may be associated with a strongly continuous semigroup of bounded linear operators in $X$. We state this result as follows:

\begin{theorem}[\cite{W}, sec 3.1, pp. 75]\label{sec:T33} 
Let $\mathcal{F}$ be a bounded linear operator from $X$ to $\mathbb{R}$ and let $\mathcal{G}$ be a bounded linear operator from $X$ to $X$. If $\phi\in X$, then the solution of (\ref{sec:1}) is defined on $[0,\infty)$. Further, the family of mappings $U(t),t\geq 0$, in $X$ defined by $U(t)\phi:=p(\cdot,t)$, where $p(\cdot, t)$ is the generalized solution of (\ref{sec:1}) on $[0,\infty)$, is a strongly continuous semigroup of bounded linear operators in $X$ satisfying
\begin{align*}
\left\|U(t)\right\|_{X}\leq e^{\omega t}, \ t\geq 0,\ \text{ where }\omega:=|\mathcal{F}|+|\mathcal{G}|.
\end{align*}
\end{theorem}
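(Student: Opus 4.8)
The plan is to specialize the nonlinear machinery developed above to the linear setting, where the principal simplification is that the local Lipschitz constants in (\ref{sec:LC111}) become genuinely constant. Indeed, when $\mathcal{F}$ and $\mathcal{G}$ are bounded and linear we have $|\mathcal{F}(\phi_1)-\mathcal{F}(\phi_2)| = |\mathcal{F}(\phi_1-\phi_2)| \le |\mathcal{F}|\,\|\phi_1-\phi_2\|_X$ and likewise for $\mathcal{G}$, so one may take $c_1(r)\equiv|\mathcal{F}|$ and $c_2(r)\equiv|\mathcal{G}|$ independently of $r$. I would open the proof by recording this, since it is what makes both global existence and the uniform exponential bound fall out cleanly. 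Local existence of a solution $p$ of the integral equation (\ref{sec:inte11}) is furnished by Proposition \ref{sec:be2}, and by Proposition \ref{sec:be1} such a $p$ is a solution of the problem (\ref{sec:1}); I shall work throughout with the integral form and invoke this equivalence.

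First I would establish the a priori estimate. Because $\mathcal{F}$ and $\mathcal{G}$ are linear, the constant function $\hat p\equiv 0$ solves (\ref{sec:inte11}) for the datum $\hat\phi=0$, and by the uniqueness in Proposition \ref{sec:be2} it is the solution. Applying the continuous-dependence estimate of Proposition \ref{sec:be3} with this zero comparison, and using that the Lipschitz constants are now $r$-independent, yields
\begin{align*}
\|p(\cdot,t)\|_X \le e^{(|\mathcal{F}|+|\mathcal{G}|)t}\|\phi\|_X = e^{\omega t}\|\phi\|_X
\end{align*}
on the maximal interval $[0,T_\phi)$. This bound excludes finite-time blow-up: were $T_\phi<\infty$, the right-hand side would remain finite as $t\to T_\phi^-$, contradicting the blow-up alternative (the theorem asserting $\limsup_{t\to T_\phi^-}\|p(\cdot,t)\|_X=\infty$ whenever $T_\phi<\infty$). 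Hence $T_\phi=\infty$, the solution is global, and the displayed estimate holds for all $t\ge 0$, giving $\|U(t)\|_X\le e^{\omega t}$.

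Next I would verify that $\{U(t)\}_{t\ge 0}$ is a strongly continuous semigroup of bounded linear operators. The identity $U(0)=I$ and the continuity of $t\mapsto U(t)\phi$ are immediate from $p\in C([0,T];X)$, while the semigroup law $U(t_1+t_2)=U(t_1)U(t_2)$ follows from the continuation property of Proposition \ref{sec:be4} together with uniqueness. For linearity of each $U(t)$ I would exploit that (\ref{sec:inte11}) is linear in $p$ when $\mathcal{F}$ and $\mathcal{G}$ are: if $p$ and $q$ solve it for data $\phi$ and $\psi$, then $\alpha p+\beta q$ solves it for $\alpha\phi+\beta\psi$, so by uniqueness $U(t)(\alpha\phi+\beta\psi)=\alpha U(t)\phi+\beta U(t)\psi$. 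Boundedness of $U(t)$ is exactly the estimate obtained above.

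I do not anticipate a serious obstacle; the substance is simply that linearity removes the $r$-dependence that obstructed global existence in the nonlinear case. The one point demanding a little care is the use of Proposition \ref{sec:be3} against the zero solution on the entire maximal interval: since $c_1$ and $c_2$ no longer grow with $r$, the estimate on each compact subinterval $[0,T]\subset[0,T_\phi)$ carries a constant $e^{\omega T}$ that does not degenerate as the a priori $L_T$-bound is taken large, which is precisely what permits concluding the uniform bound on all of $[0,\infty)$.
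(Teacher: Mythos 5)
Your argument is correct and is essentially the intended one: the paper states this result without proof, citing Webb, and the standard derivation is exactly what you give — bounded linearity makes the Lipschitz constants in (\ref{sec:LC111}) independent of $r$, so Proposition \ref{sec:be3} applied against the zero solution yields $\left\|p(\cdot,t)\right\|_{X}\leq e^{\omega t}\left\|\phi\right\|_{X}$, the blow-up alternative then forces $T_{\phi}=\infty$, and the semigroup law, linearity of $U(t)$, and strong continuity follow from Proposition \ref{sec:be4}, uniqueness applied to linear combinations of solutions of (\ref{sec:inte11}), and $p\in L_{T}$ respectively. No gaps.
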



We state the definition of nonlinear accretive operators in Banach space $X$. 

\begin{definition}[\cite{W},sec 3.1, pp.77]
Let $A$ be a mapping from a subset of a Banach space $X$ to $X$. $A$ is said to be \textit{accretive} in $X$ provided that if $x_{1}$, $x_{2}$ belong to the domain $D(A)$ of $A$ and $\lambda>0$, then
\begin{align*}
\left\|(I+\lambda A)x_{1}-(I+\lambda A)x_{2}\right\|\geq\left\|x_{1}-x_{2}\right\|.
\end{align*}
\end{definition}

We state the following results from \cite{W}:

\begin{proposition}[M. Crandall and T. Liggett]
Let $\mathcal{A}$ be a mapping from a subset of a Banach space $X$ to $X$ and let there exist $\omega\in\mathbb{R}$ such that $\mathcal{A}+\omega I$ is accretive in $X$. Let there exist $\lambda_{1}>0$ such that if $0<\lambda<\lambda_{1}$, then $R(I+\lambda\mathcal{A})\supset {\overline{D(\mathcal{A})}}$. Then, for each $x\in \overline{D(\mathcal{A})}$
\begin{align*}
\lim_{n\rightarrow\infty}(I+t/n\mathcal{A})^{-n}x:=T(t)x\text{ exists uniformly in bounded intervals of }t\geq 0.
\end{align*}
Moreover, the family of mappings $T(t),t\geq 0$, so defined is a strongly continuous nonlinear semigroup in $\overline{D(\mathcal{A})}$ satisfying
\begin{align*}
\left\|T(t)x_{1}-T(t)x_{2}\right\|\leq e^{\omega t}\left\|x_{1}-x_{2}\right\|\text{ for all }t\geq 0, x_{1},x_{2}\in \overline{D(\mathcal{A})}.
\end{align*}
\end{proposition}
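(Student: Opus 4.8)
The statement is the Crandall--Liggett generation theorem, and I would follow the classical exponential-formula argument. Write $J_\lambda:=(I+\lambda\mathcal{A})^{-1}$ for the resolvent. The first task is to check that $J_\lambda$ is a well-defined single-valued map that is Lipschitz with a controlled constant. Accretivity of $\mathcal{A}+\omega I$ means precisely that $\|(x_1-x_2)+\mu((\mathcal{A}+\omega I)x_1-(\mathcal{A}+\omega I)x_2)\|\ge\|x_1-x_2\|$ for all $\mu>0$ and $x_1,x_2\in D(\mathcal{A})$; factoring out $(1-\lambda\omega)$ from $(I+\lambda\mathcal{A})x_1-(I+\lambda\mathcal{A})x_2$ and applying this with $\mu=\lambda/(1-\lambda\omega)$ gives $\|(I+\lambda\mathcal{A})x_1-(I+\lambda\mathcal{A})x_2\|\ge(1-\lambda\omega)\|x_1-x_2\|$ whenever $\lambda\omega<1$. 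Hence for $0<\lambda<\min\{\lambda_1,1/\omega_+\}$ (with $\omega_+:=\max\{\omega,0\}$) the operator $I+\lambda\mathcal{A}$ is injective and $J_\lambda$ is single-valued with Lipschitz constant $(1-\lambda\omega)^{-1}$. The range hypothesis $R(I+\lambda\mathcal{A})\supset\overline{D(\mathcal{A})}$ guarantees that $J_\lambda$ is defined on all of $\overline{D(\mathcal{A})}$, and since $J_\lambda(\overline{D(\mathcal{A})})\subset D(\mathcal{A})\subset\overline{D(\mathcal{A})}$ the iterates $J_{t/n}^{\,n}x=(I+\tfrac{t}{n}\mathcal{A})^{-n}x$ are well-defined for every $x\in\overline{D(\mathcal{A})}$ and every $n$ with $t/n$ in the admissible range.

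The heart of the proof, and the step I expect to be the main obstacle, is showing that $J_{t/n}^{\,n}x$ is Cauchy in $n$ for $x\in D(\mathcal{A})$, with a rate uniform for $t$ in bounded intervals. The engine is the resolvent identity $J_\lambda y=J_\mu\!\big(\tfrac{\mu}{\lambda}y+(1-\tfrac{\mu}{\lambda})J_\lambda y\big)$ for $0<\mu\le\lambda$, which follows by substituting $y=J_\lambda y+\lambda\mathcal{A}J_\lambda y$ and verifying both sides solve the same defining equation for $J_\mu$. Feeding this identity together with the Lipschitz bound into the double-indexed quantity $a_{m,n}:=\|J_\lambda^{\,m}x-J_\mu^{\,n}x\|$ produces a recursive inequality of the form $a_{m,n}\le \alpha\,a_{m-1,n-1}+(1-\alpha)\,a_{m,n-1}+(\text{error})$, where $\alpha=\mu/\lambda$ and the error is controlled by $\|A_\lambda x\|:=\|\tfrac1\lambda(x-J_\lambda x)\|$, which the accretivity estimate bounds by a fixed multiple of $\|\mathcal{A}x\|$. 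Solving this recursion by the standard binomial/combinatorial estimate yields an inequality roughly of the shape $\|J_\lambda^{\,m}x-J_\mu^{\,n}x\|\le C\big[(m\lambda-n\mu)^2+m\lambda^2+n\mu^2\big]^{1/2}\|\mathcal{A}x\|$, after absorbing the $\omega$-dependence into the constant $C=C(\omega,t)$. Taking $\lambda=t/m$, $\mu=t/n$ makes the leading term vanish and the remaining terms tend to $0$ as $m,n\to\infty$, so $J_{t/n}^{\,n}x$ converges, uniformly for $t$ in any bounded interval. This defines $T(t)x:=\lim_n J_{t/n}^{\,n}x$ for $x\in D(\mathcal{A})$.

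To pass from $D(\mathcal{A})$ to $\overline{D(\mathcal{A})}$, I would use that each map $J_{t/n}^{\,n}$ is Lipschitz with constant $(1-\tfrac{t}{n}\omega)^{-n}\to e^{\omega t}$, so the family is uniformly (in $n$) equi-Lipschitz on bounded $t$-intervals; since $D(\mathcal{A})$ is dense in $\overline{D(\mathcal{A})}$ and the limit exists there, a routine $3\varepsilon$-argument extends the convergence to all $x\in\overline{D(\mathcal{A})}$, still uniformly on bounded $t$-intervals, and shows $T(t)$ is well-defined and continuous on $\overline{D(\mathcal{A})}$. Passing to the limit in $\|J_{t/n}^{\,n}x_1-J_{t/n}^{\,n}x_2\|\le(1-\tfrac{t}{n}\omega)^{-n}\|x_1-x_2\|$ immediately gives the asserted contraction estimate $\|T(t)x_1-T(t)x_2\|\le e^{\omega t}\|x_1-x_2\|$.

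It remains to verify the semigroup axioms. The estimate of the second step in fact holds for arbitrary small step sizes, so the limit may be written $T(t)x=\lim_{\lambda\to0^+}J_\lambda^{[t/\lambda]}x$ independently of the discretization. Strong continuity in $t$ then follows: for $x\in D(\mathcal{A})$ the same estimate bounds $\|T(t)x-T(s)x\|$ by a quantity of order $|t-s|$ times a multiple of $\|\mathcal{A}x\|$, and the equi-Lipschitz bound extends this to all of $\overline{D(\mathcal{A})}$; together with $T(0)=I$ this gives property (iv) of the definition. For the algebraic law, note that $[t/\lambda]+[s/\lambda]$ and $[(t+s)/\lambda]$ differ by at most one, so the equi-Lipschitz bound controls $\|J_\lambda^{[(t+s)/\lambda]}x-J_\lambda^{[t/\lambda]}J_\lambda^{[s/\lambda]}x\|$ (one extra application of $J_\lambda$ costs $\le e^{\omega t}\lambda\|A_\lambda x\|\to0$); letting $\lambda\to0^+$ and using the convergence $J_\lambda^{[s/\lambda]}x\to T(s)x$ together with the continuity of $T(t)$ in its argument yields $T(t+s)x=T(t)T(s)x$. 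This completes the construction of the strongly continuous nonlinear semigroup $\{T(t)\}_{t\ge0}$ on $\overline{D(\mathcal{A})}$.
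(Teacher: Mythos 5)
Your outline is the classical Crandall--Liggett generation argument --- resolvent Lipschitz bound from accretivity of $\mathcal{A}+\omega I$, the resolvent identity feeding the recursive estimate on $\|J_\lambda^{\,m}x-J_\mu^{\,n}x\|$, density extension, and the discretization argument for the semigroup law --- and it is correct. The paper states this proposition without proof, citing it from the literature, and the proof it implicitly relies on is exactly the one you sketch, so there is no divergence to report.
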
 

\section{The infinitesimal generator associated with the problem (\ref{sec:1})}

It is well-known that if $T(t),\;t\geq 0$, is a strongly continuous semigroup of linear operators in a Banach space $X$, then $T(t), t\geq 0$, has a densely defined infinitesimal generator $\hat{B}$ and $T(t), t\geq 0$ is generated by $-\hat{B}$. For a strongly continuous nonlinear semigroup in a general Banach space this result may not be true and the infinitesimal generator may have a nondensely defined domain. In this section we will establish that the infinitesimal generator $\mathcal{A}$ of the strongly continuous nonlinear semigroup $U(t),t\geq 0$, associated with problem (\ref{sec:1}) has a densely defined domain.
We first define the infinitesimal generator of the strongly continuous nonlinear semigroup associated with the solutions of (\ref{sec:1}).

\begin{definition}
Let H.1 hold and define the mapping $\mathcal{A}$ from $X_{+}$ to $X$ by 
\begin{align}\label{sec:infi1}
&\mathcal{A}:=\phi'-\mathcal{G}(\phi)\;\text{ for }\phi\in D(\mathcal{A}), \text{ where }\\
&D(\mathcal{A})=\left\{\phi\in X_{+}:\phi \text{ is absolutely continuous on }[0,\infty), \phi'\in L^{1}, \text{ and } \phi(0)=\mathcal{F}(\phi)\right\}.
\end{align}  
\end{definition}

\begin{theorem}
Let H.1 hold, let $\mathcal{A}$ be defined as in (\ref{sec:infi1}), and let $U(t),t\geq 0$, be the strongly continuous nonlinear semigroup in $X_{+}$ as in Theorem \ref{sec:T3.3}. Then, $-\mathcal{A}$ is the infinitesimal generator of $U(t),t\geq 0$.
\end{theorem}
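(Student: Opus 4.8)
The plan is to show the two inclusions that together identify the infinitesimal generator $\mathcal{B}$ of $U(t),\,t\ge 0$ (in the sense of Definition~\ref{sec:definge}) with $-\mathcal{A}$: namely $D(\mathcal{A})\subseteq D(\mathcal{B})$ with $\mathcal{B}\phi=-\mathcal{A}\phi$ there, and conversely $D(\mathcal{B})\subseteq D(\mathcal{A})$. Throughout I write $p(\cdot,t)=U(t)\phi$ and work from the integral representation (\ref{sec:inte11}), splitting the age variable into the interior branch $a\in(t,a_{1}]$ (governed by the initial datum) and the boundary branch $a\in(0,t)$ (governed by the birth law).

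First I would prove $D(\mathcal{A})\subseteq D(\mathcal{B})$. Fix $\phi\in D(\mathcal{A})$, so $\phi$ is absolutely continuous with $\phi'\in L^{1}$ and $\phi(0)=\mathcal{F}(\phi)$. By the regularity theorem of the previous section the map $t\mapsto p(\cdot,t)$ is continuously differentiable into $L^{1}$, so the right derivative at $t=0$ exists in $X$ and equals $\mathcal{B}\phi$. To evaluate it I use the interior branch of (\ref{sec:inte11}): for a.e.\ $a$ and $t<a$,
\[
\frac{p(a,t)-\phi(a)}{t}=\frac{\phi(a-t)-\phi(a)}{t}+\frac{1}{t}\int_{0}^{t}\mathcal{G}(p(\cdot,s))(s+a-t)\,ds.
\]
As $t\to0^{+}$ the first term converges in $L^{1}$ to $-\phi'$ because $\phi\in W^{1,1}(0,a_{1})$, while the second converges in $L^{1}$ to $\mathcal{G}(\phi)$ using the continuity of $s\mapsto\mathcal{G}(p(\cdot,s))$ as an element of $L_{T}$. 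Hence $\mathcal{B}\phi=\mathcal{G}(\phi)-\phi'=-\mathcal{A}\phi$.

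For the reverse inclusion, suppose $\phi\in D(\mathcal{B})$, i.e.\ $\psi:=\lim_{t\to0^{+}}(p(\cdot,t)-\phi)/t$ exists in $X$. Rearranging the identity above over the interior branch shows that the translation difference quotients $(\phi(\cdot-t)-\phi)/t$ converge in $L^{1}(0,a_{1})$, since the integral term still converges to $\mathcal{G}(\phi)$ and the shrinking region $a\in(0,t)$ contributes negligibly. As $L^{1}$-convergence of these difference quotients is equivalent to membership in $W^{1,1}(0,a_{1})$, I conclude that $\phi$ is absolutely continuous, $\phi'\in L^{1}$, and $\psi=\mathcal{G}(\phi)-\phi'$ almost everywhere.

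The main obstacle is recovering the compatibility condition $\phi(0)=\mathcal{F}(\phi)$, and this is where the boundary branch is essential. Since $(p(\cdot,t)-\phi)/t\to\psi$ in $L^{1}(0,a_{1})$ and $\int_{0}^{t}|\psi|\,da\to0$ by absolute continuity of the integral, I obtain $\frac{1}{t}\int_{0}^{t}|p(a,t)-\phi(a)|\,da\to0$. On the other hand, the boundary branch of (\ref{sec:inte11}) gives, for $a\in(0,t)$, $p(a,t)=\mathcal{F}(p(\cdot,t-a))+\int_{t-a}^{t}\mathcal{G}(p(\cdot,s))(s+a-t)\,ds$; letting $t\to0^{+}$ and using continuity of $\mathcal{F}$ and of $t\mapsto p(\cdot,t)$ shows $p(a,t)\to\mathcal{F}(\phi)$ uniformly for $a\in(0,t)$, whereas $\phi(a)\to\phi(0^{+})$. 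Hence $\frac{1}{t}\int_{0}^{t}|p(a,t)-\phi(a)|\,da\to|\mathcal{F}(\phi)-\phi(0^{+})|$, and comparing the two limits forces $\phi(0)=\mathcal{F}(\phi)$. Thus $\phi\in D(\mathcal{A})$ and $\mathcal{B}\phi=-\mathcal{A}\phi$, which completes the identification. The delicate point is precisely that a fixed-size mismatch at the corner $(a,t)=(0,0)$ lives on a set of measure comparable to $t$, so it produces a non-vanishing contribution of size $|\mathcal{F}(\phi)-\phi(0^{+})|$ to the difference quotient—exactly the obstruction that the existence of the $L^{1}$ limit rules out.
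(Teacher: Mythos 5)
Your argument is correct: the identification of the generator on the interior branch via the $L^{1}$ translation difference quotients, together with the corner analysis on $a\in(0,t)$ that forces $\phi(0)=\mathcal{F}(\phi)$, is exactly the standard proof from the framework of \cite{W} that the paper relies on (the paper itself states this theorem without proof, deferring to that reference). The one point worth making explicit is that in the forward inclusion the limit computed on $(t,a_{1}]$ identifies the derivative a.e.\ because the excluded set $(0,t)$ has measure tending to zero — but you in effect supply this in the reverse inclusion, so the proof stands.
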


The following result establishes the fact that the domain of the infinitesimal generator is invariant under the nonlinear semigroup $U(t),t\geq 0$.
\begin{proposition}
Let H.1 hold, let $T_{\phi}=\infty$ for all $\phi\in X_{+}$, let $U(t),t\geq 0$, be the strongly continuous nonlinear semigroup in $X_{+}$ as in Theorem \ref{sec:T3.3}, and let $\mathcal{A}$ be the infinitesimal generator of $U(t),t\geq 0$, as in (\ref{sec:infi1}). If $t>0$, then $U(t)[D(\mathcal{A})]\subset D(\mathcal{A})$. Further, if $\phi\in D(\mathcal{A})$, then $\frac{d^{+}}{dt}U(t)\phi=\mathcal{A}U(t)\phi,\;t\geq 0$ holds.
\end{proposition}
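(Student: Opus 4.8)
The plan is to prove the two assertions in turn, leaning on the regularity theorem of the previous section (the last theorem under ``Regularity of solutions'') and on the positivity result Theorem \ref{sec:T3.1}. Throughout I would fix $\phi\in D(\mathcal{A})$ and write $p(\cdot,s)=U(s)\phi$, so that $p$ is the solution of (\ref{sec:1}) on $[0,\infty)$ with initial datum $\phi$. Since $\phi$ is absolutely continuous on $[0,a_{1})$, $\phi'\in L^{1}$ and $\phi(0)=\mathcal{F}(\phi)$, the hypotheses of the regularity theorem are met, so the orbit $s\mapsto p(\cdot,s)$ is continuously differentiable from $[0,\infty)$ into $X$; I denote its derivative by $p_{t}(\cdot,s)\in X$.

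For the invariance $U(t)[D(\mathcal{A})]\subset D(\mathcal{A})$, I would fix $t>0$ and verify the three defining conditions of $D(\mathcal{A})$ for $p(\cdot,t)$. The boundary condition is immediate: as a solution of (\ref{sec:1}), $p$ satisfies $p(0,t)=\int_{a_{\min}}^{a_{\max}}\beta(a;\eta_{2}(Q_{0}(t)))p(a,t)\,da=\mathcal{F}(p(\cdot,t))$. Positivity, $p(\cdot,t)\in X_{+}$, is exactly Theorem \ref{sec:T3.1}. The substantive point is the regularity in the age variable: from the differential equation in (\ref{sec:1}) one reads off, for each fixed $t$, the identity $p_{a}(\cdot,t)=\mathcal{G}(p(\cdot,t))-p_{t}(\cdot,t)$ in $X$. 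Here $\mathcal{G}(p(\cdot,t))\in X=L^{1}$ since $\mathcal{G}$ maps $X$ into $X$ by the Lipschitz property (\ref{sec:LC111}), and $p_{t}(\cdot,t)\in L^{1}$ by the regularity theorem; hence $p_{a}(\cdot,t)\in L^{1}$. This shows that $p(\cdot,t)$ possesses an integrable distributional age-derivative, so it agrees a.e.\ with an absolutely continuous function on $[0,a_{1})$ whose derivative lies in $L^{1}$. Together with the boundary condition and positivity, this yields $U(t)\phi=p(\cdot,t)\in D(\mathcal{A})$.

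For the derivative formula I would use the semigroup property together with the invariance just established. Since the orbit is $C^{1}$ into $X$, the right derivative $\frac{d^{+}}{dt}U(t)\phi$ exists and equals $p_{t}(\cdot,t)$; rewriting the difference quotient via $U(t+h)\phi=U(h)U(t)\phi$ and letting $h\to 0^{+}$ identifies this limit with the infinitesimal generator evaluated at the point $U(t)\phi$, which lies in $D(\mathcal{A})$ by the first part. Invoking the identification of the generator with $-\mathcal{A}$ (the theorem preceding this proposition) then gives the asserted identity $\frac{d^{+}}{dt}U(t)\phi=\mathcal{A}U(t)\phi$.

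The main obstacle is the age-regularity step: one must transfer the time-differentiability supplied by the regularity theorem into differentiability in $a$, and the cleanest route is the identity $p_{a}=\mathcal{G}(p)-p_{t}$ read off from (\ref{sec:1}), after justifying that $p_{t}(\cdot,t)$ genuinely lives in $L^{1}$ and that the distributional $a$-derivative of $p(\cdot,t)$ coincides with $\mathcal{G}(p(\cdot,t))-p_{t}(\cdot,t)$. Care is also needed to confirm that the explicit domain $D(\mathcal{A})$ of (\ref{sec:infi1}) coincides with the set on which the generator limit exists, so that ``the limit exists'' and ``$U(t)\phi\in D(\mathcal{A})$'' may be used interchangeably in the derivative step.
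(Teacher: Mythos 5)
The paper states this proposition without proof (it is imported from \cite{W}), so there is no in-text argument to compare against; judged on its own terms, your outline has the right architecture --- regularity theorem for $C^{1}$-in-time orbits, Theorem \ref{sec:T3.1} for positivity, semigroup property for the derivative formula --- but it leaves its pivotal step unproved. The gap is the age-regularity claim: you propose to ``read off'' $p_{a}(\cdot,t)=\mathcal{G}(p(\cdot,t))-p_{t}(\cdot,t)$ from the differential equation in (\ref{sec:1}), but the pointwise PDE form presupposes exactly the differentiability of $p(\cdot,t)$ in $a$ that you are trying to produce. Solutions here are defined through the integral equation (\ref{sec:inte11}), equivalently the balance law (\ref{sec:char8}), which only asserts existence of the $L^{1}$ directional derivative along characteristics. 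The honest argument splits the characteristic difference quotient as
\[
\frac{p(a+h,t+h)-p(a,t)}{h}=\frac{p(a+h,t+h)-p(a+h,t)}{h}+\frac{p(a+h,t)-p(a,t)}{h},
\]
uses the $C^{1}$-in-time regularity together with $L^{1}$-continuity of translation to send the first summand to $p_{t}(\cdot,t)$, and concludes that the second summand converges in $L^{1}$ to $\mathcal{G}(p(\cdot,t))-p_{t}(\cdot,t)$; only then does one obtain an integrable distributional age-derivative and an absolutely continuous representative. You flag this as ``the main obstacle'' but do not carry it out, and the sentence you do write in its place is circular. Note also that the caveat in your last sentence actually dissolves the entire first assertion: the theorem immediately preceding this proposition states that $-\mathcal{A}$ with domain $D(\mathcal{A})$ \emph{is} the infinitesimal generator, so once the orbit is $C^{1}$ the generator limit exists at $U(t)\phi$, hence $U(t)\phi\in D(\mathcal{A})$ with no pointwise verification of the three defining conditions required.

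There is also a sign problem in your final identity. Your own chain of reasoning --- the right derivative equals the generator applied at $U(t)\phi$, and the generator equals $-\mathcal{A}$ --- yields $\frac{d^{+}}{dt}U(t)\phi=-\mathcal{A}U(t)\phi$, not $+\mathcal{A}U(t)\phi$; indeed $\mathcal{A}\phi=\phi'-\mathcal{G}(\phi)$ and the equation reads $p_{t}=-p_{a}+\mathcal{G}(p)=-\mathcal{A}p$. The statement as printed appears to carry the same sign slip relative to the paper's own conventions, but your proof cannot invoke ``the generator is $-\mathcal{A}$'' and then conclude ``the derivative is $\mathcal{A}U(t)\phi$'' without comment.
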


If $\mathcal{F}$ and $\mathcal{G}$ are bounded linear operators as in theorem \ref{sec:T33}, then we have,
\begin{proposition} 
Let $\mathcal{F}$ be a bounded linear operator from $X$ to $\mathbb{R}$ and let $\mathcal{G}$ be a bounded linear operator from $X$ to $X$, and let $U(t),t\geq 0$, in $X$ be the strongly continuous semigroup of bounded linear operators in $X$ as in Theorem \ref{sec:T33}. The infinitesimal generator of $U(t),t\geq 0$, is 
\begin{align*}
&\mathcal{A}:=\phi'-\mathcal{G}(\phi)\;\text{ for }\phi\in D(\mathcal{A}), \text{ where }\\
&D(\mathcal{A})=\left\{\phi\in X_{+}:\phi \text{ is absolutely continuous on }[0,\infty), \phi'\in L^{1}, \text{ and } \phi(0)=\mathcal{F}(\phi)\right\}.
\end{align*}
Further,
For all $t\geq 0$, $U(t)[D(\mathcal{A})]\subset D(\mathcal{A})$ and $\frac{d}{dt}U(t)\phi=\mathcal{A}U(t)\phi=U(t)\mathcal{A}\phi$ for all $\phi\in D(\mathcal{A})$.  
\end{proposition}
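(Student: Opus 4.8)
The plan is to specialize the structure already in place for the nonlinear problem and then to invoke the classical theory of strongly continuous semigroups of bounded linear operators for the remaining assertions. First I would note that bounded linear $\mathcal{F}$ and $\mathcal{G}$ satisfy the local Lipschitz estimates used throughout, with the increasing functions $c_1,c_2$ taken to be the constants $|\mathcal{F}|$ and $|\mathcal{G}|$. Hence every earlier result applies, and by Theorem \ref{sec:T33} the family $U(t),\,t\geq 0$, is a strongly continuous semigroup of bounded linear operators on $X$ with $\|U(t)\|\leq e^{\omega t}$, $\omega=|\mathcal{F}|+|\mathcal{G}|$. The work then splits into two parts: identifying the generator together with its domain, and deducing the invariance, differentiability, and commutation statements.

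For the generator, I would show that the infinitesimal generator is the operator $\mathcal{A}$ of (\ref{sec:infi1}) with domain $D(\mathcal{A})$. The quickest route is to specialize the generator identification already established for the nonlinear semigroup, which carries over verbatim in the present linear case. Alternatively, one can compute $\lim_{t\to 0^+}t^{-1}(U(t)\phi-\phi)$ directly from the integral representation (\ref{sec:inte11}): for $\phi$ absolutely continuous with $\phi'\in L^1$ the branch $a>t$ contributes $-\phi'$ in the limit while the aging term contributes $\mathcal{G}(\phi)$, and the boundary branch $a<t$ forces the compatibility $\phi(0)=\mathcal{F}(\phi)$ precisely when the $L^1$-limit exists. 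This pins down both the formula $\mathcal{A}\phi=\phi'-\mathcal{G}(\phi)$ and the domain $D(\mathcal{A})$.

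Once $\mathcal{A}$ (with domain $D(\mathcal{A})$) is known to be the generator of the $C_0$-semigroup $U(t)$, the ``Further'' claims are exactly the classical structure theorem for generators of strongly continuous semigroups of bounded linear operators, and this is where I would use linearity essentially. Invariance $U(t)[D(\mathcal{A})]\subset D(\mathcal{A})$ is automatic; interpreted on the model it says the preserved birth compatibility $(U(t)\phi)(0)=\mathcal{F}(U(t)\phi)$ is just the boundary condition of problem (\ref{sec:1}). For the derivative, the right-hand relation $\tfrac{d^{+}}{dt}U(t)\phi=\mathcal{A}U(t)\phi$ specializes from the preceding proposition. The commutation $\mathcal{A}U(t)\phi=U(t)\mathcal{A}\phi$ follows because $U(t)$ is bounded and linear, so $\tfrac{1}{h}\bigl(U(h)-I\bigr)U(t)\phi=U(t)\tfrac{1}{h}\bigl(U(h)-I\bigr)\phi$, and passing $h\to 0^+$ through the bounded operator $U(t)$ sends the right side to $U(t)\mathcal{A}\phi$. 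Finally, the left-hand derivative is recovered from $\tfrac{1}{h}\bigl(U(t)\phi-U(t-h)\phi\bigr)=U(t-h)\tfrac{1}{h}\bigl(U(h)-I\bigr)\phi$ together with the uniform bound $\|U(s)\|\leq e^{\omega s}$ on $[0,t]$ and strong continuity, which force convergence to $U(t)\mathcal{A}\phi$; matching the one-sided limits yields $\tfrac{d}{dt}U(t)\phi=\mathcal{A}U(t)\phi=U(t)\mathcal{A}\phi$.

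I expect the main obstacle to be organizational rather than deep: one must cleanly separate what is inherited from the nonlinear theory (the generator formula, its domain, and the right-hand derivative) from the genuinely linear input. The only step that truly needs the bounded-linear hypothesis is the commutation of $U(t)$ with $\mathcal{A}$ and the attendant two-sided differentiability, and the delicate point there is the interchange of the limit $h\to 0^+$ with the operators $U(t)$ and $U(t-h)$, justified precisely by the uniform operator bound $\|U(s)\|\leq e^{\omega s}$ and the strong continuity of the semigroup.
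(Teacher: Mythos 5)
Your proposal is correct and takes essentially the expected route: the paper itself supplies no proof of this proposition (it is stated as an adaptation of \cite{W}, sec.\ 3.1), and your argument --- identifying the generator and its domain from the integral representation (\ref{sec:inte11}), then obtaining invariance of $D(\mathcal{A})$, the commutation $U(t)\mathcal{A}\phi=\mathcal{A}U(t)\phi$, and two-sided differentiability from linearity, the uniform bound $\left\|U(s)\right\|\leq e^{\omega s}$, and strong continuity --- is precisely the classical argument that reference uses. The one point to watch is the sign convention: by Definition \ref{sec:definge} your limit computation yields $-\phi'+\mathcal{G}(\phi)$, so strictly it is $-\mathcal{A}$ that generates $U(t)$ (as the paper's earlier theorem for the nonlinear case states), and the displayed identities should be read with that convention; this is an inconsistency in the statement rather than a gap in your proof.
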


\section{The exponential expression}

In this section we formulate the nonlinear semigroup by the exponential formula of its infinitesimal generator as in \cite{W}, sec. 3.3, pp. 91.  

\begin{proposition}
Let H.1 hold, let $\mathcal{A}$ be defined as in (\ref{sec:infi1}). Let $\mathcal{F}$ and $\mathcal{G}$ as in (\ref{sec:2})-(\ref{sec:3}) be globally Lipschitz continuous and let  $\omega=|\mathcal{F}|+|\mathcal{G}|$. The following hold:
\begin{itemize}
\item[(i)] $R(I+\lambda\mathcal{A})=X_{+}, \text{ for }0<\lambda<\omega^{-1}$;
\item[(ii)] $\mathcal{A}+\omega I \text{ is accretive in }X$;
\item[(iii)] $\overline{D(\mathcal{A})}=X_{+}$.
\end{itemize}
\end{proposition}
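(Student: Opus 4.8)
The plan is to verify the three hypotheses of the Crandall--Liggett theorem stated above, working with $\mathcal{A}$ (as in (\ref{sec:infi1}), $\mathcal{A}\phi=\phi'-\mathcal{G}(\phi)$) as a fixed-point operator on the closed cone $X_{+}$; the unifying point is that all of (i)--(iii) are governed by the single inequality $\lambda\omega<1$ with $\omega=|\mathcal{F}|+|\mathcal{G}|$. For (i), I would fix $\psi\in X_{+}$ and $0<\lambda<\omega^{-1}$ and solve $(I+\lambda\mathcal{A})\phi=\psi$, i.e. the boundary value problem $\lambda\phi'+\phi-\lambda\mathcal{G}(\phi)=\psi$ with $\phi(0)=\mathcal{F}(\phi)$. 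Choosing a constant $c\ge 0$ for which $\phi\mapsto\mathcal{G}(\phi)+c\phi$ preserves $X_{+}$ (the global Lipschitz hypothesis makes the local constant in (\ref{sec:be5}) uniform), I rewrite this as $\phi'+(\tfrac1\lambda+c)\phi=\tfrac1\lambda\psi+\mathcal{G}(\phi)+c\phi$ and integrate with the factor $e^{(\frac1\lambda+c)a}$ to obtain the equivalent fixed-point equation
\begin{align*}
(\mathcal{T}\phi)(a)=e^{-(\frac1\lambda+c)a}\mathcal{F}(\phi)+\int_{0}^{a}e^{-(\frac1\lambda+c)(a-s)}\Big[\tfrac1\lambda\psi(s)+\mathcal{G}(\phi)(s)+c\phi(s)\Big]\,ds.
\end{align*}
Since $\mathcal{F}(\phi)\ge0$ and the bracket is nonnegative, $\mathcal{T}$ maps $X_{+}$ into $X_{+}$; taking $\|\cdot\|_{X}$ of $\mathcal{T}\phi_{1}-\mathcal{T}\phi_{2}$ and interchanging the order of integration, each kernel integral is bounded by $\lambda/(1+c\lambda)$, giving $\|\mathcal{T}\phi_{1}-\mathcal{T}\phi_{2}\|_{X}\le\frac{\lambda(\omega+c)}{1+c\lambda}\|\phi_{1}-\phi_{2}\|_{X}$, and the $c$ cancels to leave $\lambda\omega<1$. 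Banach's theorem produces a unique $\phi\in X_{+}$; reading off the integral equation shows $\phi$ is absolutely continuous with $\phi'\in L^{1}$ and $\phi(0)=\mathcal{F}(\phi)$, so $\phi\in D(\mathcal{A})$ and $(I+\lambda\mathcal{A})\phi=\psi$. This gives surjectivity onto $X_{+}$, the range condition required by the generation theorem.

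For (ii), I would test the accretivity inequality against the signum of $u:=x_{1}-x_{2}$ for $x_{1},x_{2}\in D(\mathcal{A})$. Setting
\begin{align*}
v:=(I+\lambda(\mathcal{A}+\omega I))x_{1}-(I+\lambda(\mathcal{A}+\omega I))x_{2}=(1+\lambda\omega)u+\lambda u'-\lambda\big(\mathcal{G}(x_{1})-\mathcal{G}(x_{2})\big),
\end{align*}
and choosing $\sigma$ with $\sigma=\mathrm{sgn}(u)$ where $u\neq0$, I pair with $\sigma$ and use $\int_{0}^{a_{1}}\sigma v\le\|v\|_{X}$, $\int_{0}^{a_{1}}\sigma u=\|u\|_{X}$, and the key identity $\int_{0}^{a_{1}}\sigma u'=|u(a_{1})|-|u(0)|\ge-|u(0)|$. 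The boundary condition $u(0)=\mathcal{F}(x_{1})-\mathcal{F}(x_{2})$ gives $|u(0)|\le|\mathcal{F}|\,\|u\|_{X}$, while $|\int_{0}^{a_{1}}\sigma(\mathcal{G}(x_{1})-\mathcal{G}(x_{2}))|\le|\mathcal{G}|\,\|u\|_{X}$. Collecting terms yields $\|v\|_{X}\ge(1+\lambda\omega-\lambda|\mathcal{F}|-\lambda|\mathcal{G}|)\|u\|_{X}=\|u\|_{X}$, which is precisely accretivity of $\mathcal{A}+\omega I$.

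Finally, for (iii) I would approximate a given $\psi\in X_{+}$ in $L^{1}$ by a nonnegative absolutely continuous $g$ with $g'\in L^{1}$, and then repair the boundary condition on a short interval $[0,\delta]$: replacing $g$ there by the linear interpolant between an unknown value $\beta\ge0$ at $a=0$ and $g(\delta)$ gives a function $\phi_{\beta}\in X_{+}$ depending affinely on $\beta$, and the constraint $\beta=\mathcal{F}(\phi_{\beta})$ becomes a scalar fixed-point problem whose map has Lipschitz constant $\tfrac{\delta}{2}|\mathcal{F}|<1$ for $\delta$ small (and sends a suitable interval $[0,M]$ into itself since $\mathcal{F}(X_{+})\subseteq\mathbb{R}_{+}$). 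Solving it produces $\phi\in D(\mathcal{A})$ with $\|\phi-\psi\|_{X}$ as small as desired, so $\overline{D(\mathcal{A})}=X_{+}$.

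I expect part (ii) to be the main obstacle. The accretivity estimate rests on an $L^{1}$ duality (signum) argument in which the derivative term contributes exactly the boundary value $-|u(0)|$, and this must be absorbed precisely by the $|\mathcal{F}|$ piece of $\omega$, so that the choice $\omega=|\mathcal{F}|+|\mathcal{G}|$ is what makes the cancellation exact; the delicate points are choosing $\sigma$ consistently on the set $\{u=0\}$ and justifying $\int_{0}^{a_{1}}\sigma u'=|u(a_{1})|-|u(0)|$ for a merely absolutely continuous $u$, together with discarding the favorable term $\lambda|u(a_{1})|\ge0$.
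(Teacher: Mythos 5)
The paper gives no proof of this proposition --- it is imported verbatim from \cite{W}, Section 3.3 --- so there is nothing in the text to compare against; your argument is correct and is essentially the standard one from that source. All three steps check out: the resolvent equation in variation-of-constants form defines a self-map of the closed cone $X_{+}$ whose contraction constant $\lambda(\omega+c)/(1+c\lambda)$ is indeed $<1$ exactly when $\lambda\omega<1$; the $L^{1}$ sign-pairing gives accretivity because $\int_{0}^{a_{1}}\sigma u'=|u(a_{1})|-|u(0)|\ge -|u(0)|$ (legitimate for absolutely continuous $u$, since $u'=0$ a.e.\ on $\{u=0\}$, so the choice of $\sigma$ there is immaterial) and $|u(0)|\le|\mathcal{F}|\,\|u\|_{X}$ absorbs precisely the $|\mathcal{F}|$ part of $\omega$; and the boundary-condition repair near $a=0$ yields density. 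The one point worth flagging concerns the statement rather than your proof: your fixed-point construction establishes $R(I+\lambda\mathcal{A})\supseteq X_{+}$, and the reverse inclusion needed for literal equality is not automatic (for $\phi\in D(\mathcal{A})$ with $\phi'$ strongly negative on a set, $\phi+\lambda\phi'-\lambda\mathcal{G}(\phi)$ can fail to be nonnegative there); since the Crandall--Liggett theorem only requires $R(I+\lambda\mathcal{A})\supseteq\overline{D(\mathcal{A})}$, what you prove is exactly what is used.
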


The following proposition gives the exponential expression when $\mathcal{F}$ and $\mathcal{G}$ are globally Lipschitz continuous. 

\begin{proposition}
Let H.1 hold, let $\mathcal{F}$ and $\mathcal{G}$ be globally Lipschitz continuous, let $U(t),t\geq 0$, be the strongly continuous nonlinear semigroup as in Theorem \ref{sec:T3.3}, and let $\mathcal{A}$ be defined as in (\ref{sec:infi1}). If $\phi\in X_{+}$, then
\begin{align}
\lim_{n\rightarrow\infty}(I+\frac{t}{n}\mathcal{A})^{-n}\phi=U(t)\phi\text{ uniformly in bounded intervals of }t\geq 0.
\end{align}
\end{proposition}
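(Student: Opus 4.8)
The plan is to check that $\mathcal{A}$ satisfies the hypotheses of the Crandall--Liggett theorem stated above, deduce from it the existence of the exponential limit on all of $X_{+}$, and then identify that limit with $U(t)\phi$. The first step is immediate from the preceding proposition: with $\omega=|\mathcal{F}|+|\mathcal{G}|$ it gives that $\mathcal{A}+\omega I$ is accretive in $X$, that $R(I+\lambda\mathcal{A})=X_{+}\supset\overline{D(\mathcal{A})}$ for $0<\lambda<\omega^{-1}$ (so the range condition holds with $\lambda_{1}=\omega^{-1}$), and that $\overline{D(\mathcal{A})}=X_{+}$. The Crandall--Liggett theorem then guarantees that for each $\phi\in X_{+}=\overline{D(\mathcal{A})}$ the limit
\begin{align*}
T(t)\phi:=\lim_{n\rightarrow\infty}\Bigl(I+\tfrac{t}{n}\mathcal{A}\Bigr)^{-n}\phi
\end{align*}
exists uniformly on bounded intervals of $t\geq 0$ and that $T(t),t\geq 0$, is a strongly continuous nonlinear semigroup on $X_{+}$ with $\left\|T(t)x_{1}-T(t)x_{2}\right\|\leq e^{\omega t}\left\|x_{1}-x_{2}\right\|$. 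It remains only to prove $T(t)\phi=U(t)\phi$ for all $\phi\in X_{+}$, $t\geq 0$.

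I would establish this identity first on the dense set $D(\mathcal{A})$ and then propagate it to $X_{+}$. Fix $\phi\in D(\mathcal{A})$ and write $J_{\lambda}=(I+\lambda\mathcal{A})^{-1}$, so that the backward Euler iterates $x_{k}=J_{\lambda}^{k}\phi$ satisfy $\lambda^{-1}(x_{k}-x_{k-1})=-\mathcal{A}x_{k}$, the implicit discretization of the Cauchy problem $\tfrac{d^{+}}{dt}w(t)=-\mathcal{A}w(t)$, $w(0)=\phi$. Since $-\mathcal{A}$ is the infinitesimal generator of $U(t)$, while the invariance proposition and the regularity theorem give $U(t)\phi\in D(\mathcal{A})$ for every $t$ and continuous differentiability of $t\mapsto U(t)\phi$, the function $w(t):=U(t)\phi$ is a strong solution of this Cauchy problem. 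Comparing the iterates with $w$ at the nodes $t_{k}=k\lambda$ and using the accretivity of $\mathcal{A}+\omega I$ in a telescoping estimate yields a discrete Gronwall bound of the form $\left\|x_{n}-w(n\lambda)\right\|\leq e^{\omega t}\,\varepsilon(\lambda)$ with $\varepsilon(\lambda)\rightarrow 0$ as $\lambda=t/n\rightarrow 0$, the error $\varepsilon(\lambda)$ being controlled by the modulus of continuity of $s\mapsto\mathcal{A}w(s)$. Hence $J_{t/n}^{n}\phi\rightarrow U(t)\phi$, and by uniqueness of limits $T(t)\phi=U(t)\phi$ for every $\phi\in D(\mathcal{A})$.

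For arbitrary $\phi\in X_{+}$ I would pick $\phi_{k}\in D(\mathcal{A})$ with $\phi_{k}\rightarrow\phi$, which is possible since $\overline{D(\mathcal{A})}=X_{+}$, and pass to the limit using that both semigroups obey the $e^{\omega t}$-Lipschitz estimate:
\begin{align*}
\left\|T(t)\phi-U(t)\phi\right\|\leq\left\|T(t)\phi-T(t)\phi_{k}\right\|+\left\|U(t)\phi_{k}-U(t)\phi\right\|\leq 2e^{\omega t}\left\|\phi-\phi_{k}\right\|\longrightarrow 0,
\end{align*}
where $T(t)\phi_{k}=U(t)\phi_{k}$ by the previous step. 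Since the convergence of the exponential formula is uniform on bounded $t$-intervals, this proves the proposition. The main obstacle is the identification on $D(\mathcal{A})$: one must make the discrete-to-continuous comparison rigorous in the accretive setting. The delicate point is that $X=L^{1}$ is not reflexive, so the Crandall--Liggett limit $T(t)\phi$ cannot be assumed differentiable in $t$; the comparison must therefore be routed through the a priori strong solution $U(t)\phi$, whose differentiability is supplied by the regularity theorem, rather than through any differentiability of the limit itself.
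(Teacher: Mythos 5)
Your proposal is correct and follows essentially the route the paper intends: the paper states this result without an explicit proof (deferring to \cite{W}, sec.~3.3), but the surrounding results it records --- the proposition giving $R(I+\lambda\mathcal{A})=X_{+}$, accretivity of $\mathcal{A}+\omega I$, and $\overline{D(\mathcal{A})}=X_{+}$, the Crandall--Liggett theorem, and the uniqueness theorem for Lipschitz strong solutions of $(d/ds)u(s)=-\mathcal{A}u(s)$ --- are exactly the ingredients you assemble, with your discrete-to-continuous comparison on $D(\mathcal{A})$ being the content of that uniqueness step and the density/Lipschitz argument completing the extension to $X_{+}$. Your remark on the non-reflexivity of $L^{1}$ and the need to route differentiability through $U(t)\phi$ rather than the Crandall--Liggett limit is exactly the right caution.
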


Next we consider that the birth functin $\mathcal{F}$ and the aging function $\mathcal{G}$ are locally Lipschitz continuous in the sense of (\ref{sec:LC111}) Using a truction method, we have the following, for more details we refer to \cite{W},

\begin{proposition}
Let H.1 hold and let $r> 0$. Define
\begin{align}\label{sec:tr1}
\mathcal{F}_{r}(\phi):=\begin{cases}
\mathcal{F}(\phi) & \text{if } \phi\in X \text{ and }\left\|\phi\right\|_{X}\leq r\\
\mathcal{F}(\frac{r\phi}{\left\|\phi\right\|_{X}}) & \text{if } \phi\in X \text{ and }\left\|\phi\right\|_{X}> r.
\end{cases}
\end{align}
\begin{align}\label{sec:tr2}
\mathcal{G}_{r}(\phi):=\begin{cases}
\mathcal{G}(\phi) & \text{if } \phi\in X \text{ and }\left\|\phi\right\|_{X}\leq r\\
\mathcal{G}(\frac{r\phi}{\left\|\phi\right\|_{X}}) & \text{if } \phi\in X \text{ and }\left\|\phi\right\|_{X}> r.
\end{cases}
\end{align}
Then, $\mathcal{F}_{r}$ and $\mathcal{G}_{r}$ satisfy the following:
\begin{align}
&|\mathcal{F}_{r}(\phi)-\mathcal{F}_{r}(\hat{\phi})|\leq 2c_{1}(r)\left\|\phi-\hat{\phi}\right\|_{X},\;\phi,\hat{\phi}\in X\\
&\left\|\mathcal{G}_{r}(\phi)-\mathcal{G}_{r}(\hat{\phi})\right\|\leq 2c_{2}(r)\left\|\phi-\hat{\phi}\right\|_{X},\;\phi,\hat{\phi}\in X\\
&\mathcal{F}_{r}(X_{+})\subset \mathbb{R}_{+}\\
&\mathcal{G}_{r}(\phi)+c_{3}(r_{1})\phi\in X_{+} \text{ for all } \phi\in X_{+} \text{ such that }\left\|\phi\right\|_{X} \leq r_{1}.
\end{align}

\end{proposition}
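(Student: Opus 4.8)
The plan is to observe that both truncated maps factor through the radial retraction onto the closed ball of radius $r$, and then to combine the local Lipschitz estimates~(\ref{sec:LC111}) with a $2$-Lipschitz bound for that retraction. Define $R_{r}:X\to X$ by $R_{r}(\phi)=\phi$ when $\left\|\phi\right\|_{X}\leq r$ and $R_{r}(\phi)=r\phi/\left\|\phi\right\|_{X}$ when $\left\|\phi\right\|_{X}>r$, so that $\mathcal{F}_{r}=\mathcal{F}\circ R_{r}$ and $\mathcal{G}_{r}=\mathcal{G}\circ R_{r}$. Since $\left\|R_{r}(\phi)\right\|_{X}\leq r$ for every $\phi\in X$, the local Lipschitz hypotheses~(\ref{sec:LC111}) apply with the single constants $c_{1}(r)$ and $c_{2}(r)$ to any pair of points in the image of $R_{r}$, so the whole estimate reduces to controlling $\left\|R_{r}(\phi)-R_{r}(\hat{\phi})\right\|_{X}$.

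The key step, which I expect to be the only nonroutine part, is the claim that $\left\|R_{r}(\phi)-R_{r}(\hat{\phi})\right\|_{X}\leq 2\left\|\phi-\hat{\phi}\right\|_{X}$ for all $\phi,\hat{\phi}\in X$; I would verify it by cases on whether the two arguments lie inside or outside the ball. The case where both lie inside is immediate. When both lie outside, I would write the difference of the normalized vectors as
\begin{align*}
\frac{\phi}{\left\|\phi\right\|_{X}}-\frac{\hat{\phi}}{\|\hat{\phi}\|_{X}}=\frac{\phi-\hat{\phi}}{\left\|\phi\right\|_{X}}+\hat{\phi}\Bigl(\frac{1}{\left\|\phi\right\|_{X}}-\frac{1}{\|\hat{\phi}\|_{X}}\Bigr),
\end{align*}
and use the reverse triangle inequality $\bigl|\left\|\phi\right\|_{X}-\|\hat{\phi}\|_{X}\bigr|\leq\left\|\phi-\hat{\phi}\right\|_{X}$ to bound the right-hand side by $2\left\|\phi-\hat{\phi}\right\|_{X}/\left\|\phi\right\|_{X}$; multiplying by $r\leq\left\|\phi\right\|_{X}$ then produces the factor $2$. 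In the mixed case, say $\left\|\phi\right\|_{X}\leq r<\|\hat{\phi}\|_{X}$, I would split $\phi-r\hat{\phi}/\|\hat{\phi}\|_{X}=(\phi-\hat{\phi})+\hat{\phi}(1-r/\|\hat{\phi}\|_{X})$ and estimate the second term by $\|\hat{\phi}\|_{X}-r\leq\|\hat{\phi}\|_{X}-\left\|\phi\right\|_{X}\leq\left\|\phi-\hat{\phi}\right\|_{X}$, again yielding the factor $2$. This is exactly where the constant $2$ in the asserted Lipschitz bounds comes from: on an $L^{1}$ space the radial retraction is in general only $2$-Lipschitz rather than $1$-Lipschitz. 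Composing with the local Lipschitz estimates then gives $|\mathcal{F}_{r}(\phi)-\mathcal{F}_{r}(\hat{\phi})|\leq 2c_{1}(r)\left\|\phi-\hat{\phi}\right\|_{X}$ and the analogous bound for $\mathcal{G}_{r}$.

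For the positivity statements I would use that $R_{r}$ maps $X_{+}$ into $X_{+}$, since scaling a nonnegative element by the positive scalar $r/\left\|\phi\right\|_{X}$ keeps it in the cone; hence $\mathcal{F}_{r}(X_{+})=\mathcal{F}(R_{r}(X_{+}))\subset\mathcal{F}(X_{+})\subset\mathbb{R}_{+}$ by~(\ref{sec:be5}). For the last claim, fix $\phi\in X_{+}$ with $\left\|\phi\right\|_{X}\leq r_{1}$. If $\left\|\phi\right\|_{X}\leq r$ then $\mathcal{G}_{r}(\phi)=\mathcal{G}(\phi)$ and the cone-positivity property of $\mathcal{G}$ stated just above, applied with radius $r_{1}$, gives $\mathcal{G}(\phi)+c_{3}(r_{1})\phi\in X_{+}$ directly. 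If instead $r<\left\|\phi\right\|_{X}\leq r_{1}$, set $\psi=r\phi/\left\|\phi\right\|_{X}\in X_{+}$ with $\left\|\psi\right\|_{X}=r$, so that the same property gives $\mathcal{G}(\psi)+c_{3}(r)\psi\in X_{+}$, and I would write
\begin{align*}
\mathcal{G}_{r}(\phi)+c_{3}(r_{1})\phi=\Bigl(\mathcal{G}(\psi)+c_{3}(r)\tfrac{r}{\left\|\phi\right\|_{X}}\phi\Bigr)+\Bigl(c_{3}(r_{1})-c_{3}(r)\tfrac{r}{\left\|\phi\right\|_{X}}\Bigr)\phi.
\end{align*}
The first bracket equals $\mathcal{G}(\psi)+c_{3}(r)\psi\in X_{+}$, while the scalar in the second bracket is nonnegative because $c_{3}$ is increasing (so $c_{3}(r)\leq c_{3}(r_{1})$) and $r/\left\|\phi\right\|_{X}\leq 1$; adding a nonnegative multiple of $\phi\in X_{+}$ keeps the sum in the cone, which completes the argument.
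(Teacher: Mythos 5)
Your proof is correct and is essentially the standard truncation argument that the paper itself omits and defers to Webb's book: the factorization through the radial retraction, the case-by-case verification that the retraction is $2$-Lipschitz on a general Banach space, and the splitting $\mathcal{G}(\psi)+c_{3}(r)\psi$ plus a nonnegative multiple of $\phi$ for the cone condition all match the referenced proof. Nothing further is needed.
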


\begin{definition}
Let H.1 hold, let $r>0$, let $\mathcal{F}_{r}$ and $\mathcal{G}_{r}$ be defined as in (\ref{sec:tr1})-(\ref{sec:tr2}) and let mapping $\mathcal{A}_{r}$ from $X_{+}$ to $X$ be defined by 
\begin{align}\label{sec:tr3}
\mathcal{A}_{r}:=\phi'-\mathcal{G}_{r}(\phi)\;\text{ for }\phi\in D(\mathcal{A}_{r}),
\end{align}
where $D(\mathcal{A}_{r})=$ $\left\{\phi\in X_{+}:\phi \text{ is absolutely continuous on }[0,a_{1}],\phi'\in X, \phi(0)=\mathcal{F}_{r}(\phi)\right\}$
\end{definition}

\begin{theorem}
Let H.1 hold, let $T_{\phi}=\infty$ for each $\phi\in X_{+}$, and let $U(t), t \geq 0$, be the strongly continuous nonlinear semigroup in $X_{+}$ as in Theorem \ref{sec:T33}. If $\phi\in X_{+}$, $t>0$, $r\geq\sup_{0\leq s \leq t}\left\|U(s)\phi\right\|_{X}$, and $\mathcal{A}_{r}$ is defined as in (\ref{sec:tr3}), then
\begin{align}
\lim_{n\rightarrow\infty}(I+\frac{s}{n}\mathcal{A}_{r})^{-n}\phi=U(s)\phi\;\text{ uniformly for }s\in[0,t].
\end{align}
\end{theorem}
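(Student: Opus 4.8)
The plan is to reduce to the globally Lipschitz case treated in the preceding proposition, and then to patch the resulting semigroup to $U(t)$ along the bounded trajectory determined by $\phi$.

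First I would invoke the truncation proposition stated above, which guarantees that $\mathcal{F}_r$ and $\mathcal{G}_r$ are globally Lipschitz continuous on $X$ and preserve $X_+$. The exponential formula already established for globally Lipschitz birth and aging functions then applies verbatim with $\mathcal{F}_r,\mathcal{G}_r$ in place of $\mathcal{F},\mathcal{G}$ and with generator $\mathcal{A}_r$ as in (\ref{sec:tr3}): it produces a strongly continuous nonlinear semigroup $U_r(t),t\geq 0$, on $X_+$ satisfying
\[
\lim_{n\rightarrow\infty}\bigl(I+\tfrac{s}{n}\mathcal{A}_r\bigr)^{-n}\phi=U_r(s)\phi
\]
uniformly on bounded intervals of $s\geq 0$. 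It therefore suffices to show that $U_r(s)\phi=U(s)\phi$ for all $s\in[0,t]$, after which the claimed convergence follows by substitution, inheriting the uniformity on $[0,t]$.

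The heart of the argument is precisely this identification. Because $r\geq\sup_{0\leq s\leq t}\left\|U(s)\phi\right\|_X$, the original trajectory $p(\cdot,s):=U(s)\phi$ remains inside the closed ball of radius $r$ for every $s\in[0,t]$; and on that ball the truncation leaves the operators unchanged, so $\mathcal{F}_r=\mathcal{F}$ and $\mathcal{G}_r=\mathcal{G}$ there. Consequently, along this trajectory the integral equation (\ref{sec:inte11}) satisfied by $p$ coincides literally with the truncated integral equation built from $\mathcal{F}_r,\mathcal{G}_r$, so $p$ is also a solution of the truncated problem on $[0,t]$. Uniqueness for the truncated problem — available from the continuous dependence estimate of Proposition \ref{sec:be3} applied to the globally Lipschitz operators $\mathcal{F}_r,\mathcal{G}_r$ — then forces $U_r(s)\phi=p(\cdot,s)=U(s)\phi$ on $[0,t]$.

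I expect the main obstacle to be exactly this identification $U_r(s)\phi=U(s)\phi$: it relies on the a priori bound $r\geq\sup_{0\leq s\leq t}\left\|U(s)\phi\right\|_X$ to confine the trajectory to the region where truncation has no effect, combined with uniqueness for the globally Lipschitz truncated problem. Once those two ingredients are in hand, everything else is a direct transfer of the already-established globally Lipschitz exponential formula to the truncated data.
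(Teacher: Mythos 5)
Your proposal is correct and is essentially the argument the paper relies on (it defers the proof to Webb's monograph, where exactly this truncation-plus-identification scheme is carried out): apply the globally Lipschitz exponential formula to $\mathcal{F}_r,\mathcal{G}_r$, and use the a priori bound $r\geq\sup_{0\leq s\leq t}\left\|U(s)\phi\right\|_{X}$ together with uniqueness for the truncated integral equation to conclude $U_r(s)\phi=U(s)\phi$ on $[0,t]$. No gaps.
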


\begin{theorem}
Let H.1 hold and let $\mathcal{A}$ be defined as in (\ref{sec:infi1}). Then, $\overline{D(\mathcal{A})}=X_{+}$. Further, let $T_{\phi}=\infty$ for all $\phi\in L_{+}$, let $U(t),t \geq 0$, be the strongly continuous nonlinear semigroup in $X_{+}$ as in Theorem \ref{sec:T3.3}, let $\phi\in D(\mathcal{A})$, and let $u$ be a Lipschitz continuous function from $[0,t]$ to $L^{1}$ such that $u(0)=\phi$ and for almost all $s\in(0,t)$, $u$ is differentiable at $s$, $u(s)\in D(A)$, and $(d/ds)u(s)=-\mathcal{A} u(s)$. Then, $u(s)=S(s)\phi$ for $s\in[0,t]$.
\end{theorem}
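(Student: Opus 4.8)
The plan is to establish the two assertions in turn: first the density $\overline{D(\mathcal{A})}=X_{+}$, and then the identification of the given strong solution $u$ with the semigroup orbit $U(s)\phi$ (the generalized solution written $S(s)\phi$ in the statement). For the density, the inclusion $\overline{D(\mathcal{A})}\subset X_{+}$ is immediate, since $D(\mathcal{A})\subset X_{+}$ and the positive cone of $L^{1}$ is closed. For the reverse inclusion I would fix $\phi\in X_{+}$ and $\epsilon>0$ and first approximate $\phi$ in $L^{1}$ by a nonnegative $\psi_{1}$ that is absolutely continuous with $\psi_{1}'\in L^{1}$ and $\|\phi-\psi_{1}\|_{X}<\epsilon$. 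The only obstruction to $\psi_{1}\in D(\mathcal{A})$ is the constraint $\psi(0)=\mathcal{F}(\psi)$, which I would repair by altering $\psi_{1}$ only on a short interval $[0,\delta]$ with $\delta<a_{\min}$: replace it there by an absolutely continuous interpolation taking a prescribed value $v_{0}\ge 0$ at $a=0$ and matching $\psi_{1}(\delta)$ at $a=\delta$, leaving $\psi_{1}$ untouched on $[\delta,a_{1}]$. Since $\mathcal{F}(\psi)=\int_{a_{\min}}^{a_{\max}}\beta(a;\eta_{2}(Q_{0}\psi))\psi(a)\,da$ is evaluated where $\psi=\psi_{1}$, it depends on $v_{0}$ only through $Q_{0}\psi$ and hence varies by $O(\delta)$ as $v_{0}$ ranges over a bounded set; thus $v_{0}\mapsto\mathcal{F}(\psi_{v_{0}})$ is a contraction near $\mathcal{F}(\psi_{1})$ for $\delta$ small, and its fixed point $v_{0}^{\ast}\ge 0$ (using $\mathcal{F}(X_{+})\subset\mathbb{R}_{+}$) yields $\psi\in D(\mathcal{A})$ with $\|\phi-\psi\|_{X}<2\epsilon$. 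This is the construction behind the globally Lipschitz proposition and uses only the continuity of $\mathcal{F}$.

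For the identification, set $v(s):=U(s)\phi$ and $g(s):=\|u(s)-v(s)\|_{X}$. By the invariance proposition, $\phi\in D(\mathcal{A})$ forces $v(s)\in D(\mathcal{A})$ for all $s$, together with the forward relation $\frac{d^{+}}{ds}v(s)=-\mathcal{A}v(s)$, and $v$ is Lipschitz on $[0,t]$. Since $u$ is Lipschitz by hypothesis, $g$ is a real-valued Lipschitz function, hence differentiable almost everywhere, and $g(0)=0$. The crux is the differential inequality $g'(s)\le\omega\,g(s)$ for a.e.\ $s$, where $\omega$ is the constant for which $\mathcal{A}+\omega I$ is accretive. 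Granting this, Gronwall's inequality gives $g(s)\le e^{\omega s}g(0)=0$, i.e.\ $u(s)=U(s)\phi$ on $[0,t]$, which is the asserted equality.

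To obtain $g'(s)\le\omega g(s)$ I would work at a point $s$ where $u$ is differentiable with $u'(s)=-\mathcal{A}u(s)$ and $u(s)\in D(\mathcal{A})$, and where $g$ is differentiable; almost every $s$ qualifies. Using the one-sided derivative of the norm (Kato's inequality: for $w$ differentiable at $s$ the values $\frac{d^{\pm}}{ds}\|w(s)\|_{X}$ are extremal values of $\langle f,w'(s)\rangle$ over norming functionals $f$ of $w(s)$), one expresses $g'(s)$ through a norming functional of $u(s)-v(s)$ paired against $-\mathcal{A}u(s)+\mathcal{A}v(s)$, and accretivity of $\mathcal{A}+\omega I$ converts this pairing into the bound $g'(s)\le\omega g(s)$.

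The hard part is precisely this last step, and the obstacle is genuine: $X=L^{1}$ is not reflexive and its norm is not smooth, so the norm derivative must be handled through one-sided Dini derivatives and norming functionals rather than a naive chain rule, and one must reconcile the fact that the orbit $v=U(\cdot)\phi$ is a priori only forward-differentiable while $u$ is two-sidedly differentiable almost everywhere. The clean way to circumvent the forward-only regularity of $v$ — and the route I would ultimately follow, as in the framework of \cite{W}, Chapter~3 — is to avoid differentiating the orbit altogether: compare $u$ directly with the resolvent approximants $(I+\tfrac{s}{n}\mathcal{A})^{-n}\phi$, using the resolvent contraction $\|J_{\lambda}x_{1}-J_{\lambda}x_{2}\|_{X}\le(1-\lambda\omega)^{-1}\|x_{1}-x_{2}\|_{X}$ for $J_{\lambda}=(I+\lambda\mathcal{A})^{-1}$ (the resolvent form of accretivity of $\mathcal{A}+\omega I$) to run a discrete Gronwall estimate, and then pass to the limit $n\to\infty$ through the exponential formula to recover $\|u(s)-U(s)\phi\|_{X}\le e^{\omega s}g(0)=0$. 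This replaces the delicate pointwise norm differentiation by resolvent inequalities that hold verbatim in $L^{1}$.
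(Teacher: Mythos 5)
The paper itself states this theorem without supplying a proof; it is an adaptation of the density and uniqueness results in \cite{W}, Chapter 3, and the surrounding propositions on accretivity, the range condition and the exponential formula are the intended ingredients. Your proposal reconstructs essentially that standard route, and both halves are sound in outline. For the density, repairing the boundary constraint $\psi(0)=\mathcal{F}(\psi)$ by a fixed-point perturbation supported on $[0,\delta]$ is the right idea; note only that the fixed point $v_{0}^{\ast}$ must be confirmed to lie in a bounded set independent of $\delta$ (it does, since $\mathcal{F}(\psi_{v_{0}})$ is bounded in terms of $\|\psi_{1}\|_{X}$ via the local Lipschitz estimate), so that the $L^{1}$ cost of order $v_{0}^{\ast}\delta$ of the modification indeed tends to zero. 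For the identification $u(s)=S(s)\phi$, you correctly diagnose that the naive Gronwall argument founders on the non-smoothness of the $L^{1}$ norm and on the merely forward differentiability of the orbit, and the fallback you choose --- comparing $u$ with the resolvent approximants $(I+\tfrac{s}{n}\mathcal{A})^{-n}\phi$ through the contraction $\|J_{\lambda}x_{1}-J_{\lambda}x_{2}\|_{X}\le(1-\lambda\omega)^{-1}\|x_{1}-x_{2}\|_{X}$ and a discrete Gronwall estimate --- is exactly the mechanism of the cited source.

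Two points should be made explicit in a full write-up. First, under H.1 alone $\mathcal{F}$ and $\mathcal{G}$ are only locally Lipschitz, and the range condition $R(I+\lambda\mathcal{A})\supset X_{+}$ together with the accretivity of $\mathcal{A}+\omega I$ are established in the paper only in the globally Lipschitz case; the comparison must therefore be run for the truncated operators $\mathcal{A}_{r}$ of (\ref{sec:tr3}) with $r\ge\sup_{0\le s\le t}\max\{\|u(s)\|_{X},\|U(s)\phi\|_{X}\}$ (finite since both curves are continuous on $[0,t]$), observing that $\mathcal{A}_{r}$ agrees with $\mathcal{A}$ along both curves, before the conclusion for $\mathcal{A}$ can be drawn. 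Second, the discrete Gronwall step is asserted rather than executed; it is the technical heart of the uniqueness claim (a Crandall--Liggett-type double estimate on $\|u(j\lambda)-J_{\lambda}^{k}\phi\|_{X}$), so it needs either to be carried out or to be cited precisely.
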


We conclusion this section by stating some results for the strongly continuous semigroup of bounded linear operators associated with the problem (\ref{sec:1}) in the case that $\mathcal{F}$ and $\mathcal{G}$ are bounded linear operators. This results follows from \cite{W}, sec 3.3, pp. 98.

\begin{theorem}
Let $\mathcal{F}$ be a bounded linear operator from $L^{1}$ into $\mathbb{R}^{n}$, let $\mathcal{G}$ be a bounded linear operator from $L^{1}$ to $L^{1}$, let $U(t),t\geq 0$, be the strongly continuous semigroup of bounded linear operators in $L^{1}$ as in Theorem \ref{sec:T33}, let $\mathcal{B}$ be the infinitesimal generator of $U(t), t \geq 0$, and let $\omega=|\mathcal{F}|+|\mathcal{G}|$. The following hold:
\begin{itemize}
\item[(i)] $\overline{D(\mathcal{B})}=L^{1}$;
\item[(ii)] $-\mathcal{B}+\omega I \text{ is accretive in }L^{1}$;
\item[(iii)] $(I-\lambda \mathcal{B})^{-1}$ is a bounded everywhere defined linear operator in $L^{1}$, for all $0<\lambda<\omega^{-1}$;
\item[(iv)] For each $\phi\in L^{1}$, $\lim_{n\rightarrow\infty}(I-t/n\mathcal{B})^{-n}\phi=U(t)\phi$ uniformly in bounded intervals of $t$.
\end{itemize} 
\end{theorem}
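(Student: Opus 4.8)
The plan is to treat this as the classical Hille--Yosida / Lumer--Phillips circle of results for the $C_0$-semigroup of bounded linear operators $U(t)$, whose only nonstandard input is the growth bound $\|U(t)\|_X\le e^{\omega t}$ supplied by Theorem \ref{sec:T33}. Throughout I use that, by the definition of the infinitesimal generator, the difference quotient $h^{-1}(U(h)\phi-\phi)$ converges to $\mathcal{B}\phi$ exactly on $D(\mathcal{B})$, that $\tfrac{d}{dt}U(t)\phi=\mathcal{B}U(t)\phi$ there, and that $U(t)$ is strongly continuous.

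For (i) I would establish density by the standard Ces\`aro-mean argument: given $\phi\in L^{1}$ and $t>0$, set $\phi_{t}:=t^{-1}\int_{0}^{t}U(s)\phi\,ds$. Using the semigroup property one computes $h^{-1}(U(h)-I)\phi_{t}=(ht)^{-1}\bigl(\int_{t}^{t+h}U(s)\phi\,ds-\int_{0}^{h}U(s)\phi\,ds\bigr)\to t^{-1}(U(t)\phi-\phi)$ as $h\to0^{+}$, so $\phi_{t}\in D(\mathcal{B})$; strong continuity gives $\phi_{t}\to\phi$ in $L^{1}$ as $t\to0^{+}$, whence $\overline{D(\mathcal{B})}=L^{1}$.

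For (ii) and (iii) the key object is the resolvent, which I would build as the Laplace transform of the semigroup. For $\lambda>\omega$ the integral $R(\lambda)\phi:=\int_{0}^{\infty}e^{-\lambda s}U(s)\phi\,ds$ converges absolutely, since $\|e^{-\lambda s}U(s)\phi\|_{X}\le e^{(\omega-\lambda)s}\|\phi\|_{X}$, and one verifies in the usual way that $R(\lambda)=(\lambda I-\mathcal{B})^{-1}$ is an everywhere-defined bounded operator with $\|R(\lambda)\|\le(\lambda-\omega)^{-1}$. For $0<\lambda<\omega^{-1}$, put $\mu:=1/\lambda>\omega$; then $I-\lambda\mathcal{B}=\lambda(\mu I-\mathcal{B})$, so $(I-\lambda\mathcal{B})^{-1}=\mu\,R(\mu)$ is bounded and everywhere defined, which is (iii). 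For (ii) I would convert the resolvent estimate into accretivity: the bound $\|R(\mu)\|\le(\mu-\omega)^{-1}$ is equivalent to $\|(\mu I-\mathcal{B})x\|_{X}\ge(\mu-\omega)\|x\|_{X}$ for $x\in D(\mathcal{B})$ and $\mu>\omega$. Given $\lambda>0$ and $x\in D(\mathcal{B})$, set $\mu:=\lambda^{-1}+\omega>\omega$; then $(I+\lambda(-\mathcal{B}+\omega I))x=\lambda(\mu I-\mathcal{B})x$, so $\|(I+\lambda(-\mathcal{B}+\omega I))x\|_{X}\ge\lambda(\mu-\omega)\|x\|_{X}=\|x\|_{X}$, i.e.\ $-\mathcal{B}+\omega I$ is accretive.

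Finally, for (iv) I would invoke the Crandall--Liggett proposition stated above with $\mathcal{A}:=-\mathcal{B}$: parts (ii) and (iii) verify its hypotheses, since $\mathcal{A}+\omega I=-\mathcal{B}+\omega I$ is accretive and $R(I+\lambda\mathcal{A})=R(I-\lambda\mathcal{B})=L^{1}\supset\overline{D(\mathcal{A})}$ for $0<\lambda<\omega^{-1}$ by (iii) and (i). The proposition then produces a strongly continuous semigroup $S(t)\phi:=\lim_{n\to\infty}(I-(t/n)\mathcal{B})^{-n}\phi$, uniformly on bounded $t$-intervals, and it remains to identify $S(t)=U(t)$. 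For $\phi\in D(\mathcal{B})$ both $t\mapsto S(t)\phi$ and $t\mapsto U(t)\phi$ solve $u'(t)=\mathcal{B}u(t),\ u(0)=\phi$; uniqueness for this linear Cauchy problem (again a consequence of the growth bound) forces agreement on $D(\mathcal{B})$, and density from (i) together with the uniform bounds $\|S(t)\|,\|U(t)\|\le e^{\omega t}$ extends the equality to all of $L^{1}$. I expect the convergence in (iv) together with this identification to be the main obstacle: both are classical but rest on the resolvent identity and the Yosida-approximation estimates, and they are precisely the points at which the bound $e^{\omega t}$ is needed to control the operator powers $R(\mu)^{n}$.
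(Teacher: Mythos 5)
Your proposal is correct, but it does not follow the route the paper points to: the paper gives no proof of its own here and simply cites \cite{W}, sec.\ 3.3, and the argument there (mirrored later in this thesis, e.g.\ in the explicit resolvent formula of Theorem \ref{sec:LMCC2}) constructs $(I-\lambda\mathcal{B})^{-1}$ \emph{directly from the differential operator}: one solves the first-order ODE $\phi+\lambda\phi'-\lambda\mathcal{G}\phi=\psi$ along characteristics, reduces the boundary condition $\phi(0)=\mathcal{F}(\phi)$ to a scalar fixed-point equation, and uses $0<\lambda<\omega^{-1}$ with $\omega=|\mathcal{F}|+|\mathcal{G}|$ to make the associated map a contraction; accretivity and the range condition are then read off from this explicit formula, and (iv) follows from Crandall--Liggett exactly as you say. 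You instead build the resolvent abstractly as the Laplace transform $\int_{0}^{\infty}e^{-\lambda s}U(s)\phi\,ds$ of the semigroup already furnished by Theorem \ref{sec:T33} with its bound $\|U(t)\|\leq e^{\omega t}$, and deduce (ii) and (iii) from the estimate $\|R(\mu)\|\leq(\mu-\omega)^{-1}$. Both arguments are sound; yours is shorter and purely soft (classical Hille--Yosida plus the Ces\`aro-mean density argument and the standard $\frac{d}{ds}U(t-s)w(s)=0$ uniqueness trick to identify the Crandall--Liggett limit with $U(t)$), while Webb's explicit construction additionally delivers the concrete resolvent kernel that the paper needs later for the characteristic equation and spectral analysis. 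One cosmetic caveat: your identification step attributes uniqueness of the linear Cauchy problem to the growth bound, whereas it really rests on $U$ being a semigroup generated by $\mathcal{B}$; the standard computation $\frac{d}{ds}\bigl(U(t-s)w(s)\bigr)=0$ is what you want to cite there.
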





\chapter{Equilibria and their stability}\label{chap:equi}

\section{Existence and uniqueness of either the positive equilibrium or a trivial equilibrium}\label{sec:nonequi}
In previous sections we use the semigroup theory to establish the existence of unique, postive solutions of the model for all positive time, the next natural step is to use the mathematical population models to predict whether or not a biological population will survive. More precisely, we are interested in analyzing the existence and uniqueness of the nontrivial (or trivial) steady states or equilibria of the models. Mathematical analysis of the existence and stability of a nontrivial equilibrium can be used to show the convergence of a population to the nontrivial steady state. In the first section of this chapter we study the problem of existence of a nontrivial equilibrium to (\ref{sec:1}). In the next section we will investigate the stability and instability of a nontrivial equilibrium of (\ref{sec:1}).

\subsection{Existence and uniqueness of an equilibrium solution}

We begin with some basic definition and proposition (see \cite{W}, sec 4.1, pp.136) used in finding the nontrivial equilibrium of the model (\ref{sec:1}).

\begin{definition}\label{def:equi}
Let H.1 hold, let $\phi\in X_{+}$, and let $p$ be the solution of (\ref{sec:1}) on $[0,T_{\phi})$. Then, $p$ is an equilibrium of (\ref{sec:1}) if and only if $T_{\phi}=\infty$ and $p(\cdot, t)=\phi$ for all $t\geq 0$. 
\end{definition}

We obtain the following proposition (see \cite{W}, sec 4.1, pp.136) from definition \ref{def:equi}:

\begin{proposition}
Let H.1 hold. Let $\mathcal{A}$ be defined as in Definition \ref{sec:definge}, let $\phi\in X_{+}$ and let $p$ be a solution of (\ref{sec:1}) on $[0,T_{\phi})$. Then, $p$ is an equilibrium of (\ref{sec:1}) if and only if $\mathcal{A}\phi=0$.
\end{proposition}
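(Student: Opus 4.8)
The plan is to prove the equivalence by unwinding both sides to the same condition, using the fact (established in the previous definition) that $-\mathcal{A}$ is the infinitesimal generator of the semigroup $U(t)$ and that $U(t)\phi = p(\cdot,t)$ where $p$ solves \eqref{sec:1}. The key observation is that an equilibrium is a fixed point of the semigroup, and fixed points of a semigroup are exactly the zeros of its generator. I would organize the argument as two implications.

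\emph{Forward direction.} Suppose $p$ is an equilibrium, so by Definition \ref{def:equi} we have $T_\phi = \infty$ and $p(\cdot,t) = \phi$ for all $t \geq 0$; equivalently $U(t)\phi = \phi$ for all $t \geq 0$. Plugging this into the defining limit of the infinitesimal generator,
\begin{align*}
\mathcal{A}\phi = -\lim_{t\to 0^+}\frac{U(t)\phi - \phi}{t} = -\lim_{t\to 0^+}\frac{\phi - \phi}{t} = 0,
\end{align*}
where I have used that $-\mathcal{A}$ generates $U(t)$, so the generator limit in Definition \ref{sec:definge} evaluates to $-\mathcal{A}\phi$. Since the difference quotient is identically zero, the limit exists and equals zero, which simultaneously shows $\phi \in D(\mathcal{A})$ and $\mathcal{A}\phi = 0$.

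\emph{Reverse direction.} Suppose $\mathcal{A}\phi = 0$ (in particular $\phi \in D(\mathcal{A})$). I would invoke the invariance-and-differentiability result stated earlier (the proposition giving $U(t)[D(\mathcal{A})] \subset D(\mathcal{A})$ and $\frac{d^+}{dt}U(t)\phi = \mathcal{A}U(t)\phi$ for $\phi \in D(\mathcal{A})$, $t \geq 0$) to set up a differential identity for the trajectory. Writing $v(t) = U(t)\phi$, this proposition gives $\frac{d^+}{dt}v(t) = -\mathcal{A}v(t)$, consistent with $-\mathcal{A}$ being the generator; the goal is to show $v(t) \equiv \phi$. The natural route is to control $\|U(t)\phi - \phi\|_X$ and show it vanishes, using the $L^1$-contraction/Lipschitz estimate associated with $U(t)$ (the bound $\|U(t)\phi_1 - U(t)\phi_2\| \le e^{\omega t}\|\phi_1 - \phi_2\|$ coming from accretivity of $\mathcal{A} + \omega I$) together with the semigroup property to set up a Gr\"onwall-type argument: since the right derivative of $v$ is $-\mathcal{A}v$ and $-\mathcal{A}$ is accretive up to the shift $\omega$, one estimates $\frac{d^+}{dt}\|v(t) - \phi\|_X$ and concludes $v(t) = \phi$ for all $t$.

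The main obstacle is the reverse direction, specifically making the passage from ``$\mathcal{A}\phi = 0$'' to ``$U(t)\phi = \phi$ for all $t$'' fully rigorous in the \emph{nonlinear} semigroup setting, where one cannot simply exponentiate the generator. The cleanest device is to compare $U(t)\phi$ with the constant function $\phi$ viewed as the trajectory of the stationary problem: both are Lipschitz solutions of $\frac{d}{dt}u = -\mathcal{A}u$ with the same initial value $\phi$ (the constant $\phi$ works precisely because $\mathcal{A}\phi = 0$), so I would appeal to the uniqueness statement in the final theorem of the previous chapter — that such a Lipschitz solution must coincide with $U(s)\phi$ — to conclude $U(t)\phi = \phi$, and hence $T_\phi = \infty$ with $p(\cdot,t) = \phi$, i.e. $p$ is an equilibrium. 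I would verify carefully that the constant curve $s \mapsto \phi$ satisfies all hypotheses of that uniqueness theorem (Lipschitz continuity is immediate, $\phi \in D(\mathcal{A})$ holds by assumption, and $\frac{d}{ds}\phi = 0 = -\mathcal{A}\phi$), which is where the assumption $\mathcal{A}\phi = 0$ is genuinely used.
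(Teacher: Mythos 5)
Your argument is correct in substance, but it is worth pointing out that the paper does not actually supply a proof of this proposition: it is quoted from Webb's monograph, and the computation that plays the role of a proof appears only implicitly in the proof of Theorem \ref{sec:T4.1'}, where a time-independent solution $\hat{\phi}$ is shown to satisfy $\hat{\phi}'(a)=\mathcal{G}(\hat{\phi})(a)$ together with $\hat{\phi}(0)=\mathcal{F}(\hat{\phi})$ --- which is word for word the statement that $\hat{\phi}\in D(\mathcal{A})$ and $\mathcal{A}\hat{\phi}=0$ for the concrete realization $\mathcal{A}\phi=\phi'-\mathcal{G}(\phi)$. That route is entirely elementary: one substitutes a constant-in-time function into the integral equation (\ref{sec:inte11}) and reads off the stationary system, so the equivalence is essentially a matter of unwinding definitions. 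Your route is the abstract semigroup-theoretic one: the forward direction via the vanishing difference quotient is immediate and correct, and the reverse direction via the uniqueness theorem for Lipschitz solutions of $(d/ds)u=-\mathcal{A}u$ (applied to the constant curve $s\mapsto\phi$) is a legitimate and cleaner alternative to a Gr\"onwall estimate. What the abstract approach buys is independence from the specific form of $\mathcal{F}$ and $\mathcal{G}$; what it costs is two loose ends you should tie up. First, the uniqueness theorem you invoke is stated under the blanket hypothesis that $T_{\psi}=\infty$ for all $\psi\in X_{+}$, whereas the proposition must \emph{produce} $T_{\phi}=\infty$ as part of the conclusion that $p$ is an equilibrium; to avoid circularity you should run the comparison only on $[0,T]$ for arbitrary $T<T_{\phi}$, conclude $p(\cdot,t)=\phi$ there, and then invoke the continuation criterion ($T_{\phi}<\infty$ forces $\limsup_{t\to T_{\phi}^{-}}\left\|p(\cdot,t)\right\|_{X}=\infty$, impossible since the norm is constantly $\left\|\phi\right\|_{X}$) to get $T_{\phi}=\infty$. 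Second, there is a sign ambiguity in the paper itself --- Definition \ref{sec:definge} calls the generator $\mathcal{A}$, while the later definition (\ref{sec:infi1}) makes $-\mathcal{A}$ the generator --- but since the condition being tested is $\mathcal{A}\phi=0$, this does not affect the validity of either direction.
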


We define,

\begin{align*}
\Pi(b,a; s,t):=\exp[-\int^{a}_{b}(\mu_{0}(\hat{a}, s)+\mu_{1}(\hat{a},t)+ \mu_{2}(\hat{a}))d\hat{a}],\ \text{ for } 0\leq b\leq a.
\end{align*}
$\Pi(b,a;\eta_{0}(Q_{0}\phi), \eta_{1}(Q_{1}\phi))$ represents the probability that a member of the population of age b will survive to age a when exposed to the all-cause mortality $\mu_{0}(\hat{a}, \eta_{0}(Q_{0}\phi))$, the age-dependent and post-reproductive population-dependent mortality on the pre-reproductive population $\mu_{1}(\hat{a}, \eta_{1}(Q_{1}\phi))$, and the age-dependent mortality $\mu_{2}(\hat{a})$, for $\hat{a}\in[0,a_{1}]$ with age distribution $\phi\in X_{+}$.

We define the net reproduction function, for $\phi\in X_{+}$,
\begin{align}\label{sec:4.1}
\mathcal{R}(\phi):=\int_{a_{\min}}^{a_{\max}}\beta(a; \eta_{2}(Q_{0}\phi))\Pi(0,a; \eta_{0}(Q_{0}\phi), \eta_{1}(Q_{1}\phi))da.
\end{align}
Let $\mathcal{R}(0)$ be the intrinsic growth constant (IGC), which is an indicator of the capacity of the species to survive independent of the effects of crowding and all other nonlinear effects in this model.

Let $\Phi(x):=\int_{0}^{a_{\min}}\mu_{1}(\hat{a},x)d\hat{a},$ for $x \geq 0$. By H.1, we have $\Phi(0)=\eta_{1}(0)=0$. Let $\tilde{\Phi}(z)$ and $\tilde{\eta}_{1}(z)$ be the odd extensions of $\Phi(z)$ and $\eta_{1}(z)$ to $\mathbb{R}$. We deduce from H.1 that $\tilde{\Phi}^{-1}(z)$ is continuous for $z\in\mathbb{R}$. We make the following assumptions:
\begin{itemize}
\item[H.2.] Let $\int_{a_{\min}}^{a_{\max}}\beta(a; \eta_{2}(Q_{0}))e^{-\int_{0}^{a}(\mu_{0}(\hat{a}, \eta_{0}(Q_{0}))+\mu_{2}(\hat{a}))d\hat{a}}da < 1$, for $Q_{0}$ sufficiently large.
\item[H.3.] The mapping $\Theta: [0, \infty)\rightarrow \mathbb{R}$, defined by,
\begin{align}\label{sec:EQE1}
&\Theta(x):=\frac{Q_{1}(x)\int^{a_{1}}_{0}\omega_{0}(a)\Pi(0,a;\eta_{0}(x), \eta_{1}(Q_{1}(x)))da}{\int^{a_{1}}_{0}\omega_{1}(a)\Pi(0,a; \eta_{0}(x), \eta_{1}(Q_{1}(x)))da}\ \text{ for }x\geq 0,\\
\nonumber&\text{ is decreasing, where }\\
\nonumber&Q_{1}(x):=\tilde{\eta}_{1}^{-1}\circ \tilde{\Phi}^{-1}[\ln(\int_{a_{\min}}^{a_{\max}}\beta(a; \eta_{2}(x))e^{-\int_{0}^{a}(\mu_{0}(\hat{a}, \eta_{0}(x))+\mu_{2}(\hat{a}))d\hat{a}}da)].
\end{align}
\end{itemize}

\begin{remark}
\begin{itemize}
\item[]
\item[(a)]
H.2 requires that the intrinsic growth constant IGC=$\int_{a_{\min}}^{a_{\max}}\beta(a; \eta_{2}(Q_{0}))\Pi(0,a; \eta_{0}(Q_{0}),\eta_{1}(Q_{0}))da$ $\leq \int_{a_{\min}}^{a_{\max}}\beta(a; \eta_{2}(Q_{0}))e^{-\int_{0}^{a}(\mu_{0}(\hat{a}, \eta_{0}(Q_{0}))+\mu_{2}(\hat{a}))d\hat{a}}da < 1$ as  $Q_{0}$ sufficiently large.
\item[(b)]
H.3 requires that the fixed point mapping defined by $\Theta(x)$ for $x \geq 0$ is monotone decreasing. 
\item[(c)]
If H.3 is violated, we have constructed a numerical example to illustrate the existence of multiple poistive equilibria. 
\end{itemize}
\end{remark}

In the following theorem, we show the existence and uniqueness of either the nontrivial equilibrium or only the trivial equilibrium of (\ref{sec:1}) under certain conditions. The method involves a fixed-point approach in an operator theoretic framework combined with utilizing the special property of $\mu_{1}(a,x)$ for $(a,x)\in [0,a_{1}]\times (0,\infty)$. 

\begin{theorem}\label{sec:T4.1'} Let H.1-H.2 hold.

\begin{itemize}
\item[$(a)$] If IGC $> 1$, there exists a positive equilibrium for the system (\ref{sec:1}).
\item[$(b)$] If IGC $\leq 1$, the trivial equilibrium is the only equilibrium solution for the system (\ref{sec:1}).
\item[$(c)$] If IGC $> 1$ and H.3 is satisfied, then system (\ref{sec:1}) admits a unique positive equilibrium.
\end{itemize}

\end{theorem}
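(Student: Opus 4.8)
The plan is to characterize every equilibrium explicitly and then reduce the existence/uniqueness question to the scalar fixed-point equation built from the map $\Theta$ of H.3. First I would invoke the earlier proposition that $\phi\in X_{+}$ is an equilibrium iff $\mathcal{A}\phi=0$, i.e. $\phi'(a)=\mathcal{G}(\phi)(a)$ on $[0,a_{1}]$ with $\phi(0)=\mathcal{F}(\phi)$. Since $Q_{0}\phi,Q_{1}\phi$ are constants once $\phi$ is fixed, this is a linear ODE with solution $\phi(a)=B\,\Pi(0,a;\eta_{0}(Q_{0}\phi),\eta_{1}(Q_{1}\phi))$, where $B:=\phi(0)$. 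Feeding this into the boundary condition gives $B=B\,\mathcal{R}(\phi)$, so either $B=0$ (the trivial equilibrium) or $\mathcal{R}(\phi)=1$. The decisive simplification is the hypothesis $\mu_{1}(a,z)=0$ for $a>a_{\min}$: for $a\in[a_{\min},a_{\max}]$ the $\mu_{1}$-integral saturates at $\Phi(\eta_{1}(Q_{1}\phi))$, so $\mathcal{R}(\phi)=e^{-\Phi(\eta_{1}(Q_{1}\phi))}\,\mathcal{I}(Q_{0}\phi)$, where $\mathcal{I}(Q_{0}):=\int_{a_{\min}}^{a_{\max}}\beta(a;\eta_{2}(Q_{0}))e^{-\int_{0}^{a}(\mu_{0}(\hat a,\eta_{0}(Q_{0}))+\mu_{2}(\hat a))d\hat a}\,da$ and $\mathcal{I}(0)=$ IGC.

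Next I would convert the two self-consistency relations $Q_{i}\phi=B\int_{0}^{a_{1}}\omega_{i}(a)\Pi(0,a;\eta_{0}(Q_{0}\phi),\eta_{1}(Q_{1}\phi))\,da$, $i=0,1$, into a single equation in $x:=Q_{0}$. The condition $\mathcal{R}(\phi)=1$ reads $\Phi(\eta_{1}(Q_{1}))=\ln\mathcal{I}(x)$, which (through the odd extensions $\tilde\Phi,\tilde\eta_{1}$, continuous by H.1) determines $Q_{1}=Q_{1}(x):=\tilde\eta_{1}^{-1}\circ\tilde\Phi^{-1}(\ln\mathcal{I}(x))$, exactly as in H.3. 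Dividing the $i=0$ relation by the $i=1$ relation eliminates $B$ and yields $x=\Theta(x)$ with $\Theta$ the map of H.3. Recovering $Q_{1}(x)$, then $B=x/\int_{0}^{a_{1}}\omega_{0}\Pi$, then $\phi$, one checks that this correspondence between positive equilibria and positive fixed points of $\Theta$ is a bijection, so the whole theorem becomes a statement about zeros of $g(x):=\Theta(x)-x$ on $(0,\infty)$.

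For part (a) I would first record that $\Theta$ is continuous (the $C^{1}$ data composed with the continuous inverses, the denominator $\int_{0}^{a_{1}}\omega_{1}\Pi$ being strictly positive). If IGC $>1$ then $\ln\mathcal{I}(0)>0$, so $Q_{1}(0)>0$ and hence $g(0)=\Theta(0)>0$; by H.2, $\mathcal{I}(x)<1$ for large $x$, so $\ln\mathcal{I}(x)<0$, the odd extensions give $Q_{1}(x)<0$, and therefore $\Theta(x)<0<x$, i.e. $g(x)<0$. The intermediate value theorem then produces $x^{\ast}>0$ with $\Theta(x^{\ast})=x^{\ast}$, and the bijection above turns $x^{\ast}$ into a positive equilibrium. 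For part (c), H.3 makes $\Theta$ nonincreasing, so $g=\Theta-x$ is strictly decreasing and has at most one zero; combined with part (a) this gives exactly one, hence a unique positive equilibrium.

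The genuinely delicate part is (b). A positive equilibrium forces $\mathcal{R}(\phi)=1$ with $Q_{1}=Q_{1}(x^{\ast})>0$, whence $\Phi(\eta_{1}(Q_{1}))>0$ (here $\mu_{1,z}>0$ is used) and so $1=\mathcal{R}(\phi)=e^{-\Phi(\eta_{1}(Q_{1}))}\mathcal{I}(x^{\ast})<\mathcal{I}(x^{\ast})$; thus a positive equilibrium can exist only if $\mathcal{I}(x^{\ast})>1$ for some $x^{\ast}>0$. The main obstacle is excluding this when IGC $=\mathcal{I}(0)\le1$: H.1--H.2 only pin down the value $\mathcal{I}(0)$ and the large-$Q_{0}$ behaviour of $\mathcal{I}$, so ruling out an intermediate excursion of $\mathcal{I}$ above $1$ requires the monotone (nonincreasing) dependence of $\mathcal{I}$ on $Q_{0}$, i.e. that crowding does not raise net fertility, which is the biologically intended regime. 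Under that monotonicity $\mathcal{I}(x^{\ast})\le\mathcal{I}(0)\le1$ contradicts $\mathcal{I}(x^{\ast})>1$, equivalently $\Theta(x)<x$ for every $x>0$, leaving the trivial equilibrium as the only one; I would flag this monotonicity as the hypothesis on which part (b) really rests.
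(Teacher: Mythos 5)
Your argument for parts (a) and (c) is essentially the paper's own proof: solve the equilibrium ODE to get $\hat\phi(a)=\hat\phi(0)\Pi(0,a;\cdot,\cdot)$, use $\mu_{1}(a,z)=0$ for $a>a_{\min}$ to factor the net reproduction condition and solve for $Q_{1}$ as $Q_{1}(x)=\tilde\eta_{1}^{-1}\circ\tilde\Phi^{-1}(\ln\mathcal{I}(x))$, divide the two weighted-moment identities to get the fixed-point map $\Theta$, and apply the intermediate value theorem plus the monotonicity of H.3. The only cosmetic difference in (a) is that the paper first locates the point $\tilde{Q}_{0}$ where $Q_{1}(\tilde{Q}_{0})=0$ (so $\Theta(\tilde{Q}_{0})=0<\tilde{Q}_{0}$) and runs the IVT on $[0,\tilde{Q}_{0}]$, whereas you send $x\to\infty$; both rest on the same use of H.2.

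Where you genuinely diverge is part (b): the paper's proof text stops after establishing existence and invoking H.3 for uniqueness and never argues (b) at all, while you supply an actual argument and correctly isolate its weak point. Your observation is sound: a positive equilibrium forces $\mathcal{I}(x^{\ast})=e^{\Phi(\eta_{1}(Q_{1}))}>1$ at some $x^{\ast}>0$, and H.1--H.2 only control $\mathcal{I}(0)$ and the behaviour of $\mathcal{I}$ for large argument, so excluding an intermediate excursion of $\mathcal{I}$ above $1$ when IGC $\le1$ does require that $\mathcal{I}$ be non-increasing in $Q_{0}$ (i.e.\ that $\beta$ not increase and $\mu_{0}$ not decrease with crowding), which is not among the stated hypotheses. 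Flagging that as the hypothesis on which (b) actually rests is a legitimate improvement on what the paper provides.
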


\begin{proof}A time-independent solution $\hat{\phi} \in X_{+}$ of the system (\ref{sec:1}) satisfies:
\begin{align}\label{sec:4.2}
&\hat{\phi}'(a)=-(\mu_{0}(a, \eta_{0}(Q_{0}\hat{\phi}))+\mu_{1}(a, \eta_{1}(Q_{1}\hat{\phi}))+\mu_{2}(a))\hat{\phi}(a).
\end{align}
\begin{align}\label{sec:4.3}
&\hat{\phi}(0)=\int^{a_{\max}}_{a_{\min}}\beta(a; \eta_{2}(Q_{0}\hat{\phi}))\hat{\phi}(a)da.
\end{align}
Because $0$ satisfies (\ref{sec:4.2})-(\ref{sec:4.3}), there always is the trivial equilibrium $\hat{\phi}=0$. We solve 
(\ref{sec:4.2})-(\ref{sec:4.3}) to obtain,
\begin{align}\label{sec:4.4}
\hat{\phi}(a)&=\hat{\phi}(0)\Pi(0,a; \eta_{0}(Q_{0}\hat{\phi}), \eta_{1}(Q_{1}\hat{\phi})).
\end{align}
We plug (\ref{sec:4.4}) into the initial condition (\ref{sec:4.3}) to get,
\begin{align*}
\hat{\phi}(0)=\hat{\phi}(0)\int_{a_{\min}}^{a_{\max}}\beta(a; \eta_{2}(Q_{0}\hat{\phi}))\Pi(0,a; \eta_{0}(Q_{0}\hat{\phi}), \eta_{1}(Q_{1}\hat{\phi}))da.
\end{align*}
Dividing both sides by $\hat{\phi}(0)$ gives,
\begin{align}\label{sec:4.5}
1=\int_{a_{\min}}^{a_{\max}}\beta(a; \eta_{2}(Q_{0}\hat{\phi}))\Pi(0,a; \eta_{0}(Q_{0}\hat{\phi}), \eta_{1}(Q_{1}\hat{\phi}))da.
\end{align}
We use the assumption, $\mu_{1}(a, z)=0$ when $a> a_{\min}$, to obtain,
\begin{align}\label{sec:4.6}
&1=e^{-\int_{0}^{a_{\min}}\mu_{1}(\hat{a},\eta_{1}(Q_{1}\hat{\phi}))d\hat{a}}\\
\nonumber&\times\int_{a_{\min}}^{a_{\max}}\beta(a; \eta_{2}(Q_{0}\hat{\phi}))e^{-\int_{0}^{a}(\mu_{0}(\hat{a}, \eta_{0}(Q_{0}\hat{\phi}))+\mu_{2}(\hat{a}))d\hat{a}}da.
\end{align}
We take the natural logarithm on both sides of (\ref{sec:4.6}), to obtain,
\begin{align}\label{sec:4.6'}
&\tilde{\Phi}(\tilde{\eta}_{1}(Q_{1}\hat{\phi}))=\ln[\int_{a_{\min}}^{a_{\max}}\beta(a; \eta_{2}(Q_{0}\hat{\phi}))e^{-\int_{0}^{a}(\mu_{0}(\hat{a}, \eta_{0}(Q_{0}\hat{\phi}))+\mu_{2}(\hat{a}))d\hat{a}}da].
\end{align}
Since $\eta_{1}(z)$ for $z\geq 0$ is strictly monotone increasing, onto, continuous and $\tilde{\eta}_{1}(z)$ is the odd continuous extension of $\eta_{1}(z)$ to $\mathbb{R}$, we have $\tilde{\eta}_{1}(z)$ is invertible and its inverse is continuous on $\mathbb{R}$.  We apply $\tilde{\eta}_{1}^{-1}\circ \tilde{\Phi}^{-1}$ on both sides of (\ref{sec:4.6'}) to obtain,
\begin{align*}
Q_{1}\hat{\phi}=\tilde{\eta}_{1}^{-1}\circ \tilde{\Phi}^{-1}[\ln(\int_{a_{\min}}^{a_{\max}}\beta(a; \eta_{2}(Q_{0}\hat{\phi}))e^{-\int_{0}^{a}(\mu_{0}(\hat{a}, \eta_{0}(Q_{0}\hat{\phi}))+\mu_{2}(\hat{a}))d\hat{a}}da)].
\end{align*}
We define $Q_{1}:[0,\infty)\rightarrow \mathbb{R}$ by, for $Q_{0}\geq 0$,
\begin{align}\label{sec:4.7}
Q_{1}(Q_{0}):=\tilde{\eta}_{1}^{-1}\circ \tilde{\Phi}^{-1}[\ln(\int_{a_{\min}}^{a_{\max}}\beta(a; \eta_{2}(Q_{0}))e^{-\int_{0}^{a}(\mu_{0}(\hat{a}, \eta_{0}(Q_{0}))+\mu_{2}(\hat{a}))d\hat{a}}da)].
\end{align}
If $\mathcal{R}(0)=IGC >1$, we derive from H.1 that $\int_{0}^{a_{\min}}\mu_{1}(a, 0)da=0$, $\eta_{1}(0)=0$ and from (\ref{sec:4.7}), we obtain,
\begin{align*}
Q_{1}(0)&=\tilde{\eta}_{1}^{-1}\circ \tilde{\Phi}^{-1}[\ln(\int_{a_{\min}}^{a_{\max}}\beta(a; \eta_{2}(0))e^{-\int_{0}^{a}(\mu_{0}(\hat{a}, \eta_{0}(0))+\mu_{1}(\hat{a}))d\hat{a}}da)]\\
&=\eta_{1}^{-1}\circ \Phi^{-1}[\ln(\int_{a_{\min}}^{a_{\max}}\beta(a; \eta_{2}(0))\Pi(0,a; \eta_{0}(0), \eta_{1}(0))da)]>0.
\end{align*}
We have constructed a continuous function $Q_{1}(z)$
for $z\geq 0$. By H.2, $Q_{1}(Q_{0})<0$ for $Q_{0}$ sufficiently large while $Q_{1}(0)>0$. Then, we apply the intermediate value theorem to obtain some $\tilde{Q}_{0}>0$ such that
$Q_{1}(\tilde{Q}_{0})=0$.
In order to derive a second equation for $Q_{0}\hat{\phi}$ and $Q_{1}\hat{\phi}$, we integrate $\omega_{i}(a)\hat{\phi}(a)$ (\ref{sec:4.4}), for $i=0,1$, over $[0,a_{1}]$ to obtain,
\begin{align}\label{sec:4.8}
Q_{0}\hat{\phi}&=\int_{0}^{a_{1}}\omega_{0}(a)\hat{\phi}(a)da=\hat{\phi}(0)\int^{a_{1}}_{0}\omega_{0}(a)\Pi(0,a;\eta_{0}(Q_{0}\hat{\phi}), \eta_{1}(Q_{1}\hat{\phi}))da;
\end{align}
\begin{align}\label{sec:4.9}
Q_{1}\hat{\phi}&=\int_{0}^{a_{1}}\omega_{1}(a)\hat{\phi}(a)da=\hat{\phi}(0)\int^{a_{1}}_{0}\omega_{1}(a)\Pi(0,a;\eta_{0}(Q_{0}\hat{\phi}), \eta_{1}(Q_{1}\hat{\phi}))da.
\end{align}
We use (\ref{sec:4.8}) to divide (\ref{sec:4.9}) to obtain,
\begin{align}\label{sec:4.10}
\frac{Q_{0}\hat{\phi}}{Q_{1}\hat{\phi}}=\frac{\int^{a_{1}}_{0}\omega_{0}(a)\Pi(0,a;\eta_{0}(Q_{0}\hat{\phi}), \eta_{1}(Q_{1}\hat{\phi}))da}{\int^{a_{1}}_{0}\omega_{1}(a)\Pi(0,a;\eta_{0}(Q_{0}\hat{\phi}), \eta_{1}(Q_{1}\hat{\phi}))da}.
\end{align}
From (\ref{sec:4.7}) and (\ref{sec:4.10}), we define $\Theta: [0, \infty)\rightarrow \mathbb{R}$:
\begin{align}\label{sec:4.11}
\Theta(Q_{0}):=\frac{Q_{1}(Q_{0})\int^{a_{1}}_{0}\omega_{0}(a)\Pi(0,a;\eta_{0}(Q_{0}), \eta_{1}(Q_{1}(Q_{0})))da}{\int^{a_{1}}_{0}\omega_{1}(a)\Pi(0,a; \eta_{0}(Q_{0}), \eta_{1}(Q_{1}(Q_{0})))da},\ \text{ for }Q_{0}\geq 0.
\end{align}
Our next goal is to show there exists a fixed point for $\Theta(z)$, $z\geq 0$.
It follows from (\ref{sec:4.11}) that,
\begin{align*}
\Theta(0)=\frac{Q_{1}(0)\int^{a_{1}}_{0}\omega_{0}(a)\Pi(0,a;\eta_{0}(0),\eta_{1}(Q_{1}(0)))da}{\int^{a_{1}}_{0}\omega_{1}(a)\Pi(0,a;\eta_{0}(0),\eta_{1}(Q_{1}(0)))da}>0.
\end{align*}
We compute $\Theta(z)$ at $\tilde{Q}_{0}$,
\begin{align*}
\Theta(\tilde{Q}_{0})&=\frac{Q_{1}(\tilde{Q}_{0})\int^{a_{1}}_{0}\omega_{0}(a)\Pi(0,a;\eta_{0}(\tilde{Q}_{0}),\eta_{1}(Q_{1}(\tilde{Q}_{0})))da}{\int^{a_{1}}_{0}\omega_{1}(a)\Pi(0,a;\eta_{0}(\tilde{Q}_{0}),\eta_{1}(Q_{1}(\tilde{Q}_{0})))da}=0<\tilde{Q}_{0}.
\end{align*}
Therefore, the function $\Theta(z)$ is continuous for $z\geq 0$ and $\Theta(0)>0$, $\Theta(\tilde{Q}_{0})<\tilde{Q}_{0}$. We apply the intermediate value theorem again to obtain some $\hat{Q}_{0}>0$, ($0<\hat{Q}_{0}<\tilde{Q}_{0}$) such that $\Theta(\hat{Q}_{0})=\hat{Q}_{0}$. Furthermore, $\hat{Q}_{1}$ is uniquely given by (\ref{sec:4.7}). The following form of a positive equilibrium $\hat{\phi}$ follows from (\ref{sec:4.4}), (\ref{sec:4.8})-(\ref{sec:4.9}).
\begin{align}\label{sec:4.12}
\hat{\phi}(a)&=\frac{\hat{Q}_{i}\Pi(0,a;\eta_{0}(\hat{Q}_{0}),\eta_{1}(\hat{Q}_{1}))}{\int^{a_{1}}_{0}\omega_{i}(a)\Pi(0,a;\eta_{0}(\hat{Q}_{0}),\eta_{1}(\hat{Q}_{1}))da}, \ i=0 \text{ or }1.
\end{align}
The uniqueness of the positive equilibrium is a consequence of H.3.

\end{proof}

In the following Theorem \ref{sec:E4.3} and Example \ref{sec:E4.13}, we let $\eta_{i}(x)=x$ for $x\geq 0$, $i=0,1,2$, $\omega_{0}(a)=\chi_{[0,a_{1}]}(a)$ and $\omega_{1}(a)=\chi_{[a_{\max},a_{1}]}(a)$ for $a\in [0,a_{1}]$, where $\chi_{[c,d]}(a)=1$ if $a\in[c,d]\subseteq [0,a_{1}]$; otherwise, $\chi_{[c,d]}(a)=0$ if $a\notin[c,d]$. It then follows that $Q_{0}(t)=T(t)=\int_{0}^{a_{1}}p(a,t)da$, and $Q_{1}(t)=S(t)=\int_{a_{\max}}^{a_{1}}p(a,t)da$, where $T(t),\: S(t)$ denote total population and senescent population at time $t$, respectively. Furthermore, $J(t)=\int_{0}^{a_{\min}}p(a,t)da$, and $R(t)=\int_{a_{\min}}^{a_{\max}}p(a,t)da$, where $J(t),\: R(t)$ denote juvenile population and reproductive population at time $t$, respectively. In the following Theorem \ref{sec:E4.3}, we provide a sufficient condition for the uniqueness of the nontrivial equilibrium for (\ref{sec:E4.8}) by applying Theorem \ref{sec:T4.1'}:

\begin{theorem}\label{sec:E4.3}
Let H.1 hold. Let $\beta(a,z)=\beta(a)$ for $a\in[a_{\min},a_{\max}]$ and $z\geq 0$. 
 Let $\mu_{0}(a,z)=\eta(a)z$ and $\mu_{1}(a,z)=\mu(a)z$, for $(a,z)\in [0,a_{1}]\times [0,\infty)$, where $\eta,\ \mu\in C_{+}[0,a_{1}]$, and $\mu(a)=0$ for $a>a_{\min}$. Consider the following system,
\begin{align}\label{sec:E4.8}
&p_{t}(a,t)+p_{a}(a,t)=-[\eta(a)T(t)+\mu(a)S(t)+\mu_{2}(a)]p(a,t),\\
\nonumber & 0<a<a_{1},\ t>0,\\
\nonumber &p(0,t)=\int_{a_{\min}}^{a_{\max}}\beta(a)p(a,t)da,\ t>0,\\
\nonumber &p(a,0)=p_{0}(a),\ 0<a<a_{1}.
\end{align}
Then,
\begin{itemize}
\item[$(a)$] If IGC $> 1$, there exists a positive equilibrium for the system (\ref{sec:E4.8}).
\item[$(b)$] If IGC $\leq 1$, the trivial equilibrium is the only equilibrium solution for the system (\ref{sec:E4.8}).
\item[$(c)$] If $e^{\Lambda}\geq IGC > 1$, then system (\ref{sec:E4.8}) admits a unique positive equilibrium.
\end{itemize}
where, $\Lambda:=\frac{\int^{a_{\min}}_{0}\eta(\hat{a})d\hat{a}}{\int^{a_{1}}_{0}\eta(\hat{a})d\hat{a}}(1+\frac{\bar{\eta}}{\bar{\mu}}\frac{\int^{a_{1}}_{0}e^{-\int^{a}_{0}\mu_{2}(\hat{a})d\hat{a}}da}{\int^{a_{1}}_{a_{\max}}e^{-\int^{a}_{0}\mu_{2}(\hat{a})d\hat{a}}da})$
with $\bar{\mu}:=\frac{\int_{0}^{a_{\min}}\mu(a)da}{a_{\min}},\ \bar{\eta}:=\frac{\int_{0}^{a_{\min}}\eta(a)da}{a_{\min}}$.
\end{theorem}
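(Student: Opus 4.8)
The plan is to deduce all three parts from the general Theorem~\ref{sec:T4.1'} by specializing its hypotheses to the present data. Write $E(a)=\int_0^a\eta(\hat a)\,d\hat a$, $M(a)=\int_0^a\mu(\hat a)\,d\hat a$, and $N(a)=\int_0^a\mu_2(\hat a)\,d\hat a$; since $\mu(\hat a)=0$ for $\hat a>a_{\min}$ we have $M(a)=a_{\min}\bar\mu$ for $a\ge a_{\min}$, and $E(a_{\min})=a_{\min}\bar\eta$. With $\eta_i(x)=x$ the net reproduction integrand collapses, and $\mathrm{IGC}=\mathcal R(0)=\int_{a_{\min}}^{a_{\max}}\beta(a)e^{-N(a)}\,da=:g(0)$, where $g(Q_0):=\int_{a_{\min}}^{a_{\max}}\beta(a)e^{-Q_0E(a)-N(a)}\,da$. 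First I would check H.2: because $E(a)\ge E(a_{\min})>0$ on $[a_{\min},a_{\max}]$, we have $g(Q_0)\to 0$ as $Q_0\to\infty$, so $g(Q_0)<1$ for $Q_0$ large. Thus H.1--H.2 hold, and parts $(a)$ and $(b)$ follow at once from Theorem~\ref{sec:T4.1'}$(a)$--$(b)$.

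For part $(c)$ the strategy is to verify that the monotonicity hypothesis H.3 of Theorem~\ref{sec:T4.1'} is implied by $e^{\Lambda}\ge \mathrm{IGC}$, so that Theorem~\ref{sec:T4.1'}$(c)$ applies. In the present case $\Phi(x)=a_{\min}\bar\mu\,x$ is linear, so the auxiliary function of (\ref{sec:4.7}) becomes explicit, $Q_1(Q_0)=\frac{1}{a_{\min}\bar\mu}\ln g(Q_0)$; it is strictly decreasing with $Q_1(0)=\frac{\ln \mathrm{IGC}}{a_{\min}\bar\mu}>0$ and a unique zero $\tilde Q_0$ where $g(\tilde Q_0)=1$. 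Since the fixed point produced in the proof of Theorem~\ref{sec:T4.1'} lies in $(0,\tilde Q_0)$, it suffices to show that $\Theta$ of (\ref{sec:4.11}) is decreasing on $[0,\tilde Q_0]$; combined with $\Theta(0)>0$ and $\Theta(\tilde Q_0)=0<\tilde Q_0$ this forces a single crossing of the diagonal, hence uniqueness.

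The core computation is the logarithmic derivative of $\Theta(Q_0)=Q_1(Q_0)\,I_0(Q_0)/I_1(Q_0)$, where $I_0=\int_0^{a_1}\Pi\,da$, $I_1=\int_{a_{\max}}^{a_1}\Pi\,da$, and $\Pi(a)=e^{-Q_0E(a)-Q_1(Q_0)M(a)-N(a)}$. Differentiating and writing $\langle\cdot\rangle_0,\langle\cdot\rangle_1$ for the $\Pi$-weighted averages over $[0,a_1]$ and $[a_{\max},a_1]$, I would obtain
\[
\frac{d}{dQ_0}\ln\Theta=\frac{Q_1'}{Q_1}+\bigl(\langle E\rangle_1-\langle E\rangle_0\bigr)+Q_1'\bigl(\langle M\rangle_1-\langle M\rangle_0\bigr).
\]
Here $Q_1'=-\langle E\rangle_\beta/(a_{\min}\bar\mu)<0$, where $\langle E\rangle_\beta$ is the $\beta(a)e^{-Q_0E(a)-N(a)}$-weighted mean of $E$ over $[a_{\min},a_{\max}]$, while $\langle M\rangle_1-\langle M\rangle_0\ge 0$ because $M$ attains its maximum $a_{\min}\bar\mu$ on $[a_{\max},a_1]$; thus the first and third terms are $\le 0$, and only $\langle E\rangle_1-\langle E\rangle_0>0$ (from $E$ increasing) works against monotonicity. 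The plan is then to dominate this positive term using $\langle E\rangle_\beta\ge E(a_{\min})$ together with $Q_1\le Q_1(0)=\ln(\mathrm{IGC})/(a_{\min}\bar\mu)$, and to estimate $\langle E\rangle_1-\langle E\rangle_0$ and the senescent difference $\langle M\rangle_1-\langle M\rangle_0=a_{\min}\bar\mu-\langle M\rangle_0$ through the survival weights $e^{-N}$ over $[0,a_1]$ versus $[a_{\max},a_1]$; collecting these bounds should turn $\frac{d}{dQ_0}\ln\Theta\le 0$ into exactly the threshold $\ln(\mathrm{IGC})\le\Lambda$.

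The main obstacle is this last estimation: the single positive contribution must be bounded by the negative ones tightly enough to produce the explicit constant $\Lambda$ rather than a weaker sufficient condition. A crude bound $\langle E\rangle_1-\langle E\rangle_0\le E(a_1)$ together with $Q_1'/Q_1\le -E(a_{\min})/\ln(\mathrm{IGC})$ only yields $\ln(\mathrm{IGC})\le E(a_{\min})/E(a_1)$, which is the first summand of $\Lambda$; recovering the full factor $1+\frac{\bar\eta}{\bar\mu}\frac{\int_0^{a_1}e^{-N}}{\int_{a_{\max}}^{a_1}e^{-N}}$ requires retaining the helpful term $Q_1'(\langle M\rangle_1-\langle M\rangle_0)$ and comparing the $\Pi$-weighted averages with the $e^{-N}$-weighted ones. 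Controlling the $Q_0$-dependence of these weighted averages uniformly on $[0,\tilde Q_0]$, so that the inequality holds at every point and not merely at the fixed point, is where the care is needed.
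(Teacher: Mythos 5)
Your treatment of parts $(a)$ and $(b)$ and your reduction of $(c)$ to a monotonicity statement follow the paper's route: the paper likewise invokes Theorem \ref{sec:T4.1'}$(a)$--$(b)$, specializes $Q_{1}(Q_{0})$ to the explicit function $\mathcal{S}(E_{T})$ of (\ref{sec:44.1}), locates its unique zero $\tilde{T}$ with the two-sided bound (\ref{sec:44.3}), and then proves a monotonicity statement on $[0,\tilde{T}]$. Your computation of $\frac{d}{dQ_{0}}\ln\Theta$ and the sign analysis of its three terms are correct.

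The gap is in the one step that actually produces the constant $\Lambda$, and you flag it yourself: your bounds yield only $\ln(\mathrm{IGC})\leq \int_{0}^{a_{\min}}\eta\,/\int_{0}^{a_{1}}\eta$, the first summand of $\Lambda$, and the claim that "collecting these bounds should turn $\frac{d}{dQ_0}\ln\Theta\le 0$ into exactly the threshold $\ln(\mathrm{IGC})\le\Lambda$" is asserted rather than proved. The difficulty is structural: in the normalized quantity $\ln\Theta$ the helpful contribution $-Q_{1}'/Q_{1}$ is a pure number, so the ratio $\int_{0}^{a_{1}}e^{-N}\,/\int_{a_{\max}}^{a_{1}}e^{-N}$ appearing in $\Lambda$ has no obvious source, and it is not clear that $\langle M\rangle_{1}-\langle M\rangle_{0}$ can be bounded below by anything proportional to that ratio. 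The paper avoids this by working instead with the un-normalized difference $\Delta(x)$ of (\ref{sec:44.4}) (essentially $\Theta(x)-x$ multiplied by the positive denominator), for which $\Theta(x)=x$ iff $\Delta(x)=0$; in the resulting derivative inequality (\ref{sec:44.5}) the left side is bounded below by $-\mathcal{S}'(E_{T})\int_{0}^{a_{1}}e^{-N}e^{-E_{T}E}\,da$ and the right side above by $(\tilde{T}\int_{0}^{a_{1}}\eta-1)\int_{a_{\max}}^{a_{1}}e^{-N}e^{-E_{T}E}\,da$, and the quotient of these two integrals, $K(E_{T})\geq K(0)$, together with $-\mathcal{S}'\geq \bar{\eta}/\bar{\mu}$ and $\tilde{T}\leq \ln(\mathrm{IGC})/\int_{0}^{a_{\min}}\eta$, is exactly what assembles $\Lambda$. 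Note also that showing $(\ln\Theta)'\leq 0$ and showing $\Delta'\leq 0$ are genuinely different inequalities, so even a completed version of your estimate would likely land on a different sufficient constant; to prove the theorem as stated you would need to either switch to the $\Delta$ formulation or verify that your normalization still yields $\Lambda$, and neither is done in the proposal.
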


\begin{proof}
For part (a), one can use a similar argument as in Theorem \ref{sec:T4.1'}(a) to show the existence of a positive equilibrium as in (\ref{sec:4.12}).  Part (b) follows from Theorem \ref{sec:T4.1'}(b).
 Our goal is to show, similarly as in Theorem \ref{sec:T4.1'}(c), that if $e^{\Lambda}\geq IGC > 1$, $\Theta$ as in (\ref{sec:4.11}) is monotone, and therefore, there exists a unique nontrivial equilibrium. We derive from (\ref{sec:4.7}) that,
\begin{align}\label{sec:44.1}
\mathcal{S}(E_{T})=\frac{\ln(\int_{a_{\min}}^{a_{\max}}\beta(a)e^{-\int_{0}^{a}(\eta(\hat{a})E_{T}+\mu_{2}(\hat{a}))d\hat{a}}da)}{\int_{0}^{a_{\min}}\mu(\hat{a})d\hat{a}}\ \text{ for }E_{T} \geq 0.
\end{align}
We observe from (\ref{sec:44.1}) that $\mathcal{S}(E_{T})$ is differentiable for $E_{T} \geq 0$, and it is monotone decreasing, since its derivative satisfies,
\begin{align}\label{sec:44.2}
-\mathcal{S}'(E_{T})&=\frac{\int^{a_{\max}}_{a_{\min}}\beta(a)e^{-\int^{a}_{0}(\mu_{2}(\hat{a})+\eta(\hat{a}) E_{T})d\hat{a}}\int_{0}^{a}\eta(\hat{a})d\hat{a}da}{\int^{a_{\max}}_{a_{\min}}\beta(a)e^{-\int^{a}_{0}(\mu_{2}(\hat{a})+\eta(\hat{a}) E_{T})d\hat{a}}da\int^{a_{\min}}_{0}\mu(a)da}\geq\frac{\int_{0}^{a_{\min}}\eta(\hat{a})d\hat{a} }{\int^{a_{\min}}_{0}\mu(a)da}.
\end{align}
If $IGC=\int_{a_{\min}}^{a_{\max}}\beta(a)e^{-\int_{0}^{a}\mu_{2}(\hat{a})d\hat{a}}da>1$, we have $\mathcal{S}(0)>0$.  We derive from (\ref{sec:44.1}) that $\mathcal{S}(E_{T})\rightarrow -\infty$, as $E_{T}\rightarrow \infty$, and $\mathcal{S}(E_{T})$ is a continuous function for $E_{T}\geq 0$. Therefore, by the intermediate value theorem, there exists a zero $\tilde{T}$ of $\mathcal{S}(E_{T})$ and it is unique since $\mathcal{S}(E_{T})$ is monotone decreasing for $E_{T}\geq 0$. Moreover, we estimate that $\tilde{T}$ satisfies:
\begin{align}\label{sec:44.3}
&\frac{\ln(\int^{a_{\max}}_{a_{\min}}\beta(a)e^{-\int^{a}_{0}\mu_{2}(\hat{a})d\hat{a}}da)}{\int_{0}^{a_{\max}}\eta(\hat{a})d\hat{a} }\leq \tilde{T}\leq \frac{\ln(\int^{a_{\max}}_{a_{\min}}\beta(a)e^{-\int^{a}_{0}\mu_{2}(\hat{a})d\hat{a}}da)}{\int_{0}^{a_{\min}}\eta(\hat{a})d\hat{a} }.
\end{align}
We deduce from (\ref{sec:EQE1}) that $\Theta(x)=x$ if and only if $\Delta(x)=0$ for $x\geq 0$, where $\Delta: [0,\infty) \rightarrow \mathbb{R}$ is defined by, for $x\geq 0$
\begin{align}\label{sec:44.4}
&\Delta(x):=\mathcal{S}(x)\int^{a_{\min}}_{0}e^{-\int_{0}^{a}\mu_{2}(\hat{a})d\hat{a}}e^{-x\int_{0}^{a}\eta(\hat{a})d\hat{a}}e^{\mathcal{S}(x)\int^{a_{\min}}_{a}\mu(\hat{a})d\hat{a}}da\\
\nonumber&+\mathcal{S}(x)\int^{a_{1}}_{a_{\min}}e^{-\int_{0}^{a}\mu_{2}(\hat{a})d\hat{a}}e^{-x\int_{0}^{a}\eta(\hat{a})d\hat{a}}da-x\int^{a_{1}}_{a_{\max}}e^{-\int_{0}^{a}\mu_{2}(\hat{a})d\hat{a}}e^{-x\int_{0}^{a}\eta(\hat{a})d\hat{a}}da.
\end{align}
Our next goal is to show that if $e^{\Lambda}>IGC$, $\Delta(z)$ is non-increasing for $z \geq 0$. It is easy to show that $\Delta'(E_{T}) \leq 0$ for $E_{T}\geq 0$ if and only if (\ref{sec:44.5}) holds, for $E_{T}\geq 0$:
\begin{align}\label{sec:44.5}
&-[\mathcal{S}'(E_{T})\int^{a_{\min}}_{0}e^{-\int_{0}^{a}\mu_{2}(\hat{a})d\hat{a}}e^{-E_{T}\int^{a}_{0}\eta(\hat{a})d\hat{a} }e^{\mathcal{S}(E_{T})\int^{a_{\min}}_{a}\mu(\hat{a})d\hat{a}}da\\
\nonumber&+\mathcal{S}(E_{T})\int^{a_{\min}}_{0}e^{-\int_{0}^{a}\mu_{2}(\hat{a})d\hat{a}}e^{-E_{T}\int^{a}_{0}\eta(\hat{a})d\hat{a}}e^{\mathcal{S}(E_{T})\int^{a_{\min}}_{a}\mu(\hat{a})d\hat{a}}(-\int^{a}_{0}\eta(\hat{a})d\hat{a}\\
\nonumber&+\mathcal{S}'(E_{T})\int^{a_{\min}}_{a}\mu(\hat{a})d\hat{a})da+\mathcal{S}'(E_{T})\int^{a_{1}}_{a_{\min}}e^{-\int_{0}^{a}\mu_{2}(\hat{a})d\hat{a}}e^{-E_{T}\int^{a}_{0}\eta(\hat{a})d\hat{a}}da\\
\nonumber&+\mathcal{S}(E_{T})\int^{a_{1}}_{a_{\min}}e^{-\int_{0}^{a}\mu_{2}(\hat{a})d\hat{a}}e^{-E_{T}\int^{a}_{0}\eta(\hat{a})d\hat{a}}(-\int^{a}_{0}\eta(\hat{a})d\hat{a})da]\geq-\int^{a_{1}}_{a_{\max}}e^{-\int_{0}^{a}\mu_{2}(\hat{a})d\hat{a}}\\
\nonumber&\times e^{-E_{T}\int^{a}_{0}\eta(\hat{a})d\hat{a}}da+E_{T}\int^{a_{1}}_{a_{\max}}e^{-\int_{0}^{a}\mu_{2}(\hat{a})d\hat{a}}e^{-E_{T}\int^{a}_{0}\eta(\hat{a})d\hat{a}}(\int^{a}_{0}\eta(\hat{a})d\hat{a})da.
\end{align}
Furthermore, we have, for $0\leq E_{T}\leq \tilde{T}$,
\begin{align}\label{sec:44.6}
\text{The left hand side of }(\ref{sec:44.5})&\geq-\mathcal{S}'(E_{T})\int^{a_{1}}_{0}e^{-\int_{0}^{a}\mu_{2}(\hat{a})d\hat{a}}e^{-E_{T}\int^{a}_{0}\eta(\hat{a})d\hat{a}}da\\
\nonumber&\geq\frac{\int^{a_{\min}}_{0}\eta(\hat{a})d\hat{a}}{\int^{a_{\min}}_{0}\mu(a)da}\int^{a_{1}}_{0}e^{-\int_{0}^{a}\mu_{2}(\hat{a})d\hat{a}}e^{-E_{T}\int^{a}_{0}\eta(\hat{a})d\hat{a}}da;\\
\nonumber\text{The right hand side of }(\ref{sec:44.5})&\leq (\tilde{T}\int^{a_{1}}_{0}\eta(\hat{a})d\hat{a}-1)\int^{a_{1}}_{a_{\max}}e^{-\int_{0}^{a}\mu_{2}(\hat{a})d\hat{a}}e^{-E_{T}\int^{a}_{0}\eta(\hat{a})d\hat{a}}da.
\end{align}
Let $K(x):=\frac{\int^{a_{1}}_{0}e^{-\int_{0}^{a}\mu_{2}(\hat{a})d\hat{a}}e^{-x\int^{a}_{0}\eta(\hat{a})d\hat{a}}da}{\int^{a_{1}}_{a_{\max}}e^{-\int_{0}^{a}\mu_{2}(\hat{a})d\hat{a}}e^{-x\int^{a}_{0}\eta(\hat{a})d\hat{a}}da}$, for $x\geq 0$. One can easily verify that $K(x)$ is non-decreasing for $x \geq 0$. Therefore, if $e^{\Lambda}>IGC$, we obtain, for $0\leq E_{T}\leq \tilde{T}$,
\begin{align*}
\nonumber&\frac{\int^{a_{\min}}_{0}\eta(\hat{a})d\hat{a} }{\int^{a_{\min}}_{0}\mu(a)da}\frac{\int^{a_{1}}_{0}e^{-\int_{0}^{a}\mu_{2}(\hat{a})d\hat{a}}e^{-E_{T}\int^{a}_{0}\eta(\hat{a})d\hat{a}}da}{\int^{a_{1}}_{a_{\max}}e^{-\int_{0}^{a}\mu_{2}(\hat{a})d\hat{a}}e^{-E_{T}\int^{a}_{0}\eta(\hat{a})d\hat{a}}da}\geq\frac{\int^{a_{\min}}_{0}\eta(\hat{a})d\hat{a} }{\int^{a_{\min}}_{0}\mu(a)da}\frac{\int^{a_{1}}_{0}e^{-\int_{0}^{a}\mu_{2}(\hat{a})d\hat{a}}da}{\int^{a_{1}}_{a_{\max}}e^{-\int_{0}^{a}\mu_{2}(\hat{a})d\hat{a}}da}\\
&\geq\frac{\int^{a_{1}}_{0}\eta(\hat{a})d\hat{a}\ln(\int^{a_{\max}}_{a_{\min}}\beta(a)e^{-\int^{a}_{0}\mu_{2}(\hat{a})d\hat{a}}da)-\int^{a_{\min}}_{0}\eta(\hat{a})d\hat{a}}{ \int^{a_{\min}}_{0}\eta(\hat{a})d\hat{a}}\geq(\tilde{T}\int^{a_{1}}_{0}\eta(\hat{a})d\hat{a}-1).
\end{align*}
It is readily seen that (\ref{sec:44.5}) holds and the uniqueness of the nontrivial equilibrium follows.
\end{proof}

\subsection{Numerical examples}

Let $\hat{\phi}$ given by (\ref{sec:4.12}) be a nontrivial equilibrium of (\ref{sec:1}). We define the mean age of a female by $\int_{0}^{a_{1}}a\hat{\phi}(a)da$, and the average prospective lifespan from birth of a female by $\frac{\int_{0}^{a_{1}}\hat{\phi}(a)da}{\int_{0}^{a_{1}}(\mu_{0}(a;\hat{T})+\mu_{1}(a;\hat{S})+\mu_{2}(a))\hat{\phi}(a)da}$ (see \cite{Ke}). We hypothesize that there is no significant change in the reproductive age interval of a female (who survives past juvenility) throughout the recent evolutionary time from early humans living in hunter-gatherer society until modern agricultural civilization \cite{Gur,S,S1}.
Therefore, we choose baseline values $a_{\min}=15$ years, $a_{\max}=35$ years and $a_{1}=80$ years in all numerical examples used throughout this paper.

\begin{example}

The baseline parameters are set as 
$\mu_{0}(a,T)=\eta(a)T=3\times 10^{-6}$T, for $a\in[0, a_{1}]$ and $T\geq 0$, $\mu_{1}(a,S)=\mu(a)S=10^{-7}(-a +a_{\min})S$, if $ a \leq a_{\min}$ and $S\geq 0$; otherwise, if $a>a_{\min}$ and $S\geq 0$, $\mu_{1}(a,S)=0$; $\beta(a)= 0.5(a - a_{\min})e^{-0.4(a - a_{\min})}$, if $a>a_{\min}$; otherwise, if $a\leq a_{\min}$, $\beta(a)=0$ and $\mu_{2}(a)=0.03+0.01e^{-0.04a}$ for $a\in[0, a_{1}]$. We numerically find that if $a_{\min}=15$ years and $a_{\max}=35$ years, IGC$\approx 1.5$ (indicated by yellow dots and dashed lines in Fig.\ref{sec:Fig1}). As Figure \ref{sec:Fig1}(A) illustrates, the age structure of human beings is robust with extended juvenility.
 When $a_{\max}=35$ years and all other baseline parameters are held fixed, increasing the juvenility, the reproductive period decreases and therefore IGC decreases and falls below $1$ as $a_{\min}$ exceeds $\approx 25$ years (see Figure \ref{sec:Fig1}(B)). Figure \ref{sec:Fig1}(C) shows that when $a_{\min}=15$ years is held fixed, IGC increases sharply and then more slowly as $a_{\max}$ increases. We numerically find that the average lifespan and the mean age of a female at the nontrivial equilibrium is $\approx 22.4$ years and $\approx 17.6$ years, respectively.
 We refer to \cite{B} for numerical simulations and sensitivity analyses of (\ref{sec:E4.8}).
\end{example}

In the following example, we revisit the system (\ref{sec:E4.8}), when it is subject to a nonlinear boundary condition.

\begin{example}\label{sec:E4.13}
Consider the following system:
\begin{align}\label{sec:E4.18}
&p_{t}(a,t)+p_{a}(a,t)=-[\eta(a)T(t)+\mu(a)S(t)+\mu_{2}(a)]p(a,t),\\
\nonumber & 0<a<a_{1},\ t>0,\\
\nonumber &p(0,t)=\int_{a_{\min}}^{a_{\max}}\tilde{\beta}(a)f(T(t))p(a,t)da,\ t>0,\\
\nonumber &p(a,0)=p_{0}(a),\ 0<a<a_{1}.
\end{align}
All parameters are set as in Table \ref{table:ba}. Numerical illustrations of $\beta$ and $\mu_{0}$ are given by Figure \ref{sec:Fig2}A and Figure \ref{sec:Fig3}A. Since $f(x)=\frac{1}{1+c_{2}x}$ for $x\geq 0$ and $c_{2}>0$, it directly follows that $f(x)\rightarrow 0$, as $x\rightarrow \infty$. We numerically find that IGC$\approx 1.5$ for values in Table \ref{table:ba}. One could argue as in Theorem \ref{sec:T4.1'}(a) to obtain the existence of a positive equilibrium of (\ref{sec:E4.18}). Moreover, since $\mathcal{S}(E_{T})=\frac{1}{\int_{0}^{a_{\min}}\mu(\hat{a})d\hat{a}}\ln(\int_{a_{\min}}^{a_{\max}}\frac{\tilde{\beta}(a)}{1+c_{2}E_{T}}$ $e^{-\int_{0}^{a}(\eta(\hat{a})E_{T}+\mu_{2}(\hat{a}))d\hat{a}}da)$, for $E_{T} \geq 0$, it is readily seen that $\mathcal{S}'(E_{T})\leq 0$ for $E_{T} \geq 0$. We numerically find that $\tilde{T}\int_{0}^{a_{1}}\eta(\hat{a})d\hat{a}<1$, which implies that condition (\ref{sec:44.5}) holds, where $\tilde{T}$ is the zero of $\mathcal{S}(E_{T})$ for $E_{T} \geq 0$. Therefore, $\Delta(E_{T})$ given by (\ref{sec:44.4}) is monotone decreasing for $0 \leq E_{T}\leq \tilde{T}$ and the system (\ref{sec:E4.18}) admits a unique positive equilibrium. We numerically find that the average lifespan of a female at the nontrivial equilibrium is $\approx 32.9$ years. In the baseline model, with parametric values set as in Table \ref{table:ba}, starting from a small founding population $p_{0}(a)$ (with initial total population $\approx 392$), the total population, juvenile, reproductive and senescent subpopulations converge to the nontrivial equilibrium over approximately 400 years with IGC$\approx 1.5$. Figure \ref{sec:Fig2}B illustrates the evolution of the population density $p(a,t)$ in approximately 400 years and Figure \ref{sec:Fig2}C demonstrates the change of total, juvenile, reproductive and senescent subpopulations over about 400 years. The total population exceeds $1878$ with mean age $\approx 23.5$ years (see Figure \ref{sec:Fig2}B). We repeat the simulation with all parameters set as in the baseline model except that we increase the juvenile mortality due to the senescent population burden and the fertility rate. We numerically find IGC$\approx 2.56$. From Figure \ref{sec:Fig3}B and \ref{sec:Fig3}C, we observe that total population, juvenile, reproductive and senescent subpopulations all exhibit oscillatory behavior as the population converges to the equilibrium in more than 800 years. This indicates that age structure of humans is robust and could recover from oscillatory behavior as the population stabilizes at the nontrivial equilibrium.
As Figure \ref{sec:Fig5}A and \ref{sec:Fig5}B indicate that if the initial population consists of a large fraction of juvenile and senescent populations and very few reproductive individuals, and all baseline parameter values are held fixed, the total population, juvenile, reproductive and senescent subpopulations all exhibit oscillatory behavior as the population converges to the equilibrium in approximately 400 years. The numerical simulations here and in \cite{B} support the hypothesis that human age structure from early hunter-gatherer society to present (intrinsically shaped by age and population density dependent fertility and mortality and also regulated by evolutionary benefits and costs) is robust and stable \cite{ B2, B3, B,B1,B4}.
\end{example}

\subsection{Further discussion on the uniqueness of the nontrivial equilibrium}

In this section, we investigate different combinations of fertility and mortality functions such that the system (\ref{sec:1}) has a unique nontrivial equilibrium. In the following theorem, we assume $\mu_{0}(a,z)$ or $\mu_{1}(a,z)$, for $(a,z)\in [0,a_{1}]\times [0,\infty)$ to be only age dependent, and let $\beta(a,z)$, for $(a,z)\in [a_{\min},a_{\max}]\times [0,\infty)$ to be only age dependent.

\begin{theorem}\label{sec:T4.2} Let H.1 hold. Let $\mu_{0}(a,z)=\mu_{0}(a)$ or $\mu_{1}(a,z)=\mu_{1}(a)$ for $(a,z)\in [0,a_{1}]\times [0,\infty)$, and $\beta(a,z)=\beta(a)$ for $(a,z)\in[a_{\min},a_{\max}]\times [0,\infty)$. Let $\eta_{i}'(x)$ $\times\frac{\partial \mu_{i}(\hat{a}, \eta_{i}(x))}{\partial \eta_{i}(x)}\geq 0$, where $(\hat{a},x)\in [0,a_{1}]\times [0,\infty)$. If IGC $> 1$, the system (\ref{sec:1}) admits the unique positive equilibrium.

\end{theorem}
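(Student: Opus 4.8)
The plan is to show that, under the stated hypotheses, the pair of scalar unknowns $(Q_{0}\hat\phi,Q_{1}\hat\phi)$ attached to a positive equilibrium \emph{decouples}, so that the two-dimensional fixed-point problem behind Theorem \ref{sec:T4.1'} collapses to a single scalar equation with a monotone right-hand side. Exactly as in the proof of Theorem \ref{sec:T4.1'}, a positive equilibrium has the form $\hat\phi(a)=\hat\phi(0)\,\Pi(0,a;\eta_{0}(Q_{0}\hat\phi),\eta_{1}(Q_{1}\hat\phi))$ and must satisfy the net-reproduction identity (\ref{sec:4.5}). Since $\beta(a,z)=\beta(a)$ and $\mu_{1}(a,z)=0$ for $a>a_{\min}$, this identity reduces to (\ref{sec:4.6}), namely
\begin{align*}
1=e^{-\int_{0}^{a_{\min}}\mu_{1}(\hat a,\eta_{1}(Q_{1}\hat\phi))d\hat a}\int_{a_{\min}}^{a_{\max}}\beta(a)e^{-\int_{0}^{a}(\mu_{0}(\hat a,\eta_{0}(Q_{0}\hat\phi))+\mu_{2}(\hat a))d\hat a}da.
\end{align*}
I would then split into the two cases permitted by the hypothesis, in each exhibit the relevant scalar unknown as the unique root of a strictly monotone function, and finally recover $\hat\phi(0)$ and the remaining moment from (\ref{sec:4.8})--(\ref{sec:4.9}).

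First suppose $\mu_{0}(a,z)=\mu_{0}(a)$. Then the $a$-integral above is independent of $Q_{0}\hat\phi$ and equals exactly IGC, so the identity becomes $\int_{0}^{a_{\min}}\mu_{1}(\hat a,\eta_{1}(Q_{1}\hat\phi))d\hat a=\ln(\mathrm{IGC})$. By H.1 the left-hand side is strictly increasing in $Q_{1}\hat\phi$ (since $\mu_{1,z}>0$ and $\eta_{1}$ is strictly increasing) and vanishes at $Q_{1}\hat\phi=0$; as $\ln(\mathrm{IGC})>0$, there is a unique $\hat Q_{1}>0$ solving it. Because $\mu_{0}$ is age-only, $\Pi$ no longer depends on $Q_{0}\hat\phi$ and is a fixed function $\pi(a)$. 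Equation (\ref{sec:4.9}) then gives $\hat\phi(0)=\hat Q_{1}/\int_{0}^{a_{1}}\omega_{1}(a)\pi(a)da>0$ uniquely, (\ref{sec:4.8}) returns the value of $\hat Q_{0}$, and $\hat\phi(a)=\hat\phi(0)\pi(a)$ is the unique positive equilibrium, of the form (\ref{sec:4.12}).

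Next suppose $\mu_{1}(a,z)=\mu_{1}(a)$. Now the first exponential is a constant $e^{-M}$ with $M=\int_{0}^{a_{\min}}\mu_{1}(\hat a)d\hat a$, and the identity reads $H(Q_{0}\hat\phi)=1$, where $H(x):=e^{-M}\int_{a_{\min}}^{a_{\max}}\beta(a)e^{-\int_{0}^{a}(\mu_{0}(\hat a,\eta_{0}(x))+\mu_{2}(\hat a))d\hat a}da$. Differentiating under the integral and invoking the sign condition $\eta_{0}'(x)\frac{\partial\mu_{0}(\hat a,\eta_{0}(x))}{\partial\eta_{0}(x)}\ge 0$ shows $H$ is non-increasing; moreover $H(0)=\mathrm{IGC}>1$ and the survival integral decays, so $H$ crosses the level $1$ and does so at a single point $\hat Q_{0}>0$. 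Since $\mu_{1}$ is age-only, $\Pi$ depends only on $\hat Q_{0}$; as before (\ref{sec:4.8}) fixes $\hat\phi(0)>0$ and (\ref{sec:4.9}) returns $\hat Q_{1}$, yielding the unique positive equilibrium.

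The main obstacle is precisely the uniqueness of the scalar root. The sign hypothesis only guarantees that the governing map is monotone, not strictly so, so one must rule out a crossing of the level $1$ along a whole interval: this is where IGC $>1$ together with the decay of the survival integral (the analogue of H.2 entering existence in Theorem \ref{sec:T4.1'}(a)) is used to force strict monotonicity at the crossing. A secondary point is the strict positivity of the quantities produced, which follows from $\Phi(0)=\eta_{i}(0)=0$ and IGC $>1$. Equivalently, the whole argument can be phrased as a verification of H.3: in the first case the map $\Theta$ of (\ref{sec:4.11}) is constant, while in the second it is parametrized directly by $Q_{0}$; in either case $\Theta$ is weakly decreasing with a single fixed point, so uniqueness follows as in Theorem \ref{sec:T4.1'}(c).
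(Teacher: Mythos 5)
Your proof is correct and follows essentially the same route as the paper: under the age-only hypotheses the equilibrium identity (\ref{sec:4.5}) collapses to a single scalar equation in the remaining moment $Q_{i}\hat{\phi}$, whose governing function is monotone by the sign condition, so the intermediate value theorem gives a unique root, after which $\hat{\phi}(0)$ and the other moment are recovered from (\ref{sec:4.8})--(\ref{sec:4.9}). The paper only writes out the case $\mu_{0}(a,z)=\mu_{0}(a)$ (via the function $\mathcal{K}$, which is your reduced equation before taking logarithms) and stops at the scalar root, so your explicit treatment of both cases, of the recovery of $\hat{\phi}(0)$, and of where strict (rather than merely weak) monotonicity comes from is, if anything, more complete than the paper's.
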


\begin{proof}
We only show the case that $\mu_{0}(a,z)=\mu_{0}(a)$. Define $\mathcal{K}:[0,\infty) \rightarrow [0,\infty)$ by:
\begin{align*}
\mathcal{K}(x):=\int_{a_{\min}}^{a_{\max}}\beta(a)e^{-\int_{0}^{a}(\mu_{1}(\hat{a}, \eta_{1}(x))+\mu_{0}(\hat{a})+\mu_{2}(\hat{a}))d\hat{a}}da, \ \text{ for }x\geq 0.
\end{align*}
We observe that $\mathcal{K}(0)=IGC>1$. Since $\eta_{1}'(x)\frac{\partial \mu_{1}(\hat{a}, \eta_{1}(x))}{\partial \eta_{1}(x)}$ $\geq 0$, where $(\hat{a},x)\in [0,a_{1}]\times [0,\infty)$, we obtain, for $Q_{1}\geq 0$,
\begin{align*}
\mathcal{K}'(Q_{1})&=-\int_{a_{\min}}^{a_{\max}}\beta(a)e^{-\int_{0}^{a}(\mu_{1}(\hat{a}, \eta_{1}(Q_{1}))+\mu_{0}(\hat{a})+\mu_{2}(\hat{a}))d\hat{a}}\\
&\times[\eta_{1}'(Q_{1})\int_{0}^{a}\frac{\partial \mu_{1}(\hat{a}, \eta_{1}(Q_{1}))}{\partial \eta_{1}(Q_{1})}d\hat{a}]da\leq 0.
\end{align*}
Therefore, $\mathcal{K}(z)$ is a non-increasing function for $z\geq 0$. As $z\rightarrow \infty$, $\mathcal{K}(z)\rightarrow 0$. By the intermediate value theorem, the continuous function $\mathcal{K}(z)$ for $z\geq 0$ has a unique, positive equilibrium.
\end{proof}

Following example gives an illustration of Theorem \ref{sec:T4.2}:

\begin{example}\label{sec:E4.9}
Let H.1 hold. Let $\mu_{0}(a,z)=\eta(a)z$ and $\mu_{1}(a,z)=\mu(a)z$, for $z\geq 0$, where $\eta,\ \mu\in C_{+}[0,a_{1}]$, and $\mu(a)=0$ for $a>a_{\min}$. Consider (\ref{sec:E4.1}) as follows:
\begin{align}\label{sec:E4.1}
&p_{t}(a,t)+p_{a}(a,t)=\begin{cases}
-[\eta(a)T(t)+\mu_{1}(a)+\mu_{2}(a)]p(a,t), & \ \text{ if } \mu_{1}(a,z)=\mu_{1}(a);\\
-[\mu(a)S(t)+ \mu_{0}(a)+\mu_{2}(a)]p(a,t), & \ \text{ if } \mu_{0}(a,z)=\mu_{0}(a).
\end{cases}\\
\nonumber &\text{ for }z\geq 0, \ 0<a<a_{1},\ t>0,\\
\nonumber &p(0,t)=\int_{a_{\min}}^{a_{\max}}\beta(a)p(a,t)da,\ t>0,\\
\nonumber &p(a,0)=p_{0}(a),\ 0<a<a_{1}.
\end{align}
It is readily seen that conditions of Theorem \ref{sec:T4.2} are satisfied, therefore, the existence and uniqueness of a positive equilibrium of (\ref{sec:E4.1}) follow if IGC $>1$.

\end{example}

In the following theorem, we derive the existence and uniqueness of the positive equilibrium from the system (\ref{sec:1}) when it is subject to a nonlinear boundary condition and linear mortalities $\mu_{i}(a,z)=\mu_{i}(a),\ i=0,1$, for $(a,z)\in [0,a_{1}]\times [0,\infty)$. 

\begin{theorem}\label{sec:T4.12} Let H.1 hold. Let $\mu_{i}(a,z)=\mu_{i}(a)$, for $i=0,1$, where $(a,z)\in [0,a_{1}]\times [0,\infty)$, and $\beta(a,z)=\tilde{\beta}(a)f(z)$ for $(a,z)\in[a_{\min},a_{\max}]\times [0,\infty)$, where $\tilde{\beta}\in C_{+}[a_{\min},a_{\max}]$
 and $f\in C^{1}[0,\infty)$, $f\geq 0$, $f'\leq 0$ on $[0,\infty)$. If IGC $> 1$, the system (\ref{sec:1}) admits the unique positive equilibrium.

\end{theorem}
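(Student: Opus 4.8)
The plan is to exploit the fact that, under the present hypotheses, the survival kernel $\Pi$ is \emph{independent of the population state}, so that the whole equilibrium problem collapses to a single scalar equation whose monotonicity delivers uniqueness. Existence of a positive equilibrium is not the real issue in this subsection; it is obtained exactly as in Theorem \ref{sec:T4.1'}(a) (using the decay of the fertility for large population size), so I would devote the bulk of the argument to uniqueness.

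First I would specialize the equilibrium equations (\ref{sec:4.2})--(\ref{sec:4.3}) to the present setting. Because $\mu_{0}(a,z)=\mu_{0}(a)$ and $\mu_{1}(a,z)=\mu_{1}(a)$ depend on age only, the survival probability
\[
\Pi(0,a)=\exp\Big[-\int_{0}^{a}(\mu_{0}(\hat a)+\mu_{1}(\hat a)+\mu_{2}(\hat a))\,d\hat a\Big]
\]
no longer depends on $Q_{0}\hat\phi$ or $Q_{1}\hat\phi$; it is a fixed, strictly positive function of $a$. Hence (\ref{sec:4.4}) reads $\hat\phi(a)=\hat\phi(0)\Pi(0,a)$ with a population-independent $\Pi$, and in particular $Q_{1}\hat\phi$ \emph{decouples} entirely from the renewal condition. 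Substituting $\beta(a,z)=\tilde\beta(a)f(z)$ with $z=\eta_{2}(Q_{0}\hat\phi)$ into the characteristic equation (\ref{sec:4.5}) then gives the single scalar condition
\[
1=f\big(\eta_{2}(Q_{0}\hat\phi)\big)\,C,\qquad C:=\int_{a_{\min}}^{a_{\max}}\tilde\beta(a)\Pi(0,a)\,da,
\]
where, since $\eta_{2}(0)=0$ by H.1, the intrinsic growth constant of (\ref{sec:4.1}) is $\mathrm{IGC}=\mathcal{R}(0)=f(0)\,C$.

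The heart of the argument is to read off uniqueness from this scalar equation, in the spirit of Theorem \ref{sec:T4.2}. I would introduce $\mathcal{K}(x):=f(\eta_{2}(x))\,C$ for $x\ge 0$ and note that $\mathcal{K}(0)=\mathrm{IGC}>1$, while $\mathcal{K}$ is non-increasing because $(f\circ\eta_{2})'(x)=f'(\eta_{2}(x))\,\eta_{2}'(x)\le 0$ (here $f'\le 0$ and $\eta_{2}'>0$). Equilibria of (\ref{sec:1}) correspond precisely to positive roots $\hat Q_{0}$ of $\mathcal{K}(\hat Q_{0})=1$: from such a root one recovers $\hat\phi(0)=\hat Q_{0}\big/\!\int_{0}^{a_{1}}\omega_{0}(a)\Pi(0,a)\,da>0$ and hence the positive profile $\hat\phi(a)=\hat\phi(0)\Pi(0,a)$, which lies in $D(\mathcal{A})$ and satisfies (\ref{sec:4.2})--(\ref{sec:4.3}) by construction. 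Since $\mathcal{K}$ is continuous, starts above $1$, and decreases through the level $1$, the intermediate value theorem produces exactly one such $\hat Q_{0}$, and therefore a unique positive equilibrium.

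The step I expect to require the most care is precisely this uniqueness/monotonicity claim: one must rule out a whole interval of roots of $\mathcal{K}(x)=1$, that is, a flat segment of $f\circ\eta_{2}$ sitting exactly at the critical value $1/C$. The hypothesis $f'\le 0$ only yields that $\mathcal{K}$ is non-increasing, so a fully rigorous uniqueness statement needs $f$ to be \emph{strictly} decreasing near the crossing (the natural reading of a monotone, density-limiting birth law). Once that is granted, the remaining ingredients — population-independence of $\Pi$, the decoupling of $Q_{1}$, and the reconstruction of $\hat\phi$ from $\hat Q_{0}$ — are routine, so the theorem is essentially a corollary of the reduction, markedly simpler than the coupled $\Theta$/$\Delta$ estimates needed in Theorem \ref{sec:T4.1'}(c).
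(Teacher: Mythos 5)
Your argument is essentially the paper's own proof: the paper likewise defines the scalar function $\mathcal{K}_{\tilde{\beta}}(T)=\int_{a_{\min}}^{a_{\max}}\tilde{\beta}(a)f(T)e^{-\int_{0}^{a}(\mu_{0}(\hat{a})+\mu_{1}(\hat{a})+\mu_{2}(\hat{a}))d\hat{a}}da$, observes $\mathcal{K}_{\tilde{\beta}}(0)=\mathrm{IGC}>1$ and $\mathcal{K}'_{\tilde{\beta}}\leq 0$, and concludes by the intermediate value theorem, exactly as in your reduction to $f(\eta_{2}(Q_{0}\hat{\phi}))\,C=1$. The caveat you raise about $f'\leq 0$ giving only non-strict monotonicity (and about needing $f(z)\to 0$ as $z\to\infty$ for existence) applies equally to the paper's proof, so your proposal is faithful to, and if anything slightly more careful than, the original.
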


\begin{proof}
Define $\mathcal{K}_{\tilde{\beta}}:[0,\infty) \rightarrow [0,\infty)$ by:
\begin{align*}
\mathcal{K}_{\tilde{\beta}}(T):=\int_{a_{\min}}^{a_{\max}}\tilde{\beta}(a)f(T)e^{-\int_{0}^{a}(\mu_{0}(\hat{a})+\mu_{1}(\hat{a})+\mu_{2}(\hat{a}))d\hat{a}}da, \ \text{ for }T\geq 0.
\end{align*}
We observe that $\mathcal{K}_{\tilde{\beta}}(0)=IGC>1$, and since $f'(z)\leq 0$, for $z\in[0,\infty)$, we have,
\begin{align*}
\mathcal{K}'_{\tilde{\beta}}(T)=\int_{a_{\min}}^{a_{\max}}\tilde{\beta}(a)f'(T)e^{-\int_{0}^{a}(\mu_{0}(\hat{a})+\mu_{1}(\hat{a})+\mu_{2}(\hat{a}))d\hat{a}}da\leq 0,\ \text{ for }T\geq 0.
\end{align*}
Therefore, $\mathcal{K}_{\tilde{\beta}}(z)$ is a continuous non-increasing function for $z\geq 0$. As $z\rightarrow \infty$, $\mathcal{K}_{\tilde{\beta}}(z)\rightarrow 0$. By the intermediate value theorem, 
the existence and uniqueness of a positive equilibrium follow.
\end{proof}

Following example illustrates Theorem \ref{sec:T4.12}:

\begin{example}\label{sec:E4}
Let H.1 hold. Let $\mu_{i}(a,z)=\mu_{i}(a)$, for $i=0,1$, where $(a,z)\in[0,a_{1}]\times[0,\infty)$. 
 Let $\alpha>0$ and $\beta(a,z)=\frac{1}{1+\alpha z}$, for $(a,z)\in[a_{\min},a_{\max}]\times[0,\infty)$. Consider (\ref{sec:E4.2}) given as follows:
\begin{align}\label{sec:E4.2}
&p_{t}(a,t)+p_{a}(a,t)=-(\mu_{0}(a)+\mu_{1}(a)+\mu_{2}(a))p(a,t), \ 0<a<a_{1},\ t>0,\\
\nonumber &p(0,t)=\int_{0}^{a_{1}}\frac{1}{1+\alpha T(t)}p(a,t)da,\ t>0,\\
\nonumber &p(a,0)=p_{0}(a),\ 0<a<a_{1}.
\end{align}
By Theorem \ref{sec:T4.12}, if IGC $>1$, the existence and uniqueness of a positive equilibrium of the system (\ref{sec:E4.2}) follow.

\end{example}

\section{The linear problem}

In this section we use the method of linearization to address the local stability or instability problem of the equilibrium solutions of the model (\ref{sec:1}). First we formulate our linear problem by finding the Frechet derivatives of nonlinear operators $\mathcal{F}, \ \mathcal{G}$ given by (\ref{sec:2})-(\ref{sec:3}) at an equilibrium solution $\hat{\phi}\in X$ of the system (\ref{sec:1}) and then we study some basic properties of a linear operator. Next we discuss the state space decomposition by invariant subspaces. We close this section by deriving the characteristic equation for the linear problem. We first state the following definition:

\begin{definition}[\cite{W},sec 4.2, pp.145]
Let $U(t),t\geq 0$, be a strongly continuous nonlinear semigroup in the closed subset $C$ of the Banach space $X$ and let $\hat{x}\in C$. \textit{The trajectory} $\gamma(\hat{x})$ is $\gamma(\hat{x}):=\left\{U(t)\hat{x}:t\geq 0\right\}$. If $\gamma(\hat{x})=\hat{x}$, then $\hat{x}$ is an equilibrium solution for $U(t),t\geq 0$. The trajectory $\gamma(\hat{x})$ is stable if and only if for each $\epsilon >0$, there exists $\delta >0$ such that if $x\in C$ and $\left\|x-\hat{x}\right\|<\delta$, then $\left\|U(t)x-U(t)\hat{x}\right\|<\epsilon$ for all $t\geq 0$. The trajectory $\gamma(\hat{x})$ is unstable if and only if it is not stable. The trajectory is \textit{asymptotically stable} if and only if it is stable and there exists some $\delta >0$ such that if $x\in C$ and  $\left\|x-\hat{x}\right\|<\delta$, then $\lim_{t\rightarrow \infty}\left\|U(t)x-U(t)\hat{x}\right\|=0$. The trajectory is \textit{exponentially asymptotically stable} if and only if it is asymptotically stable and there exists $\delta>0$, $\omega>0$ and $K>0$ such that if $x\in C$ and  $\left\|x-\hat{x}\right\|<\delta$, then $\left\|U(t)x-U(t)\hat{x}\right\|\leq Ke^{-\omega t}\left\|x-\hat{x}\right\|$. If $\delta$ can be chosen arbitrarily large in each of these last two definitions, then the corresponding property is \textit{ said to be global}.
\end{definition}

\subsection{The linear problem}

We assume as in H.1, that $\mathcal{F}, \ \mathcal{G}$ given by (\ref{sec:2})-(\ref{sec:3}) are continuously Frechet differentiable at an equilibrium solution $\hat{\phi}\in X$ of the system (\ref{sec:1}) and the Frechet derivatives are given by $\mathcal{F}'(\hat{\phi}), \ \mathcal{G}'(\hat{\phi})$ as in (\ref{sec:2'})-(\ref{sec:5'}).  The associated linearization of (\ref{sec:1}) is as follows:
\begin{align}
&u_{t}(a,t)+u_{a}(a,t)=-[\mu_{0}(a, \eta_{0}(Q_{0}\hat{\phi}))+\mu_{1}(a, \eta_{1}(Q_{1}\hat{\phi}))+\mu_{2}(a)]u(a,t)\label{sec:6.2}\\
\nonumber& - \frac{\partial \mu_{0}(a, z)}{\partial z}|_{z = \eta_{0}(Q_{0}\hat{\phi})}\eta_{0}'(Q_{0}\hat{\phi})(Q_{L,0}(t))\hat{\phi}(a)-\frac{\partial \mu_{1}(a, z)}{\partial z}|_{z = \eta_{1}(Q_{1}\hat{\phi})}\eta_{1}'(Q_{1}\hat{\phi})\\
\nonumber&\times(Q_{L,1}(t))\hat{\phi}(a),\  0<a<a_{1},\ t>0,\\
\nonumber &u(0,t)=\int^{a_{\max}}_{a_{\min}}\beta(a; \eta_{2}(Q_{0}\hat{\phi}))u(a,t)da\\
\nonumber&+\eta_{2}'(Q_{0}\hat{\phi})(Q_{L,0}(t))\int^{a_{\max}}_{a_{\min}}\frac{\partial \beta(a, z)}{\partial z}|_{z =
\eta_{2}(Q_{0}\hat{\phi})}\hat{\phi}(a)da, \ t>0,\\
\nonumber &u(a,0)=p_{0}(a),\ 0<a<a_{1}.
\end{align}
where $Q_{L,i}(t)=\int_{0}^{a_{1}}\omega_{i}(a)u(a,t)da,\ i=0,1.$

We apply Proposition 3.2 and Proposition 3.7 (in \cite{W}, section 3.1, pp.76) to obtain the semigroup property for solutions of (\ref{sec:6.2}):
\begin{theorem}\label{sec:T6.1}
Let H.1-H.2 hold. Let $\mathcal{F}'(\hat{\phi})$ and $\mathcal{G}'(\hat{\phi})$ be bounded linear operators from $X$ to $\mathbb{R}$ and from $X$ into $X$ as in (\ref{sec:2'})-(\ref{sec:5'}). If $p_{0}\in X$, then the generalized solution $u(a, t)$
of the system (\ref{sec:6.2}) is defined on $[0,\infty)$. Further, the family of mappings $T_{L}(t), \ t\geq 0$ in $X$, defined by $(T_{L}(t)p_{0})(a)=u(a, t)$ is a strongly continuous semigroup of bounded linear operators in $X$ satisfying,
\begin{align*}
|T_{L}(t)|\leq e^{\omega t}, \ \text{for}\ t\geq 0\ \ \ \ \text{ where }\omega=|\mathcal{F}'(\hat{\phi})|+|\mathcal{G}'(\hat{\phi})|.
\end{align*}
The infinitesimal generator of $T_{L}(t), \ t\geq 0$, is
\begin{align}\label{sec:6.1}
&\hat{B}\phi=-\phi'+\mathcal{G}'(\hat{\phi})\phi,\ \ \text{ for } \phi\in D(\hat{B}).
\end{align}
where,
\begin{align*}
D(\hat{B})=\left\{\phi\in X: \phi \text{ is absolutely continuous on }[0,a_{1}], \phi'\in L^{1}, \phi(0)=\mathcal{F}'(\hat{\phi})\phi\right\}.
\end{align*}
Further, for all $t\geq 0$, $T_{L}(t)(D(\hat{B}))\subset D(\hat{B})$ and $(d/dt )T_{L}(t)\phi=\hat{B}T_{L}(t)\phi=T_{L}(t)\hat{B}\phi$ for all $\phi\in D(\hat{B})$.
\end{theorem}

\subsection{Basic properties of the linear semigroup}

Let $T_{L}(t), \ t \geq 0$ be  the strongly continuous linear semigroup, in $X$ as in Theorem \ref{sec:T6.1}. Let $\sigma(T_{L})$, $E\sigma (T_{L})$, and $P\sigma (T_{L})$ be the spectrum, the essential spectrum and the point spectrum of the linear operator $T_{L}$ (for details see \cite{W}). We begin this section with some definitions and results from \cite{Ce, W, E}.


\begin{definition}
The spectrum of a closed linear operator $T$ in the complex Banach space $Y$, denoted by $\sigma(T)$, is the complement of the resolvent set of $T$, denoted by $\rho(T)$, in the complex plane $\mathbb{C}$. $\rho(T)$ is the set of complex numbers $\lambda$ for which $(\lambda I- T)^{-1}$ exists and is an everywhere defined bounded linear operator in $Y$. \textit{The continuous spectrum} of $T$, denoted by $C\sigma(T)$, is the set of complex numbers $\lambda$ such that $(\lambda I -T)^{-1}$ exists, is densely defined in $Y$, but not bounded. \textit{The residual spectrum}, denoted by $R\sigma(T)$, is the set of complex numbers $\lambda$ such that $(\lambda I -T)^{-1}$ exists, but is not densely defined in $Y$. \textit{The point spectrum} of $T$, denoted by $P\sigma (T)$, is the set of complex numbers $\lambda$ such that $Tx=\lambda x$ for some nonzero $x\in Y$. If $\lambda\in P\sigma(T)$, then $\lambda$ is called an eigenvalue of $T$ and a nonzero vector $x\in X$ such that $Tx=\lambda x$ is called an eigenvector of $T$ corresponding to the eigenvalue $\lambda$. If $\lambda$ is an eigenvalue of $T$, then  the Null space $N(\lambda I- T)$ is called the geometric eigenspace of $T$ with respect to $\lambda$, and its dimension is called the geometric multiplicity of $\lambda$. \textit{The essential spectrum} of $T$, denoted by $E\sigma (T)$, is the set of $\lambda\in \sigma(T)$ such that at least one of the following holds:
\begin{itemize}
\item[(i)] $R(\lambda I-T)$ is not closed;
\item[(ii)] $\lambda$ is a limit point of $\sigma(T)$;
\item[(iii)] the generalized eigenspace of $T$ with respect to $\lambda$, denoted by $N_{\lambda}(T)$, is infinite dimensional.
\end{itemize}
where $N_{\lambda}(T)$ is the smallest closed subspace of $Y$ containing $\cup_{k=1}^{\infty} N((\lambda I- T)^{k})$ and where $N((\lambda I- T)^{k})$ denotes the null space of $(\lambda I- T)^{k}$, $k=1,2,\cdots$. The \textit{point spectrum} of $T$, denoted by $P\sigma(T)$, is the set of complex numbers $\lambda$ such that $Tx=\lambda x$ for some nonzero $x\in X$. If $\lambda\in P\sigma(T)$, then $\lambda$ is called an \textit{eigenvalue} of $T$ and a nonzero vector $x\in X$ such that $Tx=\lambda x$ is called an \textit{eigenvector} of $T$ corresponding to the eigenvalue $\lambda$.
\end{definition}

\begin{definition}
The growth bound, $\omega_{0}(\hat{B})\in[-\infty, \infty)$ of $\hat{B}: D(\hat{B})\subset X\rightarrow X$, where $\hat{B}$ is the infinitesimal generator of the strongly continuous linear semigroup $\left\{T_{L}(t),t\geq 0\right\}$ in $X$ as in Theorem \ref{sec:T6.1}, is defined as follows:
\begin{align*}
\omega_{0}(\hat{B}):=\lim_{t\rightarrow \infty}\frac{\ln(\left\|T_{L}(t)\right\|_{\mathcal{L}(X)})}{t}.
\end{align*}
\textit{The essential growth bound}, $\omega_{0,ess}(\hat{B})\in[-\infty, \infty)$ of $\hat{B}$ is defined by:
\begin{align*}
\omega_{0,ess}(\hat{B}):=\lim_{t\rightarrow \infty}\frac{\ln(\left\|T_{L}(t)\right\|_{ess})}{t},
\end{align*}
where, $\left\|T_{L}(t)\right\|_{ess}$ defined by $\left\|T_{L}(t)\right\|_{ess}$ $:=\kappa(T_{L}(t)\tilde{\mathcal{B}}_{X}(0,1))$ is the essential norm of $T_{L}(t)$, and $\tilde{\mathcal{B}}_{X}(0,1)=\left\{x\in X: \left\|x\right\|_{X}\leq 1\right\}$, and for each bounded set $\tilde{\mathcal{B}} \subset X$, we define the Kuratovsky measure of non-compactness as:
\begin{align*}
\kappa(\tilde{\mathcal{B}}):=\inf\left\{\epsilon>0: \tilde{\mathcal{B}}\text{ can be covered by a finite number of balls of radius}\leq \epsilon\right\}.
\end{align*}
\end{definition}

We state the following theorem 
which links the local stability of an equilibrium $\hat{\phi}$ of (\ref{sec:1}) to the spectral properties of $\hat{B}$:

\begin{theorem}\label{sec:T6.2}  Let H.1-H.2 hold. Let $U(t), \ t\geq 0$, be the strongly continuous nonlinear semigroup in $X_{+}$ as in Theorem \ref{sec:T3.3} with infinitesimal generator $\mathcal{A}$. Let $T_{L}(t), \ t\geq 0$, be the strongly continuous linear semigroup in $X$ as in Theorem \ref{sec:T6.1} with infinitesimal generator $\hat{B}$, defined by (\ref{sec:6.1}). Let $\hat{\phi}\in X_{+}$ be an equilibrium solution of the system (\ref{sec:1}). Let $\omega_{0, ess}(\hat{B})<0$. The following hold:
\begin{itemize}
\item[(a)] If $\sup_{\lambda\in \sigma(\hat{B})-E \sigma(\hat{B})}Re(\lambda)<0$, then $\hat{\phi}$ is a locally exponentially asymptotically stable equilibrium of the system (\ref{sec:1}).
\item[(b)] If there exists $\lambda_{1}\in \sigma(\hat{B})$ such that $Re(\lambda_{1})>0$ and $\sup_{\lambda\in \sigma(\hat{B})-E \sigma(\hat{B})\ \lambda \neq \lambda_{1}}$ $Re(\lambda)<Re(\lambda_{1})$, then $\hat{\phi}$ is an unstable equilibrium of the system (\ref{sec:1}).
\end{itemize}
\end{theorem}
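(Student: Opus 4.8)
The plan is to derive both parts from the principle of linearized stability for the nonlinear semigroup $U(t)$, feeding in the spectral information about $\hat{B}$. The crucial preliminary step is to convert the hypothesis $\omega_{0,ess}(\hat{B})<0$ into a growth bound for $T_L(t)$. Because the essential growth bound is strictly negative, the part of $\sigma(\hat{B})$ lying to the right of $\omega_{0,ess}(\hat{B})$ consists of only finitely many isolated eigenvalues of finite algebraic multiplicity, each a pole of the resolvent, and hence lies in $\sigma(\hat{B})-E\sigma(\hat{B})$. I would first record the resulting decomposition $\omega_{0}(\hat{B})=\max\{\omega_{0,ess}(\hat{B}),\ \sup_{\lambda\in\sigma(\hat{B})-E\sigma(\hat{B})}Re(\lambda)\}$, invoking the semigroup spectral/decomposition theory cited in \cite{W,E}. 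This is the bridge between the growth of $T_L(t)$ and the non-essential spectral bound appearing in the statement.

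For part (a), the two hypotheses $\omega_{0,ess}(\hat{B})<0$ and $\sup_{\lambda\in\sigma(\hat{B})-E\sigma(\hat{B})}Re(\lambda)<0$ make both entries of the maximum negative, so $\omega_{0}(\hat{B})<0$ and there exist $K\geq 1$, $\omega>0$ with $|T_L(t)|\leq Ke^{-\omega t}$. I would then set $w(t):=U(t)\phi-\hat{\phi}$, using $U(t)\hat{\phi}=\hat{\phi}$, and work from the integral formulation (\ref{sec:inte11}) together with the Fréchet expansions (\ref{sec:2'})--(\ref{sec:5'}) to obtain a Duhamel representation $w(t)=T_L(t)w(0)+\int_{0}^{t}T_L(t-s)N(U(s)\phi)\,ds$, where the genuinely higher-order remainder obeys $\|N(U(s)\phi)\|\leq b(r)\|w(s)\|$ with $b(r)\to 0$ as $r\to 0$, by Definition \ref{def:Frechet}. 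Substituting the exponential bound and applying Gronwall's inequality to $e^{\omega t}\|w(t)\|$ gives $\|w(t)\|\leq K\|w(0)\|e^{(Kb(r)-\omega)t}$; shrinking the initial neighborhood so that $Kb(r)<\omega$, and running a standard continuity/bootstrap argument (using Proposition \ref{sec:be3}) to keep the trajectory in that neighborhood, yields local exponential asymptotic stability.

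For part (b), I would exploit the dominant eigenvalue $\lambda_1$ with $Re(\lambda_1)>0$. Since $Re(\lambda_1)>0>\omega_{0,ess}(\hat{B})$, the Riesz spectral projection onto the generalized eigenspace $N_{\lambda_1}(\hat{B})$ is well defined and splits $X=X_u\oplus X_s$ with $X_u$ finite-dimensional and $T_L(t)$-invariant; on $X_u$ the semigroup grows like $e^{Re(\lambda_1)t}$, while the gap hypothesis $\sup_{\lambda\neq\lambda_1}Re(\lambda)<Re(\lambda_1)$ forces the growth bound on $X_s$ to lie strictly below $Re(\lambda_1)$. Using the same Duhamel representation, a perturbation of $\hat{\phi}$ with nonzero $X_u$-component has an exponentially growing principal part that dominates the higher-order remainder, so the trajectory must leave any fixed small neighborhood of $\hat{\phi}$; this is the standard growing-mode (unstable-manifold) argument and contradicts stability.

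The main obstacle I anticipate is not the linear spectral bookkeeping but the nonlinear estimate. One must justify the Duhamel representation rigorously for a transport equation with nonlocal boundary condition, working through the integral formulation (\ref{sec:inte11}) rather than pointwise derivatives, and then control the remainder $N$ uniformly so that the Gronwall bootstrap in (a) and the growing-mode contradiction in (b) both close. The delicacy is that the nonlinearity sits inside the generator via the functionals $Q_0,Q_1$, so the Fréchet expansions (\ref{sec:2'})--(\ref{sec:5'}) must be used carefully to isolate the genuinely higher-order part, and the continuous-dependence estimate of Proposition \ref{sec:be3} is exactly what keeps the solution in the regime where $b(r)$ is small enough for the argument to run.
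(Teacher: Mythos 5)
The paper does not actually prove Theorem \ref{sec:T6.2}: it is stated as an adaptation of the principle of linearized stability from \cite{W} (and \cite{E}), with no argument given in the text, so there is no in-paper proof to compare yours against. Your proposal is, in outline, exactly the standard proof of that cited principle: the decomposition $\omega_{0}(\hat{B})=\max\{\omega_{0,ess}(\hat{B}),\sup_{\lambda\in\sigma(\hat{B})-E\sigma(\hat{B})}Re(\lambda)\}$ (which the paper itself records as a separate theorem immediately afterwards), a Duhamel representation with a Fr\'echet remainder controlled by $b(r)\|w(s)\|$, Gronwall plus a bootstrap for (a), and the finite-dimensional unstable spectral projection with a growing-mode contradiction for (b). That is the right argument, and you correctly identify the genuine technical heart: the variation-of-constants formula cannot be written naively for a transport operator whose nonlinearity sits in the nonlocal boundary condition, and must be routed through the Volterra integral formulation (\ref{sec:inte11}) as in \cite{W}.

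One point you leave unaddressed in part (b): the nonlinear semigroup $U(t)$ is defined only on $X_{+}$, so the destabilizing perturbations of $\hat{\phi}$ with nonzero component in the unstable subspace $X_{u}$ must be chosen so that the perturbed initial datum remains nonnegative. For the positive equilibrium (\ref{sec:4.12}) this is fine because $\hat{\phi}$ is continuous and strictly positive on $[0,a_{1}]$ and the (generalized) eigenfunctions are bounded, so $\hat{\phi}+\epsilon\psi\in X_{+}$ for small $\epsilon$; for the trivial equilibrium one uses the positive eigenfunction of the dominant real eigenvalue. This is a one-line fix, but without it the unstable-manifold argument does not literally produce admissible initial data for $U(t)$. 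With that addition, your proof plan closes.
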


In the following theorem, the existence of the projector was first proved in \cite{W}, and in \cite{E} it is shown the spectrum consists of finite points.

\begin{theorem}
Let $\hat{B}: D(\hat{B})\subset X\rightarrow X$ be the infinitesimal generator of the strongly continuous linear semigroup $\left\{T_{L}(t)\right\}_{t\geq 0}$ in $X$ as in Theorem \ref{sec:T6.1}. Then
\begin{align*}
\omega_{0}(\hat{B})=\max(\omega_{0,ess}(\hat{B}), \max_{\lambda\in \sigma(\hat{B})- E\sigma(\hat{B})} Re(\lambda)).
\end{align*}
Assume in addition that $\omega_{0,ess}(\hat{B})<\omega_{0}(\hat{B})$. Then, for each $\gamma\in (\omega_{0,ess}(\hat{B}), \omega_{0}(\hat{B})]$, $\left\{\lambda\in \sigma(\hat{B}): Re(\lambda)\geq \gamma\right\}\subset P\sigma(\hat{B})$ is nonempty, finite and contains only poles of the resolvent of $\hat{B}$. Moreover, there exists a finite rank bounded linear operator of projection $\mathcal{P}: X\rightarrow X$ satisfying the following properties:
\begin{itemize}
\item[(i)] $\mathcal{P}(\lambda-\hat{B})^{-1}=(\lambda-\hat{B})^{-1}\mathcal{P}$, $\forall\lambda\in \rho(\hat{B})$;
\item[(ii)] $\sigma(\hat{B}_{\mathcal{P}(X)})=\left\{\lambda\in\sigma(\hat{B}): \text{ Re } (\lambda)\geq \gamma\right\}$;
\item[(iii)] $\sigma(\hat{B}_{(I-\mathcal{P})(X)})=\sigma(\hat{B})-\sigma(\hat{B}_{\mathcal{P}(X)})$.
\end{itemize}
\end{theorem}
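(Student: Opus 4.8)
The plan is to reduce the statement to the Weyl--Riesz spectral theory for strongly continuous semigroups, after first pinning down the essential growth bound of $\hat{B}$ from the concrete structure of $T_{L}(t)$. Writing the generalized solution of (\ref{sec:6.2}) by the method of characteristics, I would split $T_{L}(t)=T_{0}(t)+T_{1}(t)$, where $T_{0}(t)\phi$ carries the initial datum along the characteristics (supported on ages $a>t$, weighted by the survival factor and by the finite-rank nonlocal terms $\mathcal{C}_{1}(\hat{\phi})$) and $T_{1}(t)\phi$ is the renewal part generated by the birth boundary condition (supported on $a<t$). Because the maximal age $a_{1}$ is finite, $T_{0}(t)=0$ for $t\geq a_{1}$, so for such $t$ we have $T_{L}(t)=T_{1}(t)$, an integral operator whose kernel satisfies the equicontinuity condition of the $L^{1}$-compactness lemma; hence $T_{L}(t)$ is compact for $t\geq a_{1}$. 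Consequently $\|T_{L}(t)\|_{ess}=0$ for $t\geq a_{1}$, so $\omega_{0,ess}(\hat{B})=-\infty$ and the additional hypothesis $\omega_{0,ess}(\hat{B})<\omega_{0}(\hat{B})$ holds automatically (provided $T_{L}(t)\not\equiv 0$).

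With the essential spectral radius of $T_{L}(t)$ identified as $e^{t\omega_{0,ess}(\hat{B})}$, the growth bound formula
\[
\omega_{0}(\hat{B})=\max\Bigl(\omega_{0,ess}(\hat{B}),\ \max_{\lambda\in\sigma(\hat{B})-E\sigma(\hat{B})}Re(\lambda)\Bigr)
\]
follows from $\omega_{0}(\hat{B})=t^{-1}\ln r(T_{L}(t))$ together with the decomposition of the spectral radius of a bounded operator into its essential spectral radius and the supremum of the moduli of the isolated eigenvalues of finite algebraic multiplicity lying outside the essential spectrum. For the second assertion I would fix $\gamma\in(\omega_{0,ess}(\hat{B}),\omega_{0}(\hat{B})]$ and a time $t_{0}\geq a_{1}$. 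By Riesz--Schauder theory the nonzero spectrum of the compact operator $T_{L}(t_{0})$ is a finite or countable set accumulating only at $0$, so the spectral values of modulus at least $e^{t_{0}\gamma}>0$ are finitely many isolated eigenvalues of finite algebraic multiplicity. Transporting this back through the spectral mapping theorem for the point spectrum, $\exp(t_{0}\,P\sigma(\hat{B}))=P\sigma(T_{L}(t_{0}))\setminus\{0\}$, shows that $\Sigma_{\gamma}:=\{\lambda\in\sigma(\hat{B}):Re(\lambda)\geq\gamma\}$ is a finite subset of $P\sigma(\hat{B})$ consisting of poles of $(\lambda I-\hat{B})^{-1}$; nonemptiness is forced by the growth bound formula, since $\gamma\leq\omega_{0}(\hat{B})=\max_{\lambda\in\sigma(\hat{B})-E\sigma(\hat{B})}Re(\lambda)$ exhibits at least one such $\lambda$.

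Finally, I would construct the projection by the Riesz--Dunford calculus. Since $\Sigma_{\gamma}$ is finite and isolated from the remainder of $\sigma(\hat{B})$, there is a positively oriented rectifiable contour $\Gamma\subset\rho(\hat{B})$ enclosing $\Sigma_{\gamma}$ and no other spectral point, and I set
\[
\mathcal{P}:=\frac{1}{2\pi i}\oint_{\Gamma}(\lambda I-\hat{B})^{-1}\,d\lambda.
\]
Property (i) is immediate from this formula and the resolvent identity; because each point of $\Sigma_{\gamma}$ is a pole of finite order with finite-dimensional generalized eigenspace, $\mathcal{P}$ is a bounded projection of finite rank, and the functional calculus yields the invariant splitting $X=\mathcal{P}(X)\oplus(I-\mathcal{P})(X)$ with $\sigma(\hat{B}_{\mathcal{P}(X)})=\Sigma_{\gamma}$ and $\sigma(\hat{B}_{(I-\mathcal{P})(X)})=\sigma(\hat{B})\setminus\Sigma_{\gamma}$, which are exactly (ii) and (iii). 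The existence of the projector in this age-structured setting is due to \cite{W}, while the finiteness of $\Sigma_{\gamma}$ is established in \cite{E}.

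The delicate point --- where the argument would break if the operator were less structured --- is the passage between $\sigma(\hat{B})$ and $\sigma(T_{L}(t))$, since the spectral mapping theorem fails for general strongly continuous semigroups. What rescues the proof is twofold: it does hold for the point spectrum, and the finite maximal age $a_{1}$ makes $T_{L}(t)$ eventually compact. Thus the part of $\sigma(\hat{B})$ with $Re(\lambda)\geq\gamma$ corresponds bijectively to the peripheral point spectrum of a compact operator, where isolation, finiteness, and finite multiplicity are all guaranteed; verifying this eventual compactness via the characteristic decomposition and the $L^{1}$-compactness lemma is the concrete technical step on which everything rests.
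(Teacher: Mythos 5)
The paper does not actually prove this theorem: it states it with the remark that the existence of the projector is taken from \cite{W} and the finiteness of the relevant part of the spectrum from \cite{E}, so there is no in-text argument to compare against line by line. Your proposal supplies the standard proof of this abstract spectral decomposition result, and it is essentially correct: the characteristic-line splitting $T_{L}(t)=T_{0}(t)+T_{1}(t)$ with $T_{0}(t)=0$ for $t\geq a_{1}$ is exactly the decomposition the paper itself uses later (Theorem \ref{sec:ES1}, via $\tilde W_{1},\tilde W_{2}$ and the Kuratowski measure of noncompactness) to get $\omega_{0,ess}(\hat B)=-\infty$, and the remaining steps (Riesz--Schauder theory for the eventually compact $T_{L}(t_{0})$, the point-spectral mapping theorem to pull peripheral eigenvalues of $T_{L}(t_{0})$ back to $\sigma(\hat B)$, and the Riesz--Dunford contour integral for $\mathcal P$) are the standard route found in \cite{W}. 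Two small remarks. First, the theorem as stated is a general conditional statement about any generator with $\omega_{0,ess}(\hat B)<\omega_{0}(\hat B)$; your opening paragraph establishes that this hypothesis holds for the concrete semigroup at hand, which is useful but logically separate from proving the conditional claim, and the proof of parts (i)--(iii) should not rely on compactness beyond the assumed gap between $\omega_{0,ess}$ and $\gamma$. Second, in the finiteness step you should note explicitly why only finitely many $\lambda\in\sigma(\hat B)$ with $Re(\lambda)\geq\gamma$ can map to the same eigenvalue $\mu=e^{t_{0}\lambda}$ of $T_{L}(t_{0})$: the candidates differ by integer multiples of $2\pi i/t_{0}$, their eigenvectors are linearly independent, and they all lie in the finite-dimensional eigenspace of $T_{L}(t_{0})$ for $\mu$; this is the detail that converts ``finitely many peripheral eigenvalues of $T_{L}(t_{0})$'' into ``finitely many $\lambda$ in the half-plane $Re(\lambda)\geq\gamma$.'' With those two points made explicit, your argument is a complete and faithful reconstruction of the cited proof.
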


The following definition for a strongly continuous linear semigroup $T_{L}(t),$ $ \ t\geq 0$, to be \textit{irreducible} is from \cite{Ce}, section 7.1, pp.165:

\begin{definition}\label{sec:Def1}
A positive  strongly continuous semigroup $T_{L}(t), \ t\geq 0$, in the Banach space $X$ is
\textit{irreducible} if and only if for every $0<x\in X$ and $0<\phi\in X^{*}$, there exists $t\geq 0$ such that $\left\langle T_{L}(t)x, \phi\right\rangle >0$.
\end{definition}

The following Proposition 
gives equivalent conditions for a strongly continuous linear semigroup $T_{L}(t),$ $ \ t\geq 0$, to be irreducible:

\begin{proposition}\label{sec:P6.1}
For a positive  strongly continuous semigroup $T_{L}(t), \ t\geq 0$,
with generator $\hat{B}$ in a Banach lattice $X$, the following statements are  equivalent.
\begin{itemize}
\item[(i)] $T_{L}(t), \ t\geq 0$, is irreducible;
\item[(ii)] The resolvent $R(\lambda, \hat{B})$ satisfies $R(\lambda, \hat{B})f$ is strictly positive for some $\lambda>s(\hat{B})$ and all $0<f\in X$, where $s(\hat{B})=\sup\left\{ Re(\lambda): \lambda\in \sigma(\hat{B})\right\}$ is the spectral bound of $\hat{B}$.
\end{itemize}
\end{proposition}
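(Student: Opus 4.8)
The plan is to build everything on the Laplace-transform representation of the resolvent, which is the bridge between the semigroup $T_{L}(t)$ and the resolvent $R(\lambda,\hat{B})$. For a positive strongly continuous semigroup on a Banach lattice, the spectral bound $s(\hat{B})$ coincides with the abscissa of convergence of the Laplace integral of $t\mapsto T_{L}(t)f$; consequently, for every real $\lambda>s(\hat{B})$ and every $f\in X$ one has $R(\lambda,\hat{B})f=\int_{0}^{\infty}e^{-\lambda t}T_{L}(t)f\,dt$, and in scalar form $\langle R(\lambda,\hat{B})f,\phi\rangle=\int_{0}^{\infty}e^{-\lambda t}\langle T_{L}(t)f,\phi\rangle\,dt$ for all $f\in X$ and $\phi\in X^{*}$. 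I would record this first as the main tool, citing \cite{Ce}.

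For (i)$\Rightarrow$(ii), I would fix any $\lambda>s(\hat{B})$ and take arbitrary $0<f\in X$ and $0<\phi\in X^{*}$. The scalar function $g(t):=\langle T_{L}(t)f,\phi\rangle$ is continuous, by strong continuity, and nonnegative, by positivity of $T_{L}(t)$. Irreducibility provides some $t_{0}\geq 0$ with $g(t_{0})>0$, and continuity then forces $g>0$ on a neighborhood of $t_{0}$ in $[0,\infty)$; hence $\langle R(\lambda,\hat{B})f,\phi\rangle=\int_{0}^{\infty}e^{-\lambda t}g(t)\,dt>0$. Since $\phi$ was an arbitrary strictly positive functional, $R(\lambda,\hat{B})f$ is strictly positive, and since $f$ was arbitrary this yields (ii), in fact for every $\lambda>s(\hat{B})$, which is stronger than the ``for some'' that is required.

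For (ii)$\Rightarrow$(i), I would argue by contradiction. Suppose $T_{L}(t)$ is not irreducible; then there exist $0<x\in X$ and $0<\phi\in X^{*}$ with $\langle T_{L}(t)x,\phi\rangle\leq 0$ for every $t\geq 0$, and combined with the positivity estimate $\langle T_{L}(t)x,\phi\rangle\geq 0$ this forces $\langle T_{L}(t)x,\phi\rangle=0$ for all $t\geq 0$. Feeding this into the Laplace representation at the distinguished $\lambda>s(\hat{B})$ supplied by (ii) gives $\langle R(\lambda,\hat{B})x,\phi\rangle=\int_{0}^{\infty}e^{-\lambda t}\cdot 0\,dt=0$, contradicting the assumed strict positivity of $R(\lambda,\hat{B})x$ since $x>0$. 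Hence $T_{L}(t)$ is irreducible.

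The hard part will be justifying the resolvent integral representation precisely at values $\lambda>s(\hat{B})$, rather than only for $\lambda>\omega_{0}(\hat{B})$: in general the spectral bound may lie strictly below the growth bound, and without positivity the integral $\int_{0}^{\infty}e^{-\lambda t}T_{L}(t)f\,dt$ need not converge for $s(\hat{B})<\lambda\leq\omega_{0}(\hat{B})$. The key point is that for positive semigroups the scalar Laplace transforms $\lambda\mapsto\int_{0}^{\infty}e^{-\lambda t}\langle T_{L}(t)f,\phi\rangle\,dt$, with $f,\phi\geq 0$, are Laplace transforms of nonnegative functions, so by a Pringsheim/Widder-type argument their abscissa of convergence is a genuine singularity of the analytic continuation $\lambda\mapsto\langle R(\lambda,\hat{B})f,\phi\rangle$; as the latter is analytic on $\{\mathrm{Re}\,\lambda>s(\hat{B})\}$, the scalar identity extends to all real $\lambda>s(\hat{B})$. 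Once this representation is in hand, the remaining steps are routine consequences of positivity together with the continuity of $t\mapsto\langle T_{L}(t)f,\phi\rangle$.
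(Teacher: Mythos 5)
Your proof is correct, but note that the paper does not actually prove this proposition: it is quoted verbatim from the cited semigroup literature (\cite{Ce}, sec.~7.1) and used as a black box in the proofs of Theorems \ref{sec:T6.6} and \ref{sec:TLM7}, so there is no in-paper argument to compare against. What you supply is the standard textbook proof, and it is sound: the Laplace representation $\langle R(\lambda,\hat{B})f,\phi\rangle=\int_{0}^{\infty}e^{-\lambda t}\langle T_{L}(t)f,\phi\rangle\,dt$ reduces both implications to the observation that a continuous nonnegative function has zero integral against $e^{-\lambda t}\,dt$ if and only if it vanishes identically, which is exactly the dichotomy that the paper's Definition \ref{sec:Def1} of irreducibility encodes. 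You also correctly identify the only delicate point, namely the validity of this representation for $s(\hat{B})<\lambda\leq\omega_{0}(\hat{B})$, and the Pringsheim--Widder argument you sketch is the right justification for positive semigroups. Two small remarks: for (i)$\Rightarrow$(ii) you do not need that deeper fact at all, since the statement only asks for \emph{some} $\lambda>s(\hat{B})$ and you may simply take $\lambda>\omega_{0}(\hat{B})$, where the representation is elementary; and for (ii)$\Rightarrow$(i) you can likewise avoid it by noting that $\mu\mapsto\langle R(\mu,\hat{B})x,\phi\rangle$ is analytic on the half-plane $\{\mathrm{Re}\,\mu>s(\hat{B})\}$ and vanishes on the ray $(\omega_{0}(\hat{B}),\infty)$, hence vanishes at the given $\lambda$ by the identity theorem. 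With either of these simplifications the argument becomes entirely elementary; as written it is correct but leans on a stronger result than necessary.
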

The compactness and boundedness results for the strongly continuous linear semigroup $T_{L}(t), \ t \geq 0$ in $X$ as in Theorem \ref{sec:T6.1} follow from: 
\begin{theorem}\label{sec:PT6}
Let H.1-H.2 hold. Let $\mathcal{F}'(\hat{\phi})$ and $\mathcal{G}'(\hat{\phi})$ be bounded linear operators given by (\ref{sec:2'})-(\ref{sec:3'}).
Let $T_{L}(t), \ t\geq 0$ be the strongly continuous linear semigroup in $X$ as in Theorem \ref{sec:T6.1}. If $\phi\in X$ and $\left\{T_{L}(t)\phi: \ t\geq 0\right\}$ is bounded in $L^{1}$, then $\left\{T_{L}(t)\phi: \ t\geq 0\right\}$ has compact closure in $L^{1}$.
\end{theorem}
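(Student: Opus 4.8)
The plan is to apply the Riesz--Fr\'echet--Kolmogorov compactness criterion for $L^{1}(0,a_{1})$ stated earlier (the third lemma of the Preliminaries): a bounded subset $M\subset L^{1}$ is precompact if and only if
\[
\lim_{h\to 0}\int_{0}^{a_{1}}|\psi(a+h)-\psi(a)|\,da=0\quad\text{uniformly for }\psi\in M,
\]
with the convention that $\psi$ vanishes outside $(0,a_{1})$. I set $M=\{T_{L}(t)\phi:t\ge 0\}$ and $u(\cdot,t):=T_{L}(t)\phi$. Boundedness of $M$ is assumed, so it remains to verify the translation estimate uniformly in $t$. Since $t\mapsto u(\cdot,t)$ is continuous from $[0,\infty)$ into $L^{1}$ by strong continuity of the semigroup (Theorem \ref{sec:T6.1}), the partial orbit $\{u(\cdot,t):0\le t\le a_{1}\}$ is the continuous image of a compact interval, hence compact, and therefore already satisfies the criterion. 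The whole difficulty is thus confined to $t\ge a_{1}$.

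For $t\ge a_{1}$ I would use the method of characteristics on the linear system (\ref{sec:6.2}). Writing $B(t):=u(0,t)$, $\nu(a):=\mu_{0}(a,\eta_{0}(Q_{0}\hat\phi))+\mu_{1}(a,\eta_{1}(Q_{1}\hat\phi))+\mu_{2}(a)$, $\Pi(\sigma,a):=\exp[-\int_{\sigma}^{a}\nu]$, and collecting the nonlocal forcing into source densities $g_{i}\in L^{1}(0,a_{1})$ (each of the form $-\,\partial_{z}\mu_{i}\cdot\eta_{i}'\cdot\hat\phi$), every individual of age $a\in(0,a_{1})$ present at time $t\ge a_{1}$ was born after time $0$, so
\[
u(a,t)=B(t-a)\,\Pi(0,a)+\int_{0}^{a}\Pi(\sigma,a)\bigl[g_{0}(\sigma)Q_{L,0}(t-a+\sigma)+g_{1}(\sigma)Q_{L,1}(t-a+\sigma)\bigr]\,d\sigma,
\]
where $Q_{L,i}(t)=\int_{0}^{a_{1}}\omega_{i}\,u(\cdot,t)$. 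Because $\omega_{i}\in L^{\infty}$ and $\sup_{t}\|u(\cdot,t)\|_{X}<\infty$, the scalars $Q_{L,i}$ are bounded, and they are continuous in $t$ since they pair the continuous curve $u(\cdot,t)$ with $\omega_{i}\in(L^{1})^{*}$.

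The technical core is to show that $B,Q_{L,0},Q_{L,1}$ are bounded and \emph{uniformly} continuous on $[0,\infty)$. Substituting the representation into the boundary condition of (\ref{sec:6.2}) and into the definitions of $Q_{L,i}$ produces a closed linear Volterra system for the vector $(B,Q_{L,0},Q_{L,1})$ whose kernels are supported in $[0,a_{1}]$ and lie in $L^{1}$ (the birth kernel $\beta(\cdot;\eta_{2}(Q_{0}\hat\phi))\Pi(0,\cdot)$ is even $C^{1}$, while the kernels attached to the $Q_{L,i}$ are $\omega_{i}\Pi\in L^{\infty}\subset L^{1}$). Uniform boundedness comes from the assumed $L^{1}$ bound together with the lower bound $\Pi(0,a)\ge e^{-\|\nu\|_{\infty}a_{1}}>0$ (note $\nu\in L^{\infty}$ precisely because $\mathcal{G}'(\hat\phi)$ is a bounded operator): integrating $|u(a,t)|$ controls the sliding-window integral $\int_{t-a_{1}}^{t}|B|$ uniformly in $t$, and the Volterra system then bounds $B$ and the $Q_{L,i}$ pointwise. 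Uniform continuity follows from the convolution structure: for a compactly supported $m\in L^{1}$ one has $\|(m*v)(\cdot+\delta)-(m*v)(\cdot)\|_{\infty}\le\|m(\cdot+\delta)-m\|_{L^{1}}\|v\|_{\infty}$ and $\|m(\cdot+\delta)-m\|_{L^{1}}\to 0$ by continuity of translation in $L^{1}$; feeding the already-bounded solution back through the system upgrades this to uniform continuity of $B$ and of the $Q_{L,i}$. I expect this uniform-in-$t$ regularity of the birth function to be the main obstacle, precisely because continuity on compact time intervals is automatic whereas equicontinuity uniformly over all $t\ge 0$ must be extracted from the renewal structure and the finiteness of $a_{1}$.

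With these facts the translation estimate is routine. For $t\ge a_{1}$ and small $h>0$ I would insert the representation into $\int_{0}^{a_{1}}|u(a+h,t)-u(a,t)|\,da$ and split the leading term by adding and subtracting $B(t-a-h)\Pi(0,a)$:
\begin{align*}
\int_{0}^{a_{1}}\bigl|B(t-a-h)\Pi(0,a+h)-B(t-a)\Pi(0,a)\bigr|\,da
&\le\|\Pi\|_{\infty}\!\int_{0}^{a_{1}}\!|B(t-a-h)-B(t-a)|\,da\\
&\quad+\|B\|_{\infty}\!\int_{0}^{a_{1}}\!|\Pi(0,a+h)-\Pi(0,a)|\,da.
\end{align*}
The first integral is $\le a_{1}\sup_{s}|B(s-h)-B(s)|\to 0$ uniformly in $t$ by uniform continuity of $B$, and the second $\to 0$ by absolute continuity of $\Pi(0,\cdot)$. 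The forcing term is handled identically using boundedness and uniform continuity of the $Q_{L,i}$ and $g_{i}\in L^{1}$, and the boundary contribution $\int_{a_{1}-h}^{a_{1}}|u(a,t)|\,da$ is $O(h)$ uniformly in $t$ because $u$ is uniformly bounded near $a_{1}$ by the representation. The case $h<0$ is symmetric. Combining the uniform estimate for $t\ge a_{1}$ with the compactness of the partial orbit on $[0,a_{1}]$ verifies the criterion for all $t\ge 0$, so $\overline{M}$ is compact in $L^{1}$.
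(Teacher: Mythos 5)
Your argument is correct, and it reaches the conclusion by a visibly different route from the one the paper (following Webb) relies on. The paper handles this theorem and its analogues (Theorems \ref{sec:ES1} and \ref{sec:T5.1}) by splitting the semigroup as $T_{L}(t)=\tilde{W}_{1}(t)+\tilde{W}_{2}(t)$, where $\tilde{W}_{1}(t)$ carries the initial-data part and vanishes identically for $t>a_{1}$, and $\tilde{W}_{2}(t)$ is shown to be ultimately compact via the Kuratowski measure of noncompactness (Webb, Proposition 3.17); compactness of bounded orbits is then immediate. You instead verify the Riesz--Fr\'echet--Kolmogorov translation criterion (the third lemma of the Preliminaries) directly on the orbit: the piece $\{T_{L}(t)\phi:0\le t\le a_{1}\}$ is compact by strong continuity, and for $t\ge a_{1}$ the renewal representation $u(a,t)=B(t-a)\Pi(0,a)+(\text{source})$ has no initial-data term, so everything reduces to uniform boundedness and uniform continuity of $B$ and $Q_{L,i}$ on $[0,\infty)$. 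Your extraction of these from the orbit bound is sound: the lower bound $\Pi(0,a)\ge e^{-\|\nu\|_{\infty}a_{1}}$ (legitimate here precisely because $\mathcal{G}'(\hat{\phi})$ is assumed bounded, forcing $\nu\in L^{\infty}$) converts the $L^{1}$ bound into a uniform sliding-window bound on $B$, the Volterra system then gives pointwise bounds, and the order of the bootstrap is right --- first uniform continuity of the $Q_{L,i}$ (pure convolutions of bounded functions against compactly supported $L^{1}$ kernels), then of $B$ (whose renewal equation contains $Q_{L,0}(t)$ non-convolved). What your approach buys is a self-contained proof that makes explicit exactly where the boundedness hypothesis enters; what the paper's approach buys is brevity and reusability, since the same decomposition and measure-of-noncompactness machinery is cited again for the nonlinear semigroup and for the essential growth bound. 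The only cosmetic caveat is that the paper's compactness lemma is stated for closed bounded sets, so one should note at the end that the translation estimate passes from the orbit to its closure; this is routine.
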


We will show the positivity and irreducibility of the strongly continuous linear semigroup $T_{L}(t), \ t \geq 0$ in $X$ as in Theorem \ref{sec:T6.1} under the following assumption, for the linear operators $\mathcal{F}'(\hat{\phi})$ and $\mathcal{G}'(\hat{\phi})$ as in (\ref{sec:2'})-(\ref{sec:3'}), where $\hat{\phi}$ is a positive equilibrium solution (\ref{sec:4.12}) of (\ref{sec:1}):
\begin{itemize}
\item[H.4.]
\begin{align}
&\eta_{2}'(Q_{0}\hat{\phi})\frac{\partial \beta(a, z)}{\partial z}|_{z = \eta_{2}(Q_{0}\hat{\phi})} \geq 0,\  \text{ for } a\in[a_{\min},a_{\max}].\label{sec:6.2.1}\\
&\eta_{i}'(Q_{i}\hat{\phi})\frac{\partial \mu_{i}(a, z)}{\partial z}|_{z = \eta_{i}(Q_{i}\hat{\phi})}\leq 0,\ i=0,1,\  \text{ for }a\in[0,a_{1}].\label{sec:6.2.2}
\end{align}
\end{itemize}
Conditions (\ref{sec:6.2.1})-(\ref{sec:6.2.2}) indicate that a biological population may exhibit a certain type of behavior at a positive equilibrium of the system (\ref{sec:1}) in which will the population growth rate becomes negative, while the fertility rate growth becomes positive.

\begin{theorem}\label{sec:T6.6}
Let H.1-H.2 and H.4 hold. Then, the strongly continuous linear semigroup $T_{L}(t),t \geq 0$, in $X$ as in Theorem \ref{sec:T6.1} satisfies $T_{L}(t)(X_{+})\subseteq X_{+}$ and is irreducible.
\end{theorem}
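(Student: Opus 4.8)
The plan is to reduce both assertions to positivity properties of the resolvent $R(\lambda,\hat B)=(\lambda I-\hat B)^{-1}$ for large real $\lambda$. For the positivity of $T_{L}(t)$ I would invoke the Crandall--Liggett exponential formula used in Chapter \ref{chap:solu}, which represents $T_{L}(t)\phi=\lim_{n\to\infty}(I-\tfrac{t}{n}\hat B)^{-n}\phi$; since a pointwise limit of positive operators is positive, it suffices to prove that $R(\lambda,\hat B)$ maps $X_{+}$ into $X_{+}$ for all sufficiently large $\lambda>0$. For irreducibility I would instead use Proposition \ref{sec:P6.1}(ii): it is enough to exhibit a single $\lambda>s(\hat B)$ for which $R(\lambda,\hat B)f$ is strictly positive (positive a.e.\ on $(0,a_{1})$) for every $0<f\in X$.

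The computational heart is to solve the resolvent equation explicitly. Writing $m(a):=\mu_{0}(a,\eta_{0}(Q_{0}\hat\phi))+\mu_{1}(a,\eta_{1}(Q_{1}\hat\phi))+\mu_{2}(a)\geq 0$ and using (\ref{sec:4'})--(\ref{sec:3'}), the equation $(\lambda I-\hat B)\phi=\psi$ becomes the first order problem
\begin{gather*}
\phi'(a)+(\lambda+m(a))\phi(a)=g_{0}(a)(Q_{0}\phi)+g_{1}(a)(Q_{1}\phi)+\psi(a),\\
\phi(0)=\int_{a_{\min}}^{a_{\max}}\beta(a;\eta_{2}(Q_{0}\hat\phi))\phi(a)\,da+b_{0}(Q_{0}\phi),
\end{gather*}
where $g_{i}(a):=-\eta_{i}'(Q_{i}\hat\phi)\,\partial_{z}\mu_{i}(a,\eta_{i}(Q_{i}\hat\phi))\,\hat\phi(a)$ and $b_{0}:=\eta_{2}'(Q_{0}\hat\phi)\int_{a_{\min}}^{a_{\max}}\partial_{z}\beta(a,\eta_{2}(Q_{0}\hat\phi))\hat\phi(a)\,da$. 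By H.4, i.e.\ (\ref{sec:6.2.1})--(\ref{sec:6.2.2}), together with $\hat\phi\in X_{+}$, we have $g_{0},g_{1}\geq 0$ and $b_{0}\geq 0$. Integrating with the factor $\Pi_{\lambda}(a):=\exp(-\int_{0}^{a}(\lambda+m(s))\,ds)$ gives
\[
\phi(a)=\Pi_{\lambda}(a)\,d+K_{0}(a)\,c_{0}+K_{1}(a)\,c_{1}+(\mathcal{I}\psi)(a),
\]
with $d:=\phi(0)$, $c_{i}:=Q_{i}\phi$, $K_{i}(a):=\int_{0}^{a}\tfrac{\Pi_{\lambda}(a)}{\Pi_{\lambda}(s)}g_{i}(s)\,ds$ and $(\mathcal{I}\psi)(a):=\int_{0}^{a}\tfrac{\Pi_{\lambda}(a)}{\Pi_{\lambda}(s)}\psi(s)\,ds$, all of which are nonnegative since $\Pi_{\lambda}>0$ and $g_{i}\geq 0$. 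Substituting this back into the definitions of $c_{0},c_{1}$ and into the boundary identity for $d$ yields a closed system $(I-M)(d,c_{0},c_{1})^{T}=\mathbf{s}$, in which the matrix $M$ (with entries the pairings $\langle\beta,\Pi_{\lambda}\rangle$, $\langle\omega_{i},K_{j}\rangle$, and the $(1,2)$ entry $\langle\beta,K_{0}\rangle+b_{0}$) and the source $\mathbf{s}=(\langle\beta,\mathcal{I}\psi\rangle,\langle\omega_{0},\mathcal{I}\psi\rangle,\langle\omega_{1},\mathcal{I}\psi\rangle)^{T}$ are entrywise nonnegative. As $\lambda\to\infty$ every entry of $M$ involving $\Pi_{\lambda}$ or $K_{i}$ vanishes in $L^{1}$, leaving only the nilpotent $b_{0}$ coupling, so the spectral radius of $M$ tends to $0$; hence for $\lambda$ large $(I-M)^{-1}=\sum_{k\geq0}M^{k}\geq 0$. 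Then $\psi\in X_{+}$ forces $\mathbf{s}\geq 0$, whence $(d,c_{0},c_{1})\geq 0$ and finally $\phi\geq 0$. This gives $R(\lambda,\hat B)(X_{+})\subseteq X_{+}$ and, via the exponential formula, $T_{L}(t)(X_{+})\subseteq X_{+}$.

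For irreducibility I would use the same representation with $\psi=f$, $0<f\in X$, and $\lambda$ chosen large with $\lambda>s(\hat B)$. Because the operators of Theorem \ref{sec:T6.1} are bounded, $m\in L^{\infty}$ and hence $\Pi_{\lambda}(a)>0$ for every $a\in[0,a_{1}]$; thus it suffices to prove $d=\phi(0)>0$, since then $\phi(a)\geq\Pi_{\lambda}(a)\,d>0$ for a.e.\ $a$. Now $(\mathcal{I}f)(a)>0$ for every $a$ above the essential infimum $\alpha_{f}$ of the support of $f$, so $\phi$ is strictly positive on $(\alpha_{f},a_{1}]$; reading off $\mathbf{s}$ from the Neumann expansion $(d,c_{0},c_{1})^{T}=\sum_{k}M^{k}\mathbf{s}$, a nonzero component of $\mathbf{s}$ is connected to the first coordinate through the birth term $\int\beta\phi$, the constant $b_{0}$, and the nonlocal couplings $g_{i}(Q_{i}\phi)$, forcing $d>0$. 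By Proposition \ref{sec:P6.1}(ii) this strict positivity of the resolvent is equivalent to irreducibility of $T_{L}(t),\,t\geq 0$.

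The step I expect to be the main obstacle is precisely this strict positivity, i.e.\ showing that the feedback loop always closes. The delicate case is data $f$ concentrated in the post-reproductive range $(a_{\max},a_{1}]$: for the \emph{pure} renewal dynamics this tail is an invariant order ideal (aging only pushes mass toward $a_{1}$ and no births are produced), so strict positivity cannot come from transport and reproduction alone. It must be supplied by the nonlocal density-dependent terms $g_{0}(Q_{0}\phi)$ and $g_{1}(Q_{1}\phi)$, whose nonnegativity is exactly what H.4 guarantees; the argument therefore hinges on the weights $\omega_{i}$, the equilibrium profile $\hat\phi$, and the birth kernel $\beta$ being nondegenerate enough that $Q_{0}\phi>0$ reinjects positive mass into the reproductive ages and thereby back to age $0$.
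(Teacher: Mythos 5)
Your positivity argument is essentially correct and lands where the paper does, by a more computational route. The paper's proof first observes that $\mathcal{C}_{1}(\hat{\phi})$ in (\ref{sec:4'}) is a \emph{positive} bounded perturbation under (\ref{sec:6.2.2}), discards it, removes the local mortality by the exponential change of variables $w(a,t)=u(a,t)\exp(\int_{0}^{a}m(\hat{a})d\hat{a})$, and then only has to invert the scalar boundary functional, obtaining $\psi(w)=(1-\psi(e^{-\lambda a}))^{-1}\psi(\int_{0}^{a}e^{-\lambda(a-b)}f(b)db)\geq 0$ for large $\lambda$; you instead keep all three nonlocal couplings and invert the $3\times 3$ nonnegative matrix $I-M$ by a Neumann series. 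Both yield $R(\lambda,\hat{B})(X_{+})\subseteq X_{+}$ for $\lambda$ large and hence positivity of $T_{L}(t)$. One detail to write out: your limit matrix is not zero but retains the entry $b_{0}$ in position $(1,2)$, so you need that this residual matrix is nilpotent together with continuity of the spectral radius of a $3\times3$ matrix in its entries to conclude $\rho(M)<1$ for large $\lambda$; this is true but should be stated, and it is not a gap.

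The genuine gap is the irreducibility half, which you have diagnosed accurately but not closed. Strict positivity of $R(\lambda,\hat{B})f$ for \emph{every} $0<f\in X$ requires $d=\phi(0)>0$, and for $f$ supported in $(a_{\max},a_{1}]$ the first component of your source vector, $\langle\beta,\mathcal{I}f\rangle$, vanishes; the only routes by which the Neumann series can feed $\langle\omega_{i},\mathcal{I}f\rangle$ back into the first coordinate are the entries $b_{0}$ and $\langle\beta,K_{i}\rangle$, and H.4 guarantees these are only $\geq 0$, not $>0$. If the couplings degenerate (e.g.\ $b_{0}=0$ and $g_{0}=g_{1}\equiv 0$, or $\omega_{1}$ failing to charge $(a_{\max},a_{1})$), then for such $f$ the resolvent vanishes identically on $(0,a_{\min})$ and the semigroup is \emph{not} irreducible, so the conclusion actually requires a nondegeneracy hypothesis beyond H.1--H.2 and H.4 (for instance $b_{0}>0$ with $\omega_{0}>0$ a.e., or $\langle\beta,K_{1}\rangle>0$ together with $\omega_{1}>0$ a.e.\ on $(a_{\max},a_{1})$). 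You should also know that the paper's own proof stops at exactly the same point: it establishes only that the resolvent of the reduced transport generator $\mathcal{B}$ is positive and then invokes Proposition \ref{sec:P6.1} (citing an item (iii) that the proposition does not have), without verifying the \emph{strict} positivity that Proposition \ref{sec:P6.1}(ii) demands. So your flagged ``main obstacle'' is real, and as written neither your proposal nor the paper's argument completes the irreducibility claim.
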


\begin{proof}

The positivity of the operator $\mathcal{C}_{1}$ as in (\ref{sec:4'}) follows directly from condition (\ref{sec:6.2.2}). Therefore, we can
restrict ourselves to the operator $\hat{B}-\mathcal{C}_{1}$. The associated differential equation subject to the corresponding boundary condition is given as follows:
\begin{align*}
&u_{t}(a,t)+u_{a}(a,t)=-[\mu_{0}(a, \eta_{0}(Q_{0}\hat{\phi}))+\mu_{1}(a, \eta_{1}(Q_{1}\hat{\phi}))+\mu_{2}(a)]u(a,t),\\
\nonumber & 0<a<a_{1},\ t>0,\\
\nonumber &u(0,t)=\int^{a_{\max}}_{a_{\min}}\beta(a; \eta_{2}(Q_{0}\hat{\phi}))u(a,t)da+\eta_{2}'(Q_{0}\hat{\phi})\int_{0}^{a_{1}}\omega_{0}(a)u(a,t)da\\
\nonumber&\times\int^{a_{\max}}_{a_{\min}}\frac{\partial \beta(a, z)}{\partial z}|_{z = \eta_{2}(Q_{0}\hat{\phi})}\hat{\phi}(a)da, \ t>0,\\
\nonumber &u(a,0)=p_{0}(a),\ 0<a<a_{1}.
\end{align*}
Let $u$ be a solution of above equation and satisfy the given boundary condition. Then, the function $w$ given by
\begin{align*}
w(a,t)=u(a,t)\exp\left\{\int_{0}^{a}[\mu_{0}(\hat{a}, \eta_{0}(Q_{0}\hat{\phi}))+\mu_{1}(\hat{a}, \eta_{1}(Q_{1}\hat{\phi}))+\mu_{2}(\hat{a})]d\hat{a}\right\}.
\end{align*}
satisfies,
\begin{align*}
&w_{t}(a,t)+w_{a}(a,t)=0,\ 0<a<a_{1},\ t>0,\\
&w(0,t)=\psi(w(a,t)), \ t>0,\\
&w(a,0)=p_{0}(a)\exp\left\{\int_{0}^{a}[\mu_{0}(\hat{a}, \eta_{0}(\bar{Q}_{0}))+\mu_{1}(\hat{a}, \eta_{1}(\bar{Q}_{0}))+\mu_{2}(\hat{a})]d\hat{a}\right\},\ 0<a<a_{1}.
\end{align*}
where, $\psi(w(a,t))=$ $\mathcal{F}'(\hat{\phi})(w(a,t)e^{-\int_{0}^{a}[\mu_{0}(\hat{a}, \eta_{0}(Q_{0}\hat{\phi}))+\mu_{1}(\hat{a}, \eta_{1}(Q_{1}\hat{\phi}))+\mu_{2}(\hat{a})]d\hat{a}})$.
This system is associated with the linear semigroup generated by $\mathcal{B}\phi=-\phi'$ for $\phi\in D(\mathcal{B})$, with  $D(\mathcal{B})$ being defined as:
\begin{align*}
D(\mathcal{B})=\left\{\phi\in X: \phi \text{ is absolutely continuous on }[0,a_{1}], \phi'\in L^{1}, \phi(0)=\psi(\phi)\right\}.
\end{align*} 
It then suffices to show that the semigroup generated by $\mathcal{B}$ is positive. We observe that the resolvent equation $\lambda w-\mathcal{B} w= f$ has the  solution $w(a)= e^{-\lambda a}\psi(w)+\int_{0}^{a}e^{-\lambda (a-b)}f(b)db$ for $\lambda \geq 0$ sufficiently large and $f\in X_{+}$. Applying $\psi$ on both sides, we get $\psi(w)=(1-\psi(e^{-\lambda a}))^{-1} \psi(\int_{0}^{a}e^{-\lambda (a-b)}f(b)db)$. From the definition of $\psi$ and condition (\ref{sec:6.2.1}) we obtain that the solution $w$, is positive if $f$ is positive a.e. and $\lambda$ is sufficiently large. Therefore, the resolvent operator of $\mathcal{B}$ is positive for sufficiently large $\lambda$. Then the conclusion follows from Proposition \ref{sec:P6.1} (iii).

\end{proof}


Theorem \ref{sec:T6.6} has the following consequence:

\begin{theorem}\cite{E}
Suppose that conditions H.1-H.2 and H.4 hold true. Then the spectral bound $s(\hat{B})=\sup\left\{\text{Re }\lambda: \lambda\in \sigma(\hat{B})\right\}$ belongs to the spectrum $\sigma(\hat{B})$. Specifically, the spectral bound $s(\hat{B})$ is a dominant eigenvalue, and any other point $\lambda$ in the spectrum has real part less than $s(\hat{B})$.
\end{theorem}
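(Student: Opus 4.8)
The plan is to combine the three structural facts already established for $T_{L}(t)$—positivity and irreducibility (Theorem \ref{sec:T6.6}) and compactness of bounded orbits (Theorem \ref{sec:PT6})—with the general spectral theory of positive semigroups on the Banach lattice $X=L^{1}$. First I would pin down the structure of $\sigma(\hat{B})$. From Theorem \ref{sec:PT6} every bounded orbit of $T_{L}(t)$ has compact closure; together with the transport structure of (\ref{sec:6.2}) this forces $T_{L}(t)$ to be compact for $t\geq a_{1}$, since once $t\geq a_{1}$ the initial datum has aged past the maximal age and the solution is produced entirely through the renewal boundary term, an integral operator. Eventual compactness gives $\omega_{0,ess}(\hat{B})=-\infty<\omega_{0}(\hat{B})$ and the spectral mapping theorem $\sigma(T_{L}(t))\setminus\{0\}=e^{t\sigma(\hat{B})}$, so $\omega_{0}(\hat{B})=s(\hat{B})$. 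Invoking the spectral decomposition theorem stated above with any $\gamma\in(\omega_{0,ess}(\hat{B}),\omega_{0}(\hat{B})]$, the part of $\sigma(\hat{B})$ with real part $\geq\gamma$ is nonempty, finite, and consists solely of poles of the resolvent, hence of eigenvalues.

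Next I would show $s(\hat{B})\in\sigma(\hat{B})$ using positivity. Since $T_{L}(t)(X_{+})\subseteq X_{+}$, each $T_{L}(t)$ is a positive operator on $X$, and the spectral radius of a positive operator on a Banach lattice is always a spectral value; thus $r(T_{L}(t))=e^{t\,\omega_{0}(\hat{B})}\in\sigma(T_{L}(t))$. By the spectral mapping theorem this produces $\lambda\in\sigma(\hat{B})$ with $Re\,\lambda=\omega_{0}(\hat{B})=s(\hat{B})$, and the standard refinement for generators of positive semigroups gives that $s(\hat{B})$ itself is a spectral value. Being in the peripheral spectrum identified in the first step, $s(\hat{B})$ is therefore an eigenvalue and a pole of the resolvent.

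Finally, irreducibility upgrades this to a dominant eigenvalue. By Proposition \ref{sec:P6.1}, irreducibility is equivalent to strict positivity of $R(\lambda,\hat{B})f$ for some $\lambda>s(\hat{B})$ and all $0<f\in X$. Feeding this into the Krein--Rutman/Perron--Frobenius theory for irreducible positive semigroups developed in \cite{Ce,E} yields that $s(\hat{B})$ is an algebraically simple eigenvalue with a strictly positive eigenvector, and—crucially—that the peripheral point spectrum $\{\lambda\in\sigma(\hat{B}):Re\,\lambda=s(\hat{B})\}$ reduces to the single point $s(\hat{B})$.

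The hard part will be exactly this last reduction: positivity and irreducibility by themselves only force the peripheral spectrum to be a cyclic set $s(\hat{B})+i\alpha\mathbb{Z}$, so one must rule out $\alpha\neq 0$. I would close this either through the eventual compactness, which excludes a nontrivial rotation group in the boundary spectrum of an irreducible semigroup, or more concretely by writing the eigenvalue problem for $\hat{B}$ as a scalar characteristic equation $\mathcal{N}(\lambda)=1$ obtained by integrating (\ref{sec:6.2}) along characteristics against the birth law, and showing via a strict triangle inequality that $|\mathcal{N}(\lambda)|<\mathcal{N}(s(\hat{B}))=1$ whenever $Re\,\lambda=s(\hat{B})$ but $\lambda\neq s(\hat{B})$. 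Either route forces every $\lambda\in\sigma(\hat{B})$ other than $s(\hat{B})$ to satisfy $Re\,\lambda<s(\hat{B})$, which is the asserted dominance.
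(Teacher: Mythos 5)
The paper gives no proof of this statement: it is imported from \cite{E} and presented as a consequence of the positivity and irreducibility established in Theorem \ref{sec:T6.6}, with the essential-spectrum input supplied separately by Theorem \ref{sec:ES1}. Your proposal reconstructs precisely the argument that this citation stands in for --- positivity puts $s(\hat{B})$ in the spectrum, irreducibility makes the boundary spectrum a cyclic set $s(\hat{B})+i\alpha\mathbb{Z}$, and $\omega_{0,ess}(\hat{B})=-\infty$ makes $\left\{\lambda\in\sigma(\hat{B}):\mathrm{Re}\,\lambda\geq\gamma\right\}$ finite, which forces $\alpha=0$ --- so in substance you are on the paper's route, just with the details written out; your alternative closing via the characteristic equation and a strict triangle/Riemann--Lebesgue estimate is the same device the paper uses later for the comparison semigroup in Theorem \ref{sec:TLM81}. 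Two steps need repair. First, Theorem \ref{sec:PT6} asserts precompactness of individual bounded orbits; that is a statement about single trajectories and does not ``force'' compactness of the operators $T_{L}(t)$ (compactness concerns the image of the unit ball). The fact you actually need, $\omega_{0,ess}(\hat{B})=-\infty$, is exactly Theorem \ref{sec:ES1}, proved there via the decomposition $T_{L}(t)=\tilde{W}_{1}(t)+\tilde{W}_{2}(t)$; invoke that instead. Second, the step ``$r(T_{L}(t))=e^{t\omega_{0}(\hat{B})}\in\sigma(T_{L}(t))$, hence $s(\hat{B})\in\sigma(\hat{B})$'' silently assumes $\sigma(\hat{B})\neq\emptyset$, i.e.\ $\omega_{0}(\hat{B})>-\infty$: if $r(T_{L}(t))=0$ the spectral value $0$ is invisible to the spectral mapping theorem and yields nothing about $\hat{B}$. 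For an eventually compact irreducible positive semigroup this non-triviality does hold (de Pagter's theorem on the spectral radius of compact irreducible positive operators, or concretely a real root of $K(\lambda)=1$), but it must be stated, since the entire conclusion is vacuous otherwise. With those two adjustments the argument is complete and is the standard Perron--Frobenius theory for irreducible positive semigroups developed in \cite{Ce} and \cite{E}.
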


In the following theorem, we show $\omega_{0, ess}(\hat{B})<0$:

\begin{theorem}\label{sec:ES1} Let H.1-H.2 hold. Let $T_{L}(t), \ t\geq 0$, be the strongly continuous linear semigroup in $X$ as in Theorem \ref{sec:T6.1} with infinitesimal generator $\hat{B}$, given by (\ref{sec:6.1}), then,
\begin{align*}
\omega_{0, ess}(\hat{B})=-\infty.
\end{align*}

\end{theorem}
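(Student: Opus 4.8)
The plan is to prove the stronger statement that the linear semigroup $T_{L}(t)$ is \emph{compact} for every $t>a_{1}$. Since the essential norm of a compact operator vanishes, this gives $\|T_{L}(t)\|_{ess}=\kappa(T_{L}(t)\tilde{\mathcal{B}}_{X}(0,1))=0$ for all $t>a_{1}$, whence $\ln\|T_{L}(t)\|_{ess}=-\infty$ for such $t$ and therefore $\omega_{0,ess}(\hat{B})=\lim_{t\to\infty}t^{-1}\ln\|T_{L}(t)\|_{ess}=-\infty$. First I would write the generalized solution $u(a,t)=(T_{L}(t)p_{0})(a)$ of the linear problem (\ref{sec:6.2}) along characteristics, exactly as in the derivation of (\ref{sec:char51}). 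This represents $u(a,t)$, for $a<t$, through the boundary value $B(t):=u(0,t)$ and the source terms proportional to $\hat{\phi}(a)$ and to the scalars $Q_{L,i}(t)$; for $a>t$ it is transported from the initial datum $p_{0}(a-t)$. The crucial observation is that when $t>a_{1}$ the second branch is empty, because every $a\in[0,a_{1}]$ satisfies $a\le a_{1}<t$; thus the whole profile $a\mapsto u(a,t)$ is built only from $B$, from the equilibrium profile $\hat{\phi}$, and from the scalar functions $Q_{L,0},Q_{L,1}$, the initial datum entering only through these scalars and through the past history of $B$.

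Second I would establish the regularity of the finite-dimensional data $(B,Q_{L,0},Q_{L,1})$. Substituting the characteristic representation into the boundary condition of (\ref{sec:6.2}) and into the definitions $Q_{L,i}(t)=\int_{0}^{a_{1}}\omega_{i}(a)u(a,t)\,da$ yields a closed linear system of Volterra integral equations for $(B,Q_{L,0},Q_{L,1})$. Its kernels are built from $\beta$, from the $z$-derivatives of $\mu_{0},\mu_{1}$, from the weights $\omega_{i}$ and from the survival factor $\Pi$, all of which are continuous by H.1. By the standard theory of linear Volterra systems the triple is continuous on $[0,t]$, and both its supremum norm and its modulus of continuity are dominated by a constant multiple of $\|p_{0}\|_{X}$, uniformly for $p_{0}$ in the unit ball of $X$.

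Third I would verify the compactness criterion of the characterization lemma for $L^{1}$. For $t>a_{1}$ the representation reads
\[
u(a,t)=B(t-a)\,\Pi\!\left(0,a;\eta_{0}(Q_{0}\hat{\phi}),\eta_{1}(Q_{1}\hat{\phi})\right)+R(a,t),
\]
where $R(a,t)$ collects the source integrals along the characteristic through $(a,t)$, each being a time-integral of $\hat{\phi}$ along that characteristic multiplied by a continuous scalar function of the integration variable. Since $\hat{\phi}$ and $\Pi(0,\cdot;\cdots)$ are absolutely continuous in $a$ by (\ref{sec:4.12}) and H.1, and $B$ is continuous by the previous step, the map $a\mapsto u(a,t)$ is continuous, and its translation modulus $\int_{0}^{a_{1}}|u(a+h,t)-u(a,t)|\,da$ tends to $0$ as $h\to 0$ uniformly over the unit ball of $X$. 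Hence $T_{L}(t)\tilde{\mathcal{B}}_{X}(0,1)$ is relatively compact in $L^{1}$, so $\|T_{L}(t)\|_{ess}=0$.

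Collecting these three steps gives $\omega_{0,ess}(\hat{B})=-\infty$. The main obstacle is the second step: because of the nonlocal coupling through $Q_{L,0}$ and $Q_{L,1}$, the renewal equation for the births is not scalar but a coupled Volterra system, and one must check that its kernels are genuinely continuous—so that the solution inherits a \emph{uniform} modulus of continuity—rather than merely integrable. This is precisely where hypothesis H.1, together with the smoothness of the equilibrium profile $\hat{\phi}$ supplied by (\ref{sec:4.12}), is indispensable.
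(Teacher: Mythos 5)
Your strategy is in substance the same as the paper's: the proof there splits $T_L(t)=\tilde W_1(t)+\tilde W_2(t)$ into the part supported on $a>t$ (pure transport of the initial datum, which vanishes identically once $t>a_1$) and the part supported on $a<t$ (driven by the births), invokes the ultimate compactness of the latter, and concludes $\alpha[T_L(t)]\le\alpha[\tilde W_1(t)]+\alpha[\tilde W_2(t)]=0$ for $t>a_1$ by subadditivity of the Kuratowski measure. Where the paper simply cites Webb's Propositions 3.17 and 4.9 for the compactness of the boundary-driven part, you verify it directly through the Volterra system for $(B,Q_{L,0},Q_{L,1})$ and the $L^1$ translation criterion; that is a legitimate, more self-contained execution of the same decomposition.

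There is, however, one sub-claim in your second step that would fail as stated. H.1 gives only $\omega_i\in L^\infty_+(0,a_1)$, not continuity, so the Volterra kernels are merely bounded and measurable; this is harmless for existence, uniqueness, continuity of the solution for each fixed $p_0$, and for the bound $\|B\|_\infty\le C\|p_0\|_X$. What is \emph{not} true is that $B$ admits a sup-norm modulus of continuity uniform over the unit ball of $X=L^1$: the forcing term $t\mapsto\int_t^{a_{\max}}\beta(a;\cdot)\,p_0(a-t)\,\Pi(\cdots)\,da$ can change by an amount of order $\|p_0\|_X$ under an arbitrarily small shift of $t$ if $p_0$ concentrates its mass near the moving endpoint (unless $\beta$ happens to vanish at $a_{\min}$ and $a_{\max}$). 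Fortunately, your third step only needs the \emph{integrated} translation modulus $\int_{t-a_1}^{t}|B(s-h)-B(s)|\,ds$, and this is $O(h)\,\|p_0\|_X$ uniformly: exchange the order of integration and use that, for each fixed $a$, the kernel is piecewise $C^1$ in time with finitely many jumps, so its $L^1$-in-time translation modulus is $O(h)$ uniformly in $a$; the same estimate then propagates through the Volterra iteration. With the sup-norm equicontinuity claim replaced by this $L^1$-in-time estimate, your argument closes and yields $\|T_L(t)\|_{ess}=0$ for $t>a_1$, hence $\omega_{0,ess}(\hat B)=-\infty$.
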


\begin{proof}
Let $T_{L}(t)=\tilde{W}_{1}(t)+\tilde{W}_{2}(t)$, where the mappings $\tilde{W}_{1}(t), \tilde{W}_{2}(t)\in X$ for $t\geq 0$, $\phi\in X$ are defined as follows:
\begin{align}\label{sec:6.1'}
&(\tilde{W}_{1}(t)\phi)(a)=\begin{cases}
0 & \text{ a.e. } \ \ a \in (0,t)\cap[0,a_{1}];\\
(T_{L}(t)\phi)(a) & \text{ a.e. } \ \ a\in (t, a_{1}];
\end{cases}\\
&(\tilde{W}_{2}(t)\phi)(a)=\begin{cases}\label{sec:6.2'}
(T_{L}(t)\phi)(a) & \text{ a.e. } \ \ a \in (0,t)\cap[0,a_{1}];\\
0 & \text{ a.e. } \ \ a\in (t, a_{1}].
\end{cases}
\end{align}
It is readily seen that $\tilde{W}_{1}(t)=0$ for $t>a_{1}$ while $\tilde{W}_{2}(t)$ (see \cite{W}, section 3.4, pp.112) is ultimately compact by using the measure of noncompactness due to Kuratowski by Proposition 3.17 (see \cite{W}, section 3.5, pp.113). Therefore, from Proposition 4.9 (in \cite{W}, section 4.3, pp.166), we obtain $\alpha[T_{L}(t)]\leq \alpha[\tilde{W}_{1}(t)]+\alpha[\tilde{W}_{2}(t)]=0$ for $t>a_{1}$, where $\alpha$ is the measure of noncompactness of $T$ defined in \cite{W}, section 4.3, pp.165. The claim then follows directly.
\end{proof}


\subsection{State space decomposition by invariant subspaces}

\begin{definition}\label{sec:defproj}
Let $Y$ be a Banach space. An everywhere defined bounded linear operator $P$ in $Y$ is called a projection provided that $P^{2}=P$. Let $M_{1}$, $M_{2}$ be linear subspaces of $Y$. Then, $Y$ is the direct sum of $M_{1}$, and $M_{2}$, denoted by $Y=M_{1}\oplus M_{2}$, provided that $M_{1}\cap M_{2}=0$ and for each $y\in Y$ there exists the (necessarily unique) representation $y=y_{1}+y_{2}$, where $y_{1}\in M_{1}$, $y_{2}\in M_{2}$.
\end{definition}

\begin{proposition}
Let $X$ be a Banach space. If $P$ is a projection in $X$, then $I-P$ is a projection in $X$, and $X=M_{1}\oplus M_{2}$, where $M_{1}=P(X)$, $M_{2}=(I-P)X$, and $M_{1}$, $M_{2}$ are closed subspaces of $X$. Conversely, if $X=M_{1}\oplus M_{2}$ is the direct sum of two closed subspaces $M_{1}$, $M_{2}$, then $P_{1}$, $P_{2}$ are projections in $X$, where $P_{i}x=x_{i}$, $x=x_{1}+x_{2}$, $x_{i}\in M_{i}$, $i=1,2$.
\end{proposition}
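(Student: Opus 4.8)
The plan is to treat the two directions separately, dispatching the purely algebraic identities by direct computation and isolating the one genuinely analytic point—continuity of the projections in the converse—for a careful appeal to the closed graph theorem, which is where completeness of $X$ enters.

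For the forward direction, I would first verify that $I-P$ is a projection by the computation $(I-P)^{2}=I-2P+P^{2}=I-2P+P=I-P$, using $P^{2}=P$; boundedness of $I-P$ is immediate since $I$ and $P$ are bounded. Next I would record the characterizations $M_{1}=P(X)=N(I-P)$ and $M_{2}=(I-P)(X)=N(P)$: if $y=Px$ then $Py=P^{2}x=Px=y$, so every element of $P(X)$ is fixed by $P$, and a fixed point of $P$ obviously lies in $P(X)$; applying the same reasoning to $I-P$ yields the statement for $M_{2}$. Since $M_{1}$ and $M_{2}$ are null spaces of the continuous operators $I-P$ and $P$, they are closed. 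To see $M_{1}\cap M_{2}=\{0\}$, take $y$ in the intersection: membership in $M_{1}$ gives $Py=y$, while membership in $M_{2}=N(P)$ gives $Py=0$, forcing $y=0$. Finally, the identity $x=Px+(I-P)x$ exhibits each $x\in X$ as a sum of an element of $M_{1}$ and one of $M_{2}$, and uniqueness of this representation follows from $M_{1}\cap M_{2}=\{0\}$; this is precisely $X=M_{1}\oplus M_{2}$ in the sense of Definition \ref{sec:defproj}.

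For the converse I would begin from the unique representation $x=x_{1}+x_{2}$ with $x_{i}\in M_{i}$ furnished by $X=M_{1}\oplus M_{2}$, and define $P_{i}x=x_{i}$. Linearity of each $P_{i}$ follows from uniqueness of the representation, and $P_{i}^{2}=P_{i}$ because an element $x_{1}\in M_{1}$ has the trivial decomposition $x_{1}+0$, so $P_{1}(P_{1}x)=P_{1}x_{1}=x_{1}=P_{1}x$, and symmetrically for $P_{2}$. Each $P_{i}$ is also everywhere defined, since the decomposition exists for every $x$.

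The main obstacle is the remaining requirement for the term \emph{projection} in Definition \ref{sec:defproj}, namely boundedness of $P_{i}$, and I expect to establish it via the closed graph theorem applied to $P_{1}$. Suppose $x^{(n)}\to x$ and $P_{1}x^{(n)}\to y$. Since $P_{1}x^{(n)}\in M_{1}$ and $M_{1}$ is closed, $y\in M_{1}$; likewise $P_{2}x^{(n)}=x^{(n)}-P_{1}x^{(n)}\to x-y$, and as each term lies in the closed subspace $M_{2}$ we get $x-y\in M_{2}$. Thus $x=y+(x-y)$ is a decomposition with $y\in M_{1}$ and $x-y\in M_{2}$, so uniqueness forces $P_{1}x=y$; the graph of $P_{1}$ is closed, and since $X$ is a Banach space the closed graph theorem yields boundedness of $P_{1}$. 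Boundedness of $P_{2}=I-P_{1}$ then follows at once, completing the proof.
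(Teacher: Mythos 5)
Your proof is correct and complete. The paper states this proposition without providing a proof (it is a standard result imported from Webb's monograph on age-dependent population dynamics), so there is no argument in the text to compare against; your write-up is the canonical one. Both directions check out: the algebraic identities $(I-P)^{2}=I-P$, $P(X)=N(I-P)$, and $(I-P)(X)=N(P)$ give closedness of $M_{1}$, $M_{2}$ and the decomposition $x=Px+(I-P)x$ for free, and in the converse you correctly isolate boundedness of $P_{1}$ as the only nontrivial point, since the paper's definition of projection requires an everywhere defined \emph{bounded} operator. Your closed graph argument is exactly right: closedness of both $M_{1}$ and $M_{2}$ is what lets you identify the limit $y$ as $P_{1}x$ via uniqueness of the decomposition, and completeness of $X$ is what makes the closed graph theorem applicable — this is precisely where the Banach space hypothesis is consumed.
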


\begin{definition}
Let the Banach space $X$ have the direct sum representation $X=M_{1}\oplus M_{2}$, where $M_{1}$, $M_{2}$ are closed subspaces of $X$. Let $P_{1}$, $P_{2}$ be the projections induced by $M_{1}$, $M_{2}$, that is, $P_{i}x=x_{i}$, where $x=x_{1}+x_{2}$, $x_{i}\in M_{i}$, $i=1,2$. A closed linear operator $T$ in $X$ is said to be completely reduced by $M_{1}$ and $M_{2}$ provided that $T(M_{1}\cap D(T))\subset M_{1}$, $T(M_{2}\cap D(T))\subset M_{2}$, $P_{1}(D(T))\subset D(T)$ and $P_{2}(D(T))\subset D(T)$.
\end{definition}

\begin{proposition}
Let $T$ be a closed linear operator in the complex Banach space $X$ and let $\lambda_{0}$ be an isolated point of $\sigma(T)$. Then,
\begin{align}
(\lambda I-T)^{-1}=\sum_{k=-\infty}^{\infty}(\lambda-\lambda_{0})^{k}A_{k}.\label{sec:clpro1}
\end{align} 
where for each integer $k$
\begin{align}
A_{k}=(2\Pi i)^{-1}\int_{\Gamma}(\lambda-\lambda_{0})^{-k-1}(\lambda I-T)^{-1}d\lambda.\label{sec:clpro2}
\end{align} 
and $\Gamma$ is a positively oriented circle of sufficiently small radius such that no point of $\sigma(T)$ lies on or inside $\Gamma$. Further, $A_{-1}$ is a projection on $X$. If $\lambda_{0}$ is a pole of $(\lambda I-T)^{-1}$ of order $m$ (that is, $A_{-m}\neq 0$ and $A_{k}=0$ for all $k<-m$), then $\lambda_{0}$ is an eigenvalue of $T$ with index $m$, $R(A_{-1})=N((\lambda_{0} I-T)^{m})$, $R(I-A_{-1})=R((\lambda_{0} I-T)^{k})$ for all $k\geq m$, $X=N((\lambda_{0} I-T)^{m})\oplus R((\lambda_{0} I-T)^{m})$, and $T$ is completely reduced by the two linear subspaces occurring in this direct sum. Also, $R(A_{-1})$ is closed, $R(I-A_{-1})$ is closed, and $T$ restricted to $R(A_{-1})$ is bounded with spectrum $\left\{\lambda_{0}\right\}$. Finally, if $R(A_{-1})$ is finite dimensional, then $\lambda_{0}$ is a pole of $(\lambda I-T)^{-1}$.
\end{proposition}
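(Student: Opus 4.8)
The plan is to recognize this as the classical Laurent (principal-part) expansion of the resolvent about an isolated point of the spectrum and to read every assertion off the algebra of the Laurent coefficients. Since $\lambda_{0}$ is isolated in $\sigma(T)$, the resolvent $\lambda\mapsto(\lambda I-T)^{-1}$ is holomorphic as a map into the bounded operators on the punctured disc $0<|\lambda-\lambda_{0}|<r$, with $r$ chosen so small that $\Gamma$ encloses no other point of $\sigma(T)$. A holomorphic operator-valued function on an annulus has a norm-convergent Laurent series, which is exactly (\ref{sec:clpro1}) with coefficients (\ref{sec:clpro2}); this part is routine.

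The crux is extracting the coefficient relations. Multiplying the expansion by $(T-\lambda_{0}I)$ and using $(T-\lambda_{0}I)(\lambda I-T)^{-1}=-I+(\lambda-\lambda_{0})(\lambda I-T)^{-1}$, I match powers of $(\lambda-\lambda_{0})$ to obtain $(T-\lambda_{0}I)A_{-k}=A_{-k-1}$ for $k\geq1$, hence $A_{-k-1}=(T-\lambda_{0}I)^{k}A_{-1}$. Writing $P:=A_{-1}$ and $D:=(T-\lambda_{0}I)P$, this reads $A_{-k-1}=D^{k}$. Separately, integrating the resolvent identity $(\lambda I-T)^{-1}(\mu I-T)^{-1}=\bigl((\mu I-T)^{-1}-(\lambda I-T)^{-1}\bigr)(\lambda-\mu)^{-1}$ over two concentric circles $\Gamma_{1}\subset\Gamma_{2}$ about $\lambda_{0}$, and using $\int_{\Gamma_{2}}(\mu-\lambda)^{-1}d\mu=2\pi i$ for $\lambda$ inside while $\int_{\Gamma_{1}}(\mu-\lambda)^{-1}d\lambda=0$ for $\mu$ outside, collapses the double integral to $P^{2}=P$. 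Thus $A_{-1}$ is a projection, $D$ is bounded, and $P$ commutes with $T$.

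Because $P$ commutes with $T$ and preserves $D(T)$, the operator $T$ is completely reduced by the closed subspaces $M_{1}:=R(P)$ and $M_{2}:=R(I-P)$, so $X=M_{1}\oplus M_{2}$ with both summands closed (ranges of bounded projections). On $M_{2}$ the operator $\lambda_{0}I-T$ is invertible since $\sigma(T|_{M_{2}})=\sigma(T)\setminus\{\lambda_{0}\}$, while on $M_{1}$ one has $T|_{M_{1}}=\lambda_{0}I+D$ with $D$ bounded, so $T|_{M_{1}}$ is bounded with spectrum $\{\lambda_{0}\}$. Now assume $\lambda_{0}$ is a pole of order $m$: $A_{-m}\neq0$ and $A_{-m-1}=0$, that is $D^{m-1}\neq0$ and $D^{m}=0$, so $D$ is nilpotent of index exactly $m$ on $M_{1}$. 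Then $(\lambda_{0}I-T)^{m}$ annihilates $M_{1}$ and is a bijection of $M_{2}$; decomposing any $x$ and using invertibility on $M_{2}$ gives $N\bigl((\lambda_{0}I-T)^{m}\bigr)=M_{1}=R(A_{-1})$, while the range of $(\lambda_{0}I-T)^{k}$ for $k\geq m$ is $M_{2}=R(I-A_{-1})$, yielding $X=N\bigl((\lambda_{0}I-T)^{m}\bigr)\oplus R\bigl((\lambda_{0}I-T)^{m}\bigr)$ and that $\lambda_{0}$ is an eigenvalue whose generalized eigenspace stabilizes at index $m$.

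For the converse, if $R(A_{-1})=M_{1}$ is finite dimensional, then $T|_{M_{1}}$ is a bounded operator on a finite-dimensional space with sole spectral value $\lambda_{0}$, so $(T-\lambda_{0}I)|_{M_{1}}=D$ is nilpotent by elementary linear algebra; if $m$ is its index then $A_{-m-1}=D^{m}=0$ truncates the principal part of the Laurent series, exhibiting $\lambda_{0}$ as a pole of order $m$. The main obstacle I anticipate is the bookkeeping in the coefficient identities—cleanly deriving $P^{2}=P$ and $A_{-k-1}=(T-\lambda_{0}I)^{k}A_{-1}$ from the resolvent identity, and then matching the nilpotency index of $D$ to both the pole order and the stabilization of $N\bigl((\lambda_{0}I-T)^{k}\bigr)$; once these are in place, the remaining statements are formal consequences of the direct-sum decomposition.
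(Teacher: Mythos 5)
Your proposal is correct. The paper does not actually prove this proposition --- it imports it verbatim as a classical result (it is the standard Laurent/spectral-decomposition theorem for the resolvent at an isolated spectral point, quoted from Webb's monograph) --- so there is no in-paper argument to compare against; your derivation via the coefficient recursion $A_{-k-1}=(T-\lambda_{0}I)^{k}A_{-1}$, the double contour integral giving $A_{-1}^{2}=A_{-1}$, and the reduction of $T$ by $R(A_{-1})\oplus R(I-A_{-1})$ is exactly the standard proof. The only points you gloss over, both routine, are (i) justifying the term-by-term application of the closed, possibly unbounded operator $T$ to the Laurent series (cleaner: apply $T-\lambda_{0}I$ under the integral sign in the formula for $A_{k}$, using closedness of $T$ and boundedness of $T(\lambda I-T)^{-1}$), and (ii) in the non-pole setting, that $\sigma(T|_{R(A_{-1})})=\{\lambda_{0}\}$ comes from quasinilpotence of $D=(T-\lambda_{0}I)A_{-1}$, i.e.\ $\|D^{k}\|^{1/k}=\|A_{-k-1}\|^{1/k}\to 0$ because the principal part converges for every $\lambda\neq\lambda_{0}$.
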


We need the following result from \cite{W}, sec 4.3, pp. 166. 
\begin{proposition}
Let $T$ be a closed linear operator in the complex Banach space $X$ and let $\lambda_{0}\in \sigma(T)-E\sigma(T)$. Then, $N_{\lambda_{0}}(T)=R(A_{-1})$, where $A_{-1}:=(2\Pi i)^{-1}\int_{\Gamma}(\lambda-\lambda_{0})^{-k-1}(\lambda I-T)^{-1}d\lambda$, $\lambda_{0}$ is a pole of $(\lambda I-T)^{-1}$, and $\lambda_{0}\in P\sigma(T)$.
\end{proposition}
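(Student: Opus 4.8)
The plan is to read the hypothesis $\lambda_{0}\in\sigma(T)-E\sigma(T)$ through the definition of $E\sigma(T)$: since none of (i), (ii), (iii) holds, we obtain simultaneously that (a) $R(\lambda_{0}I-T)$ is closed, (b) $\lambda_{0}$ is an isolated point of $\sigma(T)$, and (c) $N_{\lambda_{0}}(T)$ is finite dimensional. Because of (b), the preceding proposition applies verbatim: the resolvent has the Laurent expansion (\ref{sec:clpro1})--(\ref{sec:clpro2}) about $\lambda_{0}$, $A_{-1}$ is a bounded projection, and writing $X=X_{1}\oplus X_{2}$ with $X_{1}=R(A_{-1})$ and $X_{2}=R(I-A_{-1})$, the operator $T$ is completely reduced, $T_{1}:=T|_{X_{1}}$ is bounded with $\sigma(T_{1})=\{\lambda_{0}\}$, and $\lambda_{0}\in\rho(T|_{X_{2}})$.

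First I would record the easy inclusion $N_{\lambda_{0}}(T)\subseteq X_{1}$. If $(\lambda_{0}I-T)^{k}x=0$, then applying the commuting projection $A_{-1}$ shows that the $X_{2}$-component of $x$ lies in the kernel of $(\lambda_{0}I-T)^{k}|_{X_{2}}$; but $(\lambda_{0}I-T)|_{X_{2}}$ is invertible, so that component vanishes and $x\in X_{1}$. As $X_{1}$ is closed, $N_{\lambda_{0}}(T)=\overline{\bigcup_{k}N((\lambda_{0}I-T)^{k})}\subseteq X_{1}$. The whole proposition will then follow once I show the reverse, namely that $X_{1}=R(A_{-1})$ is finite dimensional: by the final clause of the preceding proposition this forces $\lambda_{0}$ to be a pole of some order $m$, whence (pole case) $R(A_{-1})=N((\lambda_{0}I-T)^{m})$, the ascending chain of kernels stabilizes so that $\bigcup_{k}N((\lambda_{0}I-T)^{k})=N((\lambda_{0}I-T)^{m})$ is already closed and equal to $R(A_{-1})$, giving $N_{\lambda_{0}}(T)=R(A_{-1})$, while $m\geq 1$ makes $\lambda_{0}$ an eigenvalue, so $\lambda_{0}\in P\sigma(T)$.

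The hard part will be proving that $X_{1}$ is finite dimensional, equivalently that $S_{1}:=(\lambda_{0}I-T)|_{X_{1}}$ -- a bounded operator with $\sigma(S_{1})=\{0\}$ -- is nilpotent; this is where all three negated conditions are genuinely used, closed range being precisely what excludes Volterra-type quasinilpotents (isolated spectrum and trivial generalized eigenspace but non-closed range, hence not poles). Since $T$ is reduced and $(\lambda_{0}I-T)|_{X_{2}}$ maps $X_{2}$ onto $X_{2}$, closedness of $R(\lambda_{0}I-T)$ descends to closedness of $R(S_{1})=R(\lambda_{0}I-T)\cap X_{1}$, and $\dim N(S_{1})\leq\dim N_{\lambda_{0}}(T)<\infty$ by (c), so $S_{1}$ is upper semi-Fredholm. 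I would then compare $S_{1}$ with $\mu I-S_{1}$ for small $\mu\neq 0$, which is invertible because $\sigma(S_{1})=\{0\}$: by openness of the semi-Fredholm class and continuity of the index under small perturbations, $\operatorname{ind}(S_{1})=\operatorname{ind}(\mu I-S_{1})=0$, so $S_{1}$, and hence every power $S_{1}^{k}$, is Fredholm of index $0$. Finite-dimensionality of $N_{\lambda_{0}}(T)$ makes the ascent of $S_{1}$ finite, say $m$; since $\dim N(S_{1}^{k})=\operatorname{codim}R(S_{1}^{k})$ for the index-zero powers and $\dim N(S_{1}^{k})$ is constant for $k\geq m$, the codimensions stabilize and $R(S_{1}^{k})=R(S_{1}^{m})$ for $k\geq m$, so the descent is finite as well. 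Finite ascent and descent yield $X_{1}=N(S_{1}^{m})\oplus R(S_{1}^{m})$ with $S_{1}$ invertible on $R(S_{1}^{m})$; as $\sigma(S_{1})=\{0\}$ this forces $R(S_{1}^{m})=0$, so $X_{1}=N(S_{1}^{m})\subseteq N_{\lambda_{0}}(T)$ is finite dimensional and $S_{1}^{m}=0$. This is exactly the pole condition, and the reductions of the previous paragraph then deliver $N_{\lambda_{0}}(T)=R(A_{-1})$, that $\lambda_{0}$ is a pole of $(\lambda I-T)^{-1}$, and $\lambda_{0}\in P\sigma(T)$.
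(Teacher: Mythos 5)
The paper offers no proof of this proposition at all --- it is quoted from \cite{W}, sec.\ 4.3, p.\ 166, as an imported result --- so there is no internal argument to compare yours against; I can only judge the proposal on its own merits, and it is correct. It is essentially the classical Browder-type argument. The easy inclusion $N_{\lambda_{0}}(T)\subseteq R(A_{-1})$ via the commuting spectral projection and the injectivity of $(\lambda_{0}I-T)$ on $R(I-A_{-1})$ is fine, and the substantive step --- showing $S_{1}:=(\lambda_{0}I-T)|_{R(A_{-1})}$ is nilpotent, hence $R(A_{-1})$ finite dimensional and $\lambda_{0}$ a pole --- correctly deploys all three negated clauses of the definition of $E\sigma(T)$: closed range plus finite-dimensional kernel makes $S_{1}$ upper semi-Fredholm (the identification $R(S_{1})=R(\lambda_{0}I-T)\cap R(A_{-1})$ does require the complete reduction, but you have it); invertibility of $\mu I-S_{1}$ for small $\mu\neq 0$ together with local constancy of the semi-Fredholm index forces index zero; finite ascent comes from $\dim N_{\lambda_{0}}(T)<\infty$; the index-zero bookkeeping yields finite descent; and $R(S_{1}^{m})=0$ because $S_{1}$ restricted to it would be invertible on a nonzero space with nonempty spectrum avoiding $0$, contradicting $\sigma(S_{1})=\{0\}$.

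Two minor points to tighten. First, the unconditional facts you invoke about the isolated-point decomposition (that $X=R(A_{-1})\oplus R(I-A_{-1})$ completely reduces $T$, that $\sigma(T|_{R(A_{-1})})=\{\lambda_{0}\}$, and that $\lambda_{0}\in\rho(T|_{R(I-A_{-1})})$) are stated in the paper's preceding proposition only under the pole hypothesis, so strictly you are importing them from the general spectral theory of closed operators rather than from the text; that is legitimate but should be said. Second, you should note explicitly that the pole order is at least one --- a removable singularity of the resolvent at $\lambda_{0}$ would place $\lambda_{0}$ in $\rho(T)$, contradicting $\lambda_{0}\in\sigma(T)$ --- since that is what makes $\lambda_{0}$ an eigenvalue and hence delivers $\lambda_{0}\in P\sigma(T)$.
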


\begin{proposition}
Let $T(t),t\geq 0$, be a strongly continuous semigroup of bounded linear operator in the Banach space $X$ and let $B$ be the infinitesimal generator of $T(t),t\geq 0$. Let $\Lambda=\left\{\lambda_{1},\cdots,\lambda_{k}\right\}$ be a finite set of points in $\sigma(B)$ such that $Re\lambda_{j}>\omega_{0,ess}(B)$ for $j=1,\cdots,k$. Let
\begin{align}
\omega_{0,\Lambda}:=\max\left\{\omega_{0,ess}(B),\sup_{\lambda\in \sigma(B)-E\sigma(B)-\Lambda} Re\lambda\right\}.\label{sec:clpro3}
\end{align}  
and let
\begin{align}
\omega_{0,\Lambda}<\omega<\min\left\{Re\lambda:\;j=1,\cdots,k\right\}.\label{sec:clpro4}
\end{align} 
The following hold:
\begin{itemize}
\item[(i)] Each $\lambda_{j}\in \sigma(B)-E\sigma(B)$ and is therefore isolated in $\sigma(B)$, and if $P_{j}:=(2\Pi i)^{-1}\int_{\Gamma_{j}}(\lambda I-B)^{-1}d\lambda$, $1\leq j \leq k$, where $\Gamma_{j}$ is a positively oriented closed curve in $\mathbb{C}$ enclosing $\lambda_{j}$, but no other point of $\sigma (B)$, and $M_{j}:=R(P_{j})$, then $P_{j}$ is a projection in $X$, $P_{j}P_{h}=0$ for $j\neq h$, and $B$ restricted to $M_{j}$, denoted by $B_{M_{j}}$ is bounded with spectrum consisting of the single point $\lambda_{j}$.
\item[(ii)] If $P:=\sum_{j=1}^{k}P_{j}$, $P_{0}:=1-P$, and $M_{0}:=R(P_{0})$, then $B$ restricted to $M_{0}$, denoted by $B_{M_{0}}$, has spectrum $\sigma(B)-\Lambda$, $P_{j}Bx=BP_{j}x$ for all $x\in D(B)$, $0\leq j \leq k$, $X=M\oplus M_{0}$, where $M=M_{1}\oplus\cdots\oplus M_{k}$, and $B$ is completely reduced by $M$ and $M_{0}$.
\item[(iii)] If $t\geq 0$, then $T(t)P_{j}x=P_{j}T(t)x$ for all $x\in X$, $0\leq j \leq k$, and $T(t)$ is completely reduced by $M$ and $M_{0}$.
\item[(iv)] If for some $j$, $\lambda_{j}$ is a pole of $(\lambda I-B)^{-1}$ of order $m$, then $M_{j}=N((\lambda_{j}I-B)^{m})$, $R(I-P_{j})=R((\lambda_{j}I-B)^{m})$, and $\lambda_{j}$ is an eigenvalue of $B$ with index $m$. 
\item[(v)] There exists a constant $K\geq 1$ such that $\left\|T(t)P_{0}x\right\|\leq K e^{\omega t}\left\|P_{0}x\right\|$ for all $x\in X$, $t\geq 0$.
\item[(vi)] The restriction of $B$ to $M$, denoted by $B_{M}$, is bounded with spectrum consisting of $\Lambda$, $PT(t)x=e^{t B_{M}}Px$ for $x\in X$, $t\geq 0$, where $e^{t B_{M}}Px$, $-\infty<t<\infty$, is the exponential of $tB_{M}$ in $M$, and there exists $K\geq 1$ such that $\left\|e^{t B_{M}}Px\right\|\leq Ke^{\omega t}\left\|Px\right\|$ for $x\in X$ and $t\leq 0$.
\end{itemize}
\end{proposition}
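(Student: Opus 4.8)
The plan is to derive everything from the Riesz--spectral projection machinery assembled in the preceding propositions, together with the growth-bound identity $\omega_0(B)=\max\bigl(\omega_{0,ess}(B),\ \sup_{\lambda\in\sigma(B)-E\sigma(B)}Re\,\lambda\bigr)$. First I would dispose of (i). Because $Re\,\lambda_j>\omega_{0,ess}(B)$, no $\lambda_j$ can lie in $E\sigma(B)$: if it did, the essential growth bound would dominate its real part. Hence each $\lambda_j\in\sigma(B)-E\sigma(B)$, and by the proposition characterizing such points it is an isolated pole of the resolvent. Shrinking $\Gamma_j$ to enclose $\lambda_j$ alone, the Riesz integral $P_j=(2\pi i)^{-1}\int_{\Gamma_j}(\lambda I-B)^{-1}d\lambda$ equals the residue term $A_{-1}$ of the Laurent expansion at $\lambda_j$ and is therefore a projection, and $B_{M_j}$ is bounded with spectrum $\{\lambda_j\}$. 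The orthogonality $P_jP_h=0$ for $j\neq h$ follows from the resolvent identity, which reduces $P_jP_h$ to contour integrals of scalar Cauchy kernels that vanish once $\Gamma_j$ and $\Gamma_h$ are chosen mutually exterior.

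For (ii)--(iv) I would argue as follows. The $P_j$ are mutually orthogonal projections, so $P=\sum_j P_j$ is a projection, $P_0=I-P$ is its complement, and $X=M\oplus M_0$ with $M=\bigoplus_j M_j$. Each $P_j$ is built from the resolvent and hence commutes with $(\lambda I-B)^{-1}$, so it commutes with $B$ on $D(B)$; consequently $B$ is completely reduced by each $M_j$ and by $M_0$, and the spectrum splits as $\sigma(B_{M_0})=\sigma(B)-\Lambda$. Part (iv) is then a transcription of the pole statement in the Laurent proposition: when $\lambda_j$ is a pole of order $m$ one has $R(A_{-1})=N((\lambda_j I-B)^m)$ and $R(I-A_{-1})=R((\lambda_j I-B)^m)$, identifying $M_j$ with the generalized eigenspace and $m$ with the index of $\lambda_j$.

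The commutation statement in (iii) follows from $T(t)(\lambda I-B)^{-1}=(\lambda I-B)^{-1}T(t)$, which propagates through the contour integrals to give $T(t)P_j=P_jT(t)$, so $T(t)$ leaves $M$ and $M_0$ invariant. I expect the estimate (v) to be the crux. The restriction $T(t)|_{M_0}$ is a strongly continuous semigroup with generator $B_{M_0}$, and I would compute its growth bound through the growth-bound identity applied on $M_0$. The essential ingredient is that passing to $M_0$ does not change the essential data: since $M=R(P)$ is finite dimensional, $T(t)P$ is of finite rank, so $T(t)$ and $T(t)P_0$ differ by a compact operator and $\omega_{0,ess}(B_{M_0})=\omega_{0,ess}(B)$; meanwhile $\sup\{Re\,\lambda:\lambda\in\sigma(B_{M_0})-E\sigma(B_{M_0})\}=\sup\{Re\,\lambda:\lambda\in\sigma(B)-E\sigma(B)-\Lambda\}$ because $\Lambda$ is disjoint from $E\sigma(B)$. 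Hence $\omega_0(B_{M_0})=\omega_{0,\Lambda}<\omega$, and the very definition of the growth bound furnishes $K\geq 1$ with $\|T(t)P_0x\|\leq Ke^{\omega t}\|P_0x\|$ for all $t\geq 0$. The delicate point I would verify with care is the invariance of the essential spectrum under the passage to $M_0$; here I would lean on the measure-of-noncompactness characterization of $E\sigma$ together with the finite rank of $P$.

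Finally, for (vi): each $M_j$ is the generalized eigenspace of an isolated, non-essential eigenvalue and is therefore finite dimensional, so $M$ is finite dimensional and $B_M$ is a bounded operator with spectrum $\Lambda$. Consequently $T(t)|_M=e^{tB_M}$ extends to the group $\{e^{tB_M}\}_{t\in\mathbb{R}}$ and $PT(t)x=e^{tB_M}Px$. For $t\leq 0$ the bound $\|e^{tB_M}Px\|\leq Ke^{\omega t}\|Px\|$ follows from the spectral mapping theorem on the finite-dimensional space $M$: the eigenvalues of $e^{tB_M}$ have modulus at most $e^{t\min_j Re\,\lambda_j}$, and since $\omega<\min_j Re\,\lambda_j$ and $t\leq 0$ we have $e^{t\min_j Re\,\lambda_j}\leq e^{\omega t}$, so the polynomial factors coming from nontrivial Jordan blocks are absorbed into $K$.
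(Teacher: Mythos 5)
Your proposal is correct and follows the standard Riesz--spectral-projection route that the cited source (\cite{W}, sec.\ 4.3) uses for this result; the paper itself states the proposition without proof, as an adaptation of Webb's theorem. The only points worth double-checking are the ones you already flag: that each $M_j=N_{\lambda_j}(B)$ is finite dimensional (which follows from $\lambda_j\notin E\sigma(B)$ by the definition of the essential spectrum), so that $P$ has finite rank and the essential growth bound is unchanged on $M_0$, and that the constants relating the measure of noncompactness on $M_0$ to that on $X$ (depending on $\|P_0\|$) do not affect the exponential rate.
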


\subsection{The characteristic equation}

In this section, we will follow the linearization procedure to derive the characteristic equation as follows:

Let H.1-H.2 hold. By Theorem \ref{sec:T6.2}, the stability of an equilibrium solution $\hat{\phi}$ of the system (\ref{sec:1}) is determined by $\sigma(\hat{B})-E \sigma(\hat{B})$. Accordingly, let $\lambda\in \mathbb{C}$ and let $\hat{B}\phi=\lambda\phi$ for $\phi\in X$ and $\phi \neq 0$. From the definition of $\hat{B}$, we derive the characteristic equation for $\lambda$:
\begin{align}
&\phi'(a)+\lambda\phi(a)+\mu_{2}(a)\phi(a)+\frac{\partial \mu_{0}(a, z)}{\partial z}|_{z = \eta_{0}(Q_{0}\hat{\phi})}\eta_{0}'(Q_{0}\hat{\phi})(Q_{0}\phi)\hat{\phi}(a)\label{sec:6.3.1}\\
\nonumber &+\frac{\partial \mu_{1}(a, z)}{\partial z}|_{z = \eta_{1}(Q_{1}\hat{\phi})}\eta_{1}'(Q_{1}\hat{\phi})(Q_{1}\phi)\hat{\phi}(a)+\mu_{0}(a, \eta_{0}(Q_{0}\hat{\phi}))\phi(a)\\
\nonumber &+\mu_{1}(a, \eta_{1}(Q_{1}\hat{\phi}))\phi(a)=0, \ a\in[0,a_{1}].\\
&\phi(0)=\int^{a_{\max}}_{a_{\min}}\beta(a; \eta_{2}(Q_{0}\hat{\phi}))\phi(a)da\label{sec:6.3.2}\\
\nonumber&+\eta_{2}'(Q_{0}\hat{\phi})(Q_{0}\phi)\int^{a_{\max}}_{a_{\min}}\frac{\partial \beta(a, z)}{\partial z}|_{z = \eta_{2}(Q_{0}\hat{\phi})}\hat{\phi}(a)da.
\end{align}
From (\ref{sec:6.3.1}), we obtain the general solution $\phi$, which takes the form:
\begin{align}\label{sec:6.3.3}
&\phi(a)=\phi(0)e^{-\lambda a}\Pi(0,a; \eta_{0}(Q_{0}\hat{\phi}), \eta_{1}(Q_{1}\hat{\phi}))\\
\nonumber&-e^{-\lambda a}\Pi(0,a; \eta_{0}(Q_{0}\hat{\phi}), \eta_{1}(Q_{1}\hat{\phi}))\int_{0}^{a} e^{\lambda b}\Pi(b,0; \eta_{0}(Q_{0}\hat{\phi}), \eta_{1}(Q_{1}\hat{\phi}))\\
\nonumber&\times[\frac{\partial \mu_{0}(b, z)}{\partial z}|_{z = \eta_{0}(Q_{0}\hat{\phi})}\eta_{0}'(Q_{0}\hat{\phi})(Q_{0}\phi)+\frac{\partial \mu_{1}(b, z)}{\partial z}|_{z = \eta_{1}(Q_{1}\hat{\phi})}\eta_{1}'(Q_{1}\hat{\phi})(Q_{1}\phi)]\hat{\phi}(b)db.
\end{align}
We have $Q_{i}\phi, \ i=0,1$ satisfies:
\begin{align}\label{sec:6.3.4}
Q_{i}\phi&=\int_{0}^{a_{1}}\omega_{i}(a)\phi(a)da\\
\nonumber&=\phi(0)\int_{0}^{a_{1}}\omega_{i}(a)e^{-\lambda a}\Pi(0,a; \eta_{0}(Q_{0}\hat{\phi}), \eta_{1}(Q_{1}\hat{\phi}))da\\
\nonumber&-\int_{0}^{a_{1}}\omega_{i}(a)\int_{0}^{a} e^{-\lambda (a-b)}\Pi(b,a; \eta_{0}(Q_{0}\hat{\phi}), \eta_{1}(Q_{1}\hat{\phi}))\\
\nonumber&\times[\frac{\partial \mu_{0}(b, z)}{\partial z}|_{z = \eta_{0}(Q_{0}\hat{\phi})}\eta_{0}'(Q_{0}\hat{\phi})(Q_{0}\phi)+\frac{\partial \mu_{1}(b, z)}{\partial z}|_{z = \eta_{1}(Q_{1}\hat{\phi})}\eta_{1}'(Q_{1}\hat{\phi})(Q_{1}\phi)]\\
\nonumber&\times \frac{\hat{Q}_{i}\Pi(0,b;\eta_{0}(\hat{Q}_{0}),\eta_{1}(\hat{Q}_{1}))}{\int^{a_{1}}_{0}\omega_{i}(a)\Pi(0,\tau;\eta_{0}(\hat{Q}_{0}),\eta_{1}(\hat{Q}_{1}))d\tau} dbda\ \ \text{ for }i=0,1.
\end{align}
We use the following basic properties of $\Pi$:
\begin{align*}
&\Pi(b,0;\eta_{0}(\hat{Q}_{0}),\eta_{1}(\hat{Q}_{1}))\Pi(0,b;\eta_{0}(\hat{Q}_{0}),\eta_{1}(\hat{Q}_{1}))=1;\\ &\Pi(b,a;\eta_{0}(\hat{Q}_{0}),\eta_{1}(\hat{Q}_{1}))=\Pi(b,0;\eta_{0}(\hat{Q}_{0}),\eta_{1}(\hat{Q}_{1}))\Pi(0,a;\eta_{0}(\hat{Q}_{0}),\eta_{1}(\hat{Q}_{1})).
\end{align*}
From (\ref{sec:6.3.4}), we obtain the following system of equations for $Q_{i}\phi$, $i=0,1$,
\begin{align}\label{sec:6.3.5}
&(1+I_{\omega_{0}, \mu_{0}}(\lambda))Q_{0}\phi+I_{\omega_{0}, \mu_{1}}(\lambda)Q_{1}\phi=C_{\omega_{0}}(\lambda)\phi(0);\\
\nonumber&I_{\omega_{1},\mu_{0} }(\lambda)Q_{0}\phi+(1+I_{\omega_{1}, \mu_{1}}(\lambda))Q_{1}\phi=C_{\omega_{1}}(\lambda)\phi(0).
\end{align}
where
\begin{align*}
\pi(a;\lambda)&=e^{-\lambda a}\Pi(0,a; \eta_{0}(Q_{0}\hat{\phi}), \eta_{1}(Q_{1}\hat{\phi}));\\
C_{\omega_{i}}(\lambda)&=\int_{0}^{a_{1}}\omega_{i}(a)\pi(a;\lambda)da,\ i=0,1.
\end{align*}
\begin{align*}
&I_{\omega_{i}, \mu_{j}}(\lambda)=\int_{0}^{a_{1}}\omega_{i}(a)\int_{0}^{a} e^{-\lambda (a-b)}\Pi(b,a; \eta_{0}(Q_{0}\hat{\phi}), \eta_{1}(Q_{1}\hat{\phi}))\\
&\times\frac{\partial \mu_{j}(b, z)}{\partial z}|_{z = \eta_{j}(Q_{j}\hat{\phi})}\eta_{j}'(Q_{j}\hat{\phi}) \frac{\hat{Q}_{i}\Pi(0,b;\eta_{0}(\hat{Q}_{0}),\eta_{1}(\hat{Q}_{1}))}{\int^{a_{1}}_{0}\omega_{i}(a)\Pi(0,\tau;\eta_{0}(\hat{Q}_{0}),\eta_{1}(\hat{Q}_{1}))d\tau} dbda\\
&=\frac{\hat{Q}_{i}\eta_{j}'(Q_{j}\hat{\phi})}{\int^{a_{1}}_{0}\omega_{i}(a)\Pi(0,\tau;\eta_{0}(\hat{Q}_{0}),\eta_{1}(\hat{Q}_{1}))d\tau}\int_{0}^{a_{1}}\omega_{i}(a)\pi(a;\lambda)\\
&\times\int_{0}^{a} e^{\lambda b}\frac{\partial \mu_{j}(b, z)}{\partial z}|_{z = \eta_{j}(Q_{j}\hat{\phi})}  dbda; \text{ for }i,j=0,1.
\end{align*}
Solving the system of equations in terms of $Q_{i}\phi$ for $i=0,1$, to obtain
\begin{align}\label{sec:6.3.6}
Q_{0}\phi&=\phi(0)\frac{(1+I_{\omega_{1}, \mu_{1}}(\lambda))C_{\omega_{0}}(\lambda)-I_{\omega_{0}, \mu_{1}}(\lambda)C_{\omega_{1}}(\lambda)}{\Delta(\lambda)}.\\
\nonumber Q_{1}\phi&=\phi(0)\frac{I_{\omega_{1},\mu_{0} }(\lambda)C_{\omega_{0}}(\lambda)-(1+I_{\omega_{0}, \mu_{0}}(\lambda))C_{\omega_{1}}(\lambda)}{-\Delta(\lambda)}.
\end{align}
where $\Delta(\lambda)=(1+I_{\omega_{0}, \mu_{0}}(\lambda))(1+I_{\omega_{1}, \mu_{1}}(\lambda))-I_{\omega_{0}, \mu_{1}}(\lambda)I_{\omega_{1},\mu_{0} }(\lambda)$. Substitute expressions (\ref{sec:6.3.6}) for $Q_{i}\phi,\ i=0,1$ into (\ref{sec:6.3.3}), we obtain
\begin{align}\label{sec:6.3.7}
\phi(a)&=\phi(0)\pi(a;\lambda)\\
\nonumber&-\phi(0)\frac{\hat{Q}_{0}\eta_{0}'(Q_{0}\hat{\phi})(1+I_{\omega_{1}, \mu_{1}}(\lambda))F_{\omega_{0}}(\lambda)}{\Delta(\lambda)}G_{\mu_{0}}(a;\lambda)\\
\nonumber&+\phi(0)\frac{\hat{Q}_{1}\eta_{0}'(Q_{0}\hat{\phi})I_{\omega_{0}, \mu_{1}}(\lambda)F_{\omega_{1}}(\lambda)}{\Delta(\lambda)}G_{\mu_{0}}(a;\lambda)\\
\nonumber&+\phi(0)\frac{\hat{Q}_{0}\eta_{1}'(Q_{1}\hat{\phi})I_{\omega_{1}, \mu_{0}}(\lambda)F_{\omega_{0}}(\lambda)}{\Delta(\lambda)}G_{\mu_{1}}(a;\lambda)\\
\nonumber&-\phi(0)\frac{\hat{Q}_{1}\eta_{1}'(Q_{1}\hat{\phi})(1+I_{\omega_{0}, \mu_{0}}(\lambda))F_{\omega_{1}}(\lambda)}{\Delta(\lambda)}G_{\mu_{1}}(a;\lambda).
\end{align}
where,
\begin{align*}
&F_{\omega_{i}}(\lambda)=\frac{C_{\omega_{i}}(\lambda)}{\int^{a_{1}}_{0}\omega_{i}(a)\Pi(0,a;\eta_{0}(\hat{Q}_{0}),\eta_{1}(\hat{Q}_{1}))da},\ i=0,1.\\
&G_{\mu_{i}}(a;\lambda)=\pi(a;\lambda)\int_{0}^{a} e^{\lambda b}\frac{\partial \mu_{i}(b, z)}{\partial z}|_{z = \eta_{i}(Q_{i}\hat{\phi})}db,\ i=0,1.
\end{align*}
Substitute (\ref{sec:6.3.7}) into equation (\ref{sec:6.3.2}) to obtain the characteristic equation $K(\lambda)-1=0$ for $\lambda\in\mathbb{C}$, where,
\begin{align}\label{sec:6.3.8}
&K(\lambda):=\int^{a_{\max}}_{a_{\min}}\beta(a; \eta_{2}(Q_{0}\hat{\phi}))\pi(a;\lambda)da\\
\nonumber&-H_{\mu_{0}}(\lambda)\eta_{0}'(Q_{0}\hat{\phi})\int^{a_{\max}}_{a_{\min}}\beta(a; \eta_{2}(Q_{0}\hat{\phi}))G_{\mu_{0}}(a;\lambda)da\\
\nonumber&-H_{\mu_{1}}(\lambda)\eta_{1}'(Q_{1}\hat{\phi})\int^{a_{\max}}_{a_{\min}}\beta(a; \eta_{2}(Q_{0}\hat{\phi}))G_{\mu_{1}}(a;\lambda)da\\
\nonumber&+H_{\mu_{0}}(\lambda)\eta_{2}'(Q_{0}\hat{\phi})\int^{a_{\max}}_{a_{\min}}\frac{\partial \beta(a, z)}{\partial z}|_{z = \eta_{2}(Q_{0}\hat{\phi})}\Pi(0,a; \eta_{0}(Q_{0}\hat{\phi}), \eta_{1}(Q_{1}\hat{\phi}))da.
\end{align}
and,
\begin{align*}
&H_{\mu_{0}}(\lambda)=\frac{\hat{Q}_{0}(1+I_{\omega_{1}, \mu_{1}}(\lambda))F_{\omega_{0}}(\lambda)}{\Delta(\lambda)}-\frac{\hat{Q}_{1}I_{\omega_{0}, \mu_{1}}(\lambda)F_{\omega_{1}}(\lambda)}{\Delta(\lambda)};\\
&H_{\mu_{1}}(\lambda)=-\frac{\hat{Q}_{0}I_{\omega_{1}, \mu_{0}}(\lambda)F_{\omega_{0}}(\lambda)}{\Delta(\lambda)}+\frac{\hat{Q}_{1}(1+I_{\omega_{0}, \mu_{0}}(\lambda))F_{\omega_{1}}(\lambda)}{\Delta(\lambda)}.
\end{align*}

\section{Stability or instability of the linear problem}

\subsection{Stability of the trivial equilibrium}

The following result shows the stability or instability of the trivial equilibrium of the system (\ref{sec:1}) (for details see \cite{W}).

\begin{theorem}\label{sec:TL1}  Let H.1 hold and let $\Delta(0)\neq 0$. The trivial equilibrium is locally uniformly exponentially stable if IGC $\leq 1$. It is unstable if IGC $> 1$.
\end{theorem}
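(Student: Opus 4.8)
The plan is to linearize the system at the trivial equilibrium $\hat\phi=0$ and read off stability from the spectral criterion of Theorem~\ref{sec:T6.2}. The key simplification is that at $\hat\phi=0$ one has $Q_0\hat\phi=Q_1\hat\phi=0$; since H.1 forces $\eta_i(0)=0$ (each $\eta_i$ is strictly increasing, continuous, and onto $[0,\infty)$) and $\mu_1(a,0)=0$, every term in the linearization (\ref{sec:6.2}) carrying a factor $\hat\phi(a)$ vanishes. In the notation of the characteristic equation this means $\hat Q_0=\hat Q_1=0$, hence $I_{\omega_i,\mu_j}(\lambda)\equiv 0$, $H_{\mu_0}(\lambda)=H_{\mu_1}(\lambda)=0$, and $\Delta(\lambda)\equiv 1$ (consistent with the hypothesis $\Delta(0)\neq0$). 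Thus (\ref{sec:6.3.8}) collapses to the Lotka-type equation
\begin{align*}
K(\lambda)=\int_{a_{\min}}^{a_{\max}}\beta(a;0)\,e^{-\lambda a}\,\Pi(0,a;0,0)\,da=1,
\end{align*}
and evaluating at $\lambda=0$ gives $K(0)=\mathcal{R}(0)=\mathrm{IGC}$.

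Next I would invoke Theorem~\ref{sec:ES1}, which yields $\omega_{0,ess}(\hat B)=-\infty<0$, so the hypothesis of Theorem~\ref{sec:T6.2} holds and the stability of $\hat\phi=0$ is determined entirely by $\sup\{Re(\lambda):K(\lambda)=1\}$. I would then record the elementary properties of $K$ on the real axis: it is entire, strictly decreasing there (since $K'(\lambda)=-\int_{a_{\min}}^{a_{\max}}a\,\beta(a;0)e^{-\lambda a}\Pi(0,a;0,0)\,da<0$ whenever $\beta(\cdot;0)\not\equiv0$), with $K(\lambda)\to0$ as $\lambda\to+\infty$ and $K(\lambda)\to+\infty$ as $\lambda\to-\infty$. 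The crucial complex estimate is $|K(x+iy)|\le K(x)$, valid because $|e^{-(x+iy)a}|=e^{-xa}$ and the remaining factors are nonnegative.

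For instability ($\mathrm{IGC}>1$): since $K(0)>1$ and $K$ decreases continuously to $0$, there is a unique real root $\lambda^\ast>0$. To apply Theorem~\ref{sec:T6.2}(b) I must show $\lambda^\ast$ is strictly dominant; this follows from the modulus estimate, for any root $\lambda=x+iy$ one gets $1=|K(\lambda)|\le K(x)$, so $K(x)\ge K(\lambda^\ast)$ and monotonicity forces $x\le\lambda^\ast$, with equality for $y\neq0$ excluded because it would require $e^{-iya}$ to be constant on the support of $\beta(\cdot;0)$. Hence $\lambda^\ast$ is the dominant eigenvalue and instability follows. For stability ($\mathrm{IGC}<1$): the same estimate gives $K(x)\ge1>\mathrm{IGC}=K(0)$ for every root, so monotonicity yields $x<0$; since Theorem~\ref{sec:ES1} guarantees only finitely many eigenvalues in any right half-plane, the supremum of their real parts is a negative maximum, and Theorem~\ref{sec:T6.2}(a) gives local exponential stability.

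The step I expect to be the main obstacle is the borderline case $\mathrm{IGC}=1$, where $\lambda=0$ itself solves $K(\lambda)=1$ and, by the modulus/monotonicity argument above, is the dominant root, so the spectral gap closes and Theorem~\ref{sec:T6.2}(a) no longer applies verbatim. Here I would verify that $\lambda=0$ is a simple root (using $K'(0)<0$, so a simple pole of the resolvent) with all other eigenvalues strictly in the left half-plane, and then combine the exponential decay on the complementary invariant subspace with the one-dimensional neutral direction, via the projection decomposition of the preceding subsection, to recover the asserted stability rather than a uniform exponential bound. Making this critical case precise is where the purely real-variable monotonicity machinery is insufficient and the finer spectral-projection analysis is needed.
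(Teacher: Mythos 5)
Your proposal follows essentially the same route as the paper: reduce the characteristic equation at $\hat\phi=0$ to the Lotka equation $K(\lambda)=\int_{a_{\min}}^{a_{\max}}\beta(a;\eta_2(0))e^{-\lambda a}\Pi(0,a;\eta_0(0),\eta_1(0))\,da=1$ (using $\hat Q_0=\hat Q_1=0$ so that $H_{\mu_i}\equiv 0$), and then locate the roots via the real-part/modulus estimate $|K(x+iy)|\le K(x)$ together with monotonicity of $K$ on $\mathbb{R}$. Your treatment is in fact more careful than the paper's: the paper simply takes real parts and asserts that IGC $\le 1$ forces $Re(\lambda)<0$ for every root, whereas you correctly observe that at IGC $=1$ the value $\lambda=0$ itself satisfies $K(0)=\mathrm{IGC}=1$, so the spectral gap closes and no uniform exponential bound can follow from Theorem \ref{sec:T6.2}(a); the paper's one-line claim is false in that borderline case, and your proposed repair via the spectral projection onto the simple eigenvalue $\lambda=0$ (using $K'(0)<0$) is the right way to salvage what stability remains, though it yields neutral rather than exponential stability. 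You also supply the dominance argument for the positive real root in the unstable case, which the paper omits but which is needed to invoke Theorem \ref{sec:T6.2}(b). In short: same skeleton, but your version makes explicit (and partially corrects) the two steps the paper elides.
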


\begin{proof}At the trivial equilibrium $\hat{\phi}=0$, we have $H_{\mu_{i}}(\lambda)=0$ since $\hat{Q}_{i}=0$, for $i=0,1$. The characteristic equation (\ref{sec:6.3.8}) is reduced to:
\begin{align*}
1-\int^{a_{\max}}_{a_{\min}}\beta(a; \eta_{2}(0))e^{-\lambda a}\Pi(0,a; \eta_{0}(0), \eta_{1}(0))da=0.
\end{align*}
Let $\lambda\in\mathbb{C}$ and take the real part of $\lambda$ on both sides, we obtain
\begin{align}\label{sec:TE11Q}
&1=\int^{a_{\max}}_{a_{\min}}e^{-Re(\lambda) a}\cos (Im(\lambda)a)\beta(a; \eta_{2}(0))\Pi(0,a; \eta_{0}(0), \eta_{1}(0))da.
\end{align}
Therefore, if IGC $\leq 1$, we have $Re(\lambda)<0$ for all $\lambda\in\mathbb{C}$. Otherwise, if IGC $>1$, there exists a root $\lambda_{0}\in \mathbb{C}$ of (\ref{sec:TE11Q}) with $Re(\lambda_{0})>0$.

\end{proof}

\subsection{Stability of a positive equilibrium}

Before addressing stability or instability of a positive equilibrium $\hat{\phi}$ (\ref{sec:4.12}) of the system (\ref{sec:1}), we want to study some basic properties of $K(z)$, given by (\ref{sec:6.3.8}) for $z\in \mathbb{R}$. We observe that, for $\lambda\in \mathbb{R}$,
$\lim_{\lambda \rightarrow \infty}\int^{a_{\max}}_{a_{\min}}\beta(a;$ $ \eta_{2}(Q_{0}\hat{\phi}))\pi(a;\lambda)da=0$
and $\lim_{\lambda \rightarrow \infty}C_{\omega_{i}}(\lambda)=\lim_{\lambda \rightarrow \infty}F_{\omega_{i}}(\lambda)=0$, $i=0,1$.
For $I_{\omega_{i}, \mu_{j}}(\lambda)$, $i,j=0,1$, we derive that $\lim_{\lambda \rightarrow \infty}I_{\omega_{i}, \mu_{j}}(\lambda)=0$
by Lebesgue Dominated Convergence Theorem. This is because for $0<b\leq a \leq a_{1}$ and $\lambda>0$, $|e^{-\lambda(a-b)}|\leq 1$ and $\lim_{\lambda \rightarrow \infty}e^{-\lambda(a-b)}=0$. Similarly, we have
$\lim_{\lambda \rightarrow \infty}\int^{a_{\max}}_{a_{\min}}\beta(a; \eta_{2}(Q_{0}\hat{\phi}))$ $G_{\mu_{i}}(a;\lambda)da=0$, $i=0,1$.
It then easily follows that as $\lambda\rightarrow \infty$, $\lim_{\lambda \rightarrow \infty}\Delta(\lambda)=\lim_{\lambda \rightarrow \infty}(1+I_{\omega_{0}, \mu_{0}}(\lambda))(1+I_{\omega_{1}, \mu_{1}}(\lambda))-I_{\omega_{0}, \mu_{1}}(\lambda)I_{\omega_{1},\mu_{0} }(\lambda)=1$ and $\lim_{\lambda \rightarrow \infty}H_{\mu_{i}}(\lambda)=0$, $i=0,1$. Therefore, these limits imply that $\lim_{\lambda \rightarrow \infty}K(\lambda)$ $=0$, the limit is taken in real. Moreover, we derive from (\ref{sec:4.5}) that
$\int^{a_{\max}}_{a_{\min}}\beta(a; \eta_{2}(Q_{0}\hat{\phi}))$ $\pi(a;\lambda)da|_{\lambda=0}=1$. Furthermore, 
we obtain,
\begin{align*}
&I_{\omega_{i}, \mu_{0}}(0)=-\hat{\phi}(0) \int_{0}^{a_{1}}\omega_{i}(a)\frac{\partial  \Pi(0,a; \eta_{0}(z), \eta_{1}(Q_{1}\hat{\phi}))}{\partial z}|_{z = Q_{0}\hat{\phi}}da;\\
&I_{\omega_{i}, \mu_{1}}(0)=-\hat{\phi}(0) \int_{0}^{a_{1}}\omega_{i}(a) \frac{\partial \Pi(0,a; \eta_{0}(Q_{0}\hat{\phi}),\eta_{1}(z) )}{\partial z}|_{z = Q_{1}\hat{\phi}}da.\\
\end{align*}
Moreover,
\begin{align*}
&\eta_{0}'(Q_{0}\hat{\phi})\int^{a_{\max}}_{a_{\min}}\beta(a; \eta_{2}(Q_{0}\hat{\phi}))G_{\mu_{0}}(a;0)da\\
&=-\int^{a_{\max}}_{a_{\min}}\beta(a; \eta_{2}(Q_{0}\hat{\phi}))\frac{ \partial \Pi(0,a; \eta_{0}(z), \eta_{1}(Q_{1}\hat{\phi}))}{\partial z}|_{z = Q_{0}\hat{\phi}}da;\\
&\eta_{1}'(Q_{1}\hat{\phi})\int^{a_{\max}}_{a_{\min}}\beta(a; \eta_{2}(Q_{0}\hat{\phi}))G_{\mu_{1}}(a;0)da\\
&=-\int^{a_{\max}}_{a_{\min}}\beta(a; \eta_{2}(Q_{0}\hat{\phi}))\frac{ \partial \Pi(0,a; \eta_{0}(Q_{0}\hat{\phi}),\eta_{1}(z) )}{\partial z}|_{z = Q_{1}\hat{\phi}}da
\end{align*}
Let
\begin{align*}
&D\mathcal{R}(\hat{\phi})=H_{\mu_{0}}(0)\frac{\partial \int^{a_{\max}}_{a_{\min}}\beta(a; \eta_{2}(z))\Pi(0,a; \eta_{0}(z), \eta_{1}(Q_{1}\hat{\phi}))da}{\partial z}|_{z=Q_{0}\hat{\phi}}\\
&+H_{\mu_{1}}(0)\frac{\partial \int^{a_{\max}}_{a_{\min}}\beta(a; \eta_{2}(Q_{0}\hat{\phi}))\Pi(0,a; \eta_{0}(Q_{0}\hat{\phi}), \eta_{1}(z))da}{\partial z}|_{z=Q_{1}\hat{\phi}}.
\end{align*}
$D\mathcal{R}(\hat{\phi})$ is related to the Frechet derivative of $\mathcal{R}(\phi)$ at a equilibrium solution $\hat{\phi}$ of the system (\ref{sec:1}). We derive $K(0)=1+D\mathcal{R}(\hat{\phi})$.
We summarize what we have so far to obtain the following consequence of the instability condition for a nontrivial equilibrium of the system (\ref{sec:1}).

\begin{theorem}\label{sec:TL2}

Let H.1-H.2 hold. If IGC $>1$, a positive equilibrium solution $\hat{\phi}$ of the system (\ref{sec:1}) is linearly unstable if $D\mathcal{R}(\hat{\phi})> 0$.

\end{theorem}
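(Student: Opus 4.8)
The plan is to exhibit a real, positive root of the characteristic equation $K(\lambda)=1$ determined by (\ref{sec:6.3.8}) and to deduce instability from the spectral description of $\hat{B}$. By Theorem \ref{sec:ES1} we have $\omega_{0,ess}(\hat{B})=-\infty$, so the growth bound of the linear semigroup reduces to $\omega_0(\hat{B})=\max\{\mathrm{Re}\,\lambda:\lambda\in\sigma(\hat{B})-E\sigma(\hat{B})\}$, and every point of $\sigma(\hat{B})-E\sigma(\hat{B})$ is a root of the characteristic equation. It therefore suffices to produce a single eigenvalue $\lambda_1$ of $\hat{B}$ with $\mathrm{Re}\,\lambda_1>0$, after which $\omega_0(\hat{B})\ge\lambda_1>0$ forces the linear semigroup to be unstable; this is precisely the route taken for the trivial equilibrium in Theorem \ref{sec:TL1}.

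First I would study $K$ on the real half-line $[0,\infty)$. From the values computed just before the statement, $K(0)=1+D\mathcal{R}(\hat{\phi})$, so the hypothesis $D\mathcal{R}(\hat{\phi})>0$ gives $K(0)>1$. On the other hand the limit computations recorded there show $\lim_{\lambda\to\infty}K(\lambda)=0<1$, since $\int_{a_{\min}}^{a_{\max}}\beta(a;\eta_{2}(Q_{0}\hat{\phi}))\pi(a;\lambda)\,da$, each $C_{\omega_i}(\lambda)$, $F_{\omega_i}(\lambda)$, and $I_{\omega_i,\mu_j}(\lambda)$ tend to $0$ while $\Delta(\lambda)\to1$. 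As $K$ is continuous on $[0,\infty)$, the intermediate value theorem yields $\lambda_1\in(0,\infty)$ with $K(\lambda_1)=1$.

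It then remains to identify $\lambda_1$ with a genuine eigenvalue. Normalizing $\phi(0)=1$ and defining $\phi$ by (\ref{sec:6.3.7}) at $\lambda=\lambda_1$ gives a nonzero $\phi\in D(\hat{B})$ satisfying $\hat{B}\phi=\lambda_1\phi$, so $\lambda_1\in\sigma(\hat{B})-E\sigma(\hat{B})$ with $\mathrm{Re}\,\lambda_1=\lambda_1>0$, and linear instability follows as above.

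The main obstacle is this last identification: it rests on $\Delta(\lambda_1)\neq0$, which is needed both for the formula (\ref{sec:6.3.7}) to be well defined and for the boundary condition (\ref{sec:6.3.2}) to collapse to $\phi(0)\,(K(\lambda_1)-1)=0$. One must therefore confirm that $\Delta$ has no zero on the interval over which the intermediate value theorem is applied (or restrict that interval accordingly), and rule out eigenfunctions with $\phi(0)=0$---if $\phi(0)=0$ and $\Delta(\lambda)\neq0$ then (\ref{sec:6.3.6}) forces $Q_0\phi=Q_1\phi=0$ and hence $\phi\equiv0$, so this degenerate case does not arise. With $\Delta(\lambda_1)\neq0$ in hand the remaining verifications are routine.
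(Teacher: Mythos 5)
Your argument is essentially the paper's own proof: the paper also observes $K(0)=1+D\mathcal{R}(\hat{\phi})>1$, $\lim_{\lambda\to\infty}K(\lambda)=0$, and applies the intermediate value theorem on $[0,\infty)$ to produce a positive real root of the characteristic equation, from which instability follows. Your additional care about $\Delta(\lambda)\neq 0$ on the relevant interval and about converting the root of $K(\lambda)=1$ into a genuine eigenvector via (\ref{sec:6.3.7}) is a refinement the paper's proof leaves implicit, not a different route.
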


\begin{proof}

The claim holds if we can show that there exists a positive zero of the characteristic equation (\ref{sec:6.3.8}). This result directly follows from $\lim_{\lambda \rightarrow \infty}K(\lambda)=0$ by the Intermediate Value Theorem since $K$ is real and continuous on $\mathbb{R}$ and $K(0) > 1$
if $D\mathcal{R}(\hat{\phi})> 0$.

\end{proof}
For the linear stability of the positive equilibrium, we make the following assumptions:
\begin{itemize}
\item[H.5.]
\begin{align}
&\eta_{0}'(Q_{0}\hat{\phi})\frac{\partial \mu_{0}(b, z)}{\partial z}|_{z = \eta_{0}(Q_{0}\hat{\phi})} \eta_{1}'(Q_{1}\hat{\phi}) \frac{\partial \mu_{1}(s, z)}{\partial z}|_{z = \eta_{1}(Q_{1}\hat{\phi})}\label{sec:Eq6.3}\\
\nonumber&-\eta_{1}'(Q_{1}\hat{\phi})\frac{\partial \mu_{1}(b, z)}{\partial z}|_{z = \eta_{1}(Q_{1}\hat{\phi})}\eta_{0}'(Q_{0}\hat{\phi})\frac{\partial \mu_{0}(s, z)}{\partial z}|_{z = \eta_{0}(Q_{0}\hat{\phi})} \leq 0,\\
\nonumber&\text{ for }(b,s)\in [0,a_{1}]\times[0,a_{1}].\\
&(\omega_{1}(a)\omega_{0}(y)-\omega_{0}(a)\omega_{1}(y))\eta_{1}'(Q_{1}\hat{\phi})\int_{0}^{a}\frac{\partial \mu_{1}(b, z)}{\partial z}|_{z = \eta_{1}(Q_{1}\hat{\phi})}db\geq 0,\label{sec:Eq6.4}\\
\nonumber&\text{ for }(a,y)\in [0,a_{1}]\times[0,a_{1}];\\
\nonumber&(\omega_{1}(y)\omega_{0}(a)-\omega_{0}(y)\omega_{1}(a))\eta_{0}'(Q_{0}\hat{\phi})\int_{0}^{a}\frac{\partial \mu_{0}(b, z)}{\partial z}|_{z = \eta_{0}(Q_{0}\hat{\phi})}db\geq 0,\\
\nonumber&\text{ for }(a,y)\in [0,a_{1}]\times[0,a_{1}].
\end{align}
\end{itemize}

\begin{theorem}\label{sec:T7.1}
Let H.1-H.2, and H.4-H.5 hold. Let IGC $>1$, and let $\hat{\phi}$ be a nontrivial equilibrium of the system (\ref{sec:1}). Then  $\hat{\phi}$ is locally asymptotically stable if and only if $D\mathcal{R}(\hat{\phi}) < 0$.
\end{theorem}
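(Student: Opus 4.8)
The plan is to reduce the stability question entirely to the sign of the dominant eigenvalue of $\hat{B}$ and then to locate that eigenvalue by means of the characteristic function $K$. First I would invoke Theorem \ref{sec:ES1}, which gives $\omega_{0,ess}(\hat{B})=-\infty<0$, so that Theorem \ref{sec:T6.2} applies and the local asymptotic stability of $\hat{\phi}$ becomes equivalent to $\sup_{\lambda\in\sigma(\hat{B})-E\sigma(\hat{B})}Re(\lambda)<0$, i.e. to $s(\hat{B})<0$. By Theorem \ref{sec:T6.6}, under H.4 the semigroup $T_{L}(t)$ is positive and irreducible, and its consequence (the spectral bound belongs to the spectrum and dominates) shows that $s(\hat{B})$ is a \emph{real} eigenvalue whose real part strictly exceeds that of every other spectral point. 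Since the eigenvalues of $\hat{B}$ are exactly the roots of the characteristic equation $K(\lambda)=1$ from (\ref{sec:6.3.8}), the number $s(\hat{B})$ is a real root of $K(\lambda)=1$, and stability is equivalent to $s(\hat{B})<0$.

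The heart of the argument is the claim that $K$, restricted to the real axis, is strictly decreasing. Granting this, I would combine it with the two facts recorded just before Theorem \ref{sec:TL2}, namely $K(0)=1+D\mathcal{R}(\hat{\phi})$ and $\lim_{\lambda\to\infty}K(\lambda)=0$. Monotonicity together with $K(+\infty)=0<1$ forces $K(\lambda)=1$ to have exactly one real solution $\lambda^{*}$; since $s(\hat{B})$ is a real root, necessarily $\lambda^{*}=s(\hat{B})$. Because $K$ is decreasing with $K(\lambda^{*})=1$, the sign of $\lambda^{*}$ agrees with the sign of $K(0)-1=D\mathcal{R}(\hat{\phi})$; explicitly, $s(\hat{B})<0$ iff $K(0)<1$ iff $D\mathcal{R}(\hat{\phi})<0$. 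This yields $\hat{\phi}$ locally asymptotically stable $\iff s(\hat{B})<0 \iff D\mathcal{R}(\hat{\phi})<0$.

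For completeness I would spell out the borderline case. When $D\mathcal{R}(\hat{\phi})>0$ instability is already Theorem \ref{sec:TL2}, since $K(0)>1$ and $K(+\infty)=0$ produce a positive real root by the intermediate value theorem. When $D\mathcal{R}(\hat{\phi})=0$ we have $K(0)=1$, so $0\in\sigma(\hat{B})$ and $s(\hat{B})\ge0$; monotonicity then forces $s(\hat{B})=0$, so $\hat{\phi}$ fails to be asymptotically stable. Hence stability holds precisely when $D\mathcal{R}(\hat{\phi})<0$.

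The main obstacle is establishing that $K$ is decreasing on $\mathbb{R}$, and this is exactly where H.5 enters. One must differentiate (\ref{sec:6.3.8}) in $\lambda$ and show $K'(\lambda)\le0$. The factor $\pi(a;\lambda)=e^{-\lambda a}\Pi(0,a;\cdot)$ is manifestly decreasing, and by H.4 (\ref{sec:6.2.2}) each sensitivity $\eta_{i}'(Q_{i}\hat{\phi})\,\partial_{z}\mu_{i}\le0$, which fixes the signs of the single-mortality terms $H_{\mu_{i}}(\lambda)\eta_{i}'\int\beta\,G_{\mu_{i}}\,da$ as well as of the fertility term carrying $\eta_{2}'\,\partial_{z}\beta\ge0$ from (\ref{sec:6.2.1}). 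The delicate part is the coupling through $\Delta(\lambda)=(1+I_{\omega_{0},\mu_{0}})(1+I_{\omega_{1},\mu_{1}})-I_{\omega_{0},\mu_{1}}I_{\omega_{1},\mu_{0}}$ and the numerators of $H_{\mu_{0}},H_{\mu_{1}}$: here the antisymmetric inequality (\ref{sec:Eq6.3}) controls the cross-products $I_{\omega_{0},\mu_{1}}I_{\omega_{1},\mu_{0}}$ between the two mortalities, while the two determinant-type inequalities (\ref{sec:Eq6.4}) in the weights $\omega_{0},\omega_{1}$ guarantee that the off-diagonal contributions do not reverse the sign of $K'$. Assembling these sign conditions into the estimate $K'(\lambda)\le0$ is the computational core of the proof; once it is in hand, the spectral reduction above closes the argument.
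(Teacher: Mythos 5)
Your proposal is correct and follows essentially the same route as the paper: reduce to the real axis via the positivity and irreducibility of $T_{L}(t)$ (Theorem \ref{sec:T6.6}), so that the spectral bound is a real dominant root of $K(\lambda)=1$, and then combine $K(0)=1+D\mathcal{R}(\hat{\phi})$, $\lim_{\lambda\to\infty}K(\lambda)=0$, and the monotonicity of $K$ on $[0,\infty)$ obtained from H.4 and H.5 (via the nondecrease of $\Delta(\lambda)$ under (\ref{sec:Eq6.3}) and the nonincrease of $H_{\mu_{i}}(\lambda)$ under (\ref{sec:Eq6.4})). Your treatment of the instability direction and the borderline case $D\mathcal{R}(\hat{\phi})=0$ is a slightly more explicit spelling-out of what the paper leaves implicit, but the key lemma and its proof are the same.
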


\begin{proof}

By Theorem \ref{sec:T6.6}, we could restrict ourselves to $\lambda\in \mathbb{R}$ to derive the linear stability condition for a positive equilibrium solution $\hat{\phi}$ of the system (\ref{sec:1}). If $D\mathcal{R}(\hat{\phi})<0$, then $\hat{\phi}$ will be linearly asymptotically stable if we can show that the characteristic function $K(z)$ is nonincreasing for $z\geq 0$. First, we observe that $C_{\omega_{i}}(z),\ F_{\omega_{i}}(z)$; $\int^{a_{\max}}_{a_{\min}}\beta(a; \eta_{2}(Q_{0}\hat{\phi}))\pi(a; z)da$; $-I_{\omega_{i}, \mu_{j}}(z)$;
$-\eta_{i}'(Q_{i}\hat{\phi})\int^{a_{\max}}_{a_{\min}}\beta(a;$ $ \eta_{2}(Q_{0}\hat{\phi}))G_{\mu_{i}}(a;z)da$, $i,j=0,1$, for $z\geq 0$ are all nonincreasing functions by H.4. Therefore, to show $K(z)$ is nonincreasing, it suffices to show $H_{\mu_{i}}(z)$, $i=0,1$ is nonincreasing for $z\geq 0$. Next we want to show $\Delta(z)$ is nondecreasing for $z\geq 0$ under (\ref{sec:Eq6.3}). We recall $\Delta(\lambda)=1+I_{\omega_{0}, \mu_{0}}(\lambda)+I_{\omega_{1}, \mu_{1}}(\lambda)+I_{\omega_{0}, \mu_{0}}(\lambda)I_{\omega_{1}, \mu_{1}}(\lambda)-I_{\omega_{0}, \mu_{1}}(\lambda)I_{\omega_{1},\mu_{0} }(\lambda)$. To show $\Delta(\lambda)$ is nondecreasing, it suffices to show that $I_{\omega_{0}, \mu_{0}}(\lambda)I_{\omega_{1}, \mu_{1}}(\lambda)-I_{\omega_{0}, \mu_{1}}(\lambda)I_{\omega_{1},\mu_{0} }(\lambda)$ is nondecreasing for $\lambda\geq 0$.
\begin{align*}
&I_{\omega_{0}, \mu_{0}}(\lambda)I_{\omega_{1}, \mu_{1}}(\lambda)-I_{\omega_{0}, \mu_{1}}(\lambda)I_{\omega_{1},\mu_{0} }(\lambda)\\
&=\frac{\hat{Q}_{0}\eta_{0}'(Q_{0}\hat{\phi})}{\int^{a_{1}}_{0}\omega_{0}(a)\Pi(0,\tau;\eta_{0}(\hat{Q}_{0}),\eta_{1}(\hat{Q}_{1}))d\tau}\int_{0}^{a_{1}}\omega_{0}(a)\pi(a; \lambda)\int_{0}^{a} e^{\lambda b}\frac{\partial \mu_{0}(b, z)}{\partial z}|_{z = \eta_{0}(Q_{0}\hat{\phi})}  dbda\\
&\times \frac{\hat{Q}_{1}\eta_{1}'(Q_{1}\hat{\phi})}{\int^{a_{1}}_{0}\omega_{1}(a)\Pi(0,\tau;\eta_{0}(\hat{Q}_{0}),\eta_{1}(\hat{Q}_{1}))d\tau}\int_{0}^{a_{1}}\omega_{1}(a)\pi(a; \lambda)\int_{0}^{a} e^{\lambda b}\frac{\partial \mu_{1}(b, z)}{\partial z}|_{z = \eta_{1}(Q_{1}\hat{\phi})}  dbda\\
&- \frac{\hat{Q}_{0}\eta_{1}'(Q_{1}\hat{\phi})}{\int^{a_{1}}_{0}\omega_{0}(a)\Pi(0,\tau;\eta_{0}(\hat{Q}_{0}),\eta_{1}(\hat{Q}_{1}))d\tau} \int_{0}^{a_{1}}\omega_{0}(a)\pi(a; \lambda)\int_{0}^{a} e^{\lambda b}\frac{\partial \mu_{1}(b, z)}{\partial z}|_{z = \eta_{1}(Q_{1}\hat{\phi})}dbda\\
&\times\frac{\hat{Q}_{1}\eta_{0}'(Q_{0}\hat{\phi})}{\int^{a_{1}}_{0}\omega_{1}(a)\Pi(0,\tau;\eta_{0}(\hat{Q}_{0}),\eta_{1}(\hat{Q}_{1}))d\tau}\int_{0}^{a_{1}}\omega_{1}(a)\pi(a; \lambda)\int_{0}^{a} e^{\lambda b}\frac{\partial \mu_{0}(b, z)}{\partial z}|_{z = \eta_{0}(Q_{0}\hat{\phi})} dbda.\\
&=\hat{\phi}^{2}(0)\eta_{0}'(Q_{0}\hat{\phi})\eta_{1}'(Q_{1}\hat{\phi})[\int_{0}^{a_{1}}\int_{0}^{a_{1}}\omega_{0}(a)\omega_{1}(t)\pi(a; \lambda)\pi(t;\lambda)\int_{0}^{a} e^{\lambda b}\frac{\partial \mu_{0}(b, z)}{\partial z}|_{z = \eta_{0}(Q_{0}\hat{\phi})}db\\
&\times\int_{0}^{t} e^{\lambda s}\frac{\partial \mu_{1}(s, z)}{\partial z}|_{z = \eta_{1}(Q_{1}\hat{\phi})}  dsdadt- \int_{0}^{a_{1}}\int_{0}^{a_{1}}\omega_{0}(a)\omega_{1}(t)\pi(a;\lambda)\pi(t;\lambda)\\
&\times\int_{0}^{a} e^{\lambda b}\frac{\partial \mu_{1}(b, z)}{\partial z}|_{z = \eta_{1}(Q_{1}\hat{\phi})}db\int_{0}^{t} e^{\lambda s}\frac{\partial \mu_{0}(s, z)}{\partial z}|_{z = \eta_{0}(Q_{0}\hat{\phi})} dsdadt]\\
&=\hat{\phi}^{2}(0)\eta_{0}'(Q_{0}\hat{\phi})\eta_{1}'(Q_{1}\hat{\phi})\int_{0}^{a_{1}}\int_{0}^{a_{1}}\omega_{0}(a)\omega_{1}(t)\pi(a;\lambda)\pi(t;\lambda)\int_{0}^{a}\int_{0}^{t} e^{\lambda b}e^{\lambda s}\\
&\times[\frac{\partial \mu_{0}(b, z)}{\partial z}|_{z = \eta_{0}(Q_{0}\hat{\phi})}\frac{\partial \mu_{1}(s, z)}{\partial z}|_{z = \eta_{1}(Q_{1}\hat{\phi})}\\
&-\frac{\partial \mu_{1}(b, z)}{\partial z}|_{z = \eta_{1}(Q_{1}\hat{\phi})}\frac{\partial \mu_{0}(s, z)}{\partial z}|_{z = \eta_{0}(Q_{0}\hat{\phi})}]dbdsdadt.
\end{align*}
Therefore, $\frac{1}{\Delta(\lambda)}$ is nonincreasing for $\lambda\geq 0$.  We recall that $F_{\omega_{i}}(z)$, $i=0,1$ is nonincreasing for $z\geq 0$. To show $H_{\mu_{i}}(\lambda)$, $i=0,1$ is nonincreasing for $\lambda\geq 0$, it suffices to show that
\begin{align}
\nonumber&\hat{Q}_{0}I_{\omega_{1}, \mu_{1}}(\lambda)F_{\omega_{0}}(\lambda)-\hat{Q}_{1}I_{\omega_{0}, \mu_{1}}(\lambda)F_{\omega_{1}}(\lambda)\\
&=\hat{\phi}(0)[I_{\omega_{1}, \mu_{1}}(\lambda)C_{\omega_{0}}(\lambda)-I_{\omega_{0}, \mu_{1}}(\lambda)C_{\omega_{1}}(\lambda)];\label{sec:7.1}\\
\nonumber&-\hat{Q}_{0}I_{\omega_{1}, \mu_{0}}(\lambda)F_{\omega_{0}}(\lambda)+\hat{Q}_{1}I_{\omega_{0}, \mu_{0}}(\lambda)F_{\omega_{1}}(\lambda)\\
&=\hat{\phi}(0)[-I_{\omega_{1}, \mu_{0}}(\lambda)C_{\omega_{0}}(\lambda)+I_{\omega_{0}, \mu_{0}}(\lambda)C_{\omega_{1}}(\lambda)].\label{sec:7.2}
\end{align}
are nonincreasing for $\lambda\geq 0$. For (\ref{sec:7.1}), we obtain
\begin{align*}
&I_{\omega_{1}, \mu_{1}}(\lambda)C_{\omega_{0}}(\lambda)-I_{\omega_{0}, \mu_{1}}(\lambda)C_{\omega_{1}}(\lambda)\\
&=\frac{\hat{Q}_{1}\eta_{1}'(Q_{1}\hat{\phi})}{\int^{a_{1}}_{0}\omega_{1}(a)\Pi(0,\tau;\eta_{0}(\hat{Q}_{0}),\eta_{1}(\hat{Q}_{1}))d\tau}\int_{0}
^{a_{1}}\omega_{1}(a)\pi(a; \lambda)\\
&\times\int_{0}^{a} e^{\lambda b}\frac{\partial \mu_{1}(b, z)}{\partial z}|_{z = \eta_{1}(Q_{1}\hat{\phi})}  dbda\int_{0}^{a_{1}}\omega_{0}(a)\pi(a; \lambda)da\\
&- \frac{\hat{Q}_{0}\eta_{1}'(Q_{1}\hat{\phi})}{\int^{a_{1}}_{0}\omega_{0}(a)\Pi(0,\tau;\eta_{0}(\hat{Q}_{0}),\eta_{1}(\hat{Q}_{1}))d\tau} \int_{0}^{a_{1}}\omega_{0}(a)\pi(a; \lambda)\\
&\times\int_{0}^{a} e^{\lambda b}\frac{\partial \mu_{1}(b, z)}{\partial z}|_{z = \eta_{1}(Q_{1}\hat{\phi})}dbda\int_{0}^{a_{1}}\omega_{1}(a)\pi(a; \lambda)da\\
&=\hat{\phi}(0)\eta_{1}'(Q_{1}\hat{\phi})[\int_{0}^{a_{1}}\int_{0}^{a_{1}}[\omega_{1}(a)\omega_{0}(t)-\omega_{0}(a)\omega_{1}(t)]\pi(a; \lambda)\pi(t; \lambda)\\
&\times\int_{0}^{a} e^{\lambda b}\frac{\partial \mu_{1}(b, z)}{\partial z}|_{z = \eta_{1}(Q_{1}\hat{\phi})}  dbdadt.
\end{align*}
Similarly, for (\ref{sec:7.2}), we obtain
\begin{align*}
&-I_{\omega_{1}, \mu_{0}}(\lambda)C_{\omega_{0}}(\lambda)+I_{\omega_{0}, \mu_{0}}(\lambda)C_{\omega_{1}}(\lambda)=\hat{\phi}(0)\eta_{0}'(Q_{0}\hat{\phi})\int_{0}^{a_{1}}\int_{0}^{a_{1}}\\
&[\omega_{1}(t)\omega_{0}(a)-\omega_{0}(t)\omega_{1}(a)]\pi(a; \lambda)\pi(t; \lambda)\int_{0}^{a} e^{\lambda b}\frac{\partial \mu_{0}(b, z)}{\partial z}|_{z = \eta_{0}(Q_{0}\hat{\phi})}  dbdadt.
\end{align*}
Therefore, if (\ref{sec:Eq6.4}) holds, then $H_{\mu_{i}}(\lambda)$, $i=0,1$ is nonincreasing for $\lambda\geq 0$. Then, the conclusion follows.
\end{proof}

\chapter{Asymptotic Behavior of the model}\label{chap:asym}

In section 4 we established conditions which guarantee the existence of either only the trivial equilibrium or also a positive equilibrium. The next natural step is to study the stability of an equilibrium of the model (\ref{sec:1}). Our approach to this problem involves the use of the invariance principle of J. LaSalle through finding the smallest closed set to which a trajectory will converge as times goes to infinity.

\section{Preliminaries}
\begin{definition}
The functions $t \mapsto U(t)\phi$ for fixed $\phi\in X_{+}$ are \textit{(positive) trajectories} of the nonlinear semigroup associated with system (\ref{sec:1}), defined for all positive times $t\in [0, \infty)$. $\left\{U(t)\phi: t\leq 0\right\}$ is the \textit{negative orbit (or trajectory)} through $\phi\in X_{+}$. 
$\left\{U(t)\phi: t\in\mathbb{R}\right\}$ is a \textit{complete orbit (or trajectory)} through $\phi\in X_{+}$. 
Let $U(t), \ t\geq 0$, be a strongly continuous nonlinear semigroup in the closed subset $C$ of the Banach space $X$. The \textit{omega-limit set} of $\phi$ for $\phi\in C$, denoted by $\Omega(\phi)$, is 
\begin{align*} 
\Omega(\phi)=\left\{x_{1}\in X: \exists \left\{t_{k}\right\}_{k=1}^{\infty}\in\mathbb{R}_{+}\text{ such that }t_{k} \rightarrow \infty \text{ and } U(t_{k})\phi \rightarrow x_{1} \right\}.
\end{align*}
The \textit{alpha-limit set} of $\phi$ for $\phi\in C$, denoted by $\alpha(\phi)$, is 
\begin{align*} 
\alpha(\phi)=\left\{x_{1}\in X: \exists \left\{t_{k}\right\}_{k=1}^{\infty}\in\mathbb{R}_{-}\text{ such that }t_{k} \rightarrow -\infty \text{ and } U(t_{k})\phi \rightarrow x_{1} \right\}.
\end{align*} 
\end{definition}

\begin{definition}
A set $B\subset X$ is said to \textit{attract} a set $C\subset X$ under $U(t)$ if $\text{dist}(U(t)C,B)\rightarrow 0$ as $t\rightarrow \infty$. A set $S\subset X$ is said to be \textit{invariant} if, for any $\phi_{0}\in S$, there is a complete orbit $\left\{U(t)\phi_{0}:t\in\mathbb{R}\right\}$ through $\phi_{0}$ such that $\left\{U(t)\phi_{0}:t\in\mathbb{R}\right\}\subset S$. For a given continuous map $U:X\rightarrow X$, a compact invariant set $A$ is said to be a \textit{maximal compact invariant set} if every compact invariant set of $U$ belongs to $A$. An invariant set $\mathcal{A}_{0}$ is said to be \textit{a global attractor} if $\mathcal{A}_{0}$ is a maximal compact invariant set which attracts each bounded set $B\subset X$(see \cite{Ha1}). 
\end{definition}

\begin{definition}
Let $Y$ be a Banach space and $T(t): Y\rightarrow Y$, for $t\geq 0$, be a strongly continuous semigroup on $Y$, $T(t),t\geq 0$ is \textit{point dissipative} in $Y$ if there exists a bounded nonempty set $B$ in $Y$ such that for any $y\in Y$, there exists a $t_{0}=t_{0}(y,B)$, such that $T(t)y\in B$ for $t\geq t_{0}$ \cite{Ha}. $T(t),t\geq 0$ is \textit{asymptotically smooth} if every positively invariant bounded set is attracted by a compact subset \cite{Ma}. 
\end{definition}

\begin{definition}
Let $M_{0}$ be an open subset of $X_{+}$, and $\partial M_{0} = X_{+} - M_{0}$, if $U(t)\partial M_{0}\subset\partial M_{0}$, and $U(t)M_{0}\subset M_{0}$, for $\forall t\geq 0$, then the strongly continuous nonlinear semigroup $U(t),t\geq 0$, defined on $X_{+}$ is \textit{uniformly persistent} with respect to $(M_{0}, \partial M_{0})$, if there exists some $\epsilon >0$ such that $\liminf_{t\rightarrow \infty}d(U(t)x, \partial M_{0})\geq\epsilon$ for $\forall x\in M_{0}$, where the distance $d$ is induced by the $L^{1}$-norm \cite{ Ha1,Ha}. 
\end{definition}

If we can show these trajectories have compact closures, then the following Proposition from (\cite{Wa}, Proposition 4.1, pp.166 and Theorem 4.1, pp.167) assures the existence of the smallest closed set to which the trajectory approaches as time approaches infinity. 

\begin{proposition}
Let $U(t), \ t\geq 0$ be a strongly continuous nonlinear semigroup in the closed subset $C$ of the Banach space $X$ and let $x\in C$. Then $\Omega(x)$ is closed, positive invariant, and a subset of the closure of $\left\{U(t)x:\ t\geq 0\right\}$. If $\left\{U(t)x:\ t\geq 0\right\}$ has compact closure, then $\Omega(x)$ is nonempty, compact, connected and invariant. Moreover, $U(t)x$ approaches  $\Omega(x)$ as $t$ approaches infinity in the sense that 
\begin{align*} 
\liminf_{t \rightarrow \infty\ x_{1}\in\Omega(x)}\left\|U(t)x-x_{1}\right\|=0.
\end{align*}
Further, $\Omega(x)$ is the smallest closed set that $U(t)x$ approaches as $t$ approaches infinity, in the sense that if $U(t)x$ approaches a set $C_{1}\subset C$ as $t$ approaches infinity, then $\Omega(x)\subset \bar{C}_{1}$. 
\end{proposition}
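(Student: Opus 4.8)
The plan is to read every assertion straight off the definition of $\Omega(x)$ as the set of subsequential limits of $U(t_k)x$ along sequences $t_k\to\infty$, invoking only the semigroup axioms (continuity of each map $U(s)$ and continuity of $t\mapsto U(t)x$) together with the compactness hypothesis where it is explicitly assumed. First I would dispose of the three unconditional claims. Closedness follows by a diagonal extraction: given $y_n\in\Omega(x)$ with $y_n\to y$, choose $t_n>n$ with $\left\|U(t_n)x-y_n\right\|<1/n$, so $U(t_n)x\to y$ and $y\in\Omega(x)$. Positive invariance is immediate from the semigroup property, for if $U(t_k)x\to y$ then $U(t_k+s)x=U(s)U(t_k)x\to U(s)y$ by continuity of $U(s)$, while $t_k+s\to\infty$, whence $U(s)y\in\Omega(x)$. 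That each element of $\Omega(x)$ lies in the closure of the forward orbit is just the definition.

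Next, under the compactness hypothesis, nonemptiness, compactness, and the convergence statement $\liminf=0$ are routine consequences of sequential compactness: any $t_k\to\infty$ gives $\{U(t_k)x\}$ inside a compact set, hence a convergent subsequence (nonempty); $\Omega(x)$ is a closed subset of that compact closure (compact); and were the orbit to fail to approach $\Omega(x)$, there would be $t_k\to\infty$ and $\epsilon>0$ with $\text{dist}(U(t_k)x,\Omega(x))\geq\epsilon$, yet a convergent subsequence of $U(t_k)x$ would produce a limit simultaneously lying in $\Omega(x)$ and at distance $\geq\epsilon$ from it, a contradiction. Minimality is equally short: if $\text{dist}(U(t)x,C_1)\to 0$ and $U(t_k)x\to y\in\Omega(x)$, then $\text{dist}(y,C_1)=0$, so $y\in\bar{C}_1$ and $\Omega(x)\subset\bar{C}_1$.

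The two genuinely nontrivial points are full (two-sided) invariance and connectedness, and I expect these to be the main obstacle. For invariance I would first show that each $y\in\Omega(x)$ admits a preimage in $\Omega(x)$ under every $U(s)$: picking $t_k\to\infty$ with $U(t_k)x\to y$, the points $U(t_k-s)x$ lie in the compact closure, so a subsequence converges to some $z\in\Omega(x)$, and continuity gives $U(s)z=\lim U(t_{k_j})x=y$. Iterating this along $s=1,\tfrac12,\tfrac13,\ldots$ and diagonalizing assembles a complete orbit through $y$ contained in $\Omega(x)$. For connectedness I would argue by contradiction: a disconnection $\Omega(x)=A_1\sqcup A_2$ into nonempty compact pieces separated by distance $\delta>0$ makes the continuous real function $g(t)=\text{dist}(U(t)x,A_1)-\text{dist}(U(t)x,A_2)$ negative near $A_1$ and positive near $A_2$ at arbitrarily large times; by the intermediate value theorem $g$ vanishes at a sequence $\tau_k\to\infty$, where both distances share a common value that the triangle inequality forces to be $\geq\delta/2$. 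A convergent subsequence of $U(\tau_k)x$ then yields a point of $\Omega(x)$ at distance $\geq\delta/2$ from $A_1\cup A_2=\Omega(x)$, the desired contradiction.

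The real work lies in these last two constructions. In the backward-orbit argument one must take care that the preimages chosen at successive scales are threaded consistently through the diagonalization so that they patch into a single two-sided trajectory lying in $\Omega(x)$ and satisfying the semigroup law; in the connectedness argument the delicate point is locating the gap-crossing times $\tau_k$ infinitely often, which is exactly what the intermediate value theorem applied to $g$ supplies. Everything else reduces to extracting convergent subsequences from the compact closure of the forward orbit.
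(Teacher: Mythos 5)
Your proof is correct, and it is the standard argument; the paper itself states this proposition without proof, citing it from the reference on dynamical systems, and your reasoning reproduces exactly the argument used there (sequential compactness of the orbit closure for nonemptiness, compactness and the approach property; preimage extraction for full invariance; the gap-crossing/intermediate value argument for connectedness). The only cosmetic remark is that for full invariance you do not need to diagonalize over $s=1,\tfrac12,\tfrac13,\ldots$: iterating the unit-time preimage construction to get $y_{-n}$ with $U(1)y_{-n}=y_{-(n-1)}$ and defining the complete orbit piecewise as $U(t+n)y_{-n}$ on $[-n,-(n-1)]$ already yields a consistent two-sided trajectory in $\Omega(x)$, since positive invariance keeps each segment inside $\Omega(x)$.
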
 

Further, the following LaSalle's Invariance Principle (\cite{Wa}, Theorem 4.2, pp.168) provides a method to identify the \textit{omega-limit set}.

A \textit{Lyapunov function} for a strongly continuous nonlinear semigroup $U(t),\ t \geq 0$ (in the closed set $C$ of the Banach space $X$) on $C_{1}$ ($C_{1}\subseteq C$) is a continuous function $V: C \rightarrow \mathbb{R}$ such that for all $x\in C_{1}$,
\begin{align*} 
\dot{V}(x)&:=\limsup_{t \rightarrow 0^{+}}\frac{V(U(t)x)-V(x)}{t}\leq 0.
\end{align*}
(where we allow the possibility that $\dot{V}(x)=-\infty$).

\begin{proposition}
Let $U(t), \ t\geq 0$, be a strongly continuous nonlinear semigroup in the closed subset $C$ of the Banach space $X$, let $C_{1}\subseteq C$ and let $V$ be a Lyapunov function for $U(t), \ t\geq 0$, on $\bar{C}_{1}$. Let $x\in C$ be such that $\left\{U(t)x:\ t\geq 0\right\}\subset C_{1}$ and $\left\{U(t)x:\ t\geq 0\right\}$ has compact closure. Then $\Omega(x)\subset M^{+}$, where $M^{+}$ is the largest positive invariant subset of $M_{1}:=\left\{x_{1}\in \bar{C}_{1}: \dot{V}(x_{1})=0\right\}$. (In fact, $\Omega(x)\subset M^{+}\cap\left\{x_{1}\in \bar{C}_{1}: \dot{V}(x_{1})=k\right\}$ for some constant $k$). Further, $U(t)x$ approaches  $M$ as $t$ approaches infinity, where $M$ is the largest invariant subset of $M_{1}$.
\end{proposition}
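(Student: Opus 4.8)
The plan is to follow the classical route for LaSalle's invariance principle, the crux being to show that the Lyapunov function $V$ is constant along the entire omega-limit set. First I would fix $x$ as in the hypothesis and study the real-valued function $v(t) := V(U(t)x)$ for $t \geq 0$. Using the semigroup property $U(t+h)x = U(h)(U(t)x)$ together with the definition of $\dot{V}$, I would identify the upper right Dini derivative of $v$ at $t$ with $\dot{V}(U(t)x)$, which is $\leq 0$ because $U(t)x \in C_{1}\subseteq \bar{C}_{1}$ and $V$ is a Lyapunov function on $\bar{C}_{1}$. A standard real-analysis lemma (a continuous function whose upper right Dini derivative is everywhere $\leq 0$ is non-increasing) then gives that $v$ is non-increasing. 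Since the trajectory $\left\{U(t)x:\ t\geq 0\right\}$ has compact closure and $V$ is continuous, $v$ is bounded below, so $v(t)\downarrow k$ for some constant $k$ as $t\to\infty$.

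The second step transfers this limit to the omega-limit set. If $x_{1}\in\Omega(x)$, I pick $t_{n}\to\infty$ with $U(t_{n})x\to x_{1}$; continuity of $V$ gives $V(x_{1})=\lim_{n}v(t_{n})=k$. Hence $V\equiv k$ on $\Omega(x)$, and in particular $\Omega(x)\subset \bar{C}_{1}\cap\{\,V=k\,\}$ (noting that $\Omega(x)$ lies in the closure of the trajectory, hence in $\bar{C}_{1}$). This already yields the level-set refinement stated in the parenthetical remark, namely that $\Omega(x)$ is confined to a single level set of $V$.

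Next I would show $\dot{V}\equiv 0$ on $\Omega(x)$. Here I invoke the preceding Proposition: because the trajectory has compact closure, $\Omega(x)$ is nonempty, compact and invariant, in particular positively invariant, so $U(t)x_{1}\in\Omega(x)$ for every $x_{1}\in\Omega(x)$ and $t\geq 0$. Constancy of $V$ on $\Omega(x)$ then forces $V(U(t)x_{1})=k$ for all $t\geq 0$, whence $\dot{V}(x_{1})=\limsup_{t\to 0^{+}}\frac{V(U(t)x_{1})-V(x_{1})}{t}=0$. Thus $\Omega(x)\subset M_{1}$. Since $\Omega(x)$ is positively invariant and contained in $M_{1}$, and $M^{+}$ is by definition the largest positively invariant subset of $M_{1}$, I conclude $\Omega(x)\subset M^{+}$; the same argument with ``invariant'' in place of ``positively invariant'' gives $\Omega(x)\subset M$. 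Finally, since the preceding Proposition guarantees that $U(t)x$ approaches $\Omega(x)$ as $t\to\infty$, and $\Omega(x)\subset M$, the trajectory approaches $M$, completing the argument.

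The main obstacle I anticipate is the monotonicity step: rigorously passing from the pointwise bound $\dot{V}(U(t)x)\leq 0$ on the upper right Dini derivative to global monotonicity of $v$ requires the correct form of the Dini-derivative monotonicity lemma. Continuity of $v$, which follows from continuity of $t\mapsto U(t)x$ (property (iv) of a strongly continuous semigroup) and of $V$, is essential here, since a one-sided derivative bound alone is not sufficient without such regularity. Everything else is a routine chain of continuity and compactness arguments together with the invariance of $\Omega(x)$ supplied by the previous Proposition.
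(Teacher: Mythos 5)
Your argument is correct and is the classical LaSalle chain: monotonicity of $t\mapsto V(U(t)x)$ via the Dini-derivative lemma, constancy of $V$ on $\Omega(x)$ by continuity and compact closure, then $\dot V\equiv 0$ on $\Omega(x)$ from the invariance of $\Omega(x)$ supplied by the preceding proposition, and finally maximality of $M^{+}$ and $M$. The paper itself states this proposition without proof, citing it from the reference (Webb, Theorem 4.2), and the proof given there is essentially the one you reproduce, so there is nothing to flag beyond one cosmetic point: the parenthetical in the statement reads $\dot V(x_{1})=k$, which is evidently a typo for $V(x_{1})=k$, and your level-set step establishes the intended version.
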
 

The main technical difficulty involved in showing  the trajectories of $U$ have compact closure is that it is not automatic that a bounded trajectory lies in a compact set in an infinite dimensional Banach space. The following Theorem (which is an adaption of Theorem 3.5 in \cite{W} section 3.4, pp.112) allows us to work around this difficulty. 

\begin{theorem}\label{sec:T5.1}
Let H.1 hold. Let $U(t), \ t\geq 0$ be the strongly continuous nonlinear semigroup in $X_{+}$ as in Theorem \ref{sec:T3.3} and let $U(t), \ t\geq 0$ have the property that if $t>0$ and $M$ is a bounded subset of $X_{+}$, then there exists $r>0$ such that $\left\|U(s)\phi\right\|_{X} \leq r $ for all $\phi\in M$, $0 \leq s \leq t$. If $\phi\in X_{+}$ and $\left\{U(t)\phi: \ t\geq 0\right\}$ is bounded in $L^{1}$, then $\left\{U(t)\phi: \ t\geq 0\right\}$ has compact closure in $L^{1}$.
\end{theorem}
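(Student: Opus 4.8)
The plan is to verify the Riesz--Fréchet--Kolmogorov compactness criterion for subsets of $L^1$ recorded in the Preliminaries. Writing $p(\cdot,t)=U(t)\phi$ and $r:=\sup_{t\ge0}\|U(t)\phi\|_X<\infty$, the bounded orbit will have compact closure once we show
\[
\lim_{h\to0}\ \sup_{t\ge0}\int_0^{a_1}|p(a+h,t)-p(a,t)|\,da=0,
\]
with $p(\cdot,t)$ extended by $0$ outside $[0,a_1]$. First I would record uniform regularity: by H.1 and the bound $r$, the totals $Q_i(t)$ are uniformly bounded, so $\mu_0(\cdot,\eta_0(Q_0(t)))$, $\mu_1(\cdot,\eta_1(Q_1(t)))$, $\mu_2$ are bounded on $[0,a_1]$ uniformly in $t$. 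Hence the survival factor $\sigma(a,t)$ multiplying the cohort in (\ref{sec:char51}) satisfies $0<c\le\sigma\le1$ and is Lipschitz in $a$ with a $t$-independent constant, while the birth rate $B(\tau):=p(0,\tau)=\mathcal F(p(\cdot,\tau))$ is nonnegative (Theorem \ref{sec:T3.1} and (\ref{sec:be5})) and bounded.

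Next I would split on $t$. For $t\in[0,a_1]$ the set $\{U(t)\phi:0\le t\le a_1\}$ is the image of the compact interval $[0,a_1]$ under the strongly continuous map $t\mapsto U(t)\phi$, hence compact in $L^1$ and already equicontinuous in the mean, so only $t>a_1$ needs attention. For $t>a_1$ every $a\in[0,a_1]$ lies in the birth regime of (\ref{sec:char51}) and $p(a,t)=B(t-a)\sigma(a,t)$. Splitting
\[
p(a+h,t)-p(a,t)=B(t-a-h)[\sigma(a+h,t)-\sigma(a,t)]+[B(t-a-h)-B(t-a)]\sigma(a,t),
\]
the first term integrates to $O(h)$ (Lipschitz-in-$a$ of $\sigma$ times $\int_0^{a_1}B(t-a)\,da\le Cr$, the latter bound coming from $\sigma\ge c$), and the end strips created by the zero extension are $O(h)$ because $B$ is bounded. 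Since $\sigma\le1$, the second term is controlled by the time-modulus of the birth rate, so the whole quantity is at most $\theta(h)+O(h)$, where $\theta(h):=\sup_{t>a_1}\int_{t-a_1}^{t}|B(\tau)-B(\tau-h)|\,d\tau$.

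The crux is therefore to show $\theta(h)\to0$ as $h\to0^+$, uniformly in $t$. Continuity of $B$ at each fixed $\tau$ is immediate from strong continuity of the semigroup and continuity of $\mathcal F$, but uniformity is delicate: estimating the time-modulus of $B$ directly against itself produces a Volterra inequality with coefficient $c_1(r)a_1$, which need not lie below $1$, so no naive absorption is available. The mechanism I would exploit is the renewal equation: since $a_{\max}<a_1<t$, the initial-data term in (\ref{sec:char5}) has vanished and
\[
B(t)=\int_{a_{\min}}^{a_{\max}}\beta(a;\eta_2(Q_0(t)))\,B(t-a)\,\sigma(a,t)\,da,
\]
a kernel supported in $[a_{\min},a_{\max}]$ with $a_{\min}>0$. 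Thus $B(t)$ depends on $B$ only over the strictly earlier window $[t-a_{\max},t-a_{\min}]$. Writing $B(t)-B(t-h)$ as a kernel-difference part plus a shifted-$B$ part, the Lipschitz bounds (\ref{sec:LC111}) for $\mathcal F$ and $\mathcal G$, together with the uniform control of $\beta$, of $Q_0$ and of $\sigma$ inherited from H.1 and the bound $r$, turn this into a Volterra--Gronwall inequality over sliding windows of length $a_{\max}$ whose forcing is $O(h)$; the delay $a_{\min}>0$ is precisely what lets the iteration close with $t$-independent constants rather than grow. This yields $\theta(h)=O(h)$, and substituting back into the reduction above establishes the uniform mean-equicontinuity, hence the compactness criterion. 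The Gronwall/uniformity step is the one genuine difficulty; the remainder is bookkeeping with the explicit characteristic representation.
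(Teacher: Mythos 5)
There is a genuine gap, and it sits exactly where you place the weight of the proof: the claim that the windowed Volterra--Gronwall inequality for $\theta(h)=\sup_{t>a_1}\int_{t-a_1}^{t}|B(\tau)-B(\tau-h)|\,d\tau$ closes with $t$-independent constants. Two things go wrong. First, the forcing is not $O(h)$: the kernel-difference term in your splitting of $B(t)-B(t-h)$ involves $Q_0(t)-Q_0(t-h)$ and the time-increment of the survival exponent, and these are controlled only by $\|p(\cdot,t)-p(\cdot,t-h)\|_{X}$, which for a general $\phi\in X_{+}$ (not in $D(\mathcal A)$) is merely $o(1)$, not $O(h)$, and is in fact of the same order as the translation modulus you are trying to estimate; so the ``forcing'' feeds back into the unknown. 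Second, and more seriously, even granting a clean inequality of the form $m_t(h)\le F(h)+C\,\sup_{s\in[t-a_{\max},\,t-a_{\min}]}m_s(h)$ with $C=\sup_t\int_{a_{\min}}^{a_{\max}}\beta\sigma\,da$, unrolling it back to the initial window takes $N\approx t/a_{\min}$ steps and multiplies the forcing by $\sum_{k\le N}C^{k}$. The theorem does not assume $C<1$ (it is the running net reproduction number, which equals $1$ at a nontrivial equilibrium and exceeds $1$ whenever IGC${}>1$ near the origin), so this factor grows at least linearly and generically geometrically in $t$. The delay $a_{\min}>0$ fixes the step length of the iteration but does nothing to prevent this growth. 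Since $\theta(h)\to0$ uniformly in $t$ is essentially equivalent to the conclusion of the theorem, the crux is left unproved.

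The paper's route (following Webb) is designed precisely to avoid ever needing a modulus that is uniform over all of $[0,\infty)$. One writes $U(t)=W_1(t)+W_2(t)$ as in (\ref{sec:W1})--(\ref{sec:W2}), fixes a single $s>a_1$ so that $W_1(s)=0$, and estimates the Kuratowski measure of noncompactness: $\alpha(\Gamma)=\alpha(\{U(t)\phi:t\ge s\})\le\alpha(W_1(s)\Gamma)+\alpha(W_2(s)\Gamma)=\alpha(W_2(s)\Gamma)$, where $\Gamma$ is the (bounded) orbit. The compactness of $W_2(s)$ on such a set is then proved by your same Riesz--Fr\'echet--Kolmogorov computation, but the Gronwall/continuous-dependence constants that appear are $e^{(c_1(r)+c_2(r))s}$ for the \emph{fixed} $s$, which is harmless, and the only translation modulus that must vanish is that of the single initial datum $\phi$. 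If you want to keep a direct RFK argument for the whole orbit, you would need to import this observation (uniformity in $t$ is not needed once the measure of noncompactness is used); as written, your step 4 does not go through.
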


The proof of this theorem is accomplished by decomposing $U(t)$ into $U(t)=W_{1}(t)+W_{2}(t)$, with mappings $W_{1}(t), \ W_{2}(t)\in X_{+}$ for $t\geq 0$ given as follows: for $\phi\in X_{+}$,
\begin{align} 
&(W_{1}(t)\phi)(a)=\begin{cases}\label{sec:W1}
0 & \text{ a.e. } \ \ a \in (0,t)\cap[0,a_{1}];\\
(U(t)\phi)(a) & \text{ a.e. } \ \ a\in (t, a_{1}];
\end{cases}\\
&(W_{2}(t)\phi)(a)=\begin{cases}\label{sec:W2}
(U(t)\phi)(a) & \text{ a.e. } \ \ a \in (0,t)\cap[0,a_{1}];\\
0 & \text{ a.e. } \ \ a\in (t, a_{1}].
\end{cases}
\end{align}
It is easy to see that $W_{1}(t)=0$ for $t>a_{1}$ while $W_{2}(t)$ (see \cite{W} section 3.4 pp.112) is ultimately compact by using the measure of noncompactness due to Kuratowski under suitable hypotheses on $\mathcal{F}, \mathcal{G}$ (\ref{sec:2})-(\ref{sec:3}). Theorem \ref{sec:T5.1} assures that a trajectory has compact closure, provided that it is bounded. From Theorem \ref{sec:T3.3}, we obtain $|W_{1}(t)|\leq e^{\omega a_{1}}$, for $t\geq 0$, where $\omega=|\mathcal{F}|+|\mathcal{G}|$. Let H.1 hold. The boundedness of $W_{2}(t), \ t\geq 0$ follows from Proposition 3.13 (in \cite{W}, section 3.4, pp.100). Therefore, the boundedness of a trajectory of the strongly continuous nonlinear semigroup $U(t), \ t\geq 0$ in $X_{+}$ as in Theorem \ref{sec:T3.3} follows.

Applying Theorem \ref{sec:T5.1}, we obtain the following corollary:

\begin{corollary}\label{sec:LMCOR}
Let H.1 hold. Then, the strongly continuous nonlinear semigroup $U(t), \ t\geq 0$ in $X_{+}$ as in Theorem \ref{sec:T3.3} is bounded dissipative and asymptotically smooth. 

\end{corollary}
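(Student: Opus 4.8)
The plan is to establish the two asserted properties separately, because asymptotic smoothness is essentially already packaged in Theorem~\ref{sec:T5.1}, whereas bounded dissipativity requires an a priori $L^{1}$-bound that is uniform over bounded sets of initial data. For the asymptotic smoothness I would reuse the splitting $U(t)=W_{1}(t)+W_{2}(t)$ from \eqref{sec:W1}--\eqref{sec:W2}. Recall that $W_{1}(t)=0$ for $t>a_{1}$, so $\alpha[W_{1}(t)B]=0$ once $t>a_{1}$ for any bounded $B\subset X_{+}$; and $W_{2}(t)$ is ultimately compact (via the Kuratowski measure-of-noncompactness argument used in the proof of Theorem~\ref{sec:T5.1} and in \cite{W}, sec.~3.4), so $\alpha[W_{2}(t)B]\to 0$. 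By subadditivity of $\alpha$, $\alpha[U(t)B]\le\alpha[W_{1}(t)B]+\alpha[W_{2}(t)B]\to 0$ as $t\to\infty$. Hence if $B$ is a bounded positively invariant set then $U(t)B$ is attracted by the compact set $\overline{\bigcup_{t\ge a_{1}}U(t)B}$, which is exactly asymptotic smoothness; equivalently, Theorem~\ref{sec:T5.1} already gives precompactness of bounded orbits, and positive invariance supplies the attracting compact set.

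For bounded dissipativity the goal is to exhibit one fixed bounded ball $\{\|\phi\|_{X}\le\rho\}$ that absorbs every bounded set of initial data in finite time. I would work with the scalar total population $P(t)=\|U(t)\phi\|_{X}=\int_{0}^{a_{1}}p(a,t)\,da$ (the last equality by positivity, Theorem~\ref{sec:T3.1}) and differentiate it as in \eqref{sec:char88}, giving $P'(t)=\mathcal{F}(p(\cdot,t))+\int_{0}^{a_{1}}\mathcal{G}(p(\cdot,t))(a)\,da$. The birth term is controlled by the fertility bound, $\mathcal{F}(p(\cdot,t))\le\|\beta\|_{\infty}\,R(t)\le\|\beta\|_{\infty}P(t)$, while the aging term supplies the damping: since $\eta_{i}$ maps $[0,\infty)$ onto $[0,\infty)$ and $\mu_{1,z}>0$ by H.1, the density-dependent mortality $\mu_{1}(a,\eta_{1}(Q_{1}))$ on the pre-reproductive class grows without bound as $P$ grows. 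Feeding a bound of the form $P'(t)\le(\|\beta\|_{\infty}-c(P))P(t)$ with $c(P)\to\infty$ into a comparison argument shows $P'(t)<0$ once $P(t)$ exceeds the threshold $\rho$ at which $c(\rho)=\|\beta\|_{\infty}$, so $\{\|\phi\|_{X}\le\rho\}$ is absorbing and the semigroup is dissipative. The finite-interval boundedness hypothesis of Theorem~\ref{sec:T5.1} itself follows from the norm estimate of Theorem~\ref{sec:T3.3}.

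The main obstacle is precisely the damping step, because the death integral depends on the full age profile and not on $P$ alone: a population concentrated in low-mortality ages need not satisfy a clean scalar bound $\int_{0}^{a_{1}}\mathcal{G}(p)\,da\le -c(P)P$. The careful argument must track the age structure, using that any cohort contributing to births must first pass through the juvenile window $[0,a_{\min}]$ on which the density-dependent mortality $\mu_{1}(\cdot,\eta_{1}(Q_{1}))$ acts, so that the \emph{effective} net reproduction drops below one for large populations; this is exactly the mechanism encoded in H.2. Converting this distributed, age-structured death term into a usable scalar differential inequality for $P(t)$, and thereby closing the absorbing-set estimate uniformly over bounded initial data, is where the real effort lies.
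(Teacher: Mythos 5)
The asymptotic-smoothness half of your argument coincides with the paper's: the splitting $U(t)=W_{1}(t)+W_{2}(t)$ of \eqref{sec:W1}--\eqref{sec:W2}, the observation that $W_{1}(t)=0$ for $t>a_{1}$, and the Kuratowski measure-of-noncompactness argument for $W_{2}(t)$ are exactly what the paper uses (via Theorem \ref{sec:T5.1}), so that part is fine.

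The bounded-dissipativity half contains a genuine gap, which you have in effect flagged yourself. First, the corollary assumes only H.1, but your damping mechanism leans on H.2 (``the effective net reproduction drops below one for large populations''), which is not among the hypotheses here. Second, and more fundamentally, the scalar inequality $P'(t)\le(\|\beta\|_{\infty}-c(P))P(t)$ with $c(P)\to\infty$ cannot be extracted from the aging term under H.1: the density-dependent mortality $\mu_{1}(a,z)$ vanishes identically for $a>a_{\min}$, so it acts only on the juvenile window, and its argument is $\eta_{1}(Q_{1}(t))$ where $Q_{1}(t)=\int_{0}^{a_{1}}\omega_{1}(a)p(a,t)\,da$ is a weighted integral that need not grow with $P(t)$ at all --- in the paper's own examples $\omega_{1}$ is supported on the senescent ages, so a population concentrated in reproductive ages has large $P$ but arbitrarily small $Q_{1}$ and hence no damping whatsoever. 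The comparison argument therefore cannot be closed, and the step you describe as ``where the real effort lies'' is precisely the missing proof. The paper does not attempt this route: it never constructs an absorbing ball from a differential inequality, but instead derives boundedness of trajectories directly from the operator bounds on the decomposition ($|W_{1}(t)|\le e^{\omega a_{1}}$ from Theorem \ref{sec:T3.3}, boundedness of $W_{2}(t)$ from Proposition 3.13 of \cite{W}) and then invokes Theorem \ref{sec:T5.1}. If you want to rescue your absorbing-ball strategy you would need additional coercivity hypotheses tying the mortality to the total population itself, which is a strictly stronger setting than H.1.
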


\section{The linear problem}
\begin{itemize}
\item[H.6.] Let $\tilde{\beta}\in L^{\infty}_{+}[a_{\min},a_{\max}]$, 
and $\tilde{\mu}\in L^{\infty}_{+}[0,a_{1}]$.
\end{itemize}
In this section, we study some basic properties of the following linear age-structured model to apply the comparison argument:
\begin{align}\label{sec:LM1}
&l_{t}(a,t)+l_{a}(a,t)=-\tilde{\mu}(a)l(a,t),\\ 
\nonumber & 0<a<a_{1},\ t>0,\\
\nonumber &l(0,t)=\int_{a_{\min}}^{a_{\max}}\tilde{\beta}(a)l(a,t)da,\ t>0,\\
\nonumber &l(a,0)=p_{0}(a),\ 0<a<a_{1}.
\end{align}
where $p_{0}\in X_{+}$.

We define bounded linear operators $\hat{\mathcal{F}}_{L}:X\rightarrow \mathbb{R}$, $\hat{\mathcal{G}}_{L}:X\rightarrow X$ 
for $\forall\phi\in X$ by,
\begin{align}
&\hat{\mathcal{G}}_{L}(\phi)(a):=-\tilde{\mu}(a)\phi(a).\label{sec:LM2}\\ 
&\hat{\mathcal{F}}_{L}(\phi):=\int_{a_{\min}}^{a_{\max}}\tilde{\beta}(a)\phi(a)da.\label{sec:LM3}
\end{align}
Let $\hat{A}_{L}: D(\hat{A}_{L})\subset X\rightarrow X$ be the linear operator defined by,
\begin{align}\label{sec:LM51} 
&\hat{A}_{L}\phi=-\phi'+\hat{\mathcal{G}}_{L}(\phi) ,\ \text{ for }\phi\in D(\hat{A}_{L}).
\end{align}
where,
\begin{align*} 
&D(\hat{A}_{L})=\left\{\phi\in X: \phi \text{ is absolutely continuous on }[0,a_{1}], \phi'\in L^{1}, \phi(0)=\hat{\mathcal{F}}_{L}(\phi)\right\}.
\end{align*}

The following well-known result in the context of age structured models establishes the semigroup properties of solutions of the system  (\ref{sec:LM1}), we refer to Proposition 3.2 and 3.7 in (\cite{W}, section 3.1, pp.76) for proofs.
\begin{theorem}\label{sec:LM4}
Let H.6 hold. Let $\hat{\mathcal{F}}_{L},\hat{\mathcal{G}}_{L}$ be bounded linear operators defined as in (\ref{sec:LM2})-(\ref{sec:LM3}). If $\phi_{0}\in X$, then the solution $l(a,t)$, for $(a,t)\in[0,a_{1}]\times[0,\infty)$ of the system (\ref{sec:LM1}) is defined on $[0,\infty)$. Further, the family of mappings $\hat{T}_{L}(t), \ t\geq 0$, in $X$ defined by $(\hat{T}_{L}(t)\phi_{0})(a)=l(a,t)$, for $(a,t)\in[0,a_{1}]\times[0,\infty)$, with intial condition $\phi_{0}\in X$, is a strongly continuous semigroup of bounded linear operators in $X$ with the infinitesimal generator $\hat{A}_{L}$, defined by (\ref{sec:LM51}). Moreover,
\begin{align}\label{sec:LMB1}
&(\hat{T}_{L}(t)\phi_{0})(a)=\begin{cases}
B(t-a)\prod(0,a) & \text{ a.e. } \ \ a \in [0,t)\cap[0,a_{1}];\\
\phi_{0}(a-t)\prod(a-t,a) & \text{ a.e. } \ \ a\in[t,a_{1}].
\end{cases}
\end{align}
where $\prod(b,a):=e^{-\int_{b}^{a}\tilde{\mu}(\hat{a})d\hat{a}}$, for $0\leq b\leq a$, and,
\begin{align}\label{sec:LMB}
B(t)&=\int_{a_{\min}}^{t}\tilde{\beta}(a)\prod(0,a)B(t-a)da+\int_{t}^{a_{\max}}\tilde{\beta}(a)\prod(a-t,a)\phi_{0}(a-t)da.
\end{align}
Furthermore,
\begin{align*} 
|\hat{T}_{L}(t)|\leq e^{\omega t}, \ \text{for}\ t\geq 0\ \ \ \ \text{ where }\omega=|\hat{\mathcal{F}}_{L}|+|\hat{\mathcal{G}}_{L}|.
\end{align*}
Further, for all $t\geq 0$, $\hat{T}_{L}(t)(D(\hat{A}_{L}))\subset D(\hat{A}_{L})$ and $(d/dt )\hat{T}_{L}(t)\phi=\hat{A}_{L}\hat{T}_{L}(t)\phi$ $=\hat{T}_{L}(t)\hat{A}_{L}\phi$ for all $\phi\in D(\hat{A}_{L})$.
\end{theorem}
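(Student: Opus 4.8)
The plan is to recognize the system (\ref{sec:LM1}) as the specialization of the general problem (\ref{sec:1}) to the \emph{bounded linear} case, and thereby reduce every semigroup assertion to results already established, leaving only the explicit kernel representation to be derived directly by the method of characteristics. Concretely, with $\hat{\mathcal{F}}_{L}$ and $\hat{\mathcal{G}}_{L}$ of (\ref{sec:LM2})--(\ref{sec:LM3}) playing the roles of $\mathcal{F}$ and $\mathcal{G}$, the system (\ref{sec:LM1}) is exactly an instance of (\ref{sec:1}).

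First I would verify that $\hat{\mathcal{F}}_{L}$ and $\hat{\mathcal{G}}_{L}$ are bounded linear operators: linearity is immediate, and H.6 yields $|\hat{\mathcal{F}}_{L}(\phi)|\le\|\tilde{\beta}\|_{\infty}\|\phi\|_{X}$ and $\|\hat{\mathcal{G}}_{L}(\phi)\|_{X}\le\|\tilde{\mu}\|_{\infty}\|\phi\|_{X}$, so that $|\hat{\mathcal{F}}_{L}|\le\|\tilde{\beta}\|_{\infty}$ and $|\hat{\mathcal{G}}_{L}|\le\|\tilde{\mu}\|_{\infty}<\infty$. I then invoke Theorem \ref{sec:T33} to obtain global existence of the generalized solution on $[0,\infty)$, the fact that $\hat{T}_{L}(t),\,t\ge 0$, is a strongly continuous semigroup of bounded linear operators with $|\hat{T}_{L}(t)|\le e^{\omega t}$ for $\omega=|\hat{\mathcal{F}}_{L}|+|\hat{\mathcal{G}}_{L}|$, and the subsequent proposition identifying the generator in the bounded linear case to obtain that $\hat{A}_{L}$ of (\ref{sec:LM51}) is the infinitesimal generator, that $\hat{T}_{L}(t)D(\hat{A}_{L})\subset D(\hat{A}_{L})$, and that $\frac{d}{dt}\hat{T}_{L}(t)\phi=\hat{A}_{L}\hat{T}_{L}(t)\phi=\hat{T}_{L}(t)\hat{A}_{L}\phi$ on $D(\hat{A}_{L})$. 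This disposes of every claim except the explicit formula (\ref{sec:LMB1})--(\ref{sec:LMB}).

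For the representation I would repeat the characteristic computation that produced (\ref{sec:char51}). Setting $w_{c}(t):=l(t+c,t)$ along $a-t=c$ and using the first line of (\ref{sec:LM1}) gives $w_{c}'(t)=-\tilde{\mu}(t+c)w_{c}(t)$, whose integration produces the survival factor $\prod$. Splitting according to whether the characteristic through $(a,t)$ meets the initial line $t=0$ (the case $a\ge t$, giving $\phi_{0}(a-t)\prod(a-t,a)$) or the boundary line $a=0$ (the case $a<t$, giving $B(t-a)\prod(0,a)$ with $B(t):=l(0,t)$) yields (\ref{sec:LMB1}). Substituting this expression into the boundary condition $l(0,t)=\int_{a_{\min}}^{a_{\max}}\tilde{\beta}(a)l(a,t)\,da$ and splitting the integral at $a=t$ gives precisely the linear Volterra renewal equation (\ref{sec:LMB}) for $B$. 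Since $\tilde{\beta}\in L^{\infty}$ and $0\le\prod\le 1$, this renewal equation has a bounded kernel, so a standard Volterra/Picard iteration (or a Gronwall estimate) furnishes a unique locally bounded, in fact continuous, solution $B$ on $[0,\infty)$.

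The only genuinely delicate point, which I would treat most carefully, is the \emph{matching} of the abstract generalized solution provided by Theorem \ref{sec:T33} with the pointwise a.e.\ formula (\ref{sec:LMB1}): one must check that the right-hand side of (\ref{sec:LMB1}), built from the $B$ just constructed, defines an element of $L_{T}=C([0,T];X)$ and satisfies the integral form of (\ref{sec:1}), so that uniqueness forces the two to coincide. This measurability and continuity bookkeeping, together with the Fubini-type identity for the change of variables $c=a-t$, is exactly what the preliminary lemmas on viewing an $L_{T}$ element as an $L^{1}((0,a_{1})\times(0,T))$ function were stated to supply, so the verification becomes routine once those lemmas are applied.
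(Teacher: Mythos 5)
Your proposal is correct and follows essentially the same route as the paper, which proves nothing directly but defers to the bounded-linear-operator results (the analogues of Theorem \ref{sec:T33} and the generator proposition, i.e.\ Propositions 3.2 and 3.7 of \cite{W}) for the semigroup, growth bound, and generator claims, with the representation (\ref{sec:LMB1})--(\ref{sec:LMB}) obtained by the same characteristics computation used for (\ref{sec:char51}). Your added care about the boundedness of $\hat{\mathcal{F}}_{L},\hat{\mathcal{G}}_{L}$ under H.6, the Picard solvability of the renewal equation, and the identification of the explicit formula with the abstract generalized solution via uniqueness is exactly the bookkeeping the cited results require.
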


\section{Growth estimates of the linear semigroup}

Applying the same methods as in the previous section, we can readily show, in the similar way as in Theorem \ref{sec:ES1}  and Theorem \ref{sec:T6.6}, that the strongly continuous linear semigroup $\hat{T}_{L}(t), t\geq 0$ in $X_{+}$ as in Theorem \ref{sec:LM4} is eventually compact, positive and irreducible. Therefore, the spectrum of the linear operator consists of eigenvalues of finite multiplicity, which can be determined via zeros of a characteristic function, see \cite{Pa, E, E6} for more details.

The following theorem estimates the essential growth rate of the linear semigroup $\hat{T}_{L}(t), \ t\geq 0$.

\begin{theorem}\label{sec:LMT6} Let H.6 hold. Let $\hat{T}_{L}(t), \ t\geq 0$, be the strongly continuous linear semigroup in $X$ as in Theorem \ref{sec:LM4} with infinitesimal generator $\hat{A}_{L}$, defined by (\ref{sec:LM51}), then,
\begin{align*}
\omega_{0, ess}(\hat{A}_{L})=-\infty.
\end{align*}

\end{theorem}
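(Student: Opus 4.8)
The plan is to reproduce the argument of Theorem \ref{sec:ES1} in the present linear setting, exploiting the explicit representation (\ref{sec:LMB1}) of $\hat{T}_L(t)$ furnished by Theorem \ref{sec:LM4}. Concretely, I would split $\hat{T}_L(t)=\hat{W}_1(t)+\hat{W}_2(t)$ according to whether a characteristic line reaching age $a$ at time $t$ originates from the initial datum or from the boundary; that is, for $\phi_0\in X$ set
\begin{align*}
(\hat{W}_1(t)\phi_0)(a) &= \begin{cases} 0 & \text{a.e. } a\in[0,t)\cap[0,a_1];\\ \phi_0(a-t)\prod(a-t,a) & \text{a.e. } a\in[t,a_1],\end{cases}\\
(\hat{W}_2(t)\phi_0)(a) &= \begin{cases} B(t-a)\prod(0,a) & \text{a.e. } a\in[0,t)\cap[0,a_1];\\ 0 & \text{a.e. } a\in[t,a_1],\end{cases}
\end{align*}
with $B$ the birth function solving the renewal equation (\ref{sec:LMB}). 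The essential growth bound will follow once I show that $\hat{T}_L(t)$ is compact for all large $t$, since then $\|\hat{T}_L(t)\|_{ess}=0$ and hence $\omega_{0,ess}(\hat{A}_L)=\lim_{t\to\infty}t^{-1}\ln\|\hat{T}_L(t)\|_{ess}=-\infty$.

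The first summand is easily dispatched: when $t>a_1$ the age range $[t,a_1]$ is empty, so $\hat{W}_1(t)=0$ identically, exactly as $\tilde{W}_1(t)=0$ for $t>a_1$ in Theorem \ref{sec:ES1}. It therefore remains to prove that $\hat{W}_2(t)$ maps bounded sets to relatively compact sets. Fix a bounded $M\subset X$ and $t>0$. Using (\ref{sec:LMB}) together with $\tilde{\beta}\in L^\infty_+$ and $\prod(0,\cdot)\le 1$, a Gronwall estimate gives a bound on $\|B\|_\infty$ that is uniform over $\phi_0\in M$; the same renewal equation shows that $B$ is continuous, indeed uniformly continuous, on the compact time interval in question, again uniformly in $\phi_0\in M$. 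Consequently the functions $a\mapsto (\hat{W}_2(t)\phi_0)(a)=B(t-a)\prod(0,a)$ are uniformly bounded and satisfy
\begin{align*}
\lim_{h\to0}\int_0^{a_1}\bigl|(\hat{W}_2(t)\phi_0)(a+h)-(\hat{W}_2(t)\phi_0)(a)\bigr|\,da=0
\end{align*}
uniformly for $\phi_0\in M$, since $a\mapsto B(t-a)$ inherits the uniform continuity of $B$ and $a\mapsto\prod(0,a)=e^{-\int_0^a\tilde{\mu}(\hat{a})\,d\hat{a}}$ is absolutely continuous. The $L^1$ compactness criterion stated in Chapter 2 then yields that $\hat{W}_2(t)M$ is relatively compact, i.e. $\hat{W}_2(t)$ is a compact operator.

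With both pieces in hand I would invoke subadditivity of the Kuratowski measure of noncompactness $\alpha$: for $t>a_1$,
\begin{align*}
\alpha[\hat{T}_L(t)]\le\alpha[\hat{W}_1(t)]+\alpha[\hat{W}_2(t)]=0+0=0,
\end{align*}
so $\hat{T}_L(t)$ is compact and $\|\hat{T}_L(t)\|_{ess}=0$ for every $t>a_1$; passing to the limit in the definition of $\omega_{0,ess}$ then gives $\omega_{0,ess}(\hat{A}_L)=-\infty$. The one step demanding genuine care is the compactness of $\hat{W}_2(t)$: everything hinges on extracting enough regularity of the birth term $B$ from the renewal equation (\ref{sec:LMB}) — its uniform boundedness and uniform continuity over the bounded family $M$ — to verify the equicontinuity hypothesis of the compactness lemma. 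The vanishing of $\hat{W}_1$ and the measure-of-noncompactness bookkeeping are then routine, mirroring Theorem \ref{sec:ES1}.
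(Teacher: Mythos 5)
Your decomposition is exactly the paper's: by the Volterra formula (\ref{sec:LMB1}) your $\hat{W}_1(t)$, $\hat{W}_2(t)$ coincide with the $\tilde{V}_1(t)$, $\tilde{V}_2(t)$ in the paper's proof of Theorem \ref{sec:LMT6}, which likewise kills the first piece for $t>a_1$ and disposes of the second by citing the compactness result of \cite{W} together with subadditivity of the Kuratowski measure. So the strategy matches the source; the only substantive content you add is the attempted proof that $\hat{W}_2(t)$ is compact, and that is where there is a gap.

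The claim that $B$ is ``uniformly continuous \dots uniformly in $\phi_0\in M$'' for a merely bounded $M\subset X$ is not justified and is false in general under H.6. The homogeneous part of (\ref{sec:LMB}) is harmless once $\|B\|_\infty$ is bounded (it is a convolution against the fixed $L^1$ kernel $\tilde\beta(\cdot)\prod(0,\cdot)$), but the forcing term $\int_t^{a_{\max}}\tilde\beta(a)\prod(a-t,a)\phi_0(a-t)\,da=\int_0^{a_{\max}-t}\tilde\beta(s+t)\prod(s,s+t)\phi_0(s)\,ds$ is a correlation of the merely-$L^\infty$ function $\tilde\beta$ with $\phi_0$; taking $\phi_{0,n}=n\chi_{[x_0,x_0+1/n]}$ (bounded in $L^1$) against a $\tilde\beta$ with no point of approximate continuity near $x_0$ yields a family of forcing terms with no common modulus of continuity. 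Fortunately the compactness lemma of Chapter 2 does not require pointwise equicontinuity of $B$: it requires only that $\int_0^{a_1}|(\hat{W}_2(t)\phi_0)(a+h)-(\hat{W}_2(t)\phi_0)(a)|\,da\to 0$ uniformly on $M$, i.e.\ the $L^1$-modulus $\int_0^t|B(\tau)-B(\tau-h)|\,d\tau\to0$. Writing $\tilde\beta(s+\tau)\prod(s,s+\tau)=e^{\int_0^s\tilde\mu(\hat a)d\hat a}\,k(s+\tau)$ with $k(u)=\tilde\beta(u)e^{-\int_0^u\tilde\mu(\hat a)d\hat a}\in L^1$, Fubini and the continuity of translation in $L^1$ give $\int_0^t|F(\tau)-F(\tau-h)|\,d\tau\le C\|\phi_0\|_X\|k(\cdot)-k(\cdot-h)\|_{L^1}\to0$ uniformly over $M$, and a Gronwall-type iteration transfers this estimate to $B$ itself. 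With that substitution your argument closes; as written, the regularity you assert for $B$ is stronger than what the renewal equation delivers and stronger than what you need.
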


\begin{proof}
Let $\hat{T}_{L}(t)=\tilde{V}_{1}(t)+\tilde{V}_{2}(t)$, where the mappings $\tilde{V}_{1}(t)$, $\tilde{V}_{2}(t)\in X$ for $t\geq 0$, $\phi\in X$ are defined as follows:
\begin{align}\label{sec:LM6}   
&(\tilde{V}_{1}(t)\phi)(a)=\begin{cases}
0 & \text{ a.e. } \ \ a \in (0,t)\cap[0,a_{1}];\\
(\hat{T}_{L}(t)\phi)(a) & \text{ a.e. } \ \ a\in (t, a_{1}];
\end{cases}\\
&(\tilde{V}_{2}(t)\phi)(a)=\begin{cases}\label{sec:LM7}  
(\hat{T}_{L}(t)\phi)(a) & \text{ a.e. } \ \ a \in (0,t)\cap[0,a_{1}];\\
0 & \text{ a.e. } \ \ a\in (t, a_{1}].
\end{cases}
\end{align} 
It is easy to see that $\tilde{V}_{1}(t)=0$ for $t>a_{1}$ while $\tilde{V}_{2}(t)$ (see \cite{W}, section 3.4, pp.112) is ultimately compact by using the measure of noncompactness due to Kuratowski by Proposition 3.17 (in \cite{W}, section 3.5, pp.113). Therefore, from Proposition 4.9 (in \cite{W}, section 4.3, pp.166) $\alpha[\hat{T}_{L}(t)]\leq \alpha[\tilde{V}_{1}(t)]+\alpha[\tilde{V}_{2}(t)]=0$ for $t>a_{1}$, where $\alpha$ is the measure of noncompactness of $\hat{T}_{L}$ defined in \cite{W}, section 4.3, pp.165. The claim then follows directly.
\end{proof}

The following theorem establishes the positivity and irreducibility of the strongly continuous linear semigroup $\hat{T}_{L}(t),t \geq 0$ in $X_{+}$ as in Theorem \ref{sec:LM4}.

\begin{theorem}\label{sec:TLM7}
Let H.6 hold. Then the strongly continuous linear semigroup $\hat{T}_{L}(t),t \geq 0$ in $X_{+}$ as in Theorem \ref{sec:LM4} is positive and irreducible.
\end{theorem}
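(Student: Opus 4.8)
The plan is to treat positivity and irreducibility separately, reusing the explicit representation of $\hat{T}_{L}(t)$ from Theorem \ref{sec:LM4} and the resolvent criterion of Proposition \ref{sec:P6.1}, in parallel with the argument for the nonlinear linearization in Theorem \ref{sec:T6.6}. For positivity I would read the claim directly off (\ref{sec:LMB1})--(\ref{sec:LMB}). Since $\tilde{\mu}\in L^{\infty}_{+}[0,a_{1}]$ by H.6, the survival factor $\prod(b,a)=e^{-\int_{b}^{a}\tilde{\mu}(\hat{a})d\hat{a}}$ is strictly positive, and since $\tilde{\beta}\geq 0$ the Volterra kernel $\tilde{\beta}(a)\prod(0,a)$ in (\ref{sec:LMB}) is nonnegative. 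Hence for $\phi_{0}\in X_{+}$ the forcing term of (\ref{sec:LMB}) is nonnegative, and a successive-approximation (Picard) expansion of the renewal equation, all of whose iterates are nonnegative, yields $B(t)\geq 0$ for every $t\geq 0$. Substituting back into (\ref{sec:LMB1}) gives $(\hat{T}_{L}(t)\phi_{0})(a)\geq 0$ a.e., i.e. $\hat{T}_{L}(t)(X_{+})\subseteq X_{+}$. Equivalently, one may mimic Theorem \ref{sec:T6.6} and remove the mortality by the integrating factor $w=l\,e^{\int_{0}^{a}\tilde{\mu}}$, reducing to the pure transport semigroup with a nonnegative birth law, which is manifestly positive, and then conjugate back by the positive multiplication operator.

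For irreducibility I would invoke Proposition \ref{sec:P6.1}(ii): it suffices to produce some $\lambda>s(\hat{A}_{L})$ for which $R(\lambda,\hat{A}_{L})f$ is strictly positive for every $0<f\in X$. Solving the resolvent equation $\lambda w+w'+\tilde{\mu}w=f$ with boundary condition $w(0)=\int_{a_{\min}}^{a_{\max}}\tilde{\beta}(a)w(a)\,da$ gives, once $\lambda$ is large enough that the renewal constant $\mathcal{K}(\lambda):=\int_{a_{\min}}^{a_{\max}}\tilde{\beta}(a)\prod(0,a)e^{-\lambda a}\,da$ satisfies $\mathcal{K}(\lambda)<1$, the explicit solution $w(a)=w(0)\prod(0,a)e^{-\lambda a}+\int_{0}^{a}e^{-\lambda(a-s)}\prod(s,a)f(s)\,ds$, where $w(0)=(1-\mathcal{K}(\lambda))^{-1}\int_{a_{\min}}^{a_{\max}}\tilde{\beta}(a)\int_{0}^{a}e^{-\lambda(a-s)}\prod(s,a)f(s)\,ds\,da$. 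Both the Duhamel integral and $w(0)$ are nonnegative; moreover, since $\prod(0,a)e^{-\lambda a}>0$ for all $a$, the transport term $w(0)\prod(0,a)e^{-\lambda a}$ is strictly positive at every age precisely when $w(0)>0$. Thus the whole problem collapses to showing that any nonzero $f\geq 0$ produces a positive newborn flux $w(0)>0$.

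The main obstacle is exactly this last step---strict positivity of $w$ across the entire interval $(0,a_{1})$---because the birth term only sees mass that can reach a reproductive age. One has $w(0)>0$ as soon as $\int_{a_{\min}}^{a_{\max}}\tilde{\beta}(a)\int_{0}^{a}e^{-\lambda(a-s)}\prod(s,a)f(s)\,ds\,da>0$, and since $\prod$ and the exponential kernel are strictly positive this integral is positive provided $\tilde{\beta}$ is positive on a subinterval of $[a_{\min},a_{\max}]$ and $f$ carries positive mass on $[0,a_{\max})$. The delicate case is post-reproductive data supported in $(a_{\max},a_{1})$: such $f$ never feeds the renewal term, forcing $w(0)=0$ and making $w$ vanish on $(0,a_{\max})$. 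I therefore expect to need the positivity of $\tilde{\beta}$ on the full fertility window $[a_{\min},a_{\max}]$, guaranteeing newborns for any data that can age into reproduction, together with careful treatment of the maximal-age boundary; once $w(0)>0$ is secured the strict positivity propagates to all ages through the transport term, and Proposition \ref{sec:P6.1} then delivers irreducibility. Since the essential growth bound $\omega_{0,ess}(\hat{A}_{L})=-\infty$ is already available from Theorem \ref{sec:LMT6}, this irreducibility is the only missing ingredient for the Perron--Frobenius structure used downstream.
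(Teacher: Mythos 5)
Your treatment of positivity is correct and coincides with the paper's: the paper likewise removes the mortality with the integrating factor $w=l\,e^{\int_{0}^{a}\tilde{\mu}(\hat{a})d\hat{a}}$, reduces to the transport generator $\mathcal{B}\phi=-\phi'$, solves the resolvent equation explicitly as $w(a)=e^{-\lambda a}\psi(w)+\int_{0}^{a}e^{-\lambda(a-b)}f(b)\,db$ with $\psi(w)=(1-\psi(e^{-\lambda a}))^{-1}\psi\bigl(\int_{0}^{a}e^{-\lambda(a-b)}f(b)\,db\bigr)$, and reads off nonnegativity for $\lambda$ large; your Picard iteration on the renewal equation (\ref{sec:LMB}) is an equally valid alternative route to $\hat{T}_{L}(t)(X_{+})\subseteq X_{+}$.

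For irreducibility you adopt the same strategy as the paper (the resolvent criterion of Proposition \ref{sec:P6.1}), and the obstruction you isolate is a genuine one that your proposal does not overcome --- but neither does the paper. If $0<f$ is supported in $(a_{\max},a_{1})$, the renewal functional vanishes, so $w(0)=0$ and $R(\lambda,\hat{A}_{L})f$ vanishes identically on $(0,a_{\max})$; equivalently, by (\ref{sec:LMB1})--(\ref{sec:LMB}), $\hat{T}_{L}(t)f$ is supported in $(a_{\max}+t,a_{1}]$ and is zero for $t>a_{1}-a_{\max}$, so pairing with a positive functional supported in $[0,a_{\min}]$ contradicts Definition \ref{sec:Def1}. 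Assuming $\tilde{\beta}>0$ on all of $[a_{\min},a_{\max}]$, as you suggest, does not help, because the failure is caused by the location of the initial datum, not by the fertility. The paper's proof establishes only nonnegativity of the resolvent and then cites Proposition \ref{sec:P6.1} (referring, moreover, to a nonexistent item (iii)); strict positivity of $R(\lambda,\hat{A}_{L})f$ for every $0<f\in X$, which is what irreducibility requires, simply fails on $X=L^{1}(0,a_{1})$ when $a_{\max}<a_{1}$. What the subsequent results actually use (e.g.\ Proposition \ref{sec:LMPP1} and Theorem \ref{sec:TLM81}) is dominance of the real eigenvalue $\lambda_{0}$ and strict positivity of the spectral projection on data in $M_{0}$, i.e.\ data with mass below $\tilde{a}$; a correct statement would either restrict to the invariant part of the space generated by such data or weaken ``irreducible'' accordingly. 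Your instinct to stop and flag the post-reproductive case rather than paper over it is the right one.
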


\begin{proof}

The associated differential equation subject to the corresponding boundary condition is given by (\ref{sec:LM1}). 
Let $l$ be a solution of (\ref{sec:LM1}). 
Then the function $w$ given by
\begin{align*}
w(a,t)=l(a,t)e^{\int_{0}^{a}\tilde{\mu}(\hat{a})d\hat{a}}.
\end{align*}
satisfies 
\begin{align*}
&w_{t}(a,t)+w_{a}(a,t)=0,\ 0<a<a_{1},\ t>0,\\
&w(0,t)=\psi(w(a,t)), \ t>0,\\
&w(a,0)=p_{0}(a)e^{\int_{0}^{a}\tilde{\mu}(\hat{a})d\hat{a}},\ 0<a<a_{1}.
\end{align*}
where, $\psi(w(a,t))=$ $\hat{\mathcal{F}}_{L}(w(a,t)e^{-\int_{0}^{a}\tilde{\mu}(\hat{a})d\hat{a}})$.
Solutions of this system form a strongly continuous linear semigroup with infinitesimal generator $\mathcal{B}\phi=-\phi'$ for $\phi\in D(\mathcal{B})$, and $D(\mathcal{B})$ is given by,
\begin{align*}
D(\mathcal{B})=\left\{\phi\in X: \phi \text{ is absolutely continuous on }[0,a_{1}], \phi'\in L^{1}, \phi(0)=\psi(\phi)\right\}. 
\end{align*}
It then suffices to show that the semigroup generated by $\mathcal{B}$ is nonnegative. We observe that the resolvent equation $\lambda w-\mathcal{B} w= f$ has the  solution $w(a)= e^{-\lambda a}\psi(w)+\int_{0}^{a}e^{-\lambda (a-b)}f(b)db$ for $\lambda \geq 0$ sufficiently large and $f\in X$. Applying $\psi$ on both sides, we get $\psi(w)=(1-\psi(e^{-\lambda a}))^{-1} \psi(\int_{0}^{a}e^{-\lambda (a-b)}f(b)db)$. From the definition of $\psi$ 
we obtain that the solution $w$, is nonnegative if $f$ is nonnegative a.e. and $\lambda$ is sufficiently large. Therefore, the resolvent operator of $\mathcal{B}$ is positive for sufficiently large $\lambda$. Then the conclusion follows from Proposition \ref{sec:P6.1} (iii).

\end{proof}

Consider the characteristic equation $\Delta(\lambda)$ of the system (\ref{sec:LM1}) defined by, 
\begin{align*}
\Delta(\lambda):=1-\int_{a_{\min}}^{a_{\max}}e^{-\lambda a}\tilde{\beta}(a)\prod(0,a)da.
\end{align*}
We define $\mathcal{\tilde{R}}_{0}=\int_{a_{\min}}^{a_{\max}}\tilde{\beta}(a)\prod(0,a)da$. 
We state some results (see Theorem 4.9 \cite{W}, sec 4.3, pp.184-187), which is an adaption for the strongly continuous linear semigroup $\hat{T}_{L}(t),\ t \geq 0$ in $X$ as in Theorem \ref{sec:LM4} .

\begin{theorem}\label{sec:LMCC2}
Let H.6 hold. Let $\hat{T}_{L}(t),\ t \geq 0$ be the strongly continuous linear semigroup in $X$ as in Theorem \ref{sec:LM4} with infinitesimal generator $\hat{A}_{L}$ as in (\ref{sec:LM51}). The following hold:
\begin{align}
&\text{If }\Delta(\lambda)=0,\text{ then }\lambda\in P\sigma(\hat{A}_{L}).\label{sec:LMEE6}\\
&\text{If }\lambda\in \rho(\hat{A}_{L}), \text{ then for } \psi\in X, a\in[0,a_{1}],\label{sec:LMEE7}\\
\nonumber&((\lambda I-\hat{A}_{L})^{-1}\psi)(a)=\int_{0}^{a}e^{\lambda (a-b)}\prod(b,a)\psi(b)db\\
\nonumber&+e^{-\lambda a}\prod(0,a)\Delta(\lambda)^{-1}\int_{a_{\min}}^{a_{\max}}\tilde{\beta}(b)e^{-\lambda b}[\int_{0}^{b}e^{\lambda \tau}\prod(\tau,b)\psi(\tau)d\tau]db.
\end{align}

\end{theorem}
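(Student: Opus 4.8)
The plan is to obtain both assertions by solving the first–order linear ordinary differential equation attached to the generator $\hat{A}_L\phi=-\phi'-\tilde\mu\phi$ together with the nonlocal boundary constraint $\phi(0)=\hat{\mathcal{F}}_L(\phi)=\int_{a_{\min}}^{a_{\max}}\tilde\beta(a)\phi(a)\,da$. Claim (\ref{sec:LMEE6}) will come from the homogeneous equation and (\ref{sec:LMEE7}) from the inhomogeneous one; in both cases it is the nonlocal boundary condition, fed into the explicit solution, that produces the factor $\Delta(\lambda)$. Throughout I would use the cocycle identity $\prod(0,a)=\prod(0,b)\prod(b,a)$ for $0\le b\le a$ without comment.

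For (\ref{sec:LMEE6}) I would write the eigenvalue equation $(\lambda I-\hat{A}_L)\phi=0$ as $\phi'+(\lambda+\tilde\mu)\phi=0$ and integrate it to $\phi(a)=\phi(0)\,e^{-\lambda a}\prod(0,a)$. Substituting into the boundary condition gives $\phi(0)=\phi(0)\int_{a_{\min}}^{a_{\max}}\tilde\beta(a)e^{-\lambda a}\prod(0,a)\,da=\phi(0)\bigl(1-\Delta(\lambda)\bigr)$, that is $\phi(0)\,\Delta(\lambda)=0$. Hence if $\Delta(\lambda)=0$, any nonzero choice of $\phi(0)$ yields a nontrivial solution; since H.6 gives $\tilde\mu\in L^\infty_+$, this $\phi$ is absolutely continuous with $\phi'=-(\lambda+\tilde\mu)\phi\in L^1$ and meets the boundary condition, so $\phi\in D(\hat{A}_L)\setminus\{0\}$ and $\lambda\in P\sigma(\hat{A}_L)$.

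For (\ref{sec:LMEE7}) I would fix $\lambda\in\rho(\hat{A}_L)$ and solve $\phi'+(\lambda+\tilde\mu)\phi=\psi$ by the integrating factor $e^{\lambda a}/\prod(0,a)$; variation of parameters yields the representation $\phi(a)=e^{-\lambda a}\prod(0,a)\,\phi(0)+\int_0^a e^{-\lambda(a-b)}\prod(b,a)\psi(b)\,db$, whose second term is the $\psi$-dependent summand of the claimed resolvent. Inserting this representation into $\phi(0)=\hat{\mathcal{F}}_L(\phi)$ and isolating the boundary value produces $\phi(0)\,\Delta(\lambda)=\int_{a_{\min}}^{a_{\max}}\tilde\beta(b)e^{-\lambda b}\bigl[\int_0^b e^{\lambda\tau}\prod(\tau,b)\psi(\tau)\,d\tau\bigr]db$. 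Dividing by $\Delta(\lambda)$ and substituting back into the representation gives the stated closed form for $(\lambda I-\hat{A}_L)^{-1}\psi$.

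The step that carries the weight is the division by $\Delta(\lambda)$: to justify it I must know $\Delta(\lambda)\neq 0$ on the resolvent set, which I would get from the contrapositive of (\ref{sec:LMEE6}) — if $\Delta(\lambda)=0$ then $\lambda\in P\sigma(\hat{A}_L)\subset\sigma(\hat{A}_L)$, contradicting $\lambda\in\rho(\hat{A}_L)$. The rest is bookkeeping rather than genuine difficulty: one checks that the $\phi$ so constructed lies in $D(\hat{A}_L)$ (absolute continuity and $\phi'\in L^1$, guaranteed by the $L^\infty$ bounds of H.6) and that it is the unique solution of the resolvent equation, so that the explicit kernel indeed represents the abstract bounded operator $(\lambda I-\hat{A}_L)^{-1}$ whose existence is already assured by $\lambda\in\rho(\hat{A}_L)$.
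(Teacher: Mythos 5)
Your proof is correct and follows essentially the same route as the source the paper defers to for this statement (Webb, Theorem 4.9): integrate the homogeneous and inhomogeneous first-order ODEs for the generator, feed the explicit solutions into the nonlocal boundary condition to produce $\Delta(\lambda)$, and use the contrapositive of (\ref{sec:LMEE6}) to license division by $\Delta(\lambda)$ on the resolvent set. The only discrepancy is that your (correct) variation-of-parameters kernel $e^{-\lambda(a-b)}\prod(b,a)$ exposes a sign typo in the exponent of the first term of the paper's displayed resolvent formula, which is printed as $e^{\lambda(a-b)}$.
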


\begin{theorem}\label{sec:LMCC3}
Let H.6 hold. Let $\hat{T}_{L}(t),t \geq 0$ be the strongly continuous linear semigroup in $X$ as in Theorem \ref{sec:LM4} with infinitesimal generator $\hat{A}_{L}$ as in (\ref{sec:LM51}). The following are equivalent:
\begin{align}
&\lambda_{0}\in\sigma(\hat{A}_{L}).\label{sec:LMEE8}\\
&\lambda_{0}\in\sigma(\hat{A}_{L})-E\sigma(\hat{A}_{L}).\label{sec:LMEE9}\\
&\lambda_{0} \text{ is a pole of }(\lambda I-\hat{A}_{L})^{-1} \text{ of order }m.\label{sec:LMEE10}\\
&\lambda_{0} \text{ is a pole of }1/\Delta(\lambda) \text{ of order }m.\label{sec:LMEE11}\\
&\lambda_{0} \text{ is a zero of }\Delta(\lambda) \text{ of order }m.\label{sec:LMEE12}
\end{align}
\end{theorem}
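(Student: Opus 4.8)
The plan is to run a single cycle of implications $(\ref{sec:LMEE8})\Rightarrow(\ref{sec:LMEE12})\Rightarrow(\ref{sec:LMEE11})\Rightarrow(\ref{sec:LMEE10})\Rightarrow(\ref{sec:LMEE9})\Rightarrow(\ref{sec:LMEE8})$, carrying the integer $m$ along so that the two pole orders and the zero order are identified simultaneously. Two ingredients are already available and do most of the structural work: by Theorem \ref{sec:LMT6} we have $\omega_{0, ess}(\hat{A}_{L})=-\infty$, so $\hat{T}_{L}(t)$ is eventually compact and its generator has empty essential spectrum, $E\sigma(\hat{A}_{L})=\emptyset$; and Theorem \ref{sec:LMCC2} supplies both the implication $\Delta(\lambda)=0\Rightarrow\lambda\in P\sigma(\hat{A}_{L})$ and the explicit resolvent formula (\ref{sec:LMEE7}). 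Because $E\sigma(\hat{A}_{L})=\emptyset$, the two sets in (\ref{sec:LMEE8}) and (\ref{sec:LMEE9}) coincide, so that equivalence is free; it remains to tie the spectrum to the zeros of $\Delta$ and to match orders.

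For $(\ref{sec:LMEE8})\Rightarrow(\ref{sec:LMEE12})$ I would read off (\ref{sec:LMEE7}): the resolvent $(\lambda I-\hat{A}_{L})^{-1}$ is analytic at every $\lambda$ with $\Delta(\lambda)\neq 0$, so a spectral point $\lambda_{0}$ must satisfy $\Delta(\lambda_{0})=0$, and $\lambda_{0}$ is then a zero of $\Delta$ of some finite order $m\geq 1$. The step $(\ref{sec:LMEE12})\Rightarrow(\ref{sec:LMEE11})$ is elementary complex analysis: $\Delta(\lambda)=1-\int_{a_{\min}}^{a_{\max}}e^{-\lambda a}\tilde{\beta}(a)\prod(0,a)\,da$ is entire in $\lambda$, being a Laplace-type transform of the bounded, compactly supported kernel $\tilde{\beta}(a)\prod(0,a)$, so near $\lambda_{0}$ we may write $\Delta(\lambda)=(\lambda-\lambda_{0})^{m}h(\lambda)$ with $h$ entire and $h(\lambda_{0})\neq 0$, whence $1/\Delta$ has a pole of order exactly $m$.

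The heart of the matter is $(\ref{sec:LMEE11})\Rightarrow(\ref{sec:LMEE10})$, and here I would exploit the structure of (\ref{sec:LMEE7}). Writing $(\lambda I-\hat{A}_{L})^{-1}=E(\lambda)+\Delta(\lambda)^{-1}N(\lambda)$, the operator $(E(\lambda)\psi)(a)=\int_{0}^{a}e^{\lambda(a-b)}\prod(b,a)\psi(b)\,db$ and the operator $N(\lambda)$ are both entire in $\lambda$, and $N(\lambda)$ has rank one, namely $(N(\lambda)\psi)(a)=e^{-\lambda a}\prod(0,a)\,c(\lambda,\psi)$ with $c(\lambda,\psi)=\int_{a_{\min}}^{a_{\max}}\tilde{\beta}(b)e^{-\lambda b}\int_{0}^{b}e^{\lambda\tau}\prod(\tau,b)\psi(\tau)\,d\tau\,db$. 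The singular part of the resolvent is therefore carried entirely by $\Delta(\lambda)^{-1}$, so a pole of $1/\Delta$ of order $m$ forces a pole of the resolvent of order \emph{at most} $m$. To obtain order exactly $m$ I would show $N(\lambda_{0})\neq 0$: since $\Delta(\lambda_{0})=0$, by (\ref{sec:LMEE6}) $\lambda_{0}$ is an eigenvalue, so the resolvent genuinely blows up at $\lambda_{0}$; this is impossible if $N(\lambda_{0})=0$, hence the bounded functional $c(\lambda_{0},\cdot)$ is not identically zero, and as $a\mapsto e^{-\lambda_{0}a}\prod(0,a)$ is nonzero we conclude $N(\lambda_{0})\neq 0$. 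Comparing the Laurent principal parts of both sides then yields equal orders.

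Finally, $(\ref{sec:LMEE10})\Rightarrow(\ref{sec:LMEE9})$ is immediate: a pole of the resolvent lies in $\sigma(\hat{A}_{L})$, and since $E\sigma(\hat{A}_{L})=\emptyset$ it lies in $\sigma(\hat{A}_{L})-E\sigma(\hat{A}_{L})$, while $(\ref{sec:LMEE9})\Rightarrow(\ref{sec:LMEE8})$ is trivial. The only genuine obstacle I foresee is the order-matching in $(\ref{sec:LMEE11})\Leftrightarrow(\ref{sec:LMEE10})$: one must rule out that cancellation in the operator-valued numerator drops the pole order of $(\lambda I-\hat{A}_{L})^{-1}$ below that of $1/\Delta$, and this is exactly where the rank-one form of $N(\lambda)$ and the non-degeneracy of $c(\lambda_{0},\cdot)$ are used. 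The remaining steps are soft consequences of eventual compactness and of the analyticity of $\Delta$.
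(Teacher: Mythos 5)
The paper does not actually prove this theorem: it is stated as an adaptation of Theorem 4.9 of \cite{W} (pp.\ 184--187) and the proof is delegated to that reference. Your argument reconstructs essentially the standard route taken there: empty essential spectrum from $\omega_{0,ess}(\hat{A}_{L})=-\infty$ makes (\ref{sec:LMEE8})$\Leftrightarrow$(\ref{sec:LMEE9}) free, and the resolvent decomposition $(\lambda I-\hat{A}_{L})^{-1}=E(\lambda)+\Delta(\lambda)^{-1}N(\lambda)$ with $E,N$ entire and $N$ of rank one carries the rest. Two points need tightening. First, for (\ref{sec:LMEE8})$\Rightarrow$(\ref{sec:LMEE12}) you invoke (\ref{sec:LMEE7}), but Theorem \ref{sec:LMCC2} only asserts that formula \emph{for} $\lambda\in\rho(\hat{A}_{L})$; to conclude that $\Delta(\lambda)\neq 0$ implies $\lambda\in\rho(\hat{A}_{L})$ you must actually verify that the right-hand side of (\ref{sec:LMEE7}) defines a bounded everywhere-defined inverse of $\lambda I-\hat{A}_{L}$ whenever $\Delta(\lambda)\neq 0$. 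This is routine but it is the content, not a corollary, of the citation.

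The genuine soft spot is your justification that $N(\lambda_{0})\neq 0$ in the step (\ref{sec:LMEE11})$\Rightarrow$(\ref{sec:LMEE10}). You argue: $\lambda_{0}$ is an eigenvalue, so the resolvent blows up, which ``is impossible if $N(\lambda_{0})=0$.'' That inference is invalid when $m\geq 2$: if $N(\lambda_{0})=0$ but $N$ does not vanish to order $m$ at $\lambda_{0}$, then $\Delta(\lambda)^{-1}N(\lambda)$ still has a pole (of order between $1$ and $m-1$), so the resolvent still blows up. The eigenvalue argument therefore only shows that $N$ does not vanish to order $\geq m$, which is not enough to conclude that the pole order of the resolvent equals $m$. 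The conclusion $N(\lambda_{0})\neq 0$ is nevertheless true and should be proved directly: writing $c(\lambda_{0},\psi)=\int_{0}^{a_{\max}}\psi(\tau)k(\tau)\,d\tau$ with $k(\tau)=\int_{\max(\tau,a_{\min})}^{a_{\max}}\tilde{\beta}(b)e^{-\lambda_{0}(b-\tau)}\prod(\tau,b)\,db$, one checks that $k$ is continuous and $k(0)=1-\Delta(\lambda_{0})=1\neq 0$, so the functional $c(\lambda_{0},\cdot)$ is nonzero and hence $N(\lambda_{0})\neq 0$. With that substitution, comparing leading Laurent coefficients gives pole order exactly $m$, and your cycle closes correctly since pole and zero orders are uniquely determined.
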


\begin{theorem}\label{sec:LMCC11}
Let H.6 hold and let $\sup_{\Delta(\lambda)=0}\text{Re }\lambda<0$. Then, the zero equilibrium of the system (\ref{sec:LM1}) is globally exponentially asymptotically stable.

\end{theorem}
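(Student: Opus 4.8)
The system \eqref{sec:LM1} is linear, so its dynamics are governed by the strongly continuous semigroup $\hat{T}_{L}(t)$, $t\ge 0$, of Theorem \ref{sec:LM4} with generator $\hat{A}_{L}$, and the zero equilibrium is globally exponentially asymptotically stable precisely when the growth bound $\omega_{0}(\hat{A}_{L})$ is strictly negative. Indeed, by linearity $\|\hat{T}_{L}(t)x-\hat{T}_{L}(t)\cdot 0\|_{X}=\|\hat{T}_{L}(t)x\|_{X}$, so a bound of the form $\|\hat{T}_{L}(t)\|\le Me^{-\omega t}$ simultaneously furnishes stability, asymptotic stability, the exponential decay estimate, and the arbitrariness of $\delta$ required for globality in the definition of exponential asymptotic stability. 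The plan is therefore to reduce the claim to the single inequality $\omega_{0}(\hat{A}_{L})<0$ and to extract that inequality from the spectral hypothesis.

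The key step is the decomposition of the growth bound into an essential part and a point-spectral part. As for the generator $\hat{B}$ treated earlier, the eventual compactness of $\hat{T}_{L}(t)$ (established through Theorem \ref{sec:LMT6} and Theorem \ref{sec:TLM7}) yields the identity
\begin{align*}
\omega_{0}(\hat{A}_{L})=\max\Bigl(\omega_{0,ess}(\hat{A}_{L}),\ \sup_{\lambda\in\sigma(\hat{A}_{L})-E\sigma(\hat{A}_{L})}\mathrm{Re}\,\lambda\Bigr).
\end{align*}
First I would invoke Theorem \ref{sec:LMT6}, which gives $\omega_{0,ess}(\hat{A}_{L})=-\infty$; this immediately discards the essential term. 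Next I would use Theorem \ref{sec:LMCC3}, which shows that every point of $\sigma(\hat{A}_{L})$ lies in $\sigma(\hat{A}_{L})-E\sigma(\hat{A}_{L})$ and is a pole of the resolvent, and moreover that $\lambda_{0}\in\sigma(\hat{A}_{L})$ if and only if $\lambda_{0}$ is a zero of the characteristic function $\Delta(\lambda)$. Consequently
\begin{align*}
\sup_{\lambda\in\sigma(\hat{A}_{L})-E\sigma(\hat{A}_{L})}\mathrm{Re}\,\lambda=\sup_{\Delta(\lambda)=0}\mathrm{Re}\,\lambda<0
\end{align*}
by hypothesis, so that $\omega_{0}(\hat{A}_{L})<0$.

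Having secured $\omega_{0}(\hat{A}_{L})<0$, I would fix $\omega>0$ with $\omega_{0}(\hat{A}_{L})<-\omega<0$ and appeal to the definition of the growth bound to obtain a constant $M\ge 1$ with $\|\hat{T}_{L}(t)\|_{\mathcal{L}(X)}\le Me^{-\omega t}$ for all $t\ge 0$; applied to an arbitrary $p_{0}\in X_{+}$ this gives $\|\hat{T}_{L}(t)p_{0}\|_{X}\le Me^{-\omega t}\|p_{0}\|_{X}\to 0$, which is exactly the asserted global exponential asymptotic stability of the zero solution. The main obstacle is the justification of the growth-bound decomposition itself: one must confirm that the eventual compactness/positivity machinery behind Theorem \ref{sec:LMT6} and Theorem \ref{sec:TLM7} legitimately lets us read the decay rate off the zeros of $\Delta$ (i.e. that the essential spectrum being empty forces $\omega_{0}$ to be attained on the point spectrum), and to treat the degenerate case in which $\Delta$ has no zeros, where the supremum is taken as $-\infty$ and the conclusion holds a fortiori.
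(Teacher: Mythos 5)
Your argument is correct and is essentially the one the paper relies on: the paper states this theorem without proof as an adaptation of Webb's Theorem 4.9, and the intended justification is exactly the growth-bound decomposition $\omega_{0}(\hat{A}_{L})=\max(\omega_{0,ess}(\hat{A}_{L}),\sup_{\lambda\in\sigma(\hat{A}_{L})-E\sigma(\hat{A}_{L})}\mathrm{Re}\,\lambda)$ combined with Theorem \ref{sec:LMT6} ($\omega_{0,ess}(\hat{A}_{L})=-\infty$) and Theorem \ref{sec:LMCC3} (the spectrum coincides with the zero set of $\Delta$). Your reduction of global exponential asymptotic stability to $\omega_{0}(\hat{A}_{L})<0$ via linearity, and your handling of the vacuous case where $\Delta$ has no zeros, are both sound.
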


\begin{theorem}\label{sec:LMCC1} Let H.6 hold. Let $\hat{T}_{L}(t),t \geq 0$ be the strongly continuous linear semigroup in $X$ as in Theorem \ref{sec:LM4} with infinitesimal generator $\hat{A}_{L}$ as in (\ref{sec:LM51}), and let there exist an eigenvalue $\lambda_{0}$ of the linear system (\ref{sec:LM1}) such that $\lambda_{0}$ is real, $\sup_{\Delta(\lambda)=0, \lambda\neq \lambda_{0}} \text{Re }\lambda<\lambda_{0}$, and $\lambda_{0}$ is a simple zero of $\Delta(\lambda)$. Let $\mathcal{P}_{\lambda_{0}}:X\rightarrow X$ be defined by,
\begin{align}\label{sec:LMEE4}
\mathcal{P}_{\lambda_{0}}\phi:=(2\pi i)^{-1}\int_{\Gamma}(\lambda I-\hat{A}_{L})^{-1}d\lambda,\ \text{ for }\phi\in X.
\end{align}
where $\Gamma$ is a positively oriented closed curve in $\mathbb{C}$ enclosing $\lambda_{0}$, but no other point of $\sigma(\hat{A}_{L})$. Then,
\begin{align}\label{sec:LMEE14}
\lim_{t\rightarrow \infty}e^{-\lambda_{0} t}\hat{T}_{L}(t)\phi=\mathcal{P}_{\lambda_{0}}\phi,\ \text{ for all }\phi\in X.
\end{align}
\end{theorem}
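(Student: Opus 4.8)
The plan is to exploit the fact that the generator $\hat{A}_L$ has essential growth bound $-\infty$, so that its spectrum consists only of isolated poles, and then to split $X$ into the dominant one-dimensional eigenspace and a complement on which the rescaled semigroup decays. The limit operator will drop out as the projection onto the dominant eigenspace.

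First I would record, from Theorem~\ref{sec:LMT6}, that $\omega_{0,ess}(\hat{A}_L)=-\infty$, so the essential spectrum is empty and, by Theorem~\ref{sec:LMCC3}, every point of $\sigma(\hat{A}_L)$ lies in $\sigma(\hat{A}_L)-E\sigma(\hat{A}_L)$, is an isolated pole of the resolvent, and coincides with a zero of $\Delta$. The hypothesis that $\lambda_0$ is a \emph{simple} zero of $\Delta$ then means, again by Theorem~\ref{sec:LMCC3}, that $\lambda_0$ is a pole of $(\lambda I-\hat{A}_L)^{-1}$ of order $m=1$; hence $\lambda_0$ is an eigenvalue of index $1$, and its generalized eigenspace $N_{\lambda_0}(\hat{A}_L)$ coincides with the geometric eigenspace $N(\lambda_0 I-\hat{A}_L)$, with no nilpotent part.

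Next I would apply the spectral-decomposition proposition of the preceding subsection with the single point $\Lambda=\{\lambda_0\}$. Because $\lambda_0$ is dominant, i.e. $\sup_{\Delta(\lambda)=0,\ \lambda\neq\lambda_0}\mathrm{Re}\,\lambda<\lambda_0$, and because $\omega_{0,ess}(\hat{A}_L)=-\infty$, the quantity $\omega_{0,\Lambda}=\max\bigl(\omega_{0,ess}(\hat{A}_L),\ \sup_{\lambda\in\sigma(\hat{A}_L)-E\sigma(\hat{A}_L)-\Lambda}\mathrm{Re}\,\lambda\bigr)$ is strictly less than $\lambda_0$, so a number $\omega$ with $\omega_{0,\Lambda}<\omega<\lambda_0$ exists. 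The proposition produces the Riesz projection $\mathcal{P}:=(2\pi i)^{-1}\int_{\Gamma}(\lambda I-\hat{A}_L)^{-1}\,d\lambda$, which is exactly the operator $\mathcal{P}_{\lambda_0}$ of (\ref{sec:LMEE4}), together with $P_0:=I-\mathcal{P}$ and the $\hat{T}_L(t)$-invariant splitting $X=M\oplus M_0$, where $M=R(\mathcal{P})$ and $M_0=R(P_0)$. On $M$ the restriction $\hat{A}_{L,M}$ is bounded with spectrum $\{\lambda_0\}$; since the index is $1$ there is no Jordan block, so $\hat{A}_{L,M}=\lambda_0 I_M$ and therefore $\mathcal{P}\,\hat{T}_L(t)\phi=\hat{T}_L(t)\mathcal{P}\phi=e^{\lambda_0 t}\mathcal{P}\phi$. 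On $M_0$ the proposition furnishes a constant $K\ge 1$ with $\|\hat{T}_L(t)P_0\phi\|\le K e^{\omega t}\|P_0\phi\|$ for all $t\ge 0$.

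Finally I would combine the two pieces. Writing $\phi=\mathcal{P}\phi+P_0\phi$ and using the identities above,
\begin{align*}
\|e^{-\lambda_0 t}\hat{T}_L(t)\phi-\mathcal{P}_{\lambda_0}\phi\|
&=\|e^{-\lambda_0 t}\hat{T}_L(t)P_0\phi\|\le K e^{(\omega-\lambda_0)t}\|P_0\phi\|,
\end{align*}
which tends to $0$ as $t\to\infty$ since $\omega-\lambda_0<0$; this is precisely (\ref{sec:LMEE14}). The main obstacle, and the only delicate step, is the bookkeeping in the middle paragraph: one must check that a simple zero of $\Delta$ forces index $1$ (so $\hat{A}_{L,M}$ is the scalar $\lambda_0$ and not a nontrivial Jordan block), that the projection delivered by the decomposition proposition is literally the operator $\mathcal{P}_{\lambda_0}$ defined in (\ref{sec:LMEE4}), and that the strict inequality $\omega_{0,\Lambda}<\lambda_0$ really holds so an admissible $\omega$ can be chosen. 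All three follow from Theorems~\ref{sec:LMT6} and~\ref{sec:LMCC3}.
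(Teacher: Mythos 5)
Your argument is correct and is essentially the proof the paper intends: the paper states this result without writing out a proof (citing Theorem 4.9 of \cite{W}), but the machinery it develops immediately beforehand --- $\omega_{0,ess}(\hat{A}_{L})=-\infty$ from Theorem \ref{sec:LMT6}, the equivalences of Theorem \ref{sec:LMCC3} identifying a simple zero of $\Delta$ with a first-order pole of the resolvent, and the spectral decomposition proposition applied with $\Lambda=\{\lambda_{0}\}$ --- is exactly what you assemble. Your bookkeeping of the three delicate points (index $1$, identification of the Riesz projection with $\mathcal{P}_{\lambda_{0}}$, and the existence of an admissible $\omega$ with $\omega_{0,\Lambda}<\omega<\lambda_{0}$) is sound.
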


In the following theorem (which is an adaption of Theorem 4.10 in \cite{W}, sec. 4.3, pp.188), we derive the projection onto the eigenspace associated with the dominant eigenvalue $\lambda_{0}$ of the linear system (\ref{sec:LM1}).  

\begin{theorem}\label{sec:TLM81}
Let H.6 hold, and let $\hat{T}_{L}(t),t \geq 0$ be the strongly continuous linear semigroup in $X$ as in Theorem \ref{sec:LM4} with infinitesimal generator $\hat{A}_{L}$ as in (\ref{sec:LM51}). If $\mathcal{\tilde{R}}_{0}<1$, then the zero equilibrium is globally exponentially asymptotically stable. If $\mathcal{\tilde{R}}_{0}>1$, then there exists a unique positive real number $\lambda_{0}$ such that 
\begin{align}\label{sec:LMEE13}
\int_{a_{\min}}^{a_{\max}}e^{-\lambda_{0} a}\tilde{\beta}(a)\prod(0,a)da=1.
\end{align}
and for all $\phi\in X$,
\begin{align}\label{sec:LMEE3}
&\lim_{t\rightarrow \infty}e^{-\lambda_{0} t}(\hat{T}_{L}(t)\phi)(a)=e^{-\lambda_{0} a}\prod(0,a)\frac{\int_{a_{\min}}^{a_{\max}}\tilde{\beta}(b)e^{-\lambda_{0} b}[\int_{0}^{b}e^{\lambda_{0} \tau}\prod(\tau,b)\phi(\tau)d\tau]db}{\int_{a_{\min}}^{a_{\max}}\tilde{\beta}(a)ae^{-\lambda_{0} a}\prod(0,a)da}.
\end{align}
where the limit is taken in the norm of $L^{1}$.

\end{theorem}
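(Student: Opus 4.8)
The plan is to reduce everything to an analysis of the real characteristic function and then read off the asymptotics from the resolvent representation already at hand. Write $\Phi(\lambda):=\int_{a_{\min}}^{a_{\max}}e^{-\lambda a}\tilde{\beta}(a)\prod(0,a)\,da$, so that $\Delta(\lambda)=1-\Phi(\lambda)$ and $\Phi(0)=\mathcal{\tilde{R}}_{0}$. For real $\lambda$ I would observe that $\Phi$ is strictly decreasing, since $\Phi'(\lambda)=-\int_{a_{\min}}^{a_{\max}}a\,e^{-\lambda a}\tilde{\beta}(a)\prod(0,a)\,da<0$ (the integrand is nonnegative, and positive on a set of positive measure whenever $\tilde{\beta}$ does not vanish identically, which is the relevant situation), with $\Phi(\lambda)\to 0$ as $\lambda\to+\infty$ and $\Phi(\lambda)\to+\infty$ as $\lambda\to-\infty$ by H.6. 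The single estimate driving the dominance arguments is that for every $\lambda\in\mathbb{C}$ one has $|\Phi(\lambda)|\le\Phi(\mathrm{Re}\,\lambda)$, because $|e^{-\lambda a}|=e^{-\mathrm{Re}(\lambda)a}$ and the remaining factors are nonnegative.

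For the case $\mathcal{\tilde{R}}_{0}<1$: since $\Phi(0)<1$ and $\Phi$ decreases from $+\infty$ to $0$, there is a unique real root $\lambda^{*}$ of $\Delta$, and $\Phi(0)<1$ forces $\lambda^{*}<0$. For any complex zero $\lambda$ of $\Delta$ the estimate gives $1=|\Phi(\lambda)|\le\Phi(\mathrm{Re}\,\lambda)$, hence $\Phi(\mathrm{Re}\,\lambda)\ge 1=\Phi(\lambda^{*})$, and monotonicity yields $\mathrm{Re}\,\lambda\le\lambda^{*}<0$. Thus $\sup_{\Delta(\lambda)=0}\mathrm{Re}\,\lambda\le\lambda^{*}<0$, and Theorem \ref{sec:LMCC11} gives global exponential asymptotic stability of the zero equilibrium.

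For the case $\mathcal{\tilde{R}}_{0}>1$: now $\Phi(0)>1$ produces a unique positive real root $\lambda_{0}$, which is exactly (\ref{sec:LMEE13}). I would then verify that $\lambda_{0}$ is strictly dominant and simple. As above, every zero satisfies $\mathrm{Re}\,\lambda\le\lambda_{0}$; if $\mathrm{Re}\,\lambda=\lambda_{0}$ then equality holds in $|\Phi(\lambda)|\le\Phi(\lambda_{0})$, which forces the phase $e^{-i\,\mathrm{Im}(\lambda)a}$ to be constant on the support of $\tilde{\beta}\prod(0,\cdot)$; since $\tilde{\beta}$ is positive on a subset of positive measure this is impossible unless $\mathrm{Im}\,\lambda=0$, so $\lambda=\lambda_{0}$. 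Because $\omega_{0,\mathrm{ess}}(\hat{A}_{L})=-\infty$ (Theorem \ref{sec:LMT6}), only finitely many eigenvalues lie in any half-plane $\{\mathrm{Re}\,\lambda\ge\gamma\}$, so the supremum of $\mathrm{Re}\,\lambda$ over zeros $\lambda\neq\lambda_{0}$ is a maximum over a finite set and is therefore strictly below $\lambda_{0}$. Simplicity follows from $\Delta'(\lambda_{0})=\int_{a_{\min}}^{a_{\max}}a\,e^{-\lambda_{0}a}\tilde{\beta}(a)\prod(0,a)\,da>0$, so $\lambda_{0}$ is a simple zero of $\Delta$, hence a simple pole of $(\lambda I-\hat{A}_{L})^{-1}$ by Theorem \ref{sec:LMCC3}. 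Theorem \ref{sec:LMCC1} then applies and yields $\lim_{t\to\infty}e^{-\lambda_{0}t}\hat{T}_{L}(t)\phi=\mathcal{P}_{\lambda_{0}}\phi$ in $X$.

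It remains to identify $\mathcal{P}_{\lambda_{0}}\phi$ with the right-hand side of (\ref{sec:LMEE3}). Since $\lambda_{0}$ is a simple pole, the contour integral in (\ref{sec:LMEE4}) is the residue of the resolvent at $\lambda_{0}$. In the explicit formula (\ref{sec:LMEE7}) the first term is entire in $\lambda$, so only the factor $\Delta(\lambda)^{-1}$ contributes; using $\Delta(\lambda)^{-1}\sim[\Delta'(\lambda_{0})(\lambda-\lambda_{0})]^{-1}$ near $\lambda_{0}$, the residue is the second term evaluated at $\lambda=\lambda_{0}$ divided by $\Delta'(\lambda_{0})$. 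As $\Delta'(\lambda_{0})=\int_{a_{\min}}^{a_{\max}}\tilde{\beta}(a)\,a\,e^{-\lambda_{0}a}\prod(0,a)\,da$ is exactly the denominator in (\ref{sec:LMEE3}), this reproduces the stated limit. I expect the main obstacle to be the strict-dominance step: ruling out zeros on the line $\mathrm{Re}\,\lambda=\lambda_{0}$ through the equality case of the modulus bound (where the positivity of $\tilde{\beta}$ on an interval from H.6 is essential) and then upgrading ``no zero on the line'' to a genuine spectral gap via the finiteness of the right-most spectrum from Theorem \ref{sec:LMT6}; the residue computation itself is then routine.
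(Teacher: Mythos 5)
Your proof is correct and follows the same overall architecture as the paper's: reduce to the real characteristic function $q(\lambda)=\int_{a_{\min}}^{a_{\max}}e^{-\lambda a}\tilde\beta(a)\prod(0,a)\,da$, use $|q(\lambda)|\le q(\mathrm{Re}\,\lambda)$ and monotonicity to locate the spectrum relative to the unique real root, establish simplicity from $\Delta'(\lambda_0)>0$, and identify $\mathcal{P}_{\lambda_0}\phi$ as the residue of the resolvent formula (\ref{sec:LMEE7}) at $\lambda_0$. The one step where you genuinely diverge is the spectral gap $\sup_{\Delta(\lambda)=0,\,\lambda\neq\lambda_0}\mathrm{Re}\,\lambda<\lambda_0$. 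The paper rules out an infinite sequence of zeros $z_k$ with $\mathrm{Re}\,z_k>0$ by combining non-accumulation of zeros of the holomorphic function $\Delta$ with the Riemann--Lebesgue lemma applied to $\int\cos(\mathrm{Im}\,z_k\,a)\tilde\beta(a)\prod(0,a)\,da$. You instead (i) exclude zeros on the critical line $\mathrm{Re}\,\lambda=\lambda_0$ via the equality case of the modulus bound (forcing $\cos(\mathrm{Im}\,\lambda\,a)=1$ a.e.\ on the support of $\tilde\beta\prod(0,\cdot)$, impossible for $\mathrm{Im}\,\lambda\neq 0$ when that support has positive measure, which is guaranteed by $\mathcal{\tilde{R}}_0>1$), and (ii) upgrade to a strict gap using $\omega_{0,\mathrm{ess}}(\hat{A}_L)=-\infty$ from Theorem \ref{sec:LMT6}, so that only finitely many eigenvalues lie in any right half-plane. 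Your route is arguably cleaner in that it reuses machinery the paper has already established (eventual compactness and the finite-spectrum-in-half-planes decomposition) rather than importing Riemann--Lebesgue, and it makes the equality-case analysis explicit where the paper's strict inequality in (\ref{sec:LMEE2}) is stated without justification; the paper's route has the advantage of being self-contained at the level of the scalar function $q$ without appealing to the semigroup's essential growth bound. One small caveat in your $\mathcal{\tilde{R}}_0<1$ case: the existence of a real root $\lambda^*$ requires $\tilde\beta\not\equiv 0$, which you flag; if $\tilde\beta\equiv 0$ there are no zeros of $\Delta$ at all and the stability conclusion is immediate, so nothing is lost.
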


\begin{proof}
Define $q: \mathbb{C}\rightarrow \mathbb{C}$ by,
\begin{align*}
q(\lambda):=\int_{a_{\min}}^{a_{\max}}e^{-\lambda a}\tilde{\beta}(a)\prod(0,a)da.
\end{align*}
Notice that $q(\lambda)=1$ if and only if $\Delta(\lambda)=0$,  that is, $\lambda$ is an eigenvalue. 

Assume that $q(0)<1$ and there eixsts $\lambda$ such that $q(\lambda)=1$. Then, 
\begin{align}\label{sec:LMEE1}
1=\text{Re }q(\lambda)&=\int_{a_{\min}}^{a_{\max}}e^{-\text{Re }\lambda\: a}\cos(\text{Im }\lambda\: a)\tilde{\beta}(a)\prod(0,a)da\\
\nonumber&\leq \int_{a_{\min}}^{a_{\max}}e^{-\text{Re }\lambda\: a}\tilde{\beta}(a)\prod(0,a)da\\
\nonumber&=q(\text{Re }\lambda).
\end{align}
Since $q$ is continuous and strictly decreasing on $\mathbb{R}$, $\text{Re }\lambda<0$ and there must exist a real number $\lambda_{0}\in[\text{Re }\lambda,0)$ such that $q(\lambda_{0})=1$. Further, (\ref{sec:LMEE1}) shows that for any $\lambda$ such that $q(\lambda)=1$, we must have $\text{Re }\lambda\leq\lambda_{0}$. Thus, $\sup_{\Delta(\lambda)=0}\text{Re }\lambda\leq \lambda_{0}<0$. The global exponential asymptotical stability of the zero equilibrium follows from Theorem \ref{sec:LMCC11}.

Assume that $q(0)>1$. Since $\lim_{\lambda\in\mathbb{R},\lambda\rightarrow \infty}q(\lambda)=0$, there exists a unique positive number $\lambda_{0}$ satisfying $\int_{a_{\min}}^{a_{\max}}e^{-\lambda_{0} a}\tilde{\beta}(a)\prod(0,a)da=1$. We claim that there can be only finitely many $\lambda$ with $\text{Re }\lambda>0$ and $q(\lambda)=1$. Assume to the contrary that there exists an infinite sequence $\left\{z_{k}\right\}$ such that $\text{Re }z_{k}>0$ and $q(z_{k})=1$ for $k\in \mathbb{N}$. Since $\Delta(\lambda)$ is holomorphic for $\text{Re }\lambda>-\infty$, its zeros cannot accumulate in this region (see \cite{Ru}, pp.209). From (\ref{sec:LMEE1}), we see that $0<\text{Re }z_{k}<\lambda_{0}$ for all $k=1,2,\cdots$. Thus, $\lim_{k\rightarrow \infty}|\text{Im }z_{k}|=\infty$. Observe that for each  $k=1,2,\cdots$,
\begin{align}\label{sec:LMEE2}
1&=\int_{a_{\min}}^{a_{\max}}e^{-\text{Re }z_{k}\: a}\cos(\text{Im }z_{k}\: a)\tilde{\beta}(a)\prod(0,a)da\\
\nonumber&< \int_{a_{\min}}^{a_{\max}}\cos(\text{Im }z_{k}\: a)\tilde{\beta}(a)\prod(0,a)da.
\end{align}
But by the Riemann-Lebesgue theorem (see \cite{Ro}, pp.90),
\begin{align*}
\lim_{k\rightarrow \infty}\int_{a_{\min}}^{a_{\max}}\cos(\text{Im }z_{k}\: a)\tilde{\beta}(a)\prod(0,a)da=0.
\end{align*}
which is a contradiction.

Thus, $\sup_{\Delta(\lambda)=0,\lambda\neq \lambda_{0}}\text{Re }\lambda<\lambda_{0}$. To finish the proof it suffices by Theorem \ref{sec:LMCC1} to show that $\lambda_{0}$ is a simple zero of $\Delta(\lambda)$ and $\mathcal{P}_{\lambda_{0}}\phi$ in (\ref{sec:LMEE4}) is given by the right hand side of (\ref{sec:LMEE3}). The claim that $\lambda_{0}$ is a simple zero of $\Delta(\lambda)$ follows from the fact that
\begin{align*}
\Delta'(\lambda_{0})=\int_{a_{\min}}^{a_{\max}}ae^{-\lambda_{0} a}\tilde{\beta}(a)\prod(0,a)da>0.
\end{align*}
Further, the residue of $\frac{1}{\Delta(\lambda)}$ at $\lambda_{0}$ is 
\begin{align*}
\frac{1}{\Delta'(\lambda_{0})}=\frac{1}{\int_{a_{\min}}^{a_{\max}}ae^{-\lambda_{0} a}\tilde{\beta}(a)\prod(0,a)da}.
\end{align*}
(see \cite{Ru}, pp.215). The claim that $\mathcal{P}_{\lambda_{0}}\phi$ in (\ref{sec:LMEE4}) is given by the right hand side of (\ref{sec:LMEE3}) now follows directly from (\ref{sec:LMEE7}) and Theorem \ref{sec:LMCC3}.

\end{proof}







The fact that $\lambda_{0}$ is a dominant eigenvalue follows from Theorem \ref{sec:TLM7}. Using the resolvent formula, we apply the results on irreducible positive semigroups in Banach lattices \cite{W87}  to the system (\ref{sec:LM1}) to obtain the following theorem (we also refer to \cite{T1,Y, A1} for more results on this topic):

\begin{theorem}\label{sec:TLM9}

Let H.6 hold. Let $\hat{T}_{L}(t),t \geq 0$ be the strongly continuous linear semigroup in $X$ as in Theorem \ref{sec:LM4} with infinitesimal generator $\hat{A}_{L}$ as in (\ref{sec:LM51}). Let $\mathcal{\tilde{R}}_{0}>1$. Then $\lambda_{0}>0$ is a simple dominant eigenvalue of $\hat{A}_{L}$, that is, 
\begin{align*} 
\hat{T}_{L}(t)\mathcal{P}_{\lambda_{0}}=\mathcal{P}_{\lambda_{0}}\hat{T}_{L}(t)=e^{\lambda_{0}t}\mathcal{P}_{\lambda_{0}},\ \forall t\geq 0.
\end{align*}
and there exist constants $\epsilon, \eta >0$ such that,
\begin{align*} 
\left\|\hat{T}_{L}(t)(I-\mathcal{P}_{\lambda_{0}})\right\|\leq \eta e^{(\lambda_{0}-\epsilon)t}\left\|I-\mathcal{P}_{\lambda_{0}}\right\|,\ \forall t\geq 0.
\end{align*}
\end{theorem}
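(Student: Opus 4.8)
The plan is to read all the spectral data off the analytic characteristic function $\Delta(\lambda)$ and then feed it into the abstract spectral decomposition available for an eventually compact positive semigroup. First I would collect what Theorem \ref{sec:TLM81} already supplies: since $\mathcal{\tilde{R}}_{0}>1$ there is a unique positive real $\lambda_{0}$ with $\Delta(\lambda_{0})=0$, it is a simple zero of $\Delta$ (because $\Delta'(\lambda_{0})=\int_{a_{\min}}^{a_{\max}}ae^{-\lambda_{0} a}\tilde{\beta}(a)\prod(0,a)da>0$), and $\sup_{\Delta(\lambda)=0,\ \lambda\neq\lambda_{0}}\text{Re}\,\lambda<\lambda_{0}$. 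By the equivalences of Theorem \ref{sec:LMCC3}, a simple zero of $\Delta$ corresponds to $\lambda_{0}\in\sigma(\hat{A}_{L})-E\sigma(\hat{A}_{L})$ which is a pole of $(\lambda I-\hat{A}_{L})^{-1}$ of order $1$. Since the eigenequation $\hat{A}_{L}\phi=\lambda_{0}\phi$ reduces to $\phi'=-(\lambda_{0}+\tilde{\mu})\phi$ and hence forces $\phi(a)=\phi(0)e^{-\lambda_{0} a}\prod(0,a)$, the geometric eigenspace $N(\lambda_{0} I-\hat{A}_{L})$ is one dimensional; combined with the order-one pole this makes $\lambda_{0}$ an algebraically simple eigenvalue with $\mathcal{P}_{\lambda_{0}}(X)=N(\lambda_{0} I-\hat{A}_{L})$.

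Next I would produce the spectral gap. Theorem \ref{sec:LMT6} gives $\omega_{0,ess}(\hat{A}_{L})=-\infty$, so for every real $\gamma$ the set $\{\lambda\in\sigma(\hat{A}_{L}):\text{Re}\,\lambda\geq\gamma\}$ is finite and, by Theorem \ref{sec:LMCC3}, consists only of zeros of $\Delta$. In particular $s_{1}:=\sup_{\lambda\in\sigma(\hat{A}_{L}),\ \lambda\neq\lambda_{0}}\text{Re}\,\lambda$ equals $\sup_{\Delta(\lambda)=0,\ \lambda\neq\lambda_{0}}\text{Re}\,\lambda<\lambda_{0}$, and I may fix $\epsilon>0$ with $s_{1}<\lambda_{0}-\epsilon<\lambda_{0}$. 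Now apply the abstract spectral decomposition proposition (cf. \cite{W}, sec 4.3) with $B=\hat{A}_{L}$, $\Lambda=\{\lambda_{0}\}$ and $\omega=\lambda_{0}-\epsilon$, noting $\omega_{0,\Lambda}=\max\{\omega_{0,ess}(\hat{A}_{L}),s_{1}\}=s_{1}<\omega$. Its conclusions give that $\mathcal{P}_{\lambda_{0}}$ defined by (\ref{sec:LMEE4}) is the projection onto $M:=\mathcal{P}_{\lambda_{0}}(X)$, that $\hat{T}_{L}(t)$ commutes with $\mathcal{P}_{\lambda_{0}}$, and that $\hat{A}_{L}$ is completely reduced by $M$ and $(I-\mathcal{P}_{\lambda_{0}})(X)$. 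Because $M$ is one dimensional with $\sigma(\hat{A}_{L}|_{M})=\{\lambda_{0}\}$, the restriction $\hat{A}_{L}|_{M}$ is scalar multiplication by $\lambda_{0}$, whence $\hat{T}_{L}(t)\mathcal{P}_{\lambda_{0}}=\mathcal{P}_{\lambda_{0}}\hat{T}_{L}(t)=e^{\lambda_{0}t}\mathcal{P}_{\lambda_{0}}$. The remaining estimate is the decay bound for the complementary part: there is $K\geq 1$ with $\|\hat{T}_{L}(t)(I-\mathcal{P}_{\lambda_{0}})x\|\leq Ke^{(\lambda_{0}-\epsilon)t}\|(I-\mathcal{P}_{\lambda_{0}})x\|$, and taking the operator norm yields $\|\hat{T}_{L}(t)(I-\mathcal{P}_{\lambda_{0}})\|\leq\eta e^{(\lambda_{0}-\epsilon)t}\|I-\mathcal{P}_{\lambda_{0}}\|$ with $\eta=K$.

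Finally, to justify calling $\lambda_{0}$ the dominant eigenvalue I would invoke Theorem \ref{sec:TLM7}: $\hat{T}_{L}(t)$ is positive and irreducible, so by the Perron--Frobenius theory for irreducible positive semigroups in Banach lattices (\cite{W87}) the spectral bound $s(\hat{A}_{L})$ is an algebraically simple eigenvalue with a strictly positive eigenvector that strictly exceeds the real part of every other spectral point; since $\lambda_{0}$ is the zero of $\Delta$ of largest real part, it coincides with $s(\hat{A}_{L})$ and is therefore the claimed simple dominant eigenvalue. I expect the one genuine obstacle to be the gap step: one must be certain that $\omega_{0,ess}(\hat{A}_{L})=-\infty$ together with the holomorphy of $\Delta$ forces the second-largest real part $s_{1}$ to be strictly below $\lambda_{0}$, so that a positive $\epsilon$ genuinely exists. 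Everything after that is a bookkeeping application of the decomposition proposition and the Perron property.
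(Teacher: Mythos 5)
Your proposal is correct and follows essentially the route the paper itself indicates: the paper offers no written-out proof, only the remark that the result follows from the positivity and irreducibility of Theorem \ref{sec:TLM7} together with the resolvent formula and the theory of irreducible positive semigroups in Banach lattices, and your argument fills in exactly that outline using the already-established spectral gap from Theorem \ref{sec:TLM81}, the equivalences of Theorem \ref{sec:LMCC3}, the bound $\omega_{0,ess}(\hat{A}_{L})=-\infty$ from Theorem \ref{sec:LMT6}, and the abstract spectral decomposition proposition. The gap you flag (strictness of $s_{1}<\lambda_{0}$) is already settled in the paper's proof of Theorem \ref{sec:TLM81} via the Riemann--Lebesgue argument, so no new obstacle remains.
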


\begin{lemma}\label{sec:LMLA}
Let H.6 hold. Let $\tau^{*}>a_{\min}$. Assume that there exists some $\tilde{\delta}>0$ such that $\int_{a_{\min}}^{a_{\max}}\tilde{\beta}(a)$ $l(a,t)da\geq \tilde{\delta}$ for $\forall t\in[0,\tau^{*}]$. Then,
\begin{align*} 
\int_{a_{\min}}^{a_{\max}}\tilde{\beta}(a)l(a,t)da> 0, \ \text{ for }t\geq 0.
\end{align*}
where $l(a,t)$ is a solution of the system (\ref{sec:LM1}).
\end{lemma}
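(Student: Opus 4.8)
The plan is to recognize that the quantity in the statement is exactly the birth rate $B(t):=l(0,t)=\int_{a_{\min}}^{a_{\max}}\tilde{\beta}(a)l(a,t)\,da$ of the linear system (\ref{sec:LM1}), and that by Theorem \ref{sec:LM4} it satisfies the renewal equation (\ref{sec:LMB}),
\[
B(t)=\int_{a_{\min}}^{t}\tilde{\beta}(a)\prod(0,a)B(t-a)\,da+\int_{t}^{a_{\max}}\tilde{\beta}(a)\prod(a-t,a)p_{0}(a-t)\,da,
\]
where the second (initial-data) term is understood to vanish once $t\ge a_{\max}$, and, since $\tilde{\beta}\ge 0$ is supported in $[a_{\min},a_{\max}]$, the upper limit of the first integral may be replaced by $\min(t,a_{\max})$. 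Thus the assertion to prove reduces to $B(t)>0$ for all $t\ge 0$.

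First I would record two soft facts. By Theorem \ref{sec:TLM7} the semigroup $\hat{T}_{L}(t)$ is positive, so $p_{0}\in X_{+}$ gives $l(\cdot,t)\in X_{+}$ and hence $B(t)=\hat{\mathcal{F}}_{L}(\hat{T}_{L}(t)p_{0})\ge 0$ for every $t$. Moreover, since $t\mapsto \hat{T}_{L}(t)p_{0}$ is continuous from $[0,\infty)$ into $X$ and $\hat{\mathcal{F}}_{L}$ is a bounded linear functional, $B$ is continuous on $[0,\infty)$.

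The core of the argument is a continuation run backward from the first zero. Suppose, for contradiction, that $B$ vanishes somewhere, and set $t_{0}:=\inf\{t\ge 0:B(t)=0\}$. Because $B\ge \tilde{\delta}>0$ on $[0,\tau^{*}]$ and $B$ is continuous, we have $t_{0}\ge \tau^{*}>a_{\min}$, $B(t_{0})=0$, and $B>0$ on $[0,t_{0})$. Evaluating the renewal equation at $t_{0}$, both terms are nonnegative (the forcing term because $\tilde{\beta}\ge 0$, $\prod>0$, $p_{0}\ge 0$), so each must vanish; in particular
\[
\int_{a_{\min}}^{\min(t_{0},a_{\max})}\tilde{\beta}(a)\prod(0,a)B(t_{0}-a)\,da=0.
\]
The integrand is nonnegative and $\prod>0$, while for $a\in[a_{\min},\min(t_{0},a_{\max})]$ we have $t_{0}-a\in[0,t_{0})$, where $B>0$. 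Hence $\tilde{\beta}(a)B(t_{0}-a)=0$ a.e., which forces $B$ to vanish a.e. on the reflected set $\{t_{0}-a:\ a\in[a_{\min},\min(t_{0},a_{\max})],\ \tilde{\beta}(a)>0\}\subset[0,t_{0})$. Since $t_{0}>a_{\min}$ this set is nondegenerate, contradicting the continuity and strict positivity of $B$ on $[0,t_{0})$, and the proof closes.

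The main obstacle is the support of $\tilde{\beta}$: the contradiction requires that the fertile ages $a$ with $\tilde{\beta}(a)>0$ reflect to a genuinely nondegenerate age-window $t_{0}-a$ sitting inside $[0,t_{0})$. This is precisely the role of the hypothesis $\tau^{*}>a_{\min}$, which pushes the first zero $t_{0}$ strictly past $a_{\min}$ so that the self-renewal (convolution) term is active rather than being governed solely by the initial-data term, which carries no such positivity. If $\tilde{\beta}$ were permitted to vanish on subintervals of the reproductive window, the reflected set could collapse and the conclusion could fail; I would therefore invoke the standing assumption that $\tilde{\beta}$ is positive a.e. on $(a_{\min},a_{\max})$, or, more carefully, localize the entire argument to $\operatorname{supp}\tilde{\beta}$ and track how the sliding window $t_{0}-\operatorname{supp}\tilde{\beta}$ overlaps the region of already-established positivity.
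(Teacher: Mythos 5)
Your argument is correct, but it proceeds differently from the paper's. The paper runs a \emph{forward} bootstrap: it picks $\epsilon\in(0,\tau^{*})$ with $Q:=\int_{\epsilon}^{\tau^{*}}\tilde{\beta}(a)\prod(0,a)\,da>0$, uses the renewal equation to get $B(t)\geq Q\inf_{r\in[t-\tau^{*},\,t-\epsilon]}B(r)$ for $t\geq\tau^{*}$, concludes $B\geq\tilde{\delta}Q>0$ on $[\tau^{*},\tau^{*}+\epsilon]$, and then propagates positivity by induction over the intervals $[\tau^{*}+n\epsilon,\tau^{*}+(n+1)\epsilon]$. You instead argue \emph{backward} by contradiction from the first zero $t_{0}=\inf\{t:B(t)=0\}$, using $t_{0}\geq\tau^{*}>a_{\min}$ to force the self-renewal term to be active and strictly positive there. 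Both proofs hinge on exactly the same two ingredients --- the Volterra equation (\ref{sec:LMB}) and the fact that $\tilde{\beta}$ has positive mass on $(a_{\min},\min(t_{0},a_{\max}))$ (respectively on $(\epsilon,\tau^{*})$) --- and both silently need $\tilde{\beta}$ not to vanish a.e.\ just above $a_{\min}$, a point H.6 alone does not guarantee; you are actually more candid about this gap than the paper, which simply asserts $\int_{a_{\min}}^{\tau^{*}}\tilde{\beta}(a)\prod(0,a)\,da>0$. The paper's induction has the minor advantage of producing an explicit (if geometrically decaying) lower bound $\tilde{\delta}Q^{n}$ on successive windows, which is reused in spirit elsewhere; your first-zero argument is shorter and avoids choosing $\epsilon$, but yields only the qualitative statement. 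One small point of care in your write-up: you should note explicitly that the zero set of $B$ is closed (by the continuity you established), so the infimum $t_{0}$ is attained and $B(t_{0})=0$; with that, the argument closes cleanly.
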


\begin{proof}
Since $\tau^{*}>a_{\min}$, $\int_{a_{\min}}^{\tau^{*}}\tilde{\beta}(a)$ $\prod(0,a)da>0$. Therefore, there exists some $\epsilon\in (0,\tau^{*})$ such that $Q:=\int_{\epsilon}^{\tau^{*}}\tilde{\beta}(a)\prod(0,a)da>0$. For $t\geq\tau^{*}$, we have
\begin{align*}
B(t)&=\int_{a_{\min}}^{t}\tilde{\beta}(a)\prod(0,a)B(t-a)da+\int_{t}^{a_{\max}}\tilde{\beta}(a)\prod(a-t,a)p_{0}(a-t)da\\
&\geq \int_{\epsilon}^{\tau^{*}}\tilde{\beta}(a)\prod(0,a)B(t-a)da\\
&\geq \inf_{r\in[t-\tau^{*},t-\epsilon]}B(r)\int_{\epsilon}^{\tau^{*}}\tilde{\beta}(a)\prod(0,a)da\\
&=\inf_{r\in[t-\tau^{*},t-\epsilon]}B(r)Q.
\end{align*}
Consider $t\in[\tau^{*},\tau^{*}+\epsilon]$, we have
\begin{align*}
B(t)\geq \inf_{r\in[0,\tau^{*}]}B(r)Q\geq \tilde{\delta} Q.
\end{align*}
We recall that $\int_{a_{\min}}^{a_{\max}}\tilde{\beta}(a)l(a,t)da=l(0,t)=B(t)$, it follows that $\int_{a_{\min}}^{a_{\max}}\tilde{\beta}(a)$ $l(a,t)da> 0$ is satisfied for all $t\in [\tau^{*},\tau^{*}+\epsilon]$. The result follows on $[\tau^{*}+n \epsilon,\tau^{*}+(n+1)\epsilon]$ by using induction argument, for $n\in \mathbb{Z}$.
\end{proof}

We define $\tilde{a}:=\sup\left\{a>0: \beta(a;\cdot)> 0 \right\}$.
Also we have $\tilde{a}\geq a_{\min}>0$ and $\tilde{a}=\sup\left\{a>0: \tilde{\beta}(a)> 0 \right\}$ from H.1 and H.6. Let 
\begin{align*} 
M_{0}:=\left\{\phi\in X_{+}: \int_{0}^{\tilde{a}}\phi(a)da>0\right\}, \text{ and } \partial M_{0}=X_{+}-M_{0}.
\end{align*}

\begin{proposition}\label{sec:LMP1}
Let H.6 hold. Then for any $\phi_{0}\in \partial M_{0}$, the solution of the system (\ref{sec:LM1}) satisfies,
\begin{align*} 
\int_{a_{\min}}^{a_{\max}}\tilde{\beta}(a)l(a,t)da=0,\  \text{ for }t\geq 0.
\end{align*}
and therefore $l(a,t)$ is also a solution of the following system,
\begin{align}\label{sec:LMS}
&l_{t}(a,t)+l_{a}(a,t)=-\tilde{\mu}(a)l(a,t),\\ 
\nonumber & 0<a<a_{1},\ t>0,\\
\nonumber &l(0,t)=0,\ t>0,\\
\nonumber &l(a,0)=\phi_{0}(a),\ 0<a<a_{1}.
\end{align}
and satisfies, $\left\|l(\cdot,t)\right\|_{X}\leq e^{-\omega t}\left\|\phi_{0}\right\|_{X}$, for $\forall t\geq 0$, where $\omega=\inf_{a\in[0,a_{1}]}\tilde{\mu}(a)$.
\end{proposition}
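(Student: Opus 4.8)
The plan is to track the boundary term $B(t):=l(0,t)=\int_{a_{\min}}^{a_{\max}}\tilde{\beta}(a)l(a,t)\,da$ through the explicit representation of the linear semigroup in Theorem \ref{sec:LM4} and to show that it vanishes identically. First I record two support facts. Since $\phi_{0}\in X_{+}$ and $\phi_{0}\in\partial M_{0}$, the defining condition $\int_{0}^{\tilde{a}}\phi_{0}(a)\,da=0$ together with $\phi_{0}\geq 0$ forces $\phi_{0}(a)=0$ for almost every $a\in[0,\tilde{a}]$. Moreover, $\tilde{a}=\sup\{a>0:\tilde{\beta}(a)>0\}$ means $\tilde{\beta}(a)=0$ for almost every $a>\tilde{a}$.

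By Theorem \ref{sec:LM4}, $B$ satisfies the renewal equation (\ref{sec:LMB}),
\[
B(t)=\int_{a_{\min}}^{t}\tilde{\beta}(a)\prod(0,a)B(t-a)\,da+\int_{t}^{a_{\max}}\tilde{\beta}(a)\prod(a-t,a)\phi_{0}(a-t)\,da.
\]
The central observation is that the second (initial-data) integral vanishes for every $t\geq 0$. After the substitution $s=a-t$ it reads $\int_{0}^{a_{\max}-t}\tilde{\beta}(s+t)\prod(s,s+t)\phi_{0}(s)\,ds$, and its integrand can be nonzero only where simultaneously $\phi_{0}(s)>0$ (hence $s>\tilde{a}$) and $\tilde{\beta}(s+t)>0$ (hence $s+t\leq\tilde{a}$, so $s\leq\tilde{a}$). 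These two conditions are incompatible for $t\geq 0$, so the integrand is zero almost everywhere. Thus $B$ solves the homogeneous Volterra equation $B(t)=\int_{a_{\min}}^{t}\tilde{\beta}(a)\prod(0,a)B(t-a)\,da$.

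Next I conclude $B\equiv 0$. Since $t\mapsto l(\cdot,t)$ lies in $L_{T}$ and $\hat{\mathcal{F}}_{L}$ is a bounded linear functional, $B$ is continuous, hence locally bounded. The estimate $|B(t)|\leq\|\tilde{\beta}\|_{L^{\infty}}\int_{0}^{t}|B(r)|\,dr$ then gives $B\equiv 0$ by Gronwall's inequality; alternatively, because the kernel is supported on $a\geq a_{\min}>0$, a method-of-steps induction on the intervals $[na_{\min},(n+1)a_{\min}]$ yields the same conclusion ($B=0$ on $[0,a_{\min}]$ where the integral is empty, and the inductive step uses $t-a\in[0,na_{\min}]$). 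Either way $\int_{a_{\min}}^{a_{\max}}\tilde{\beta}(a)l(a,t)\,da=B(t)=0$, which is the first assertion, and the boundary condition in (\ref{sec:LM1}) collapses to $l(0,t)=0$, so $l$ is a solution of (\ref{sec:LMS}).

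Finally I read off the decay estimate from the representation formula with $B\equiv 0$: for $t\leq a_{1}$ one has $l(a,t)=\phi_{0}(a-t)\prod(a-t,a)$ on $a\in[t,a_{1}]$ and $l(a,t)=0$ on $a\in[0,t)$, so
\[
\|l(\cdot,t)\|_{X}=\int_{t}^{a_{1}}\phi_{0}(a-t)\prod(a-t,a)\,da=\int_{0}^{a_{1}-t}\phi_{0}(s)\prod(s,s+t)\,ds.
\]
Since $\tilde{\mu}(\hat{a})\geq\omega=\inf_{a\in[0,a_{1}]}\tilde{\mu}(a)$ and $s+t\leq a_{1}$ in the range of integration, $\prod(s,s+t)=e^{-\int_{s}^{s+t}\tilde{\mu}(\hat{a})\,d\hat{a}}\leq e^{-\omega t}$, and with $\phi_{0}\geq 0$ this bounds $\|l(\cdot,t)\|_{X}\leq e^{-\omega t}\int_{0}^{a_{1}}\phi_{0}(s)\,ds=e^{-\omega t}\|\phi_{0}\|_{X}$; the case $t>a_{1}$ is trivial since then $l(\cdot,t)\equiv 0$. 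I expect the only delicate point to be the vanishing of the initial-data integral, where one must combine the two disjoint support conditions on $\phi_{0}$ and $\tilde{\beta}$ carefully; once that is secured, the remainder is the standard uniqueness for a homogeneous renewal equation followed by a direct integral estimate.
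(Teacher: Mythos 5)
Your proposal is correct and follows essentially the same route as the paper's proof: show the initial-data term in the renewal equation (\ref{sec:LMB}) vanishes for $\phi_{0}\in\partial M_{0}$, conclude $B\equiv 0$ from the resulting homogeneous Volterra equation, and then read the decay estimate off the representation formula (\ref{sec:LMB1}). You merely supply details the paper leaves implicit, namely the disjoint-support argument for why the initial-data integral is zero and the Gronwall/method-of-steps justification that the homogeneous renewal equation has only the trivial solution.
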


\begin{proof}
Let $B(t)$ be the solution of the Volterra integral equation (\ref{sec:LMB}). We observe that since $\phi_{0}\in \partial M_{0}$, we deduce that, for $t\leq a_{\max}$,
\begin{align*}
&\int_{t}^{a_{\max}}\tilde{\beta}(a)\prod(a-t,a)\phi_{0}(a-t)da=\int_{0}^{a_{\max}-t}\tilde{\beta}(a+t)\prod(a,a+t))\phi_{0}(a)da=0.
\end{align*}
If $t> a_{\max}$, it is evident that the above term is not in the equation (\ref{sec:LMB}). Therefore, in both cases, for $\phi_{0}\in \partial M_{0}$ the equation (\ref{sec:LMB}) becomes,
\begin{align*}
B(t)&=\int_{a_{\min}}^{t}\tilde{\beta}(a)\prod(0,a)B(t-a)da.
\end{align*}
which has the unique solution $B(t)=0$ for $t\geq 0$. Then we deduce from  the Volterra integral equation (\ref{sec:LMB1}) that $l(a,t)=0$ for $0\leq a\leq t$. In particular, $l(0,t)=0$, therefore, $l$ is a solution of the system (\ref{sec:LMS}). For $t<a$, we obtain,
\begin{align*}
&l(a,t)=\phi_{0}(a-t)\prod(a-t,a)\leq e^{-\omega t}\phi_{0}(a-t).
\end{align*}
where $\omega=\inf_{a\in[0,a_{1}]}\tilde{\mu}(a)$. Then the conclusion directly follows.
\end{proof}

The following result follows from Theorem \ref{sec:TLM9}.

\begin{proposition}\label{sec:LMPP1}
Let H.6 hold. Let $l(a,t)$ be a solution of the system (\ref{sec:LM1}) with $\phi_{0}\in M_{0}$. If $\mathcal{\tilde{R}}_{0}>1$, then there exists a unique positive real number $\lambda_{0}$ and some $\epsilon^{*}=\epsilon^{*}(\phi_{0})>0$ and $t^{*}=t^{*}(\phi_{0})>0$ such that 
\begin{align*} 
\lim_{t\rightarrow \infty}e^{-\lambda_{0} t}\int_{a_{\min}}^{a_{\max}}\tilde{\beta}(a)l(a,t)da\geq \epsilon^{*}.
\end{align*}
\end{proposition}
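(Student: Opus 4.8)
The plan is to recognize the target quantity as the value of the bounded birth functional $\hat{\mathcal{F}}_{L}$ on the solution and then to feed it through the explicit asymptotic projection formula already obtained in Theorem \ref{sec:TLM81}. First I would note that for $\phi_{0}\in M_{0}\subset X_{+}$ the solution is $l(\cdot,t)=\hat{T}_{L}(t)\phi_{0}$ and
\[
\int_{a_{\min}}^{a_{\max}}\tilde{\beta}(a)l(a,t)\,da=\hat{\mathcal{F}}_{L}\big(\hat{T}_{L}(t)\phi_{0}\big),
\]
where $\hat{\mathcal{F}}_{L}$ from (\ref{sec:LM3}) is a bounded linear functional on $X=L^{1}$ since $\tilde{\beta}\in L^{\infty}_{+}$ by H.6. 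Because $\mathcal{\tilde{R}}_{0}>1$, Theorem \ref{sec:TLM81} supplies the unique positive $\lambda_{0}$ together with the $L^{1}$-convergence $e^{-\lambda_{0}t}\hat{T}_{L}(t)\phi_{0}\to \mathcal{P}_{\lambda_{0}}\phi_{0}$, the limit being given explicitly by the right-hand side of (\ref{sec:LMEE3}). Continuity of $\hat{\mathcal{F}}_{L}$ then lets me pass the functional through the limit to obtain
\[
\lim_{t\rightarrow\infty}e^{-\lambda_{0}t}\int_{a_{\min}}^{a_{\max}}\tilde{\beta}(a)l(a,t)\,da=\int_{a_{\min}}^{a_{\max}}\tilde{\beta}(a)\,(\mathcal{P}_{\lambda_{0}}\phi_{0})(a)\,da.
\]

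Next I would evaluate the right-hand side using the closed form of $\mathcal{P}_{\lambda_{0}}\phi_{0}$. Writing $(\mathcal{P}_{\lambda_{0}}\phi_{0})(a)=C(\phi_{0})\,e^{-\lambda_{0}a}\prod(0,a)$ with the scalar
\[
C(\phi_{0})=\frac{\int_{a_{\min}}^{a_{\max}}\tilde{\beta}(b)e^{-\lambda_{0}b}\big[\int_{0}^{b}e^{\lambda_{0}\tau}\prod(\tau,b)\phi_{0}(\tau)\,d\tau\big]\,db}{\int_{a_{\min}}^{a_{\max}}\tilde{\beta}(a)\,a\,e^{-\lambda_{0}a}\prod(0,a)\,da},
\]
the normalization (\ref{sec:LMEE13}), namely $\int_{a_{\min}}^{a_{\max}}\tilde{\beta}(a)e^{-\lambda_{0}a}\prod(0,a)\,da=1$, collapses the limit to exactly $C(\phi_{0})$. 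It therefore suffices to show $C(\phi_{0})>0$ and to set $\epsilon^{*}:=C(\phi_{0})$; the denominator of $C(\phi_{0})$ is manifestly positive, so everything reduces to positivity of the numerator.

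The hard part will be establishing strict positivity of that numerator from the sole hypothesis $\phi_{0}\in M_{0}$, that is, $\int_{0}^{\tilde{a}}\phi_{0}(a)\,da>0$. I would argue that the inner integral $\int_{0}^{b}e^{\lambda_{0}\tau}\prod(\tau,b)\phi_{0}(\tau)\,d\tau$ is nonnegative and, since the weight $e^{\lambda_{0}\tau}\prod(\tau,b)$ is strictly positive, it is strictly positive precisely when $\int_{0}^{b}\phi_{0}>0$. Setting $a^{*}:=\inf\{a:\int_{0}^{a}\phi_{0}>0\}$, the condition $\phi_{0}\in M_{0}$ forces $a^{*}<\tilde{a}$, so the inner integral is positive for every $b>a^{*}$. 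Because $\tilde{a}=\sup\{a:\tilde{\beta}(a)>0\}$, the set $\{b\in(a^{*},\tilde{a}):\tilde{\beta}(b)>0\}$ has positive Lebesgue measure, and on it the integrand $\tilde{\beta}(b)e^{-\lambda_{0}b}[\,\cdots]$ is strictly positive; hence the numerator, and thus $C(\phi_{0})$, is strictly positive. The only delicate point is the measure-theoretic claim that the essential support of $\tilde{\beta}$ accumulates at its supremum $\tilde{a}$, which I would justify directly from the definition of $\tilde{a}$ as an essential supremum.

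Finally, with $\epsilon^{*}=C(\phi_{0})>0$ fixed, the limit identity yields $\lim_{t\to\infty}e^{-\lambda_{0}t}\int_{a_{\min}}^{a_{\max}}\tilde{\beta}(a)l(a,t)\,da=\epsilon^{*}\geq\epsilon^{*}$, and a threshold time $t^{*}=t^{*}(\phi_{0})$ beyond which the pre-limit quantity stays within a prescribed neighborhood of $\epsilon^{*}$ can be extracted from the uniform exponential remainder estimate $\big\|\hat{T}_{L}(t)(I-\mathcal{P}_{\lambda_{0}})\big\|\leq \eta\,e^{(\lambda_{0}-\epsilon)t}\|I-\mathcal{P}_{\lambda_{0}}\|$ of Theorem \ref{sec:TLM9}. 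This completes the argument.
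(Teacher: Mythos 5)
Your proposal is correct and follows essentially the same route as the paper: invoke Theorem \ref{sec:TLM81} for the existence of $\lambda_{0}$ and the $L^{1}$-convergence $e^{-\lambda_{0}t}\hat{T}_{L}(t)\phi_{0}\to\mathcal{P}_{\lambda_{0}}\phi_{0}$, pass the bounded functional $\hat{\mathcal{F}}_{L}$ through the limit, and deduce strict positivity of the limit from $\phi_{0}\in M_{0}$. In fact you supply more detail than the paper at the one nontrivial point --- the paper merely asserts that positivity of the numerator of $\mathcal{P}_{\lambda_{0}}\phi_{0}$ ``directly follows'' from $\phi_{0}\in M_{0}$, whereas your support argument via $a^{*}<\tilde{a}$ justifies it.
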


\begin{proof}
Let $\phi:=\mathcal{P}_{\lambda_{0}}\phi_{0}$. Since $\mathcal{\tilde{R}}_{0}>1$, by Theorem \ref{sec:TLM81}, then there exists a unique positive real number $\lambda_{0}$ such that $\int_{a_{\min}}^{a_{\max}}e^{-\lambda_{0} a}\tilde{\beta}(a)\prod(0,a)=1$. The corresponding projection on the eigenspace associated with the eigenvalue $\lambda_{0}$ is given by (\ref{sec:LMEE3}). Since $\phi_{0}\in M_{0}$, it directly follows that $\int_{a_{\min}}^{a_{\max}}\tilde{\beta}(b)e^{-\lambda_{0} b}[\int_{0}^{b}e^{\lambda_{0} \tau}$ $\prod(\tau,b)\phi_{0}(\tau)d\tau]db>0$. This implies that $\phi(a)>0$ for $a\in [0,a_{1}]$. Thus, there exists some $\epsilon^{*}=\epsilon^{*}(\phi_{0})>0$ such that $\int_{a_{\min}}^{a_{\max}}\tilde{\beta}(a)$ $\phi(a)da\geq \epsilon^{*}$. Then, applying Theorem \ref{sec:TLM9}, we obtain,
\begin{align*} 
l(\cdot,t)&=\hat{T}_{L}(t)\phi_{0}\\
&=\hat{T}_{L}(t)\mathcal{P}_{\lambda_{0}}\phi_{0}+\hat{T}_{L}(t)(I-\mathcal{P}_{\lambda_{0}})\phi_{0}\\
&=e^{\lambda_{0} t}\mathcal{P}_{\lambda_{0}}\phi_{0}+\hat{T}_{L}(t)(I-\mathcal{P}_{\lambda_{0}})\phi_{0}\\
&=e^{\lambda_{0} t}\phi+\hat{T}_{L}(t)(I-\mathcal{P}_{\lambda_{0}})\phi_{0}.
\end{align*}
It directly follows from Theorem \ref{sec:TLM9} that, 
\begin{align*} 
\lim_{t\rightarrow \infty}\left\|e^{-\lambda_{0} t}\hat{T}_{L}(t)(I-\mathcal{P}_{\lambda_{0}})\right\|_{X}=0.
\end{align*}
Therefore,
\begin{align*} 
\lim_{t\rightarrow \infty}e^{-\lambda_{0} t}\int_{a_{\min}}^{a_{\max}}\tilde{\beta}(a)l(a,t)da=\int_{a_{\min}}^{a_{\max}}\tilde{\beta}(a)\phi(a)da\geq \epsilon^{*}.
\end{align*}
Then the conclusions follow.
\end{proof}

The next proposition shows that if $\mathcal{\tilde{R}}_{0}\leq 1$, the claim of Proposition \ref{sec:LMPP1} still holds.

\begin{proposition}\label{sec:LMPP2}
Let H.6 hold. Let $l(a,t)$ be a solution of the system (\ref{sec:LM1}) with $\phi_{0}\in M_{0}$ and let $\mathcal{\tilde{R}}_{0}\leq 1$. Then there exists some  $t^{*}=t^{*}(\phi_{0})>0$ such that 
\begin{align*} 
\int_{a_{\min}}^{a_{\max}}\tilde{\beta}(a)l(a,t)da>0,\ \text{ for }\forall t\geq t^{*}.
\end{align*}
\end{proposition}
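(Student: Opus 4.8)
The plan is to reduce to the supercritical regime already handled in Proposition \ref{sec:LMPP1} by rescaling the fertility, exploiting the fact that the \emph{sign pattern} of the birth rate is insensitive to multiplication of $\tilde\beta$ by a positive scalar.

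First I would record the renewal structure of the birth rate. Writing $B(t)=l(0,t)=\int_{a_{\min}}^{a_{\max}}\tilde\beta(a)l(a,t)\,da$, the representation (\ref{sec:LMB}) shows that $B$ solves the causal Volterra equation $B=k*B+G$, where the kernel $k(a)=\tilde\beta(a)\prod(0,a)\mathbf{1}_{[a_{\min},a_{\max}]}(a)$ and the forcing $G(t)=\int_{t}^{a_{\max}}\tilde\beta(a)\prod(a-t,a)\phi_0(a-t)\,da$ (for $0\le t<a_{\max}$, and $G\equiv 0$ for $t\ge a_{\max}$) are both nonnegative and both depend \emph{linearly} on $\tilde\beta$, since $\prod$ involves only $\tilde\mu$. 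Solving by the Neumann series gives $B=\sum_{n\ge 0}k^{*n}*G$, a series of nonnegative terms converging uniformly on compact time intervals because the convolution is causal. Consequently the positivity set $\{t\ge 0:B(t)>0\}=\bigcup_{n\ge 0}\{t:(k^{*n}*G)(t)>0\}$ is governed solely by the supports of $k$ and $G$, not by their magnitudes.

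Next I would rescale. Since the asserted positivity forces $\tilde\beta\not\equiv 0$, I may assume $\mathcal{\tilde{R}}_{0}>0$. Fix $\kappa>1/\mathcal{\tilde{R}}_{0}$ and replace $\tilde\beta$ by $\tilde\beta_\kappa:=\kappa\tilde\beta$; then $\tilde\beta_\kappa\in L^{\infty}_{+}[a_{\min},a_{\max}]$, so H.6 still holds, the value $\tilde a=\sup\{a>0:\tilde\beta(a)>0\}$ is unchanged, so $M_0$ and the hypothesis $\phi_0\in M_0$ are unchanged, while the net reproduction number becomes $\kappa\mathcal{\tilde{R}}_{0}>1$. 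The birth rate $B_\kappa$ of the rescaled system satisfies $B_\kappa=\kappa k*B_\kappa+\kappa G$, hence $B_\kappa=\sum_{n\ge 0}\kappa^{\,n+1}\,k^{*n}*G$. Because every term $k^{*n}*G$ is nonnegative and $\kappa>0$, for each fixed $t$ one has $B_\kappa(t)>0$ if and only if some $(k^{*n}*G)(t)>0$, that is, if and only if $B(t)>0$. Thus $B$ and $B_\kappa$ have exactly the same zero set.

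Finally I would invoke Proposition \ref{sec:LMPP1} for the supercritical system with fertility $\tilde\beta_\kappa$: since $\phi_0\in M_0$ and $\kappa\mathcal{\tilde{R}}_{0}>1$, there exist a dominant eigenvalue $\lambda_0>0$ and $\epsilon^{*}>0$ with $\lim_{t\to\infty}e^{-\lambda_0 t}\int_{a_{\min}}^{a_{\max}}\tilde\beta_\kappa(a)l_\kappa(a,t)\,da=\lim_{t\to\infty}e^{-\lambda_0 t}B_\kappa(t)\ge\epsilon^{*}$. Hence $B_\kappa(t)\ge\tfrac12\epsilon^{*}e^{\lambda_0 t}>0$ for all $t\ge t^{*}$, for some $t^{*}=t^{*}(\phi_0)$, and by the sign equivalence of the previous step $\int_{a_{\min}}^{a_{\max}}\tilde\beta(a)l(a,t)\,da=B(t)>0$ for all $t\ge t^{*}$, which is the claim. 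The main point requiring care is precisely this rescaling-invariance of the sign of $B$, delivered by the nonnegative Neumann series; it is what allows the spectral conclusion of Proposition \ref{sec:LMPP1} (available only when $\mathcal{\tilde{R}}_{0}>1$) to be imported into the critical/subcritical regime $\mathcal{\tilde{R}}_{0}\le 1$. A more self-contained alternative—first producing a window $[\sigma,\sigma+\tau^{*}]$ with $\tau^{*}>a_{\min}$ on which $B$ is bounded below, then restarting the semigroup at $\sigma$ and quoting Lemma \ref{sec:LMLA}—also works, but the measure-theoretic bookkeeping needed to enlarge an initial interval of positivity (obtained from $\phi_0\in M_0$) to length exceeding $a_{\min}$ is considerably more delicate, which is why I favor the rescaling argument.
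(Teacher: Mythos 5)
Your argument is correct, and it follows the same overall strategy as the paper --- transform the system into one with reproduction number exceeding $1$, apply Proposition \ref{sec:LMPP1} to the transformed system, and transfer the positivity back --- but the transformation and the transfer mechanism are genuinely different. The paper sets $\tilde{l}(a,t)=e^{\alpha(t-a)}l(a,t)$, which converts (\ref{sec:LM1}) into the system (\ref{sec:LMPP3}) with fertility $\tilde\beta(a)e^{\alpha a}$ and reproduction number $\mathcal{\hat{R}}_{00}=\int_{a_{\min}}^{a_{\max}}\tilde\beta(a)e^{\alpha a}\prod(0,a)\,da>1$ for $\alpha$ large; because $\tilde{l}$ is a pointwise positive multiple of $l$, positivity of the tilted birth rate transfers back with no further work. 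You instead multiply $\tilde\beta$ by a constant $\kappa>1/\mathcal{\tilde{R}}_{0}$, which changes the actual solution, so you need the additional observation that the zero set of $B$ is invariant under scaling of the kernel and forcing in the renewal equation (\ref{sec:LMB}); your nonnegative, locally finite Neumann series $B=\sum_{n\ge 0}k^{*n}*G$ delivers exactly that, and the step is sound (the implicit reduction to $\mathcal{\tilde{R}}_{0}>0$ is harmless, since $\prod(0,a)>0$ forces $\mathcal{\tilde{R}}_{0}=0$ only when $\tilde\beta=0$ a.e., in which case the statement is vacuous). What the paper's tilt buys is economy --- no auxiliary sign-invariance lemma is needed; what your version buys is a reusable structural fact about renewal equations (the support of the birth rate depends only on the supports of $k$ and $G$, not their magnitudes), which would also justify your sketched alternative via Lemma \ref{sec:LMLA}. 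Either route is acceptable.
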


\begin{proof}
Let $\alpha> 0$ and let $\tilde{l}(a,t)=e^{\alpha(t-a)}l(a,t)$ for $t\geq 0$, where $l(a,t)$ is a solution of the system (\ref{sec:LM1}) with $\phi_{0}\in M_{0}$. Then, 
\begin{align*} 
\tilde{l}_{t}(a,t)+\tilde{l}_{a}(a,t)&=e^{\alpha(t-a)}(l_{t}(a,t)+l_{a}(a,t))\\
&=-\tilde{\mu}(a)\tilde{l}(a,t).
\end{align*}
and,
\begin{align*}
\tilde{l}(0,t)&=e^{\alpha t}l(0,t)\\
\nonumber&=e^{\alpha t}\int_{a_{\min}}^{a_{\max}}\tilde{\beta}(a)l(a,t)da\\
\nonumber&=\int_{a_{\min}}^{a_{\max}}\tilde{\beta}(a)e^{\alpha a}\tilde{l}(a,t)da.
\end{align*}
Therefore, $\tilde{l}(a,t)$ satisfies the following system,
\begin{align}\label{sec:LMPP3}
&\tilde{l}_{t}(a,t)+\tilde{l}_{a}(a,t)=-\tilde{\mu}(a)\tilde{l}(a,t),\\ 
\nonumber & 0<a<a_{1},\ t>0,\\
\nonumber &\tilde{l}(0,t)=\int_{a_{\min}}^{a_{\max}}\tilde{\beta}(a)e^{\alpha a}\tilde{l}(a,t)da,\ t>0,\\
\nonumber &\tilde{l}(a,0)=e^{-\alpha a}\phi_{0}(a),\ 0<a<a_{1}.
\end{align}
We choose $\alpha> 0$ sufficiently large such that,
\begin{align*} 
\mathcal{\hat{R}}_{00}:=\int_{a_{\min}}^{a_{\max}}\tilde{\beta}(a)e^{\alpha a}\prod(0,a)da>1.
\end{align*}
Then, the claim follows by applying Proposition \ref{sec:LMPP1} to the system (\ref{sec:LMPP3}). 

\end{proof}



 


We make further assumption on $\beta$ and $\mu_{i}$, $i=0,1$:
\begin{itemize}
\item[H.7.] There exist $\bar{\beta},\underline{\beta}\in L^{\infty}_{+}[a_{\min},a_{\max}]$ such that $\bar{\beta}(a)\geq\beta(a; z)\geq \underline{\beta}(a)\geq 0$ for $(a,z)\in([a_{\min},a_{\max}]\times [0,\infty))$ and $\beta(a;z)$
is non-increasing for $z\geq 0$.
\item[] 
\item[] There exist $\bar{\mu}_{i},\underline{\mu_{i}}\in L^{\infty}_{+}[0,a_{1}]$ such that $\bar{\mu}_{i}(a)\geq\mu_{i}(a; z)\geq \underline{\mu_{i}}(a)\geq 0$ for $(a,z)\in([0,a_{1}]\times [0,\infty))$, for $i=0,1$ and $\mu_{i}(a,z)$
is non-decreasing for $z\geq 0$. $\bar{\mu}_{1}(a)=\underline{\mu}_{1}(a)=0$, for $a>a_{\min}$.
\end{itemize}

\begin{proposition}\label{sec:LMPP9}
Let H.1 and H.6-H.7 hold. 
Then, for the same initial distribution $p_{0}\in X_{+}$, we have $\underline{l}(a,t)\leq p(a,t)\leq \check{l}(a,t)$ for $(a,t)\in[0,a_{1}]\times[0,\infty)$, where, $p(a,t)$ is a solution of the nonlinear system (\ref{sec:1}), $\underline{l}(a,t)$ and $\check{l}(a,t)$ are solutions of linear systems (\ref{sec:LM1G}) and (\ref{sec:LM1U}) for $(a,t)\in[0,a_{1}]\times[0,\infty)$. Furthermore, $\left\|\underline{l}(\cdot,t)\right\|_{X}\leq \left\|p(\cdot,t)\right\|_{X}\leq \left\|\check{l}(\cdot,t)\right\|_{X}$, where,
\begin{align}\label{sec:LM1G}
&\underline{l}_{t}(a,t)+\underline{l}_{a}(a,t)=-[\bar{\mu}_{0}(a)+\bar{\mu}_{1}(a)+ \mu_{2}(a)]\underline{l}(a,t),\\ 
\nonumber & 0<a<a_{1},\ t>0,\\
\nonumber &\underline{l}(0,t)=\int_{a_{\min}}^{a_{\max}}\underline{\beta}(a)\underline{l}(a,t)da,\ t>0,\\
\nonumber &\underline{l}(a,0)=p_{0}(a),\ 0<a<a_{1}.
\end{align}

\begin{align}\label{sec:LM1U}
&\check{l}_{t}(a,t)+\check{l}_{a}(a,t)=-[\underline{\mu_{0}}(a)+\underline{\mu_{1}}(a)+ \mu_{2}(a)]\check{l}(a,t),\\ 
\nonumber & 0<a<a_{1},\ t>0,\\
\nonumber &\check{l}(0,t)=\int_{a_{\min}}^{a_{\max}}\bar{\beta}(a)\check{l}(a,t)da,\ t>0,\\
\nonumber &\check{l}(a,0)=p_{0}(a),\ 0<a<a_{1}.
\end{align}

\end{proposition}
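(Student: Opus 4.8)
The plan is to reduce the pointwise comparison to a comparison of the three birth-rate functions $B_p(t):=p(0,t)$, $B_{\underline{l}}(t):=\underline{l}(0,t)$ and $B_{\check{l}}(t):=\check{l}(0,t)$, each of which satisfies a renewal (Volterra) integral equation — the nonlinear one obtained from the characteristic representation (\ref{sec:char51}) and the two linear ones from Theorem \ref{sec:LM4} via (\ref{sec:LMB})--(\ref{sec:LMB1}) — and then to propagate the resulting inequalities back to the densities along characteristics. First I would record the coefficient bounds along the actual trajectory. By Theorem \ref{sec:T3.1}, $p(\cdot,t)\in X_{+}$, so $Q_{i}(t)=\int_{0}^{a_{1}}\omega_{i}(a)p(a,t)da\geq 0$, and by H.1 $\eta_{i}(Q_{i}(t))\geq 0$. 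Hence H.7 applies at the arguments $z=\eta_{i}(Q_{i}(t))\geq 0$ and yields, for all $t\geq 0$ and a.e. $a$,
\begin{align*}
\underline{\mu}(a)\leq M(a,t)\leq\bar{\mu}(a),\qquad \underline{\beta}(a)\leq\beta(a;\eta_{2}(Q_{0}(t)))\leq\bar{\beta}(a),
\end{align*}
where $M(a,t):=\mu_{0}(a,\eta_{0}(Q_{0}(t)))+\mu_{1}(a,\eta_{1}(Q_{1}(t)))+\mu_{2}(a)$, $\underline{\mu}:=\underline{\mu_{0}}+\underline{\mu_{1}}+\mu_{2}$ and $\bar{\mu}:=\bar{\mu}_{0}+\bar{\mu}_{1}+\mu_{2}$. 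Writing the survival factor of $p$ along a characteristic as $\Pi_{p}(b,a;t):=\exp[-\int_{b}^{a}M(\sigma,t-a+\sigma)\,d\sigma]$ and the autonomous survival factors of $\check{l},\underline{l}$ as $\check{\Pi}(b,a):=e^{-\int_{b}^{a}\underline{\mu}}$ and $\underline{\Pi}(b,a):=e^{-\int_{b}^{a}\bar{\mu}}$, the mortality bounds give $\underline{\Pi}(b,a)\leq\Pi_{p}(b,a;t)\leq\check{\Pi}(b,a)$.

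Next I would compare kernels and forcing terms. From (\ref{sec:char51}), $B_{p}(t)=\int_{0}^{t}K_{p}(a,t)B_{p}(t-a)\,da+F_{p}(t)$, where $K_{p}(a,t)=\beta(a;\eta_{2}(Q_{0}(t)))\,\Pi_{p}(0,a;t)$ (supported on $[a_{\min},a_{\max}]$ in $a$, since $\beta$ vanishes off that interval) and $F_{p}(t)=\int_{t}^{a_{\max}}\beta(a;\eta_{2}(Q_{0}(t)))\Pi_{p}(a-t,a;t)p_{0}(a-t)\,da$ is the initial-data term; by Theorem \ref{sec:LM4}, $B_{\check{l}}$ satisfies the same equation with kernel $\bar{K}(a):=\bar{\beta}(a)\check{\Pi}(0,a)$ and forcing $F_{\check{l}}$, and $B_{\underline{l}}$ with $\underline{K}(a):=\underline{\beta}(a)\underline{\Pi}(0,a)$ and $F_{\underline{l}}$. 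Using $p_{0}\geq 0$ and the coefficient bounds, termwise $K_{p}(a,t)\leq\bar{K}(a)$ and $F_{p}(t)\leq F_{\check{l}}(t)$; combined with $B_{p}\geq 0$ (Theorem \ref{sec:T3.1}) this gives $B_{p}(t)\leq\int_{0}^{t}\bar{K}(a)B_{p}(t-a)\,da+F_{\check{l}}(t)$, so $B_{p}$ is a nonnegative subsolution of the renewal equation solved by $B_{\check{l}}$. Symmetrically $B_{p}(t)\geq\int_{0}^{t}\underline{K}(a)B_{p}(t-a)\,da+F_{\underline{l}}(t)$, so $B_{p}$ is a supersolution for $B_{\underline{l}}$.

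The central step is the Volterra comparison: a nonnegative subsolution of a renewal equation with nonnegative kernel $\bar{K}$ is dominated by its solution. I would prove this using the resolvent kernel $R$ of $\bar{K}$, which is nonnegative since $R=\sum_{n\geq 1}\bar{K}^{*n}$ with $\bar{K}\geq 0$ (the Neumann series converging locally because $\bar{K}\in L^{\infty}$ is supported in $[a_{\min},a_{\max}]$). Setting $g:=F_{\check{l}}+\bar{K}*B_{p}-B_{p}\geq 0$, the relation $B_{p}-\bar{K}*B_{p}=F_{\check{l}}-g$ inverts to $B_{p}=B_{\check{l}}-(g+R*g)\leq B_{\check{l}}$, and the analogous identity gives $B_{\underline{l}}\leq B_{p}$. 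Finally I would propagate these along characteristics: inserting $\Pi_{p}\leq\check{\Pi}$, $B_{p}\leq B_{\check{l}}$ and $p_{0}\geq 0$ into the representations (\ref{sec:char51}) and (\ref{sec:LMB1}) yields $p(a,t)\leq\check{l}(a,t)$ for a.e. $(a,t)$, and likewise $\underline{l}(a,t)\leq p(a,t)$. Since then $0\leq\underline{l}\leq p\leq\check{l}$, integrating in $a$ gives $\|\underline{l}(\cdot,t)\|_{X}\leq\|p(\cdot,t)\|_{X}\leq\|\check{l}(\cdot,t)\|_{X}$.

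The main obstacle is that $K_{p}$ is genuinely a time-dependent kernel: its $t$-dependence enters through $Q_{i}(s)$ for $s\in[t-a,t]$ inside $\Pi_{p}$, so one cannot apply a convolution/resolvent argument to the equation for $B_{p}$ itself. The resolution, and the one point that needs care, is that I only ever use the one-sided bounds $\underline{K}\leq K_{p}(a,t)\leq\bar{K}$ together with $B_{p}\geq 0$, which reduces the problem to comparison against the two autonomous renewal equations; the remaining technical verifications (local integrability of the kernels, continuity of the birth rates so the resolvent manipulation is valid in $L^{1}_{\mathrm{loc}}$) are routine.
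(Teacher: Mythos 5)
Your proposal is correct and follows essentially the same route as the paper: both reduce the problem to the Volterra (renewal) integral formulations of the nonlinear and linear systems obtained from the method of characteristics, bound the kernels and forcing terms via H.7 and the positivity of $Q_{i}(t)$, and then compare. The one substantive step the paper compresses into ``the claim follows from the assumptions''---the comparison of a nonnegative sub/supersolution of an autonomous renewal equation with its solution via the nonnegative resolvent kernel $R=\sum_{n\geq 1}\bar{K}^{*n}$---is exactly what you supply, and your treatment of the time-dependent kernel $K_{p}(a,t)$ by one-sided bounds against the autonomous kernels is the right way to make that step rigorous.
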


\begin{proof}
By Theorem \ref{sec:LM4}, we obtain solutions of systems (\ref{sec:LM1G}), (\ref{sec:LM1U}) form strongly continuous linear semigroups, denoted by $T_{L}(t),t\geq 0$ and $T_{U}(t),t\geq 0$. The Volterra integral formula for linear systems (\ref{sec:LM1G}) and (\ref{sec:LM1U}) can be derived directly from (\ref{sec:LMB1})-(\ref{sec:LMB}), where $\underline{l}(a,t):=(T_{L}(t)p_{0})(a)$ and $\check{l}(a,t):=(T_{U}(t)p_{0})(a)$, for $(a,t)\in[0,a_{1}]\times[0,\infty)$. We apply the method of characteristics to nonlinear system (\ref{sec:1}) to obtain the following integral equations, (for more details we refer to \cite{W}, sec 1.3, pp.11.)
\begin{align}\label{sec:VT1}
p(a,t)=\begin{cases}
p(0,t-a)e^{-\int_{t-a}^{t}(\mu_{0}(a+s-t,\eta_{0}(Q_{0}(s)))+\mu_{1}(a+s-t,\eta_{1}(Q_{1}(s)))+\mu_{2}(a+s-t))ds}; & \\
\text{ a.e. } \ \ a \in (0,t)\cap[0,a_{1}];\\
p_{0}(a-t)e^{-\int_{0}^{t}(\mu_{0}(a+s-t,\eta_{0}(Q_{0}(s)))+\mu_{1}(a+s-t,\eta_{1}(Q_{1}(s)))+\mu_{2}(a+s-t))ds}; &\\
\text{ a.e. } \ \ a\in[t,a_{1}].
\end{cases}
\end{align}
Define $\tilde{B}(t)=p(0,t)$ and substitute the formula for $p(a,t)$ into $Q_{i}(t)$, $i=0,1$ and the boundary condition to obtain,
\begin{align*}
Q_{0}(t)&=\int_{0}^{t}\omega_{0}(a)\tilde{B}(t-a)e^{-\int_{t-a}^{t}(\mu_{0}(a+s-t,\eta_{0}(Q_{0}(s)))+\mu_{1}(a+s-t,\eta_{1}(Q_{1}(s)))+\mu_{2}(a+s-t))ds}da\\
&+\int_{t}^{a_{1}}\omega_{0}(a)p_{0}(a-t)e^{-\int_{0}^{t}(\mu_{0}(a+s-t,\eta_{0}(Q_{0}(s)))+\mu_{1}(a+s-t,\eta_{1}(Q_{1}(s)))+\mu_{2}(a+s-t))ds}da.\\
Q_{1}(t)&=\int_{a_{\max}}^{t}\omega_{1}(a)\tilde{B}(t-a)e^{-\int_{t-a}^{t}(\mu_{0}(a+s-t,\eta_{0}(Q_{0}(s)))+\mu_{1}(a+s-t,\eta_{1}(Q_{1}(s)))+\mu_{2}(a+s-t))ds}da\\
&+\int_{t}^{a_{1}}\omega_{1}(a)p_{0}(a-t)e^{-\int_{0}^{t}(\mu_{0}(a+s-t,\eta_{0}(Q_{0}(s)))+\mu_{1}(a+s-t,\eta_{1}(Q_{1}(s)))+\mu_{2}(a+s-t))ds}da.
\end{align*}
\begin{align*}
\tilde{B}(t)&=\int_{a_{\min}}^{t}\beta(a;\eta_{2}(Q_{0}(t)))\tilde{B}(t-a)\\
&\times e^{-\int_{t-a}^{t}(\mu_{0}(a+s-t,\eta_{0}(Q_{0}(s)))+\mu_{1}(a+s-t,\eta_{1}(Q_{1}(s)))+\mu_{2}(a+s-t))ds}da\\
&+\int_{t}^{a_{\max}}\beta(a;\eta_{2}(Q_{0}(t)))p_{0}(a-t)\\
&\times e^{-\int_{0}^{t}(\mu_{0}(a+s-t,\eta_{0}(Q_{0}(s)))+\mu_{1}(a+s-t,\eta_{1}(Q_{1}(s)))+\mu_{2}(a+s-t))ds}da.
\end{align*}
The claim follows from the assumptions.
\end{proof}

The following invariance property of the strongly continuous linear semigroup $T_{L}(t),\ t\geq 0$ corresponding to the system (\ref{sec:LM1G})
is a consequence of Proposition \ref{sec:LMPP1} and Proposition \ref{sec:LMPP2}. 

\begin{proposition}\label{sec:LMPP4}
Let H.6-H.7 hold. Then,
\begin{align*} 
T_{L}(t) (M_{0})\subseteq M_{0}, \ \text{ for } t\geq 0.
\end{align*}
\end{proposition}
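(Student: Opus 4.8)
The plan is to prove the inclusion by a contradiction argument that couples the forward invariance of the ``boundary'' set $\partial M_0$ with the eventual positivity of the birth rate for trajectories issuing from $M_0$. First I would record that the system (\ref{sec:LM1G}) is a special case of (\ref{sec:LM1}), obtained by taking $\tilde{\mu}:=\bar{\mu}_0+\bar{\mu}_1+\mu_2$ and $\tilde{\beta}:=\underline{\beta}$; since H.7 guarantees $\bar{\mu}_i,\underline{\beta}\in L^\infty_+$, hypothesis H.6 is met, so Theorem \ref{sec:LM4}, Proposition \ref{sec:LMP1}, Proposition \ref{sec:LMPP1} and Proposition \ref{sec:LMPP2} all apply verbatim to $T_L(t)$. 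In particular $T_L(t)$ is positive and admits the Volterra representation (\ref{sec:LMB1})--(\ref{sec:LMB}), with birth term $B_L(t):=\underline{l}(0,t)=\int_{a_{\min}}^{a_{\max}}\underline{\beta}(a)\underline{l}(a,t)\,da$.

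\emph{Step 1 (forward invariance of $\partial M_0$).} Let $\psi_0\in\partial M_0$, i.e.\ $\psi_0=0$ a.e.\ on $[0,\tilde{a}]$. By Proposition \ref{sec:LMP1}, $B_L(t)\equiv 0$ and the corresponding solution solves the pure-transport system (\ref{sec:LMS}), so $\underline{l}(a,t)=\psi_0(a-t)\prod(a-t,a)$ for $a\ge t$ and $\underline{l}(a,t)=0$ for $a<t$. A one-line computation then gives
\begin{align*}
\int_0^{\tilde{a}}\underline{l}(a,t)\,da=\int_0^{(\tilde{a}-t)_+}\psi_0(s)\prod(s,s+t)\,ds=0,
\end{align*}
because $\psi_0$ vanishes on $[0,\tilde{a}]\supseteq[0,(\tilde{a}-t)_+]$. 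Hence $T_L(t)\psi_0\in\partial M_0$ for every $t\ge 0$, and by the semigroup identity $T_L(t+s)=T_L(t)T_L(s)$ the set $\partial M_0$ is forward invariant.

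\emph{Step 2 (the contradiction).} Suppose the assertion fails, so there exist $\phi_0\in M_0$ and $t_1>0$ with $\psi_0:=T_L(t_1)\phi_0\in\partial M_0$. By Step 1, $T_L(t)\phi_0=T_L(t-t_1)\psi_0\in\partial M_0$ for all $t\ge t_1$, whence $B_L(t)=0$ for every $t\ge t_1$. On the other hand, $\phi_0\in M_0$, so Proposition \ref{sec:LMPP2} (when $\mathcal{\tilde{R}}_{0}\le 1$), or Proposition \ref{sec:LMPP1} together with $e^{-\lambda_0 t}B_L(t)\to\int_{a_{\min}}^{a_{\max}}\underline{\beta}(a)\phi(a)\,da\ge\epsilon^*>0$ (when $\mathcal{\tilde{R}}_{0}>1$), furnishes a time $T_0$ with $B_L(t)>0$ for all $t\ge T_0$. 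Choosing $t\ge\max\{t_1,T_0\}$ yields $0=B_L(t)>0$, a contradiction; therefore $T_L(t)(M_0)\subseteq M_0$.

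The hard part, and the step I expect to require genuine care, is a support-matching issue hidden in Step 2. Propositions \ref{sec:LMPP1}--\ref{sec:LMPP2} are positivity statements for the semigroup $T_L$ whose fertility is $\underline{\beta}$, so the persistence set they naturally see is $M_0^{\underline{\beta}}:=\{\phi\in X_+:\int_0^{\tilde{a}_{\underline{\beta}}}\phi(a)\,da>0\}$ with $\tilde{a}_{\underline{\beta}}:=\sup\{a:\underline{\beta}(a)>0\}$, whereas $M_0$ is defined through $\tilde{a}=\sup\{a:\beta(a;\cdot)>0\}$. Since $\underline{\beta}\le\beta$ one only has $\tilde{a}_{\underline{\beta}}\le\tilde{a}$ in general; the invariance in Step 1 uses only $\partial M_0\subseteq\partial M_0^{\underline{\beta}}$ and is thus unaffected, but the eventual positivity in Step 2 needs $M_0\subseteq M_0^{\underline{\beta}}$, i.e.\ $\tilde{a}_{\underline{\beta}}=\tilde{a}$. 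The crux is therefore to deduce, from H.7 and the nonnegativity and monotonicity of $\beta(a;\cdot)$ in H.1, that the age-support of the fertility is independent of the crowding variable --- equivalently that the canonical envelope $\underline{\beta}(a)=\inf_{z\ge 0}\beta(a;z)$ is positive exactly on $\{a:\beta(a;0)>0\}$ up to $\tilde{a}$ --- so that $M_0$ is genuinely the persistence set for (\ref{sec:LM1G}). Granting this identification (or imposing it as a standing hypothesis), the contradiction closes and the invariance follows.
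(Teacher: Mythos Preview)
Your argument is essentially the paper's own proof: assume some $\phi_0\in M_0$ lands in $\partial M_0$ at a time $t_1$, invoke Proposition~\ref{sec:LMP1} (forward invariance of $\partial M_0$, hence $B_L(t)=0$ for $t\ge t_1$), and contradict the eventual positivity furnished by Propositions~\ref{sec:LMPP1}--\ref{sec:LMPP2}. The paper compresses your Step~1 by quoting Proposition~\ref{sec:LMP1} directly and cites only Proposition~\ref{sec:LMPP2} (whose proof in fact does not use the restriction $\mathcal{\tilde R}_0\le 1$, so your case split is harmless but unnecessary).

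Your closing paragraph on the support-matching issue is well spotted and is exactly the point the paper handles by fiat rather than by derivation: immediately before defining $M_0$ the paper asserts $\tilde a=\sup\{a>0:\tilde\beta(a)>0\}$ ``from H.1 and H.6,'' which in the context of (\ref{sec:LM1G}) amounts to the standing identification $\tilde a_{\underline\beta}=\tilde a$. You are right that H.1, H.6 and H.7 alone do not force this equality; the paper is tacitly assuming that the lower envelope $\underline\beta$ shares the age-support of $\beta(\cdot;0)$. Under that standing identification your proof is complete and matches the paper's; without it, both your argument and the paper's have the same gap, so your caveat is appropriate rather than a defect of your approach.
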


\begin{proof}
Let $\phi_{0}\in M_{0}$. Assume that there exists $t_{1}>0$ such that $T_{L}(t_{1})\phi_{0}$ $\in \partial M_{0}$, then we have $\int_{a_{\min}}^{a_{\max}}\underline{\beta}(a)$ $l(a,t)da=0$ for $t= t_{1}$. Further, since $T_{L}(t_{1})\phi_{0}$ $\in \partial M_{0}$, then by Proposition \ref{sec:LMP1}, we have $\int_{a_{\min}}^{a_{\max}}\underline{\beta}(a)$ $l(a,t)da=0$ for $t\geq t_{1}$, where $l(a,t)=(T_{L}(t)\phi_{0})(a)$, for $(a,t)\in[0,a_{1}]\times[0,\infty)$, and this contradicts Proposition \ref{sec:LMPP2}. Therefore, there does not exist such a $t_{1}$. 
\end{proof}

\begin{proposition}\label{sec:LMPP5}
Let H.6-H.7 hold. Let $U(t), \ t\geq 0$ be the strongly continuous nonlinear semigroup in $X_{+}$ as in Theorem \ref{sec:T3.3}. 
Then, $M_{0}$ is positively invariant under $U(t), t\geq 0$. Furthermore, for every $\phi_{0}\in M_{0}$, there exists $t^{*}>0$ such that
\begin{align*} 
\int_{a_{\min}}^{a_{\max}}\beta(a; \eta_{2} (Q_{0}(t)))p(a,t)da>0,\ \text{ for }\forall t\geq t^{*}.
\end{align*}
\end{proposition}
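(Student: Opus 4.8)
The plan is to obtain both assertions from the comparison (squeeze) Proposition \ref{sec:LMPP9}, which bounds the nonlinear density $p(a,t)$ below by the solution $\underline{l}(a,t)$ of the linear comparison system (\ref{sec:LM1G}), and then to import the persistence properties already proved for that lower linear system. Only the lower bound $0\le\underline{l}(a,t)\le p(a,t)$ and the pointwise inequality $\beta(a;\eta_{2}(Q_{0}(t)))\ge\underline{\beta}(a)$ coming from H.7 will be needed.

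For the invariance of $M_{0}$, I would fix $\phi_{0}\in M_{0}$. Theorem \ref{sec:T3.1} guarantees $p(\cdot,t)=U(t)\phi_{0}\in X_{+}$, so it remains only to show $\int_{0}^{\tilde{a}}p(a,t)\,da>0$. Writing $\underline{l}(\cdot,t)=T_{L}(t)\phi_{0}$ for the semigroup $T_{L}(t)$ of (\ref{sec:LM1G}), Proposition \ref{sec:LMPP4} gives $T_{L}(t)M_{0}\subseteq M_{0}$, hence $\int_{0}^{\tilde{a}}\underline{l}(a,t)\,da>0$ for all $t\ge0$; combined with $\underline{l}\le p$ this forces $\int_{0}^{\tilde{a}}p(a,t)\,da>0$, i.e. $U(t)\phi_{0}\in M_{0}$, which is the claimed positive invariance.

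For the eventual positivity of the nonlinear birth rate, I would apply Proposition \ref{sec:LMPP1} (the case $\mathcal{\tilde{R}}_{0}>1$, where $\lambda_{0}>0$ makes the exponentially weighted limit strictly positive) together with Proposition \ref{sec:LMPP2} (the case $\mathcal{\tilde{R}}_{0}\le1$) to the lower system (\ref{sec:LM1G}), that is, with $\tilde{\beta}=\underline{\beta}$ and $\tilde{\mu}=\bar{\mu}_{0}+\bar{\mu}_{1}+\mu_{2}$. This produces $t^{*}=t^{*}(\phi_{0})>0$ with $\int_{a_{\min}}^{a_{\max}}\underline{\beta}(a)\underline{l}(a,t)\,da>0$ for all $t\ge t^{*}$. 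Since $\beta(a;\eta_{2}(Q_{0}(t)))\ge\underline{\beta}(a)\ge0$ and $p(a,t)\ge\underline{l}(a,t)\ge0$, monotonicity of the integral then yields
\[
\int_{a_{\min}}^{a_{\max}}\beta(a;\eta_{2}(Q_{0}(t)))p(a,t)\,da\ \ge\ \int_{a_{\min}}^{a_{\max}}\underline{\beta}(a)\underline{l}(a,t)\,da\ >\ 0,\qquad t\ge t^{*}.
\]

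The argument is largely a repackaging of earlier results, so the delicate point is the compatibility of the set $M_{0}$ across systems: Propositions \ref{sec:LMPP1}, \ref{sec:LMPP2} and \ref{sec:LMPP4} use $M_{0}$ tied to the fertility support supremum of the system to which they are applied, and in order to feed $\phi_{0}\in M_{0}$ into the lower system I must ensure that $\sup\{a:\underline{\beta}(a)>0\}$ equals $\tilde{a}=\sup\{a:\beta(a;\cdot)>0\}$ used to define $M_{0}$. I expect this to be the main thing requiring verification (it is implicit in the support structure imposed by H.7, and in particular forces $\underline{\beta}\not\equiv0$); once it is secured, the two squeeze estimates above close the proof.
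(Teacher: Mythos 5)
Your proposal is correct and follows essentially the same route as the paper: both rest on the comparison Proposition \ref{sec:LMPP9} against the lower linear system (\ref{sec:LM1G}), invoke Proposition \ref{sec:LMPP4} for the invariance of $M_{0}$ (the paper phrases this as a contradiction, you argue it directly, which is immaterial), and then apply Proposition \ref{sec:LMPP2} together with $\beta\geq\underline{\beta}$ and $p\geq\underline{l}$ to get the eventual positivity of the birth integral. The support-compatibility issue you flag (that $\sup\{a:\underline{\beta}(a)>0\}$ must coincide with $\tilde{a}$, so that $M_{0}$ means the same thing for the comparison system) is real but is exactly what the paper silently assumes when it asserts $\tilde{a}=\sup\{a>0:\tilde{\beta}(a)>0\}$ from H.1 and H.6, so your treatment is, if anything, more careful than the original.
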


\begin{proof}
Let $T_{L}(t),\:t\geq 0$ be the strongly continuous linear semigroup corresponding to the system (\ref{sec:LM1G}) with initial value $\phi_{0}$. By Proposition \ref{sec:LMPP9}, $(U(t)\phi_{0})(a)\geq (T_{L}(t)\phi_{0})(a)$ for $(a,t)\in[0,a_{1}]\times[0,\infty)$. Let $\phi_{0}\in M_{0}$. Assume that there exists $t_{1}>0$ such that $U(t_{1})\phi_{0}$ $\in \partial M_{0}$, then we have $\int_{a_{\min}}^{a_{\max}}\beta(a; \eta_{2} (Q_{0}(t)))p(a,t)da=0$ for $t= t_{1}$. This by comparison argument, implies that $T_{L}(t_{1})\phi_{0}$ $\in \partial M_{0}$, which contradicts Proposition \ref{sec:LMPP4}. Therefore, there does not exist such a $t_{1}$. Finally, applying Proposition \ref{sec:LMPP2}, we obtain $\int_{a_{\min}}^{a_{\max}}\beta(a; \eta_{2} (Q_{0}(t)))p(a,t)da\geq \int_{a_{\min}}^{a_{\max}}\underline{\beta}(a)i(a,t)da>0,\ \text{ for }\forall t\geq t^{*}$.

\end{proof}

\begin{proposition}\label{sec:LMPP8}
Let H.6-H.7 hold. Let $U(t),t\geq 0$ be the strongly continuous nonlinear semigroup in $X_{+}$ as in Theorem \ref{sec:T3.3}. Then, $\partial M_{0}$ is positively invariant under $U(t),t\geq 0$. Furthermore, the trivial equilibrium is globally exponentially stable for $U(t), t\geq 0$ restricted to $\partial M_{0}$.
\end{proposition}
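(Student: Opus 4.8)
The plan is to exploit the defining property of $\partial M_0 = X_+ - M_0$, namely that it consists precisely of those $\phi\in X_+$ with $\int_0^{\tilde a}\phi(a)\,da=0$, i.e. of functions vanishing almost everywhere on $[0,\tilde a]$, together with the fact that by the definition of $\tilde a$ the fertility kernel $\beta(\cdot;\cdot)$ is supported in $[a_{\min},\tilde a]$. Thus for $\phi_0\in\partial M_0$ no reproductively active individuals carrying positive fertility are present initially, and the entire argument reduces to showing that the nonlinear birth term remains identically zero along the trajectory, after which only the pure mortality dynamics remain.

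First I would prove positive invariance. Let $\phi_0\in\partial M_0$, write $p=U(t)\phi_0$ and $B(t)=p(0,t)=\int_{a_{\min}}^{a_{\max}}\beta(a;\eta_2(Q_0(t)))p(a,t)\,da$, and use the Volterra representation of $B$ derived in the proof of Proposition \ref{sec:LMPP9}. The "initial-data" integral $\int_t^{a_{\max}}\beta(a;\eta_2(Q_0(t)))p_0(a-t)(\cdots)\,da$ vanishes, because $\beta(a;\cdot)>0$ forces $a\le\tilde a$, hence $a-t\le\tilde a$, where $p_0=0$. Therefore $B$ solves the homogeneous renewal equation $B(t)=\int_{a_{\min}}^{t}\beta(a;\eta_2(Q_0(t)))B(t-a)(\cdots)\,da$, which is linear in $B$, so by uniqueness $B\equiv 0$. (Equivalently one may sandwich $0\le p\le\check l$ via Proposition \ref{sec:LMPP9} and apply the linear result Proposition \ref{sec:LMP1} to the upper system $\check l$ of (\ref{sec:LM1U}).) With $B\equiv 0$, the characteristic formula gives $p(a,t)=0$ for $a<t$ and $p(a,t)=p_0(a-t)\,\Pi(\cdots)$ for $a\ge t$; since $a\in[0,\tilde a]$ and $a\ge t$ imply $a-t\in[0,\tilde a]$, we conclude $p(\cdot,t)=0$ a.e. on $[0,\tilde a]$, i.e. $U(t)\phi_0\in\partial M_0$.

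Finally I would read off the global exponential stability of the trivial equilibrium on $\partial M_0$. Because $B\equiv 0$, the trajectory solves the transport-with-mortality problem with zero inflow, and since the survival factor $\Pi(\cdots)\le 1$ we get $\|U(t)\phi_0\|_X=\int_t^{a_1}p_0(a-t)\Pi(\cdots)\,da\le\int_0^{a_1-t}p_0(s)\,ds\le\|\phi_0\|_X$. This already yields stability (choose $\delta=\epsilon$); moreover, as there are no births and the maximal age is $a_1$, one has $U(t)\phi_0\equiv 0$ for all $t\ge a_1$. Combining the uniform bound on $[0,a_1]$ with this finite-time extinction gives $\|U(t)\phi_0\|_X\le K e^{-\omega t}\|\phi_0\|_X$ with $K=e^{\omega a_1}$ for any $\omega>0$, hence global exponential stability. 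I expect the main obstacle to be the first step: showing that the nonlinear coupling through $Q_0(t),Q_1(t)$ and $\eta_2(Q_0(t))$ cannot reintroduce a positive birth rate. The resolution is that, although the renewal kernel is nonautonomous, it is linear and homogeneous in $B$, so uniqueness for the renewal equation (or, alternatively, the comparison route through (\ref{sec:LM1U}) and Proposition \ref{sec:LMP1}) forces $B\equiv 0$ irrespective of how the weighted populations evolve.
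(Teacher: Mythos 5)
Your argument is correct, and your primary route is genuinely different from the paper's. The paper proves both claims in one stroke by comparison: by Proposition \ref{sec:LMPP9} the nonlinear solution is dominated by the solution $\check{l}$ of the linear majorant system (\ref{sec:LM1U}), and Proposition \ref{sec:LMP1} applied to that linear system squeezes the birth integral of $U(t)\phi_{0}$ between $0$ and $\int_{a_{\min}}^{a_{\max}}\bar{\beta}(a)\check{l}(a,t)\,da=0$, which gives the invariance, while the decay estimate of Proposition \ref{sec:LMP1} supplies the stability. You instead run the renewal-equation argument directly on the nonlinear system: the initial-data term of the Volterra equation for $B(t)=p(0,t)$ vanishes because $\phi_{0}=0$ a.e.\ on $[0,\tilde{a}]$ and $\beta$ is supported there, so $B$ satisfies a homogeneous linear Volterra equation with locally bounded kernel, hence $B\equiv 0$ by the standard Gronwall/uniqueness argument, and the characteristic formula then places $U(t)\phi_{0}$ back in $\partial M_{0}$. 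This is essentially the mechanism hidden inside the paper's Proposition \ref{sec:LMP1}, inlined at the nonlinear level rather than reached through the comparison (and you note the comparison route as an equivalent alternative, so you see both). Your treatment of the exponential stability is in fact sharper: the paper's rate comes from Proposition \ref{sec:LMP1} as $\omega=\inf\tilde{\mu}$, which for the majorant system (\ref{sec:LM1U}) equals $\inf(\underline{\mu_{0}}+\underline{\mu_{1}}+\mu_{2})$ and may be $0$ (the paper's own baseline has $\mu_{2}\equiv 0$), in which case that bound gives only boundedness; your observation that $B\equiv 0$ together with the finite maximal age $a_{1}$ forces $U(t)\phi_{0}=0$ for $t\geq a_{1}$ yields decay at every exponential rate and thus genuinely delivers the ``exponentially stable'' conclusion.
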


\begin{proof}
Let $\phi_{0}\in \partial M_{0}$ and let $T_{U}(t),t\geq 0$ be the strongly continuous linear semigroup corresponding to the system (\ref{sec:LM1U}).
By Proposition \ref{sec:LMPP9}, we obtain $(U(t)\phi_{0})(a)\leq (T_{U}(t)\phi_{0})(a)$ for $(a,t)\in[0,a_{1}]\times[0,\infty)$.
Then by Proposition \ref{sec:LMP1}, we deduce that 
\begin{align*}
0\leq \int_{a_{\min}}^{a_{\max}}\beta(a; \eta_{2} (Q_{0}(t)))(U(t)\phi_{0})(a)da\leq \int_{a_{\min}}^{a_{\max}}\bar{\beta}(a)(T_{U}(t)\phi_{0})(a)da=0.\\
\end{align*}
and the result follows.

\end{proof}

\section{Uniform Persistence}

\begin{theorem}[Uniform Persistence]\label{sec:LMTUP}
Let H.1, H.6-H.7 hold. 
If $\mathcal{R}_{0}>1$, the strongly continuous nonlinear semigroup $U(t),t\geq 0$, in $X_{+}$ as in Theorem \ref{sec:T3.3} is uniformly persistent with respect to the pair $(\partial M_{0}, M_{0})$, namely, there exists $\epsilon> 0$ such that,
\begin{align*}
\liminf_{t\rightarrow \infty}\left\|U(t)\phi\right\|_{X}\geq\epsilon, \ \text{ for every }\phi\in M_{0}.
\end{align*}

\end{theorem}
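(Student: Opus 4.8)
The plan is to invoke the abstract theory of uniform persistence for dynamical systems (in the form of Hale--Waltman, adapted to the global attractor setting, see \cite{Ha1,Ha}), for which three ingredients must be assembled: a dissipativity/compactness structure, a description of the flow on the boundary $\partial M_0$, and a weak-repeller property of the trivial equilibrium relative to $M_0$. The first two are essentially in hand. Corollary \ref{sec:LMCOR} shows that $U(t),t\geq 0$, is bounded dissipative and asymptotically smooth, hence admits a compact global attractor. Proposition \ref{sec:LMPP5} and Proposition \ref{sec:LMPP8} show that $M_0$ and $\partial M_0$ are each positively invariant under $U(t)$, and that on $\partial M_0$ every trajectory converges exponentially to the trivial equilibrium; consequently the global attractor of $U(t)$ restricted to $\partial M_0$ is exactly $\{0\}$, a single equilibrium and therefore automatically isolated and acyclic in $\partial M_0$.

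The substantive step is to establish \emph{uniform weak persistence}: there is $\eta>0$ such that $\limsup_{t\to\infty}\left\|U(t)\phi\right\|_{X}\geq \eta$ for every $\phi\in M_0$. I argue by contradiction. The net reproduction function $\mathcal{R}(\phi)$ of (\ref{sec:4.1}) depends continuously on the crowding levels $Q_0\phi,Q_1\phi$ and, since $\eta_i(0)=0$ and $\mu_1(a,0)=0$ on $(0,a_{\min})$ by H.1, it reduces to IGC $=\mathcal{R}_0>1$ as $Q_0\phi,Q_1\phi\to 0$. Hence there is $\delta>0$ so small that the linear system of the type (\ref{sec:LM1G}), but with coefficients frozen at the level $z=\delta$ (fertility $\beta(a;\eta_2(\delta))$, mortality $\mu_0(a,\eta_0(\delta))+\mu_1(a,\eta_1(\delta))+\mu_2(a)$), still has net reproduction $\tilde{\mathcal{R}}_0>1$. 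If uniform weak persistence failed, I could pick $\phi\in M_0$ whose orbit eventually satisfies $\left\|U(t)\phi\right\|_{X}<\delta/(1+\|\omega_0\|_\infty+\|\omega_1\|_\infty)$, so that $Q_0(t),Q_1(t)<\delta$ for all large $t$. On that time range, H.7 and the nonnegativity of the $\mu_i$ let me bound $p(\cdot,t)$ below by the solution $\underline{l}(\cdot,t)$ of the $\delta$-frozen linear system; by Theorem \ref{sec:TLM9} its dominant eigenvalue $\lambda_0>0$ is positive, and Proposition \ref{sec:LMPP1} (together with Proposition \ref{sec:LMPP5}, which guarantees the orbit enters $M_0$'s persistent regime) gives $\int_{a_{\min}}^{a_{\max}}\beta(a;\eta_2(Q_0(t)))p(a,t)\,da\geq \int_{a_{\min}}^{a_{\max}}\beta(a;\eta_2(\delta))\underline{l}(a,t)\,da\sim e^{\lambda_0 t}$, forcing $\left\|U(t)\phi\right\|_{X}\to\infty$ and contradicting $\left\|U(t)\phi\right\|_{X}<\delta$.

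The hard part will be making this comparison genuinely local in $Q$. The comparison systems of Proposition \ref{sec:LMPP9} are built from the \emph{global} envelopes $\underline{\beta},\bar{\mu}_i$, whose net reproduction may drop below $1$ even when IGC $>1$; the repeller estimate instead needs envelopes valid only on the invariant region where $Q_i$ is small, which is why the coefficients must be frozen at $z=\delta$ rather than taken globally. I would secure this by re-running the Volterra/characteristics comparison argument of Proposition \ref{sec:LMPP9} on the time interval where $Q_i(t)\leq\delta$, with $\bar{\mu}_i(a)$ replaced by $\mu_i(a,\eta_i(\delta))$ and $\underline{\beta}(a)$ by $\beta(a;\eta_2(\delta))$ (legitimate monotone bounds precisely there), and by using that the $L^1$ norm dominates the birth integral to pass from growth of $\int\beta p\,da$ to growth of $\left\|U(t)\phi\right\|_{X}$.

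Finally I would upgrade uniform weak persistence to uniform (strong) persistence. With $U(t)$ asymptotically smooth and point dissipative (Corollary \ref{sec:LMCOR}), $\partial M_0$ forward invariant with boundary attractor equal to the isolated, acyclic singleton $\{0\}$ (Proposition \ref{sec:LMPP8}), and the weak-repeller property just established, the Hale--Waltman persistence theorem yields $\epsilon>0$ with $\liminf_{t\to\infty}\left\|U(t)\phi\right\|_{X}\geq\epsilon$ for all $\phi\in M_0$, which is exactly the assertion.
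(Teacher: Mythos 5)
Your proposal is correct and follows essentially the same route as the paper's proof: a contradiction argument in which the coefficients are frozen at a small crowding level $\delta$ where the net reproduction still exceeds one, a comparison from below with the resulting linear semigroup whose dominant eigenvalue is positive (Theorem \ref{sec:TLM9}, Proposition \ref{sec:LMPP1}), and then an appeal to the Hale--Waltman persistence theorem using Corollary \ref{sec:LMCOR} and Propositions \ref{sec:LMPP5}, \ref{sec:LMPP8}. If anything, your explicit remark that the comparison must be re-run with the $\delta$-frozen envelopes rather than the global ones of Proposition \ref{sec:LMPP9} makes precise a step the paper passes over quickly.
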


\begin{proof}
Let $\hat{\omega}=\max\left\{\left\|\omega_{i}\right\|_{L^{\infty}},i=0,1,2\right\}$. To apply Theorem 4.1 in \cite{Ha} for the claim, it is sufficient to show that there exists $\epsilon> 0$ such that for each $\phi_{0}\in M_{0}$, there exists $t_{0}\geq 0$ such that $\left\|U(t_{0})\phi_{0}\right\|_{X}\geq \epsilon$. Let $\mathcal{\tilde{R}}_{0}(\bar{Q}_{0},\bar{Q}_{1})=\int_{a_{\min}}^{a_{\max}}\beta(a;$ $ \eta_{2} (\bar{Q}_{0}))\prod(0,a; \eta_{0}(\bar{Q}_{0}),\eta_{1}(\bar{Q}_{1}))da$, for $\bar{Q_{i}}\geq 0$ and $i=0,1$. We observe from (\ref{sec:4.1}) that $\mathcal{\tilde{R}}_{0}(\bar{Q}_{0},\bar{Q}_{1})$ is continuous for $(\bar{Q}_{0},\bar{Q}_{1})\in [0,\infty)\times[0, \infty)$ with $\mathcal{\tilde{R}}_{0}(0,0)=\mathcal{R}_{0}>1$. 
Therefore, 
there exists some neighbourhood of $(0,0)$ in the right half plane of $R^{2}$, denoted by $\mathcal{O}:=[0,\bar{\delta})\times[0,\bar{\delta})$ such that for any $(\bar{Q}_{0},\bar{Q}_{1})\in \mathcal{O}$, we have $\int_{a_{\min}}^{a_{\max}}\beta(a; \eta_{2} (\bar{Q}_{0}))\prod(0,a; \eta_{0} (\bar{Q}_{0}),\eta_{1} (\bar{Q}_{1}))$ $da> 1$. We argue by contradiction, assume for $0<\epsilon=\min\left\{\frac{\bar{\delta}}{2\hat{\omega}},\frac{\bar{\delta}}{2}\right\}<\bar{\delta}$ fixed, there exists some $\phi_{0}\in M_{0}$ such that $\left\|U(t)\phi_{0}\right\|_{X}\leq \epsilon$, $\forall t\geq 0$. Then, we consider the following linear system, and let $ i(a,t)$ be a solution:
\begin{align*}
&i_{t}(a,t)+i_{a}(a,t)=-[\mu_{0}(a, \eta_{0} (\frac{\bar{\delta}}{2}))+\mu_{1}(a,\eta_{1}(\frac{\bar{\delta}}{2}))+ \mu_{2}(a)]i(a,t),\\ 
\nonumber & 0<a<a_{1},\ t>0,\\
\nonumber &i(0,t)=\int_{a_{\min}}^{a_{\max}}\beta(a; \eta_{2} (\frac{\bar{\delta}}{2}))i(a,t)da,\ t>0,\\
\nonumber &i(a,0)=\phi_{0}(a),\ 0<a<a_{1}.
\end{align*}
This, by Proposition \ref{sec:LMPP9}, implies that $p(a,t)=(U(t)\phi_{0})(a)\geq i(a,t)$ for $(a,t)\in[0,a_{1}]\times[0,\infty)$. Since $\mathcal{R}_{0}>1$, by Theorem \ref{sec:TLM9}, we deduce that $\mathcal{P}_{\lambda_{0}}i(\cdot,t)=e^{\lambda_{0} t}\mathcal{P}_{\lambda_{0}}\phi_{0}$ for some $\lambda_{0}> 0$, where $\mathcal{P}_{\lambda_{0}}$ is the projection on the eigenspace corresponding to the eigenvalue $\lambda_{0}$. Further, by Theorem \ref{sec:TLM81}, $\left\|\mathcal{P}_{\lambda_{0}}\phi_{0}\right\|_{X}>0$ for $\phi_{0}\in M_{0}$. It then follows that $\lim_{t\rightarrow \infty}\left\|\mathcal{P}_{\lambda_{0}}i(\cdot,t)\right\|_{X}=\infty$. Therefore, $\lim_{t\rightarrow \infty}\left\|p(\cdot,t)\right\|_{X}=\infty$, which contradicts with $\left\|p(\cdot,t)\right\|_{X}\leq \epsilon,\ \forall t\geq 0$. Therefore, the stable manifold of the trivial equilibrium does not intersect $M_{0}$. Furthermore, by Corollary \ref{sec:LMCOR}, the strongly continuous nonlinear semigroup $U(t), \ t\geq 0$ in $X_{+}$ as in Theorem \ref{sec:T3.3} is point dissipative and asymptotically smooth and the trajectory of a bounded set is bounded. The trivial equilibrium is global stable in $\partial M_{0}$. Therefore, Theorem 4.2 in \cite{Ha} implies the uniform persistence of $U(t), \ t\geq 0$.

\end{proof}

We use the results of \cite{ST, Go,Ha1,Ha,Ma,M04, MZ,DA, Lak}, to obtain the following theorem.

\begin{theorem}\label{sec:ga}
Let H.1 hold. Let $U(t), \ t\geq 0$ in $X_{+}$ be the strongly continuous nonlinear semigroup as in Theorem \ref{sec:T3.3}. Assume that $U(t), \ t\geq 0$ is bounded dissipative, asymptotically smooth and uniformly persistent with respect to ($M_{0}$, $\partial M_{0}$). There exists a global attractor $\mathcal{A}_{0}\in M_{0}$ under $U(t), \ t\geq 0$, which is a compact set, satisfying,
\begin{itemize}
\item[(i)] $\mathcal{A}_{0}$ is invariant under the semigroup $U(t), \ t\geq 0$, namely, $U(t)\mathcal{A}_{0}=\mathcal{A}_{0}$, for $t\geq 0$;
\item[(ii)] $\mathcal{A}_{0}$ attracts the bounded sets of $M_{0}$ under $U(t), \ t\geq 0$ that is, for every bounded set $C\in M_{0}$, $\lim_{t\rightarrow \infty}$ $\hat{\delta}(U(t)C,\mathcal{A}_{0})=0$, where, the semi-distance $\hat{\delta}(C,\mathcal{A}_{0}):=\sup_{x\in C}\inf_{y\in \mathcal{A}_{0}}\left\|x-y\right\|$.
\end{itemize} 
Moreover, the subset $\mathcal{A}_{0}$ is locally asymptotically stable.

\end{theorem}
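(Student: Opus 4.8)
The plan is to deduce the statement from the abstract theory of global attractors for uniformly persistent semiflows, so that the real content is the verification that the hypotheses listed in the theorem — bounded dissipativity, asymptotic smoothness, and uniform persistence with respect to $(M_0,\partial M_0)$ — combine with the invariance structure already established for the decomposition $X_+ = M_0 \cup \partial M_0$. By Corollary \ref{sec:LMCOR}, $U(t)$ is bounded dissipative and asymptotically smooth; by Propositions \ref{sec:LMPP5} and \ref{sec:LMPP8}, $M_0$ and $\partial M_0$ are each positively invariant under $U(t)$; and Theorem \ref{sec:LMTUP} supplies the uniform persistence. These are exactly the structural ingredients demanded by the persistence framework of Hale and of Magal--Zhao (\cite{Ha}, \cite{Ha1}, \cite{MZ}, \cite{ST}).

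First I would establish a global attractor for the full semiflow. Since $U(t)$ is point dissipative, asymptotically smooth, and carries bounded sets to sets with bounded orbits (all contained in Corollary \ref{sec:LMCOR}), the standard result of Hale yields a compact invariant global attractor $\mathcal{A}\subset X_+$ which attracts every bounded subset of $X_+$. Next I would pass to $M_0$ and invoke the Magal--Zhao characterization of attractors for uniformly persistent systems. Uniform persistence (Theorem \ref{sec:LMTUP}) gives a uniform repulsion from the boundary: there is $\epsilon>0$ with $\liminf_{t\to\infty} d(U(t)\phi,\partial M_0)\geq \epsilon$ for all $\phi\in M_0$. Together with positive invariance of $M_0$ and the fact that on $\partial M_0$ the dynamics collapses to the globally attracting trivial equilibrium (Proposition \ref{sec:LMPP8}), the abstract results of \cite{MZ} (see also \cite{Ha1}, \cite{ST}) produce a compact set $\mathcal{A}_0\subset M_0$ that is invariant, $U(t)\mathcal{A}_0=\mathcal{A}_0$ for $t\geq 0$, and attracts each bounded subset of $M_0$; this yields properties (i) and (ii). Local asymptotic stability of $\mathcal{A}_0$ then follows from its compactness and the attraction property, since persistence keeps every trajectory starting in $M_0$ away from $\partial M_0$, so a sufficiently small neighborhood of $\mathcal{A}_0$ stays in $M_0$ and is drawn into $\mathcal{A}_0$.

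The hard part will be checking that the concrete semiflow meets the precise technical hypotheses of the chosen abstract theorem — chiefly the asymptotic compactness (or $\alpha$-condensing) condition in the exact form required, and the compatibility of the full-system attractor with the splitting into $M_0$ and $\partial M_0$. This reduces to combining the Kuratowski-measure decomposition $U(t)=W_1(t)+W_2(t)$ from Theorem \ref{sec:T5.1} (where $W_1(t)=0$ for $t>a_1$ and $W_2(t)$ is ultimately compact) with the comparison estimates of Proposition \ref{sec:LMPP9}, which confine trajectories between the linear sub- and super-solutions $\underline{l}$ and $\check{l}$. Once these two facts are recorded, the abstract machinery applies essentially verbatim and the remaining assertions are immediate.
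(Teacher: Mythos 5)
Your proposal is correct and follows essentially the same route as the paper, which states Theorem \ref{sec:ga} with no written proof and simply invokes the abstract persistence/attractor theory of \cite{Ha}, \cite{Ha1}, \cite{MZ}, \cite{ST}; your sketch is a faithful elaboration of exactly that citation-based argument, correctly identifying Corollary \ref{sec:LMCOR}, Theorem \ref{sec:LMTUP}, Propositions \ref{sec:LMPP5} and \ref{sec:LMPP8}, and the decomposition $U(t)=W_{1}(t)+W_{2}(t)$ of Theorem \ref{sec:T5.1} as the ingredients to be checked. Nothing in your outline conflicts with the paper's (implicit) proof.
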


\begin{proposition}\label{sec:LMPP7}
Let H.1, H.6-H.7 hold. Let $U(t),t\geq 0$, be the strongly continuous nonlinear semigroup in $X_{+}$ as in Theorem \ref{sec:T3.3}. 
There exists some $\delta>0$ such that for every $\phi_{0}\in \mathcal{A}_{0}$,
\begin{align*} 
\int_{a_{\min}}^{a_{\max}}\underline{\beta}(a)\phi_{0}(a)da\geq \delta.
\end{align*}
where $\mathcal{A}_{0}$ is the global attractor given in Theorem \ref{sec:ga}. 
\end{proposition}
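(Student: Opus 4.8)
The plan is to combine a compactness/continuity reduction with the invariance of $\mathcal{A}_0$ and the comparison principle of Proposition \ref{sec:LMPP9}. First I would record that the map $L\colon X\to\mathbb{R}$, $L(\phi):=\int_{a_{\min}}^{a_{\max}}\underline{\beta}(a)\phi(a)\,da$, is a bounded linear functional (since $\underline{\beta}\in L^\infty_+[a_{\min},a_{\max}]$) and hence continuous. As $\mathcal{A}_0$ is compact by Theorem \ref{sec:ga}, it therefore suffices to establish the pointwise positivity $L(\phi_0)>0$ for every $\phi_0\in\mathcal{A}_0$: the infimum $\delta:=\inf_{\phi_0\in\mathcal{A}_0}L(\phi_0)$ is then attained at some $\phi_0^{\ast}\in\mathcal{A}_0$ and equals $L(\phi_0^{\ast})>0$, which is exactly the assertion.

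To prove the pointwise positivity I would exploit the invariance $U(t)\mathcal{A}_0=\mathcal{A}_0$. Fix $\phi_0\in\mathcal{A}_0$ and $s>0$; since $U(s)$ maps $\mathcal{A}_0$ onto itself there is $\psi\in\mathcal{A}_0\subseteq M_0$ with $U(s)\psi=\phi_0$. Let $T_L(t)$ denote the lower linear semigroup associated with system (\ref{sec:LM1G}), which is an instance of (\ref{sec:LM1}) with $\tilde{\beta}=\underline{\beta}$ and $\tilde{\mu}=\bar{\mu}_0+\bar{\mu}_1+\mu_2$. The comparison inequality of Proposition \ref{sec:LMPP9} gives $\phi_0=U(s)\psi\geq T_L(s)\psi$ a.e., whence $L(\phi_0)\geq L(T_L(s)\psi)$. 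Because $\psi\in M_0$, the eventual positivity of the lower reproduction functional—Proposition \ref{sec:LMPP1} or Proposition \ref{sec:LMPP2} according as the reproduction number of the lower system exceeds $1$ or not, exactly as invoked in the proof of Proposition \ref{sec:LMPP5}—produces a time $t^{\ast}(\psi)$ with $L(T_L(t)\psi)>0$ for all $t\geq t^{\ast}(\psi)$. Thus $L(\phi_0)>0$ as soon as the backward time $s$ can be chosen with $s\geq t^{\ast}(\psi)$; since $\psi$ itself depends on $s$, the entire argument reduces to producing a time $t^{\ast}$ that is bounded \emph{uniformly} over $\mathcal{A}_0$.

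Establishing this uniform activation time is the main obstacle, and I would resolve it by a compactness argument on $\mathcal{A}_0$ together with Lemma \ref{sec:LMLA}. Define $F(\psi):=\inf\{s_0\geq 0:\ L(T_L(t)\psi)>0\ \text{for all }t\geq s_0\}$, which is finite for each $\psi\in\mathcal{A}_0$ by the preceding paragraph. If $F$ were unbounded there would be $\psi_n\to\psi_{\ast}$ in $\mathcal{A}_0$ with $F(\psi_n)\to\infty$. On the compact window $W:=[F(\psi_{\ast})+1,\,F(\psi_{\ast})+2+a_{\min}]$, whose length exceeds $a_{\min}$, the continuous map $t\mapsto L(T_L(t)\psi_{\ast})$ is strictly positive, hence bounded below by some $\tilde{\delta}_{\ast}>0$; the elementary estimate $|L(T_L(t)(\psi_n-\psi_{\ast}))|\leq\|\underline{\beta}\|_{L^\infty}e^{\omega t}\|\psi_n-\psi_{\ast}\|_X$, with $\omega$ the growth bound of $T_L$ from Theorem \ref{sec:LM4}, is uniform on $W$, so $L(T_L(t)\psi_n)>\tilde{\delta}_{\ast}/2$ throughout $W$ for all large $n$. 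Applying Lemma \ref{sec:LMLA} to the solution started from $T_L(F(\psi_{\ast})+1)\psi_n$ (for which the reproduction functional is $\geq\tilde{\delta}_{\ast}/2$ on $[0,a_{\min}+1]$) then forces $L(T_L(t)\psi_n)>0$ for every $t\geq F(\psi_{\ast})+1$, i.e. $F(\psi_n)\leq F(\psi_{\ast})+1$, contradicting $F(\psi_n)\to\infty$. Hence $T^{\ast}:=\sup_{\psi\in\mathcal{A}_0}F(\psi)<\infty$. Returning to the second paragraph with $s=T^{\ast}$ and the corresponding $\psi\in\mathcal{A}_0$ satisfying $U(T^{\ast})\psi=\phi_0$, we obtain $L(T_L(T^{\ast})\psi)>0$ and therefore $L(\phi_0)\geq L(T_L(T^{\ast})\psi)>0$. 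Since $\phi_0\in\mathcal{A}_0$ was arbitrary, the pointwise positivity holds, and the compactness reduction of the first paragraph delivers the uniform $\delta>0$.
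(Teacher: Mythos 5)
Your proposal is correct and follows essentially the same route as the paper's proof: comparison with the lower linear system (Proposition \ref{sec:LMPP9}), eventual positivity of the birth functional (Proposition \ref{sec:LMPP2}) propagated forward by Lemma \ref{sec:LMLA}, compactness of $\mathcal{A}_{0}$ to make the activation time uniform, the backward‑orbit identity $\phi_{0}=U(t)\bigl(U(-t)\phi_{0}\bigr)$ to convert this into pointwise positivity on $\mathcal{A}_{0}$, and finally compactness together with continuity of $\phi\mapsto\int_{a_{\min}}^{a_{\max}}\underline{\beta}(a)\phi(a)\,da$ to extract a uniform $\delta>0$. The only differences are cosmetic: you uniformize the activation time by sequential compactness applied to the linear comparison semigroup $T_{L}(t)$ (which makes the appeal to Lemma \ref{sec:LMLA} slightly cleaner, since that lemma is stated for the linear system, whereas the paper applies an open‑cover argument directly to the nonlinear semigroup), and in the last step you should take $s=T^{\ast}+1$ rather than $s=T^{\ast}$, since $F$ is defined as an infimum and positivity at $t=F(\psi)$ itself is not guaranteed.
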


\begin{proof}

By Corollary \ref{sec:LMCOR} and Theorem \ref{sec:LMTUP}, the strongly continuous nonlinear semigroup $U(t), \ t\geq 0$ in $X_{+}$ as in Theorem \ref{sec:T3.3} is point dissipative, asymptotically smooth and uniformly
persistent with respect to $(M_{0}, \partial M_{0})$. Therefore, by Theorem \ref{sec:ga}, there exists a global attractor $\mathcal{A}_{0}\in M_{0}$, for $U(t), t\geq 0$, which is invariant under $U(t), \:t\geq 0$. 
Let $T_{L}(t), t\geq 0$ be the strongly continuous linear semigroup corresponding to the system (\ref{sec:LM1G}) with $i(a,t)=(T_{L}(t)\phi_{0})(a)$ for $(a,t)\in[0,a_{1}]\times[0,\infty)$. By Proposition \ref{sec:LMPP9}, we obtain $(U(t)\phi_{0})(a)\geq (T_{L}(t)\phi_{0})(a)$ for $(a,t)\in[0,a_{1}]\times[0,\infty)$.
Further, by Proposition \ref{sec:LMPP2}, there exists $t^{*}=t^{*}(\phi_{0})>0$ such that for $t>t^{*}$, we have 
\begin{align*} 
\int_{a_{\min}}^{a_{\max}}\underline{\beta}(a)(U(t)\phi_{0})(a)da\geq \int_{a_{\min}}^{a_{\max}}\underline{\beta}(a)(T_{L}(t)\phi_{0})(a)da>0,\ \text{ for }t\geq t^{*}.
\end{align*}

For each fixed $\phi\in X_{+}$, $U(t)\phi$ is continuous in $t$ and the mapping $\phi\rightarrow U(t)\phi$ is continuous for $\phi\in X_{+}$, $t\geq 0$, and also the mapping 
$\phi\rightarrow\int_{a_{\min}}^{a_{\max}}\underline{\beta}(a)\phi(a)da$ is continuous in $L^{1}$ norm. Therefore, for any $\phi_{0}\in \mathcal{A}_{0}$, there eixsts $r=r(\phi_{0})>0$ and $\tau^{*}>0$ such that for $\tilde{\phi}_{0}\in \mathcal{A}_{0}$ satisfying $\left\|\tilde{\phi}_{0}-\phi_{0}\right\|_{X}\leq r(\phi_{0})$, we have 
$\int_{a_{\min}}^{a_{\max}}\underline{\beta}(a)(U(t)\tilde{\phi}_{0})(a)da>0$, for $t\in[t^{*}(\phi_{0}),t^{*}(\phi_{0})+\tau^{*}]$. Then applying Lemma \ref{sec:LMLA}, we deduce that 
$\int_{a_{\min}}^{a_{\max}}\underline{\beta}(a)$ $(U(t)\tilde{\phi})(a)da>0$, for $t\geq t^{*}(\phi_{0})$. Therefore, we obtain an open cover for $\mathcal{A}_{0}$, that is, $\mathcal{A}_{0}\subset \cup_{\phi_{0}\in\mathcal{A}_{0}}N(\phi_{0})$, where $N(\phi_{0}):=\left\{\phi\in\mathcal{A}_{0}: \left\|\phi-\phi_{0}\right\|_{X}<r(\phi_{0})\right\}$. Then, we deduce from the compactness of $\mathcal{A}_{0}$ that there exists a finite cover such that $\mathcal{A}_{0}\subset \cup_{j=1}^{m}N(\phi_{j})$ for some $m>0$ and for each $\phi\in N(\phi_{j})$, we have $\int_{a_{\min}}^{a_{\max}}\underline{\beta}(a)$ $(U(t)\phi)(a)da>0$, for $t\geq t^{*}(\phi_{j})$, where $j=1,2,\cdots,m$. Let $\hat{t}=\max_{j=1}^{m}\left\{t_{j}\right\}$. For every $\phi_{0}\in \mathcal{A}_{0}$ and $t>\hat{t}$, 
$\int_{a_{\min}}^{a_{\max}}\underline{\beta}(a)$ 
$(U(t)\phi_{0})(a)da>0$. Using the fact that $U(t)\mathcal{A}_{0}=\mathcal{A}_{0}$, $\forall t\geq 0$, we have for any initial value $\phi_{0}\in\mathcal{A}_{0}$, there exists a complete orbit $\left\{U(t):t\in\mathbb{R}\right\}$ through $\phi_{0}$ in $\mathcal{A}_{0}$. Therefore, for any $\phi_{0}\in \mathcal{A}_{0}$, we have $\phi_{0}=U(t)(U(-t)\phi_{0})$ with $U(-t)\phi_{0}\in \mathcal{A}_{0}$. This implies for any $t\geq 0$, $\int_{a_{\min}}^{a_{\max}}\underline{\beta}(a)$ $(U(t)\phi_{0})(a)da=\int_{a_{\min}}^{a_{\max}}\underline{\beta}(a)$ $(U(t+t_{1})U(-t_{1})\phi_{0})(a)da>0$, for $t_{1}>\hat{t}$. 
It then follows that for every $\phi_{0}\in \mathcal{A}_{0}$,
$\int_{a_{\min}}^{a_{\max}}\underline{\beta}(a)\phi_{0}(a)da>0$. Now we define the functional $\hat{\mathcal{F}}: \mathcal{A}_{0}\rightarrow [0,\infty)$ by $\hat{\mathcal{F}}(\phi):=\int_{a_{\min}}^{a_{\max}}\underline{\beta}(a)\phi(a)da$ for $\forall\phi \in\mathcal{A}_{0}$. Then using the continuity for $\phi\rightarrow\int_{a_{\min}}^{a_{\max}}\underline{\beta}(a)\phi(a)da$ again, we have $\hat{\mathcal{F}}$ is continuous on $\mathcal{A}_{0}$. Let $\delta:=\inf_{\phi\in \mathcal{A}_{0}}\hat{\mathcal{F}}(\phi)\geq 0$. We claim that $\delta >0$. If not, 
by the definition of infimum there exists a sequence $\left\{\phi_{n}\right\}_{n\in\mathbb{Z}}\subset \mathcal{A}_{0}$ such that $\lim_{n\rightarrow \infty}\hat{\mathcal{F}}(\phi_{n})=0$. Then we use the compactness of $\mathcal{A}_{0}$ to choose a convergent subsequence $\left\{\phi_{n_{j}}\right\}$ for $j=1,2,\cdots$ such that $\lim_{j\rightarrow \infty}\phi_{n_{j}}=\bar{\phi}$, where $\bar{\phi}\in \mathcal{A}_{0}$, . This implies $\hat{\mathcal{F}}(\bar{\phi})=0$ which contradicts $\int_{a_{\min}}^{a_{\max}}\underline{\beta}(a)\bar{\phi}(a)da>0$ since $\bar{\phi}\in \mathcal{A}_{0}$. Therefore, the claim follows.

\end{proof}

\section{Global stability analysis}

\subsection{Global stability of the trivial equilibrium}

\begin{theorem}\label{sec:LMLL}
Let H.1 and H.6-H.7 hold. 
If $\mathcal{R}_{0}< 1$. Then the trivial equilibrium is global asymptotically stable.

\end{theorem}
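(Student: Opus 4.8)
The plan is to establish global asymptotic stability of the trivial equilibrium by a comparison argument: I would dominate the nonnegative nonlinear trajectory from above by a linear age-structured semigroup whose net reproduction number is subcritical, and then let the linear exponential decay push the nonlinear solution to zero.

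First I would fix an initial datum $p_0\in X_+$ and let $p(a,t)=(U(t)p_0)(a)$ be the corresponding solution of (\ref{sec:1}), which is nonnegative by Theorem \ref{sec:T3.1}. The monotonicity built into H.7, combined with the normalizations $\eta_i(0)=0$ and $\mu_1(a,0)=0$ for $a<a_{\min}$ coming from H.1, lets me take the \emph{tightest} admissible bounds $\bar{\beta}(a)=\beta(a;\eta_2(0))$ and $\underline{\mu}_i(a)=\mu_i(a;\eta_i(0))$. These are legitimate because $\beta(a;\cdot)$ is non-increasing and each $\mu_i(a;\cdot)$ is non-decreasing, so the effective fertility $\beta(a;\eta_2(Q_0(t)))$ never exceeds $\bar{\beta}(a)$ and the effective mortality never drops below $\underline{\mu}_0+\underline{\mu}_1+\mu_2$. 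Proposition \ref{sec:LMPP9} then gives the domination $0\le p(a,t)\le\check{l}(a,t)$ on $[0,a_1]\times[0,\infty)$, with $\|p(\cdot,t)\|_X\le\|\check{l}(\cdot,t)\|_X$, where $\check{l}$ solves the upper linear comparison system (\ref{sec:LM1U}).

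Second, I would compute the net reproduction number of (\ref{sec:LM1U}). With the sharp bounds above it equals $\check{\mathcal{R}}_0=\int_{a_{\min}}^{a_{\max}}\beta(a;\eta_2(0))\Pi(0,a;\eta_0(0),\eta_1(0))\,da$, which is exactly $\mathcal{R}_0=$ IGC. Hence the hypothesis $\mathcal{R}_0<1$ yields $\tilde{\mathcal{R}}_0=\check{\mathcal{R}}_0<1$ for the comparison system, and Theorem \ref{sec:TLM81} (equivalently Theorem \ref{sec:LMCC11}) applies: the zero equilibrium of (\ref{sec:LM1U}) is globally exponentially asymptotically stable, so there exist $M\ge1$ and $\omega>0$ with $\|\check{l}(\cdot,t)\|_X\le M e^{-\omega t}\|p_0\|_X$ for all $t\ge0$.

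Finally, I would combine the two bounds: $0\le\|p(\cdot,t)\|_X\le\|\check{l}(\cdot,t)\|_X\le M e^{-\omega t}\|p_0\|_X$ for all $t\ge0$. Letting $t\to\infty$ gives global attractivity of $0$ for every $p_0\in X_+$, while the uniform estimate $\|U(t)p_0\|_X\le M\|p_0\|_X$ supplies Lyapunov stability (given $\epsilon$, take $\delta=\epsilon/M$), so the trivial equilibrium is globally, and indeed exponentially, asymptotically stable. The main obstacle I anticipate is the first step: one must verify carefully that the sharp choice of $\bar{\beta}$ and $\underline{\mu}_i$ both respects the ordering required by H.7 and makes $\check{\mathcal{R}}_0$ coincide with $\mathcal{R}_0$ rather than merely bounding it from above. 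For generic H.7 bounds $\check{\mathcal{R}}_0$ could exceed $1$, so the reduction to the trivial-equilibrium linearization is essential and must be justified precisely through the monotonicity of $\beta$ and the $\mu_i$ together with $\eta_i(0)=0$.
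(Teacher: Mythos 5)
Your argument is correct, but it proceeds by a genuinely different route from the paper. The paper's proof constructs the Lyapunov functional
\begin{align*}
V[\Psi]=\int_{a_{\min}}^{a_{\max}}\beta(a;\eta_{2}(0))\int_{0}^{a}e^{-\int_{s}^{a}(\mu_{0}(\hat{a},\eta_{0}(0))+\mu_{1}(\hat{a},\eta_{1}(0))+\mu_{2}(\hat{a}))d\hat{a}}\Psi(s)\,ds\,da,
\end{align*}
computes $\frac{d}{dt}V[p(\cdot,t)]\leq(\mathcal{R}_{0}-1)\int_{a_{\min}}^{a_{\max}}\beta(a;\eta_{2}(0))p(a,t)\,da\leq 0$ using the same monotonicity from H.1 and H.7 that you invoke, and then identifies the largest invariant set where $\dot V=0$ via Proposition \ref{sec:LMPP8} and a LaSalle-type argument on the omega-limit set. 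You instead dominate the trajectory by the supersolution semigroup of (\ref{sec:LM1U}) with the sharp choices $\bar{\beta}(a)=\beta(a;\eta_{2}(0))$, $\underline{\mu}_{i}(a)=\mu_{i}(a;\eta_{i}(0))$, observe that the resulting linear net reproduction number is exactly $\mathcal{R}_{0}<1$, and quote Theorem \ref{sec:TLM81}. Your route buys a stronger conclusion --- global \emph{exponential} decay with an explicit rate (the Malthusian parameter of the comparison system) and a cheap verification of Lyapunov stability from the uniform bound $\|U(t)p_{0}\|_{X}\leq M\|p_{0}\|_{X}$ --- and it bypasses the precompactness of trajectories and the invariance-principle machinery that the paper's proof needs. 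The paper's route is softer but self-contained within the Lyapunov framework it develops for the nontrivial equilibrium. The one point you rightly flag, and which is the crux, is that the generic H.7 envelopes are useless here; the reduction to the bounds evaluated at zero is what makes $\check{\mathcal{R}}_{0}$ coincide with $\mathcal{R}_{0}$, and it is justified exactly as you say by the monotonicity of $\beta(a;\cdot)$ and $\mu_{i}(a;\cdot)$ together with $\eta_{i}(0)=0$ and $\mu_{1}(a,0)=0$ for $a<a_{\min}$. You should also note, for completeness, that these sharp bounds are admissible in H.6 (i.e.\ lie in $L^{\infty}_{+}$), which follows from the existence of the H.7 envelopes themselves.
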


\begin{proof}
Let $U(t),t\geq 0$, be the strongly continuous nonlinear semigroup in $X_{+}$ as in Theorem \ref{sec:T3.3}. Let $p(a,t)=(U(t)\phi_{0})(a)$, for $\phi_{0}\in X_{+}$. Define the function $V: X_{+}\rightarrow \mathbb{R}$ by
\begin{align}\label{sec:LMF1}
V[\Psi]:=\int_{a_{\min}}^{a_{\max}}\beta(a; \eta_{2} (0))\int_{0}^{a}e^{-\int_{s}^{a}(\mu_{0}(\hat{a}, \eta_{0} (0))+\mu_{1}(\hat{a},\eta_{1}(0))+ \mu_{2}(\hat{a}))d\hat{a}}\Psi(s)dsda.
\end{align}
Then
\begin{align*}
\frac{d}{dt}V[p(\cdot,t)]&=\int_{a_{\min}}^{a_{\max}}\beta(a; \eta_{2} (0))\int_{0}^{a}e^{-\int_{s}^{a}(\mu_{0}(\hat{a}, \eta_{0} (0))+\mu_{1}(\hat{a},\eta_{1}(0))+ \mu_{2}(\hat{a}))d\hat{a}}\frac{\partial }{\partial t}p(s,t)dsda\\
&=\int_{a_{\min}}^{a_{\max}}\beta(a; \eta_{2} (0))\int_{0}^{a}e^{-\int_{s}^{a}(\mu_{0}(\hat{a}, \eta_{0} (0))+\mu_{1}(\hat{a},\eta_{1}(0))+ \mu_{2}(\hat{a}))d\hat{a}}[-\frac{\partial }{\partial s}p(s,t)\\
&-(\mu_{0}(s, \eta_{0} (Q_{0}(t)))+\mu_{1}(s,\eta_{1}(Q_{1}(t)))+ \mu_{2}(s))p(s,t)]dsda\\
&\leq\int_{a_{\min}}^{a_{\max}}\beta(a; \eta_{2} (0))\int_{0}^{a}e^{-\int_{s}^{a}(\mu_{0}(\hat{a}, \eta_{0} (0))+\mu_{1}(\hat{a},\eta_{1}(0))+ \mu_{2}(\hat{a}))d\hat{a}}[-\frac{\partial }{\partial s}p(s,t)\\
&-(\mu_{0}(s, \eta_{0} (0))+\mu_{1}(s,\eta_{1}(0))+ \mu_{2}(s))p(s,t)]dsda\\
&=-\int_{a_{\min}}^{a_{\max}}\beta(a; \eta_{2} (0))[e^{-\int_{s}^{a}(\mu_{0}(\hat{a}, \eta_{0} (0))+\mu_{1}(\hat{a},\eta_{1}(0))+ \mu_{2}(\hat{a}))d\hat{a}}p(s,t)]_{s=0}^{a}da\\
&=\int_{a_{\min}}^{a_{\max}}\beta(a; \eta_{2} (0))e^{-\int_{0}^{a}(\mu_{0}(\hat{a}, \eta_{0} (0))+\mu_{1}(\hat{a},\eta_{1}(0))+ \mu_{2}(\hat{a}))d\hat{a}}dap(0,t)\\
&-\int_{a_{\min}}^{a_{\max}}\beta(a; \eta_{2} (0))p(a,t)da.
\end{align*}
Applying the boundary condition $p(0,t)=\int_{a_{\min}}^{a_{\max}}\beta(a; \eta_{2} (Q_{0}(t)))p(a,t)da\leq\int_{a_{\min}}^{a_{\max}}$ $\beta(a; \eta_{2} (0))p(a,t)da,\ t\geq 0$ to obtain,
\begin{align*}
\frac{d}{dt}V[p(\cdot,t)]&=(\int_{a_{\min}}^{a_{\max}}\beta(a; \eta_{2} (0))\prod(0,a;\eta_{0} (0),\eta_{1}(0))da-1)\\
&\times\int_{a_{\min}}^{a_{\max}}\beta(a; \eta_{2} (0))p(a,t)da.
\end{align*}
Since $\mathcal{R}_{0}< 1$, we deduce that,
\begin{align*}
\frac{d}{dt}V[p(\cdot,t)]\leq 0,\ \text{ for }t\geq 0.
\end{align*}
And $\frac{d}{dt}V[p(\cdot,t)]=0$ if and only if $\int_{a_{\min}}^{a_{\max}}\beta(a; \eta_{2} (0))p(a,t)da=0$. Therefore, $\phi_{0}\in \partial M_{0}$, and by Proposition \ref{sec:LMPP8}, the trivial equilibrium is globally asymptotically stable in $\partial M_{0}$. Then, we deduce that the omega-limit set of a trajectory only consists of the trivial equilibrium. Finally, we observe that $V[p(\cdot,t)]$ is decreasing for $t\geq 0$ and is zero at the omega-limit set. Then we deduce that it is identically zero, and the result follows.

\end{proof}

\begin{remark}
Another way to show the global stability of the trivial equilibrium is by applying the invariance principle, which is given by the following Theorem from (\cite{W}, section 4.2, pp.158):

\begin{theorem} Let H.1 and H.7 hold. Let $U(t), \ t\geq 0$ be the strongly continuous nonlinear semigroup in $X_{+}$ as in Theorem \ref{sec:T3.3}. 
Let $\beta_{0}(a)=\beta(a,\eta_{2}(0))$ for $a\in [a_{\min},a_{\max}]$ and $\check{\mu}_{0}: [0,a_{1}]\rightarrow [0,\infty)$ be $\check{\mu}_{0}(a)=\mu_{0}(a,\eta_{0}(0))+\mu_{1}(a,\eta_{1}(0))+\mu_{2}(a)$. Let $\beta_{0}, \ \check{\mu}_{0}$ satisfy the following conditions:
\begin{itemize}
\item[(i)]  $\check{\mu}_{0}$ is nondecreasing on $[0,a_{1}]$;
\item[(ii)] $\int_{a_{\min}}^{a_{\max}}e^{\omega a}\beta_{0}(a)e^{-\int_{0}^{a}\check{\mu}_{0}(b)db}da=1$ for some $\omega>0$.
\end{itemize}
Then, $\lim_{t \rightarrow \infty}U(t)\phi=0$ in $L^{1}(0,\infty;\mathbb{R})$ for all $\phi\in X_{+}$.
\end{theorem}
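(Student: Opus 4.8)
The plan is to run LaSalle's invariance principle with a Lyapunov functional adapted to the exponential weight in condition (ii), after first using the monotonicity built into H.7 to sandwich the nonlinear dynamics between the trivial state and a subcritical linear renewal process. Set $\Pi_0(s,a):=\exp\!\big({-\int_s^a \check{\mu}_0(b)\,db}\big)$ for the survival kernel of the limiting mortality. The candidate Lyapunov function on $X_{+}$ is
\begin{align*}
V[\Psi]:=\int_{a_{\min}}^{a_{\max}} e^{\omega a}\beta_0(a)\int_0^a \Pi_0(s,a)\Psi(s)\,ds\,da,
\end{align*}
that is, the total weighted future reproduction carried by a density $\Psi$, the weight $e^{\omega a}$ being exactly the one normalized by (ii).

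First I would record the one-sided bounds that H.7 furnishes along a trajectory $p(\cdot,t)=U(t)\phi$: since $\beta(a;z)$ is non-increasing in $z$ and $\eta_2$ is increasing, $\beta(a;\eta_2(Q_0(t)))\le\beta(a;\eta_2(0))=\beta_0(a)$, while the monotonicity of the $\mu_i$ in $z$ gives a total mortality rate $M(\cdot,t)$ bounded below by $\check{\mu}_0$. Differentiating $V$ along a regular trajectory, substituting $p_t=-p_a-M(\cdot,t)p$, and integrating by parts in age via $\partial_s\Pi_0(s,a)=\check{\mu}_0(s)\Pi_0(s,a)$, the transport and interior mortality terms combine into a nonpositive remainder $-\int_0^a (M(s,t)-\check{\mu}_0(s))\Pi_0(s,a)p\,ds$, so only the boundary terms survive the estimate:
\begin{align*}
\dot V[p(\cdot,t)]\le -\int_{a_{\min}}^{a_{\max}} e^{\omega a}\beta_0(a)p(a,t)\,da+p(0,t)\int_{a_{\min}}^{a_{\max}} e^{\omega a}\beta_0(a)\Pi_0(0,a)\,da.
\end{align*}
Condition (ii) makes the last integral equal to $1$, and the boundary law together with $\beta\le\beta_0$ and $e^{\omega a}\ge 1$ gives $p(0,t)\le\int_{a_{\min}}^{a_{\max}}\beta_0 p\le\int_{a_{\min}}^{a_{\max}} e^{\omega a}\beta_0 p$. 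Hence $\dot V\le-\int_{a_{\min}}^{a_{\max}}(e^{\omega a}-1)\beta_0 p\,da\le 0$, and $\dot V[\Psi]=0$ forces $\int_{a_{\min}}^{a_{\max}}\beta_0\Psi=0$.

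Next I would invoke the asymptotic machinery of this chapter. By Corollary \ref{sec:LMCOR} the semigroup $U(t)$ is bounded dissipative and asymptotically smooth, so every forward orbit is bounded, hence has compact closure by Theorem \ref{sec:T5.1} and a nonempty invariant omega-limit set. LaSalle's principle then confines $\Omega(\phi)$ to the largest invariant subset of $\{\Psi:\dot V[\Psi]=0\}\subseteq\{\Psi\in X_{+}:\int_{a_{\min}}^{a_{\max}}\beta_0\Psi=0\}$. On a complete orbit inside this set the boundary law gives $p(0,t)=\int_{a_{\min}}^{a_{\max}}\beta(a;\cdot)p\le\int_{a_{\min}}^{a_{\max}}\beta_0 p=0$ for every $t$, so no births ever occur; via the characteristic representation (as in Proposition \ref{sec:LMP1}) such an orbit is pure transport with zero inflow and vanishes identically once a time span $a_1$ has elapsed, whence the orbit is $\{0\}$. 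Therefore $\Omega(\phi)=\{0\}$, which is precisely $U(t)\phi\to 0$ in $L^1$.

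I expect the principal obstacle to be justifying the inequality for $\dot V$ for a general $\phi\in X_{+}$ rather than for regular data in $D(\mathcal{A})$: this I would handle by approximating $\phi$ by absolutely continuous, compatible densities, differentiating $V$ there, and passing to the limit through the continuous-dependence estimate of Proposition \ref{sec:be3}, while verifying boundedness of the orbit so that Theorem \ref{sec:T5.1} applies. A purely linear shortcut bypasses LaSalle altogether: by Proposition \ref{sec:LMPP9} the nonlinear solution obeys $0\le U(t)\phi\le\check l(\cdot,t)$, where $\check l$ solves the upper linear system with data $\beta_0,\check{\mu}_0$; condition (ii) exhibits $\lambda_0=-\omega<0$ as a root of the characteristic equation $\int_{a_{\min}}^{a_{\max}} e^{-\lambda a}\beta_0(a)\Pi_0(0,a)\,da=1$, and being the unique real (hence dominant) root it gives $\sup_{\Delta(\lambda)=0}\mathrm{Re}\,\lambda<0$, so Theorem \ref{sec:LMCC11} (or Theorem \ref{sec:TLM81}) forces $\|\check l(\cdot,t)\|_X\to 0$ and thus $\|U(t)\phi\|_X\to 0$. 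Condition (i) is the standing monotonicity hypothesis of the quoted theorem; the decisive input in either route is the normalization (ii), which fixes the decay rate at $\omega$.
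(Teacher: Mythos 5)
Your proposal is correct, and it takes a genuinely different route from the paper, which disposes of this theorem in one line by citing Webb (sec.\ 4.2, p.\ 158) together with the Lyapunov function $V(\phi)=\int_0^{a_1}\phi(a)q(a)\,da$, $q(a)=e^{-\omega a+\int_0^a\check\mu_0(b)db}\bigl[1-\int_0^a e^{\omega b}\beta_0(b)e^{-\int_0^b\check\mu_0(\tau)d\tau}db\bigr]$. Using (ii), that weight equals $e^{-\omega a}\int_a^{a_1}e^{\omega b}\beta_0(b)\Pi_0(a,b)\,db$, i.e.\ your functional with an extra factor $e^{-\omega a}$ against the density; that factor is precisely what closes Webb's estimate into $\dot V\le-\omega V$ and yields a decay \emph{rate}, whereas your weight only gives $\dot V\le-\int_{a_{\min}}^{a_{\max}}(e^{\omega a}-1)\beta_0 p\,da\le 0$, so you must pay for the missing rate with LaSalle, compact orbit closure (Corollary \ref{sec:LMCOR}, Theorem \ref{sec:T5.1}), and the identification of the limiting invariant set --- which you carry out correctly by passing to \emph{complete} orbits (mere forward invariance would not do: densities supported in $(a_{\max},a_1]$ also satisfy $\int\beta_0\Psi=0$ and are carried forward within that set). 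Your computation of $\dot V$ is sound: H.7 together with the monotonicity of the $\eta_i$ gives $M(\cdot,t)\ge\check\mu_0$ and $\beta(\cdot;\eta_2(Q_0(t)))\le\beta_0$, (ii) normalizes the boundary term, and the regularity issue you flag is genuine but no worse than in the paper's own Lyapunov computations and is settled by density of $D(\mathcal{A})$ plus Proposition \ref{sec:be3}. It is worth noting that neither of your arguments uses hypothesis (i) at all, which suggests (i) is only needed for Webb's particular passage from decay of $V$ to decay of the $L^1$ norm. Your second, linear route is the cleanest of the three and recovers the exponential rate without any Lyapunov function: since $\eta_i(0)=0$, the pair $(\beta_0,\check\mu_0)$ dominates the nonlinear data in the sense of Proposition \ref{sec:LMPP9}, condition (ii) says $\lambda=-\omega$ is the real root of the characteristic equation of that linear system, strict monotonicity of $\lambda\mapsto\int_{a_{\min}}^{a_{\max}}e^{-\lambda a}\beta_0(a)\Pi_0(0,a)\,da$ on $\mathbb{R}$ then gives $\mathcal{\tilde{R}}_{0}<1$, and Theorem \ref{sec:TLM81} (or Theorem \ref{sec:LMCC11}) combined with $0\le U(t)\phi\le\check l(\cdot,t)$ finishes the proof.
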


The proof of this theorem follows from (\cite{W}, section 4.2, pp.158) by defining $V: X_{+} \rightarrow \mathbb{R}$,
$V(\phi):=\int_{0}^{a_{1}}\phi(a)q(a)da\ \ \phi\in X_{+}$, where,
$q(a):=\exp[-\omega a+\int_{0}^{a}\check{\mu}_{0}(b)db]\times[1-\int_{0}^{a}e^{\omega b}\beta_{0}(b)\exp[-\int_{0}^{b}\check{\mu}_{0}(\tau)d\tau]db],\ a\in[0,a_{1}]$.
\end{remark}

\subsection{Global stability of the nontrivial equilibrium}
In this section, we use the following Lyapunov functional to show that, if $\mathcal{R}_{0}>1$, solutions of the system (\ref{sec:1}) converge to the nontrivial equilibrium $\hat{\phi}$ (\ref{sec:4.12}). 
We define $V: \mathcal{A}_{0} \rightarrow\mathbb{R}$ by:
\begin{align*} 
V(\phi)&:=\int^{a_{1}}_{0}(|\phi(a)-\hat{\phi}(a)|-|\hat{\phi}(a)\log\frac{\phi(a)}{\hat{\phi}(a)}|)da,\ \text{ for }\phi\in \mathcal{A}_{0}.
\end{align*}
where $\hat{\phi}$ (\ref{sec:4.12}) is the positive equilibrium of the system (\ref{sec:1}) as in Theorem 4.1.

\begin{proposition}\label{sec:prop5.11} 
Let H.1-H.7 hold. 
If $ \mathcal{R}_{0}>1$, $V$ is a Liapunov functional on $\mathcal{A}_{0}$.
\end{proposition}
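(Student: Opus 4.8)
The plan is to verify the two requirements in the definition of a Lyapunov functional recalled just above the statement: that $V$ is a well-defined continuous map $\mathcal{A}_0\to\mathbb{R}$, and that $\dot V(\phi)\le 0$ for every $\phi\in\mathcal{A}_0$. Note that $V$ need not be sign-definite here (its integrand changes sign across $\{\phi=\hat\phi\}$), so only the dissipation inequality is genuinely at stake; positivity is not needed because the functional is destined for the LaSalle invariance principle on $\mathcal{A}_0$.

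First I would show $V$ is finite and continuous on $\mathcal{A}_0$. The only possible obstruction is the logarithm, which is controlled once $\phi/\hat\phi$ is bounded above and below. Since $\mathcal{A}_0$ is compact, hence bounded in $L^1$, and invariant under $U(t)$ by Theorem~\ref{sec:ga}, each $\phi\in\mathcal{A}_0$ lies on a complete bounded orbit. Combining this with the uniform lower bound $\int_{a_{\min}}^{a_{\max}}\underline{\beta}(a)\phi(a)\,da\ge\delta>0$ from Proposition~\ref{sec:LMPP7} and the Volterra representation (\ref{sec:VT1}) of solutions, which expresses $\phi(a)$ as a birth value times a survival factor bounded away from $0$, I expect to obtain constants $0<c\le C$ with $c\,\hat\phi(a)\le\phi(a)\le C\,\hat\phi(a)$ for a.e.\ $a$, uniformly in $\phi\in\mathcal{A}_0$. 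On this range the integrand is bounded and depends continuously on $\phi$ in the $L^1$ norm, yielding continuity of $V$.

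Second, for the orbital derivative I would fix $\phi\in\mathcal{A}_0$, set $p(a,t)=(U(t)\phi)(a)$, and use that on the invariant set $\mathcal{A}_0$ the trajectory is regular enough to differentiate under the integral (invoking $\tfrac{d}{dt}|f|=\operatorname{sgn}(f)\,\dot f$ off the null set $\{p=\hat\phi\}$). Writing $m(a):=\mu_0(a,\eta_0(Q_0\phi))+\mu_1(a,\eta_1(Q_1\phi))+\mu_2(a)$ and $\hat m(a)$ for the corresponding equilibrium mortality, and using $\operatorname{sgn}(p-\hat\phi)=\operatorname{sgn}(\log(p/\hat\phi))$ so that the integrand equals $\operatorname{sgn}(p-\hat\phi)\,[(p-\hat\phi)-\hat\phi\log(p/\hat\phi)]$, I would substitute the transport equation $p_t=-p_a-m\,p$ together with $\hat\phi'=-\hat m\,\hat\phi$ (from (\ref{sec:4.2})) and integrate the advection term by parts in $a$. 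This is expected to give
\begin{align*}
\dot V(\phi)=h(0)-h(a_1)+\int_0^{a_1}\Big(\hat m(a)\hat\phi(a)\,|\log(p(a)/\hat\phi(a))|-m(a)\,|p(a)-\hat\phi(a)|\Big)\,da,
\end{align*}
where $h(a):=|p(a)-\hat\phi(a)|-\hat\phi(a)\,|\log(p(a)/\hat\phi(a))|$ denotes the integrand of $V$.

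Finally, to force the right-hand side to be nonpositive I would combine three ingredients: the elementary Volterra inequalities $\hat\phi\log(p/\hat\phi)\le p-\hat\phi$ on $\{p\ge\hat\phi\}$ and its reversal on $\{p\le\hat\phi\}$, to compare the two interior terms; the monotonicity hypotheses H.7, which fix the sign of $m-\hat m$ against the sign of the density deviations $Q_i\phi-\hat Q_i$; and the birth laws $p(0,t)=\int_{a_{\min}}^{a_{\max}}\beta(a;\eta_2(Q_0\phi))p(a,t)\,da$ and $\hat\phi(0)=\int_{a_{\min}}^{a_{\max}}\beta(a;\eta_2(\hat Q_0))\hat\phi(a)\,da$ with the equilibrium normalization (\ref{sec:4.5}), processed through Jensen's inequality for the concave $\log$, to estimate the boundary term $h(0)$. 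The main obstacle will be exactly this sign bookkeeping: unlike the positive-definite case, the interior integrand changes sign across $\{p=\hat\phi\}$, and the boundary term $-h(a_1)$ at the maximal age carries no a priori favorable sign. The crux is therefore to show that the monotone density feedback (lower births and higher deaths when $Q_i\phi>\hat Q_i$, and the reverse otherwise) dominates both the age-$a_1$ boundary term and the unfavorable region, using H.4--H.5 to relate the two deviations $Q_0\phi-\hat Q_0$ and $Q_1\phi-\hat Q_1$ that enter $m-\hat m$ and the birth defect.
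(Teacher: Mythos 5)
Your first step (well-definedness of $V$ on $\mathcal{A}_0$) follows the paper exactly: compactness and invariance of $\mathcal{A}_0$, the comparison bounds of Proposition \ref{sec:LMPP9}, the uniform birth lower bound of Proposition \ref{sec:LMPP7}, and the Volterra representation together give $0<\delta_1\le (U(t)\phi)(a)/\hat{\phi}(a)\le\delta_2$, so the logarithm is controlled. Your remark that $V$ need not be sign-definite is also consistent with how the paper uses it (only LaSalle is invoked later).

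The dissipation step, however, contains a genuine gap. The paper does not write $p_t=-p_a-mp$ and integrate by parts; it computes $\dot V$ from the difference quotient and exploits the algebraic structure of the integrand: writing $\mathcal{G}(\phi)(a)=-m(a)\phi(a)$, the two pieces combine as
\begin{align*}
\dot V(\phi)=\int_0^{a_1}\Bigl(1-\tfrac{\hat{\phi}(a)}{\phi(a)}\Bigr)\operatorname{sgn}\bigl(\phi(a)-\hat{\phi}(a)\bigr)\,\mathcal{G}(\phi)(a)\,da
=\int_0^{a_1}\frac{\mathcal{G}(\phi)(a)}{\phi(a)}\,\bigl|\phi(a)-\hat{\phi}(a)\bigr|\,da\le 0,
\end{align*}
which is nonpositive pointwise, with no boundary terms and no appeal to H.4, H.5, H.7 or Jensen at this stage. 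Your route instead produces a boundary contribution $-h(a_1)$ with no favorable sign and an interior remainder of the form $\hat m\hat{\phi}\,|\log(p/\hat{\phi})|-m\,|p-\hat{\phi}|$, and the plan to close this by ``monotone density feedback dominating'' is not an argument: the elementary inequality $\hat{\phi}\,|\log(p/\hat{\phi})|\le|p-\hat{\phi}|$ holds only where $p\ge\hat{\phi}$ and reverses where $p<\hat{\phi}$, while the comparison of $m$ with $\hat m$ is governed by the global quantities $Q_i\phi$ versus $\hat Q_i$ rather than by the local sign of $p-\hat{\phi}$, so there is no pointwise or even integrated sign to extract by the tools you list. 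You have correctly identified this as the crux, but identifying the crux is not resolving it; as written the proposal does not establish $\dot V\le 0$. The fix is to abandon the integration-by-parts route and use the paper's cancellation, in which the $\operatorname{sgn}$ factors from the absolute value and from the logarithm coincide (both equal $\operatorname{sgn}(\phi-\hat{\phi})$) and the difference $1-\hat{\phi}/\phi$ converts the signum into $|\phi-\hat{\phi}|/\phi$.
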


\begin{proof}

Our goal is to show that $V$ is well defined on the global attractor $\mathcal{A}_{0}$. Obviously, $V$ is not well defined on $M_{0}$ because of the function $\log$ under the integral. Let $\phi_{0}\in \mathcal{A}_{0}$. We use the Volterra formulation (\ref{sec:VT1}) and a comparison argument by   Proposition \ref{sec:LMPP9} and Proposition \ref{sec:LMPP7}, to obtain $(U(t)\phi_{0})(a)=B(t-a)\geq \int_{a_{\min}}^{a_{\max}}\underline{\beta}(l)(U(t-a)\phi_{0})(l)dl\geq \delta$, for $(a,t)\in[0,a_{1}]\times[0,\infty)$ and $t> a$. For $0\leq t\leq a\leq a_{1}$, we have $(U(t)\phi_{0})(a)=(U(t+t_{1})U(-t_{1})\phi_{0})(a)\geq \delta$, where $t_{1}>a_{1}$. Therefore, $(U(t)\phi_{0})(a)\geq \delta$ for all $(a,t)\in[0,a_{1}]\times[0,\infty)$.
On the other hand, we use the fact that $\mathcal{A}_{0}$ is compact and H.6 to derive that there exists some $M>0$ such that for every $\phi_{0}\in \mathcal{A}_{0}$, we have
$\int_{a_{\min}}^{a_{\max}}\bar{\beta}(a)\phi_{0}(a)da\leq M$.  Then, we deduce that $(U(t)\phi_{0})(a)=B(t-a)\leq \int_{a_{\min}}^{a_{\max}}\bar{\beta}(l)(U(t-a)\phi_{0})(l)dl\leq M$, for $(a,t)\in[0,a_{1}]\times[0,\infty)$ and $t> a$. For $0\leq t\leq a\leq a_{1}$, we have $(U(t)\phi_{0})(a)=(U(t+t_{1})U(-t_{1})\phi_{0})(a)\leq M$, where $t_{1}>a_{1}$. Therefore, $(U(t)\phi_{0})(a)\leq M$ for all $(a,t)\in[0,a_{1}]\times[0,\infty)$. It then directly follows that $\delta_{1}\leq\frac{(U(t)\phi)(a)}{\hat{\phi}(a)}\leq \delta_{2}$, for $(a,t)\in[0,a_{1}]\times[0,\infty)$, where $\delta_{1}=\frac{\delta}{\sup_{a\in[0,a_{1}]}\hat{\phi}(a)}$ and $\delta_{2}=\frac{M}{\inf_{a\in[0,a_{1}]}\hat{\phi}(a)}$. It is easily seen from (\ref{sec:4.12}) that $\inf_{a\in[0,a_{1}]}\hat{\phi}(a)>0$ which is due to the fact that $\hat{\phi}\in C^{1}[0,a_{1}]$ and $\hat{\phi}(a)>0$ for $a\in[0,a_{1}]$. We then deduce that $0\leq |\log\frac{(U(t)\phi)(a)}{\hat{\phi}(a)}|\leq \max\left\{|\log\delta_{1}|,|\log\delta_{2}|\right\}$. Therefore, $0 \leq\int^{a_{1}}_{0}\hat{\phi}(a)|\log\frac{(U(t)\phi)(a)}{\hat{\phi}(a)}|da\leq \max\left\{|\log\delta_{1}|,|\log\delta_{2}|\right\}\hat{\left\|\phi\right\|}_{X}$.

Then, for $\phi\in \mathcal{A}_{0}$ and $U(t)\phi\in \mathcal{A}_{0}$, $t\geq 0$, we have
\begin{align*} 
\dot{V}(\phi)&=\lim_{t\rightarrow 0^{+}}\frac{V(U(t)\phi)-V(\phi)}{t}\\
&=\lim_{t\rightarrow 0^{+}}\frac{1}{t}[\int^{a_{1}}_{0}(|(U(t)\phi)(a)-\hat{\phi}(a)|-|\hat{\phi}(a)\log\frac{(U(t)\phi)(a)}{\hat{\phi}(a)}|)da\\
&-\int^{a_{1}}_{0}(|\phi(a)-\hat{\phi}(a)|-|\hat{\phi}(a)\log\frac{\phi(a)}{\hat{\phi}(a)}|)da]\\  
&=\lim_{t\rightarrow 0^{+}}\frac{1}{t}\int^{a_{1}}_{0}(|(U(t)\phi)(a)-\hat{\phi}(a)|-|\phi(a)-\hat{\phi}(a)|)da\\
&-\lim_{t\rightarrow 0^{+}}\frac{1}{t}\int^{a_{1}}_{0}(|\hat{\phi}(a)\log\frac{(U(t)\phi)(a)}{\hat{\phi}(a)}|-|\hat{\phi}(a)\log\frac{\phi(a)}{\hat{\phi}(a)}|)da\\
&=A-B.
\end{align*}
where 
\begin{align*} 
A&=\lim_{t\rightarrow 0^{+}}\frac{1}{t}\int^{a_{1}}_{0}(|(U(t)\phi)(a)-\hat{\phi}(a)|-|\phi(a)-\hat{\phi}(a)|)da\\
&=\lim_{t\rightarrow 0^{+}}\frac{1}{t}\int^{a_{1}}_{0}(\sqrt{((U(t)\phi)(a)-\hat{\phi}(a))^{2}}-\sqrt{(\phi(a)-\hat{\phi}(a))^{2}})da\\
&=\lim _{t\rightarrow 0^{+}}\int^{a_{1}}_{0} \frac{((U(t)\phi)(a)-\hat{\phi}(a))^{2}-(\phi(a)-\hat{\phi}(a))^{2}}{t(\sqrt{((U(t)\phi)(a)-\hat{\phi}(a))^{2}}+\sqrt{(\phi(a)-\hat{\phi}(a))^{2}})}da\\
&=\lim_{t\rightarrow 0^{+}}\int^{a_{1}}_{0}\frac{(U(t)\phi)(a)-\phi(a)}{t} \frac{(U(t)\phi)(a)+\phi(a)-2\hat{\phi}(a)}{(\sqrt{((U(t)\phi)(a)-\hat{\phi}(a))^{2}}+\sqrt{(\phi(a)-\hat{\phi}(a))^{2}})}da\\
&=\int^{a_{1}}_{0}\mathcal{G}(\phi)(a) \frac{\phi(a)-\hat{\phi}(a)}{|\phi(a)-\hat{\phi}(a)|}da\\
&=\int^{a_{1}}_{0}\mathcal{G}(\phi)(a) \text{sgn}(\phi(a)-\hat{\phi}(a))da.
\end{align*}
\begin{align*} 
B&=\lim_{t\rightarrow 0^{+}}\frac{1}{t}\int^{a_{1}}_{0}\hat{\phi}(a)(|\log\frac{(U(t)\phi)(a)}{\hat{\phi}(a)}|-|\log\frac{\phi(a)}{\hat{\phi}(a)}|)da\\
&=\lim_{t\rightarrow 0^{+}}\frac{1}{t}\int^{a_{1}}_{0}\hat{\phi}(a)(\sqrt{(\log\frac{(U(t)\phi)(a)}{\hat{\phi}(a)})^{2}}-\sqrt{(\log\frac{\phi(a)}{\hat{\phi}(a)})^{2}})da\\
&=\lim_{t\rightarrow 0^{+}}\frac{1}{t}\int^{a_{1}}_{0}\hat{\phi}(a)(\sqrt{(\log(U(t)\phi)(a)-\log\hat{\phi}(a))^{2}}-\sqrt{(\log\phi(a)-\log\hat{\phi}(a))^{2}})da\\
&=\lim_{t\rightarrow 0^{+}}\int^{a_{1}}_{0}\hat{\phi}(a) \frac{(\log(U(t)\phi)(a)-\log\hat{\phi}(a))^{2}-(\log\phi(a)-\log\hat{\phi}(a))^{2}}{t(\sqrt{(\log(U(t)\phi)(a)-\log\hat{\phi}(a))^{2}}+\sqrt{(\log\phi(a)-\log\hat{\phi}(a))^{2}})}da\\ 
&=\lim_{t\rightarrow 0^{+}}\int^{a_{1}}_{0}\hat{\phi}(a)\frac{\log(U(t)\phi)(a)-\log\phi(a)}{t}\\ 
&\times \frac{\log(U(t)\phi)(a)+\log\phi(a)-2\log\hat{\phi}(a)}{(\sqrt{(\log(U(t)\phi)(a)-\log\hat{\phi}(a))^{2}}+\sqrt{(\log\phi(a)-\log\hat{\phi}(a))^{2}})}da\\ 
&=\int^{a_{1}}_{0}\hat{\phi}(a)\frac{\mathcal{G}(\phi)(a)}{\phi(a)}\text{sgn}(\phi(a)-\hat{\phi}(a))da.
\end{align*}
Thus,
\begin{align*} 
\dot{V}(\phi)&=\int^{a_{1}}_{0}\mathcal{G}(\phi)(a) \text{sgn}(\phi(a)-\hat{\phi}(a))da-\int^{a_{1}}_{0}\hat{\phi}(a)\frac{\mathcal{G}(\phi)(a)}{\phi(a)}\text{sgn}(\phi(a)-\hat{\phi}(a))da\\
&=\int^{a_{1}}_{0}\frac{\phi(a)-\hat{\phi}(a)}{\phi(a)}\mathcal{G}(\phi)(a) \text{sgn}(\phi(a)-\hat{\phi}(a))da\\
&=\int^{a_{1}}_{0}\frac{\mathcal{G}(\phi)(a)}{\phi(a)}|\phi(a)-\hat{\phi}(a)|da\\
&=-\int^{a_{1}}_{0}(\mu_{0}(a, \eta_{0}(Q_{0}(\phi)))+\mu_{1}(a, \eta_{1}(Q_{1}(\phi)))+\mu_{2}(a))|\phi(a)-\hat{\phi}(a)|da.
\end{align*}
All terms are obviously nonpositive. Therefore, $V(\phi)$ is a Liapunov function for $U(t), t\geq 0$ in $X_{+}$ as in Theorem \ref{sec:T3.3}.
\end{proof}

\begin{theorem}\label{sec:T5.11} 
Let H.1-H.7 hold. 
If $ \mathcal{R}_{0}>1$, the unique positive equilibrium is globally asymptotically stable for the semigroup generated by the system (\ref{sec:1}). 
\end{theorem}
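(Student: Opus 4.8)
The plan is to apply LaSalle's invariance principle (the Proposition stated in the Preliminaries of this chapter) to the restriction of the semigroup to the global attractor. I first collect the ingredients already in hand. For each $\phi_0\in M_0$, Corollary \ref{sec:LMCOR} gives that $U(t),\,t\geq 0$, is bounded dissipative and asymptotically smooth, and Theorem \ref{sec:LMTUP} gives uniform persistence with respect to $(\partial M_0,M_0)$ when $\mathcal{R}_0>1$; hence by Theorem \ref{sec:ga} there is a compact, invariant global attractor $\mathcal{A}_0\subset M_0$ attracting every bounded subset of $M_0$. In particular each trajectory starting in $M_0$ has compact closure, and its omega-limit set $\Omega(\phi_0)$ is a nonempty, compact, invariant subset of $\mathcal{A}_0$. (Note that ``global'' here must be read relative to $M_0$: on $\partial M_0$ solutions decay to the trivial equilibrium by Proposition \ref{sec:LMPP8}.)

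Next I would invoke Proposition \ref{sec:prop5.11}, which shows that $V$ is a Lyapunov functional on $\mathcal{A}_0$ and computes
\[
\dot V(\phi)=-\int_0^{a_1}\bigl(\mu_0(a,\eta_0(Q_0\phi))+\mu_1(a,\eta_1(Q_1\phi))+\mu_2(a)\bigr)\,|\phi(a)-\hat\phi(a)|\,da\le 0 .
\]
Since every trajectory in $M_0$ eventually enters and stays near $\mathcal{A}_0$, LaSalle's principle yields $\Omega(\phi_0)\subset M^{+}$, where $M^{+}$ is the largest invariant subset of $M_1:=\{\psi\in\mathcal{A}_0:\dot V(\psi)=0\}$. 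The proof then reduces to the identification $M^{+}=\{\hat\phi\}$.

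To carry out this reduction, observe that the integrand in $\dot V$ is nonnegative, so $\psi\in M_1$ forces $\psi(a)=\hat\phi(a)$ for almost every age $a$ at which the total mortality $\mu_0(a,\eta_0(Q_0\psi))+\mu_1(a,\eta_1(Q_1\psi))+\mu_2(a)$ is positive. On $\mathcal{A}_0$ the population is bounded away from zero (Proposition \ref{sec:LMPP7} together with uniform persistence), so $Q_0\psi,\,Q_1\psi>0$; combined with H.1 this pins $\psi$ to $\hat\phi$ wherever the mortality acts. For a point $\psi\in M^{+}$ the entire orbit stays in $M_1$, and I would propagate this equality to every age by using the renewal boundary condition $\psi(0)=\mathcal{F}(\psi)$ together with the transport (characteristic) structure of (\ref{sec:1}): the values of $\psi$ at ages where the mortality vanishes are transported along characteristics from the boundary value and from the already-pinned values, forcing $\psi\equiv\hat\phi$. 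I expect this propagation to be the main obstacle, since it is precisely where strict positivity of the total mortality may fail and where the invariance of the full orbit, rather than the pointwise condition $\dot V=0$ alone, must be exploited.

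Finally, with $M^{+}=\{\hat\phi\}$ I conclude $\Omega(\phi_0)=\{\hat\phi\}$, i.e. $\lim_{t\to\infty}U(t)\phi_0=\hat\phi$ for every $\phi_0\in M_0$. Because $\mathcal{A}_0$ attracts all bounded subsets of $M_0$ and every omega-limit set equals $\{\hat\phi\}$, the attractor itself collapses to $\mathcal{A}_0=\{\hat\phi\}$, whose local asymptotic stability is furnished by Theorem \ref{sec:ga}. Combining the global convergence with this local asymptotic stability gives that the unique positive equilibrium $\hat\phi$ of (\ref{sec:4.12}) is globally asymptotically stable on the biologically relevant region $M_0$, which is the assertion of the theorem.
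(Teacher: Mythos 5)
Your proposal is correct and follows essentially the same route as the paper: LaSalle's invariance principle applied on the compact global attractor $\mathcal{A}_0$ furnished by dissipativity, asymptotic smoothness and uniform persistence, with Proposition \ref{sec:prop5.11} supplying the Lyapunov functional and the largest invariant subset of $\{\dot V=0\}$ identified as $\{\hat\phi\}$. The only difference is one of care: the paper simply asserts $\mathcal{G}(\phi)(a)/\phi(a)<0$ a.e.\ to conclude $\phi=\hat\phi$ a.e., whereas you correctly flag that at ages where the total mortality could vanish one must additionally propagate the equality along characteristics using full-orbit invariance and the boundary condition --- a refinement the paper's argument implicitly assumes away.
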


\begin{proof}

We derive from Proposition \ref{sec:prop5.11} that $\dot{V}(\phi)=0$ if and only if $\int^{a_{1}}_{0}\frac{\mathcal{G}(\phi)(a)}{\phi(a)}$ $|\phi(a)-\hat{\phi}(a)|da=0$, that is, $\phi(a)=\hat{\phi}(a)$, a.e. on $[0,a_{1}]$, because $\frac{\mathcal{G}(\phi)(a)}{\phi(a)}<0$ a.e. on $[0,a_{1}]$. Thus the only invariant set contained in the set $\dot{V}(\phi)=0$ is the positive equilibrium $\hat{\phi}$. Hence, LaSalle's theorem implies the largest invariant set is $\hat{\left\{\phi\right\}}$.
For $\forall\phi\in M_{0}$. Let $\tilde{\phi}\in \omega(\phi)$. There exists $\left\{t_{n}\right\}_{n\in \mathbb{Z}}\rightarrow \infty$ such that $\lim_{t_{n}\rightarrow \infty}U(t_{n})\phi=\tilde{\phi}$. Since $\tilde{\phi}\in \omega(\phi)\subseteq \left\{\bar{\phi}: \dot{V}(\bar{\phi})=0,\bar{\phi}\in\mathcal{A}_{0}\right\}$. Therefore, $\tilde{\phi}=\hat{\phi}$. We claim $\lim_{t\rightarrow \infty}U(t)\phi=\hat{\phi}$. If not, by Theorem \ref{sec:T5.1}, we have $\overline{\left\{U(t)\phi:t\geq 0\right\}}$ is compact, and therefore, there exists a sequence $\left\{t_{m}\right\}_{m\in\mathbb{Z}}\rightarrow \infty$ such that $\lim_{t_{m}\rightarrow \infty}U(t_{m})\phi=\check{\phi}\neq\hat{\phi}$. However, since $\check{\phi}\in\omega(\phi)$, this implies $\check{\phi}=\hat{\phi}$, which gives the contradiction.
   
\end{proof}

We give following examples to illustrate that there are different choices of fertility and mortality functions for which conditions of Theorem \ref{sec:T5.11} hold:

\begin{example}
For example \ref{sec:E4.3}, it is readily seen that, when $ e^{\Lambda}\geq\mathcal{R}_{0}>1$, conditions of Theorem \ref{sec:T5.11} are satisfied, thus, the unique positive equilibrium is globally asymptotically stable for the semigroup generated by the system (\ref{sec:E4.8}). In example \ref{sec:E4.13}, we numerically verify that with all parametric values set as in Table \ref{table:ba}, conditions of Theorem \ref{sec:T5.11} are satisfied and the unique positive equilibrium is globally asymptotically stable for the semigroup generated by the system (\ref{sec:E4.18}). Similarly, in example \ref{sec:E4.9} and example \ref{sec:E4}, one can easily verify that when $\mathcal{R}_{0}>1$, assumptions of Theorem \ref{sec:T5.11} hold and therefore the unique positive equilibrium is globally asymptotically stable for the semigroup generated by the system (\ref{sec:E4.1}) and by the system (\ref{sec:E4.2}). 
\end{example}

\chapter{Discussion}

Our model of age structure in human populations incorporates the extended juvenility and the prolonged post-reproductive population period unique to the human species. Our analysis of the model shows that these features are mathematically stable and robust in the sense that equilibria are recovered from perturbation and reset initial values.  The population could stabilize at a very low level or converge to a higher one depending on the initial population size. It would be a challenge for early humans with fairly small total population size \cite{C} to recover from such a perturbation.  The harsh time value is determined by the value of the unstable nontrivial equilibrium which is an indicator of the survivalship of a population. It is easily seen that a large harsh time value is associated with a reduced chance for a small population to survive especially when the evolution of the population exhibit oscillatory behavior which leads the total population size to fall to a very low level at the bottom. If at certain time the population size drops below the harsh time value, then the strong Allee effect would drive the species to go extinction eventually. In contrast, a smaller harsh time value provides a more friendly environment for early humans to grow from a fairly small total population size at the very beginning and after sufficiently many generations stabilize to a much higher population level. 
Human life expectancy is constrained, as confirmed by our model, in the sense that increasing senescent population becomes a burden on juveniles. Example \ref{sec:E4.13} shows that an increased fertility rate will not balance the cost of the extremely large juvenile mortality caused by the competing adult population. Instead, increased senescent burden on juveniles and increased fertility will lead to oscillatory behavior for the system when the birth process involved is either linear or nonlinear. Our model supports the thesis that the intrinsic age structure of human is essentially unchanged from the hunter-gatherer era to the present.

\begin{table}[ht]
\caption{Baseline model} 
\centering 
\begin{tabular}{| p{3cm} | p{5.5cm} | p{3cm} | } 
\hline 
Term & Form & Baseline value \\ [0.5ex] 
\hline 
 &  & $c_{1}=0.5$  \\[-1ex]
 \raisebox{1.5ex}{Fertility rate} & \raisebox{1.5ex}{$\beta(a,T)=\frac{c_{1}(a - 15)e^{-0.4(a - 15)}}{1+c_{2}T}$} & $c_{2}=0.00022$ \\[1ex]\hline
All-cause mortality &  & $c_{3}=0.01$  \\[-1ex]
(excluding particular
microbial mortalities) & \raisebox{1.5ex}{$\mu_{0}(a,T)= 0.03 +c_{3}e^{-0.04a}+ \eta(a)T$} & $\eta(a)=1.76\times 10^{-9}(a-20)^{2}$ \\[1ex]\hline
Juvenile mortality & $\mu_{1}(a,S)=c_{4}(15-a)S, \ a\in [0,15]$ &   \\[-1ex]
due to senescent  population
burden & $\mu_{1}(a,S)=0, \ a\in (15,\infty)$ & \raisebox{1.5ex}{$c_{4}=10^{-6}$} \\[1ex]\hline
Particular microbial
mortalities & $\mu_{2}(a)$ & 0 \\ [1ex] 
\hline 
\end{tabular}
\label{table:ba} 
\end{table}

\begin{figure}
\begin{center}
 \includegraphics[width=7.2cm,height=7cm]{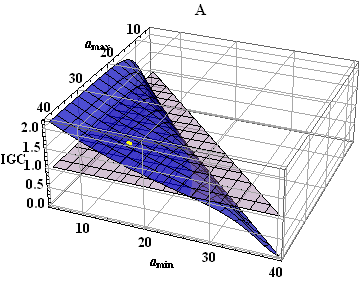}
 \includegraphics[width=5.2cm,height=7cm]{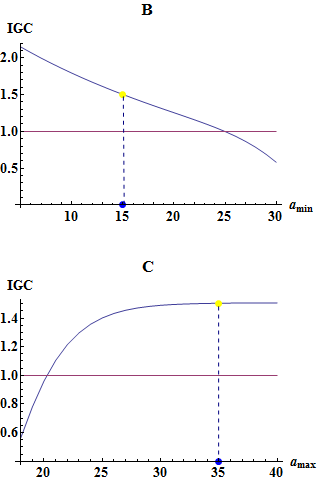}
\caption{The graph (A) is the value of IGC as $a_{\min}$ and $a_{\max}$ vary. The graph (B) is the value of IGC as $a_{\min}$ changes when $a_{\max}=35$ years are held fixed and the graph (C) is the value of IGC as $a_{\max}$ changes when $a_{\min}=15$ years are held fixed. IGC is significantly above one in a wide range of $a_{\min}$ and $a_{\max}$.}\label{sec:Fig1}
\end{center}
\end{figure}

\begin{figure}
\begin{center}
 \includegraphics[width=5.2cm,height=4cm]{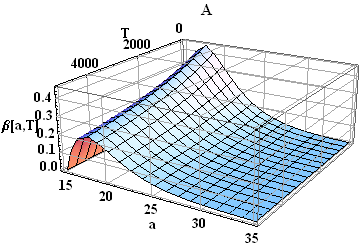}
 \includegraphics[width=5.2cm,height=4cm]{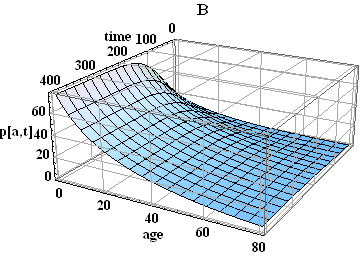}\\
 \includegraphics[width=10.2cm,height=2.5cm]{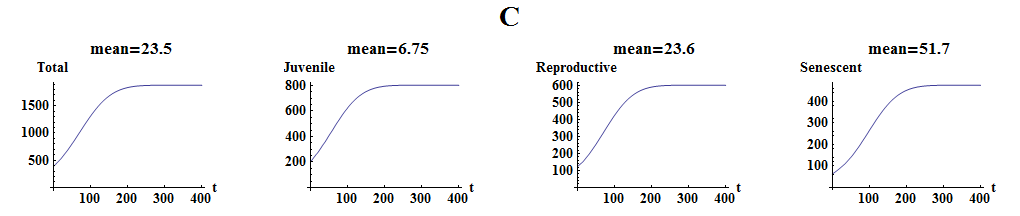}
\caption{The graph (A) is the fertility rate at age $a$ when the total population is $T$. The graph (B) is the time evolution in years of the age structured population density $p(a,t)$ for the baseline parametric values as in Table \ref{table:ba}. The graphs in (C) are time evolution in years of the total population and subpopulations  for the baseline parametric values as in Table \ref{table:ba}. The age structure is robust for those baseline parameter values.}\label{sec:Fig2}
\end{center}
\end{figure}

\begin{figure}\label{figure:ba1}
\begin{center}
 \includegraphics[width=5.2cm,height=4cm]{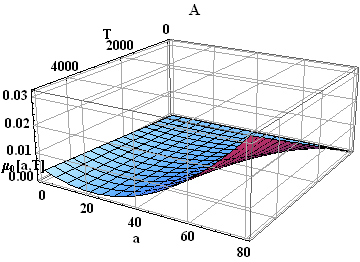}
 \includegraphics[width=5.2cm,height=4cm]{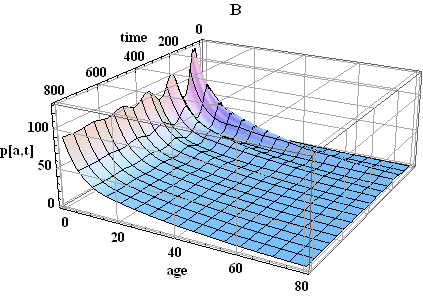}\\
 \includegraphics[width=10.2cm,height=2.5cm]{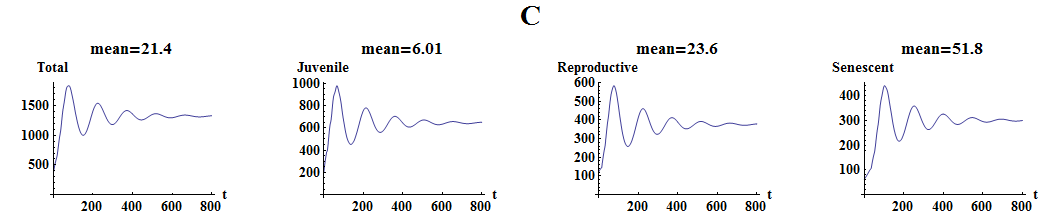}
\caption{The graph (A) is the all-cause mortality at age a when the total population is $T$. The graph (B) is time evolution in years of the age structured population density $p(a,t)$ for the baseline parametric values corresponding to increased fertility rate ($c_{1}=0.85$) and increased senescent burden on juvenile individuals ($c_{4}=2\times 10^{-5}$). The graphs in (C) are time evolution in years of the total population and subpopulations  for the baseline parametric values with increased fertility rate ($c_{1}=0.85$) and increased senescent burden on juvenile individuals ($c_{4}=2\times 10^{-5}$). The age structure is again robust for the parameter values, but undergoes significant oscillations if the initial conditions are perturbed far from the equilibrium values.}\label{sec:Fig3}
\end{center}
\end{figure}

\begin{figure}
\begin{center}
 \includegraphics[width=4.2cm,height=4.9cm]{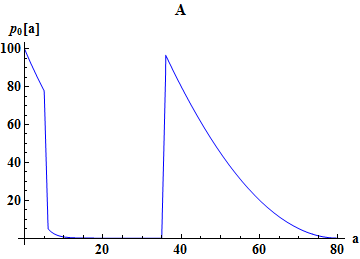}
 \includegraphics[width=6.2cm,height=4.9cm]{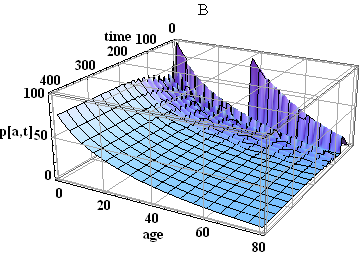}\\
 \includegraphics[width=10.2cm,height=2.5cm]{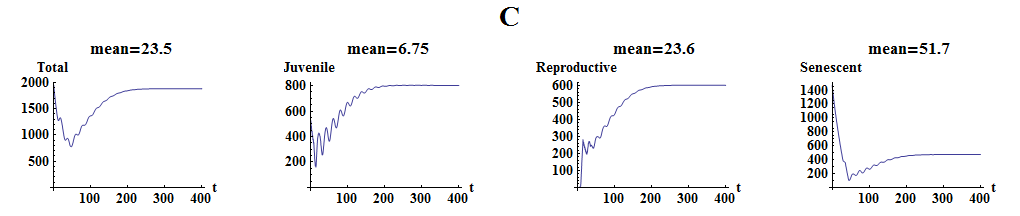}
\caption{The graph (A) is the initial distribution of the population. The graph (B) is time evolution in years of the age structured population density $p(a,t)$ for the baseline parametric values with initial distribution given as graph (A). The graphs in (C) are time evolution in years of the total population and subpopulations  for the baseline parametric values with the initial value given as graph (A). The age structure recovers from an extreme initial age distribution.}\label{sec:Fig5}
\end{center}
\end{figure}


\backmatter


\printbibliography

\addcontentsline{toc}{chapter}{Bibliography}

\end{document}